\numberwithin{equation}{section}
\newtheorem{lem}{Lemma}[section]
\newtheorem{thm}{Theorem}[section]
\newtheorem{proposition}[thm]{Proposition}
\newtheorem{cor}[thm]{Corollary}
\newtheorem{rmk}{Remark}[section]
\newtheorem{definition}[thm]{Definition}
\renewcommand{\tilde}{\widetilde}
\renewcommand{\hat}{\widehat}
\renewcommand{\bar}{\overline}
\newtheorem{assum}{Assumption}
\newcommand{\nn}{\nonumber}
\newcommand{\dt}{ \, {\rm d} t}
\newcommand{\dx}{ \, {\rm d} x}
\newcommand{\dy}{ \, {\rm d} y}
\newcommand{\ds}{\, {\rm d} s}
\newcommand{\MF}{{\mathcal{F}}}
\newcommand{\mc}[1]{\mathcal{#1}}
\newcommand{\MC}{\mathcal{C}}
\newcommand{\EE}{\mathbb{E}}
\newcommand{\RR}{\mathbb{R}}
\newcommand{\NN}{\mathbb{N}}
\newcommand{\ud}{\,\mathrm{d}}
\newcommand{\rd}{\mathrm{d}}
\newcommand{\la}{\langle}
\newcommand{\ra}{\rangle}
\newcommand{\abs}[1]{\left\lvert#1\right\rvert}
\newcommand{\norm}[1]{\left\lVert#1 \, \right\rVert}
\newcommand{\Hbb}{\mathbb{H}}
\newcommand{\R}{\mathbb{R}}
\newcommand{\J}{\mathbb{J}}
\newcommand{\Lcal}{\mathcal{L}}
\newcommand{\Pcal}{\mathcal{P}}
\newcommand{\Ccal}{\mathcal{C}}
\newcommand{\Fcal}{\mathcal{F}}
\newcommand{\Rcal}{\mathcal{R}}
\newcommand{\supp}{\textnormal{supp}}
\newcommand{\Lip}{\textnormal{Lip}}
\newcommand{\BPhi}{\boldsymbol{\Phi}}
\newcommand{\INTDom}[3]{\int_{#2} #1 \textnormal{d} #3}
\newcommand{\INTSeg}[4]{\int_{#3}^{#4} #1 \textnormal{d} #2}
\newcommand{\derv}[2]{\frac{\textnormal{d} #1}{ \textnormal{d} #2}}
\title{A Measure Theoretical Approach to the Mean-field Maximum Principle for Training NeurODEs}
\date{}
\author[1]{Beno\^it Bonnet\thanks{Email: \texttt{benoit.a.bonnet@inria.fr}}}
\affil[1]{Inria Paris and Laboratoire Jacques-Louis Lions, Sorbonne Universit\'e, Universit\'e Paris-Diderot SPC, CNRS, Inria, 75005 Paris, France}
\author[2]{Cristina Cipriani\thanks{Email: \texttt{cristina.cipriani@ma.tum.de}}}
\author[3]{Massimo Fornasier\thanks{Email: \texttt{massimo.fornasier@ma.tum.de}} }
\affil[2,3]{Technical University Munich, Department of Mathematics, Munich,
Germany}
\affil[2,3]{Munich Data Science Institute, Munich, Germany}
\author[4]{Hui Huang\thanks{Email: \texttt{hui.huang1@ucalgary.ca}}}
\affil[4]{University of Calgary, Department of Mathematics and Statistics, Calgary,
Canada}
\begin{document}

\maketitle
\begin{abstract}
In this paper we consider a measure-theoretical formulation of the training of NeurODEs in the form of a mean-field optimal control with $L^2$-regularization of the control. We derive first order optimality conditions for the NeurODE training problem in the form of a mean-field maximum principle, and show that it admits a unique control solution, which is Lipschitz continuous in time. As a consequence of this uniqueness property, the mean-field maximum principle also provides a strong quantitative generalization error for finite sample approximations, yielding a rigorous justification of the double descent phenomenon. Our derivation of the mean-field maximum principle is much simpler than the ones currently available in the literature for mean-field optimal control problems, and is based on a generalized Lagrange multiplier theorem on convex sets of spaces of measures. The latter is also new, and can be considered as a result of independent interest.
\end{abstract}
{\small {\bf Keywords:} NeurODEs, Mean-Field Optimal Control, Mean-Field Maximum Principle, Lagrange Multiplier Theorem}
\tableofcontents


\section{Introduction}

\subsection{Deep learning}

Deep learning is an established computational approach that performs state-of-the-art on various relevant real-life applications such as speech \cite{hannun2014deep} and image  \cite{NIPS2012_4824,7780459} recognition, language translation \cite{NIPS2017_7181}, and which also serves as a basis for novel scientific computing methods \cite{Berner2020,elbraechter2020dnn}. In unsupervised machine learning, deep neural networks have shown great success as well, for instance in image and speech generation \cite{pmlr-v48-oord16,oord2016wavenet}, and in reinforcement learning for solving control problems, such as mastering Atari games \cite{nature15} or beating human champions at playing Go \cite{silver2017mastering}. Deep learning is about realizing complex tasks as the ones mentioned above, by means of highly parametrized functions, called deep artificial neural networks $\mathcal N :\mathbb R^{d_0} \to \mathbb R^{d_L}$. A classical architecture is the one of feed-forward artificial neural networks of the type
\begin{equation}
\label{nndef}
\mathcal N(x)=\rho \big(W_L^\top \rho \big(W_{L-1}^\top \dots \rho\big(W_1^\top x + \tau_1\big)\dots \big)+\tau_L\big),
\end{equation} 
where the matrices $W_{\ell} \in \R^{ d_{\ell-1} \times d_{\ell}}$ represent collections of weights, the vectors $\tau{_\ell} \in \R^{d_{\ell}}$ are shifts/biases for each layer $\ell = 1,\ldots,L$ and $\rho$ is a scalar activation function acting component-wisely on vectors. Below, we shall denote by $\mathcal  F(X) := \rho (W^\top X + \tau)$ a generic layer of the network. In practical applications, the number $L \geq 1$ of layers -- determining the depth of the network -- and the dimensions $d_{\ell-1}\times d_{\ell}$ of the weight matrices $W_{\ell}$  are typically determined by means of heuristic considerations, whereas the weight matrices and the shifts are free parameters which are tuned in various possible ways by using a given training dataset.

Practical evidences towards certified benchmarks confirm that deep-learning algorithms are able to outperform many of the previously existing methods. Also, recent mathematical investigations \cite{mhaskar2020function,shaham2018provable,Berner2020,elbraechter2020dnn,elbrachter2020deep,daubechies2019,PETERSEN2018296,cloninger2020relu,devore2020neural,mhaskar2016deep} have proven that deep artificial networks can approximate  high dimensional functions without incurring in the curse of dimensionality, i.e. without needing a number of parameters (here the weights and shifts of the network) that is exponential with respect to the input dimension in order to approximate  high-dimensional functions. While the approximation properties -- also called the \textit{expressivity} -- of neural networks are becoming more and more understood and transparent \cite{gühring2020expressivity}, the training phase itself, based on suitable optimization processes, remains a (black-)box with some levels of opacity. 
In fact, the latter procedure features a surprising and yet mostly unexplained phenomenon, which is in stark contrast with conventional statistics wisdom: in addition to providing a finer empirical data fitting, increasing the number of modelling parameters beyond that of training examples also tends to improve the {\it generalization error}, namely the prediction error on unseen data. This simultaneous decrease of both empirical and generalization errors is called the {\it double descent} phenomenon.
Instead, from classical statistical learning theory \cite{books/daglib/0033642}, one would expect that overfitting should lead to a blow-up of the generalization error, owing to the wealth of complextiy of the underlying model \cite{zhang2016understanding}. Hence the prediction of the generalization error from data remains at large a fundamental open problem in deep learning. As one of the main results of this paper, we show that for certain classes of neural networks based on dynamical systems, whose training is reformulated as an optimal control problem, the double descent phenomenon can be rigorously explained.

%


\subsection{Training of deep nets and residual blocks}

In order to understand the context of our results, let us mention how the neural networks considered in this paper arise. We start by recalling how training of neural networks is performed and how it is facilitated by appropriate network architectures.
The method that is most frequently used to train deep neural networks is the so-called \textit{backpropagation of error} \cite{werbos74,10.5555/104279.104293,7fa6b6a5cde14bcfbd7ab3a8f19d0d56}, which is justified by its tremendous empirical success. Inherently, all the practical advances recalled above are due to the efficacy of this method. The term backpropagation\footnote{In fact, ``backpropagation'' refers more precisely to a recursive way of applying the chain rule needed to compute the gradient of the loss with respect to weights, but it is often used also to describe any algorithmic optimization procedure resorting to such gradients. In many cases, these latter are computed using symbolic calculus.} usually refers to the use of stochastic gradient descent or some of its variants \cite{sun2019optimization} to minimize a given loss function (e.g. mean-squared distances, Kullback-Leibler divergences, or Wasserstein distances) over the parameters of the network (the weights and biases), usually measuring the misfit of input-output information over a finite number of labeled training samples. On the one hand, the practical efficiency of deep learning is currently ensured in the so-called overparametrized regime by fitting a large amount of data with a larger amount of parameters. On the other hand, solving learning problems with very large numbers of layers gets increasingly harder with the total depth of the network, as the resulting non-convex optimization problems become in turn very high-dimensional. 

In their groundbreaking work \cite{7780459}, He et al. showed that the training error of the $56$-layer CNN network remains worse than that of a $20$-layer network for the same problem, highlighting an issue which could be blamed either on the optimization function, on initialization of the network, or on the vanishing/exploding gradient phenomenon. The problem of training very deep networks has been alleviated with the introduction of a new neural network layer called the ``Residual Block'', see Figure \ref{fig:resblock}. 
\begin{figure}
\begin{center}
\includegraphics[width=0.3\textwidth]{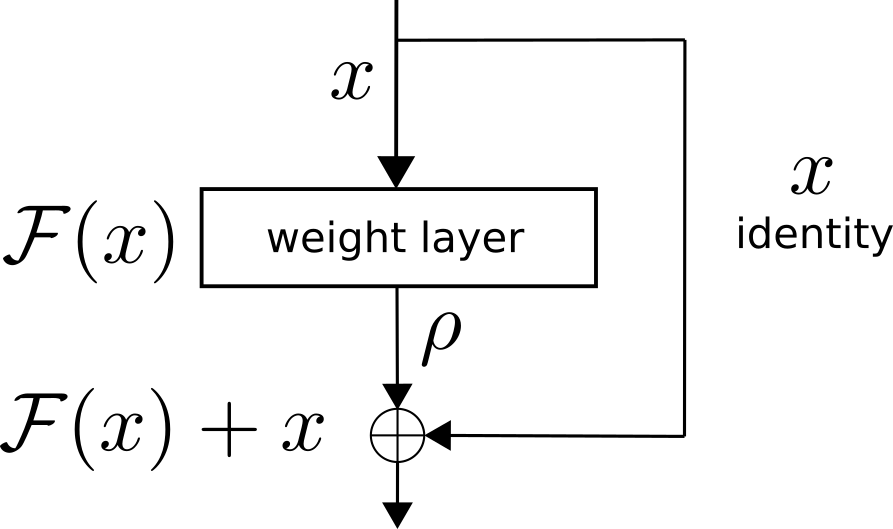}
\end{center}
\caption{The layer update reads: $X^{n+1} = X^n + \MF(X^n)$, see \cite{7780459}.}\label{fig:resblock}
\end{figure}
According to the analysis conveyed in \cite{he2016identity}, the use of identity  mappings as  skip connections and after-addition activations of the form
\begin{equation} \label{euler}
X^{n+1} = X^n +  \mathcal F(X^n)
\end{equation}
turns out to be beneficial to promote the smoothness of the information propagation. Therein, the authors present several $1000$-layer  deep networks that can be easily trained and achieves improved accuracy. The use of such skip connections  with identity mappings presupposes a rectangular shape of the network for which the depths $d_{\ell+1} = d_\ell$ of the layers are all identical.

 
\subsection{NeurODEs and stochastic optimal control}

While originally the arguments in \cite{he2016identity} that support the use of residual blocks are based on empirical considerations, a recent line of research has been devoted to a more mathematical and rigorous formulation of deep neural networks with residual blocks in terms of dynamical systems. In this context, the training of the network can be interpreted as a large optimal control problem, an insight that was proposed independently by E Weinan  \cite{eweinan17} and Haber-Ruthotto \cite{Haber_2017}. Later on, this dynamical approach has been greatly popularized in the machine learning community under the name of \textit{NeurODE} by Chen et al. \cite{10.5555/3327757.3327764}, see also \cite{DBLP:journals/corr/abs-1908-10920}. The formulation starts by reinterpreting the iteration \eqref{euler} as a step of the discrete-time Euler approximation \cite{avelin2020neural} of the following dynamical system
\begin{equation}\label{SDE}
\dot{X}_t=\mathcal F(t,X_t,\theta_t)\,,
\end{equation}
with initial condition $X_0\in \RR^d$. Here, the map $\mathcal F:\RR_+\times\RR^d\times \RR^m\rightarrow \RR^d$ represents the feed-forwarding dynamics, the parameter $\theta_t \in \mathbb R^m$ is a general control variable, which encodes the weights and shifts of the network, i.e. $\theta_t := (W_t,\tau_t)$. A prototypical example is given by
\begin{equation}\label{classical}
\mathcal F(t,X_t,\theta_t) = \rho ( W_t X_t + \tau_t),
\end{equation}
for instance with an activation function $\rho := \tanh$ acting componentwisely on its entries. In \cite{eweinan17,eweinan19}, the authors proposed a {\it stochastic control formulation} of the training of this nonlinear process, with a detailed analysis of the related optimality conditions. Therein, both the the Hamilton-Jacobi-Bellman equations \cite{CannarsaS2004} -- based on the well-known dynamic programming principle -- and the Pontryagin Maximum Principle \cite{Pontryagin} were studied in great generality. From another perspective, several recent works \cite{Agrachev2020,Agrachev2021,Tabuada2021} in geometric control theory have aimed at explaining the efficiency of NeurODEs in approximating large classes of mappings in terms of controllability properties of such systems in the group of diffeomorphisms.  
 
In this paper, we focus on a particular measure theoretical reformulation of the general approach developed by E Weinan et al. \cite{eweinan19}, which allows us to derive more specific properties of the control problem, such as the existence, uniqueness, and smoothness of solutions to the Pontryagin Maximum Principle, and a strong form of generalization error estimates. Most importantly, our approach encompasses the prototypical model \eqref{classical} as a possible application. Consider two random variables $X_0$ and $Y_0$ which are jointly distributed according to a law $\mu_0 \in \Pcal(\R^{2d})$, and let us fix the depth $T>0$ of the time-continuous neural network \eqref{SDE}. Training this network then amounts to learning the control signals $\theta \in L^2([0,T];\R^m)$ in such a way that the terminal output $X_T$ of \eqref{SDE} is close to $Y_0$, with respect to some distortion measure $\ell(\cdot,\cdot)\in \MC^2$. A typical choice is $\ell(x,y)=:|x-y|^2$, which is often called the {\it squared loss function} in the machine learning literature. The stochastic optimal control problem can hence be posed as
\begin{equation}
\label{Ecost}
\inf\limits_{\theta \in L^2([0,T];\R^m)} J(\theta) = \left\{ 
\begin{aligned}
& \inf \limits_{\theta \in L^2([0,T];\R^m)} \EE_{\mu_0} \Big[ \ell(X_T,Y_0) \Big] +\lambda\int_0^T|\theta_t|^2\dt , \\
& \hspace{0.2cm} \text{s.t.} ~ \left\{
\begin{aligned}
& \dot X_t = \Fcal (t,X_t,\theta_t), \\
& (X_t,Y_0)_{|t =0} \sim \mu_0.
\end{aligned}
\right.
\end{aligned}
\right.
\end{equation}
The use of a regularization term of the type $\lambda\int_0^T|\theta_t|^2\dt$ is very standard in machine learning, see e.g. \cite[Chapter 7]{Goodfellow-et-al-2016} or \cite[Section 6]{Kukacka-et-al-2017}. In the absence of regularization, the resulting trained networks may have huge Lipschitz constants, rendering them extremely unstable and susceptible to adversarial attacks \cite{Goodfellow2015ExplainingAH}. Additionally, the regularization may significantly help the usual training processes, by making the loss $J$ increasingly more convex. As we shall see more in details below, such a standard regularization will allow us to establish  the  existence and uniqueness of solutions for \eqref{Ecost}, as well as their continuity with respect to the data, which provides a rigorous explaination to the stability of trained networks and the double descent phenomenon. Conversely, we shall also demonstrate numerically in Section \ref{sec:numres} that the lack of a sufficient regularization causes significant instabilities in the numerical solution of the optimal control problem \eqref{Ecost}, see Figure \ref{fig:unimodal_problems}, rendering the latter absolutely essential from a practical standpoint. Other and more general regularizations are of course possible \cite{Kukacka-et-al-2017}, but for the sake of simplicity and clarity in the exposition, we shall restrict our attention to this specific one.

\subsection{Measure-theoretical approach to mean-field optimal control}

In this paper, we develop a new point of view that is equivalent to that of \cite{eweinan19}, but which is not based on stochastic control considerations. We start by providing a measure-theoretic reformulation of \eqref{Ecost}, which can be interpreted as a generalized optimal transport problem or mean-field optimal control problem. 
To the best of our knowledge, the present paper is the first in the literature to make such a connection. To this end,  let us define a new stochastic process $Z_t :=(X_t,Y_t)$ satisfying
\begin{align}
\dot{X}_t=\mathcal F(t,X_t,\theta_t) \qquad \text{and} \qquad \dot{Y}_t=0,
\end{align}
with initial data $(X_0,Y_0)$ distributed according to $\mu_0$, and denote the law of $(X_t,Y_t)$ by $\mu_t(x,y)$. It is well-known that $\mu_t$ satisfies the following partial differential equation 
\begin{equation}
\label{PDE}
\partial_t\mu_t+\nabla_x\cdot (\mathcal F(t,x,\theta_t)\mu_t)=0, \qquad \mu_t|_{t=0}=\mu_0\, ,
\end{equation}
understood in the sense of distributions as in Definition \ref{defweak} below. With this transport equation at hand, we can recast the stochastic optimal control problem \eqref{Ecost} as
\begin{equation}
\label{Pcost}
\inf_{\theta \in L^2([0,T];\R^m)} J(\theta) = 
\left\{
\begin{aligned}
&\inf_{\theta \in L^2([0,T];\R^m)} \int_{\RR^{2d}}\ell(x,y)\rd\mu_T(x,y)+\lambda\int_0^T|\theta_t|^2\dt\, ,  \\
& \hspace{0.2cm} \text{s.t.} ~ \left\{
\begin{aligned}
& \partial_t\mu_t+\nabla_x\cdot (\mathcal F(t,x,\theta_t)\mu_t)=0\, , \\
& \mu_t|_{t=0}=\mu_0\,.
\end{aligned}
\right.
\end{aligned}
\right.
\end{equation}
Therein, the goal is again is to find the control signal $\theta$ for which  $J(\theta)$ is minimal when $\mu$ satisfies the PDE constraint \eqref{PDE}. Observe that when the initial measure $\mu_0$ is empirical, i.e.
\begin{equation*}
\mu_0 := \mu_0^N = \frac{1}{N} \sum_{i=1}^N \delta_{(X_0^i,Y_0^i)}
\end{equation*}
then the optimal control problem \eqref{Pcost} reduces to a classical finite particle optimal control problem with ODE constraints. 

Optimal control problems over spaces of probability measures of the form \eqref{Pcost} have been recently explored, mostly in the absence of final-point constraints and in the context of multi-agent interactions. The first contributions on this topic \cite{fornasier2014mean,MR3268059} were concerned with the rigorous convergence of classical finite particle optimal controls towards their mean-field counterparts, see also the more recent work \cite{LipReg,cavagnari2020lagrangian,fornasier_lisini_orrieri_savaré_2019}. The derivation of first order optimality conditions, i.e., the so-called Pontryagin Maximum Principle (PMP), has been proposed for the first time in \cite{BFRS15} based on the leader-follower model studied in \cite{MR3268059}. In this work, the mean-field Pontryagin Maximum Principle is derived as limit of its classical finite-particle counterpart. The first general derivation of the PMP for mean-field optimal control problems was obtained in \cite{PMPWass}, and is based on a careful adaptation of the strategy of needle-variations to the abstract geometric structure of Wasserstein spaces. These results were further extended in \cite{PMPWassConst} to problems with general final-point and running state constraints. In the latter contribution, the proof strategy combines a finite-dimensional non-smooth multipliers rule and outer-approximations of optimal trajectories by countable families of curves generated using needle-variations. Very recently, a simpler approach has been proposed in \cite{SetValuedPMP}, by adapting to the notion of multivalued dynamics in Wasserstein space introduced in \cite{ContInc} a methodology originally developed in \cite{Frankowska1990}, which relies on suitable linearisations of set-valued maps that produce admissible inner-perturbed trajectories. From a different standpoint, we also mention \cite{burger} in which a KKT approach is developed in Wasserstein spaces for rather general mean-field optimal control problems with $H^1$-controls. Therein, both the first order optimality conditions and their relationships with finite particle approximations are derived, along with the corresponding rates of convergence. We finally point out that a completely different approach to the mean-field PMP was  formulated for stochastic optimal control problems in \cite{10.1214/14-AOP946} inspired by the theory of mean-field games \cite{lali07} (see also \cite{befrph13,ACFK16}).
Similar methods, based on needle-variations in the space of measures are also leveraged in \cite{eweinan19} and \cite{Jabir2021} for the derivation of the PMP for stochastic control problems of the form \eqref{Ecost}. 

\subsection{Contributions and organization of the paper}

The contributions of this paper can be summarized as follows. From a global standpoint, we start by establishing existence and stability results for \eqref{Pcost}, based on compactness and $\Gamma$-convergence arguments. We then proceed by deriving general first-order optimality conditions for the measure-theoretic formulation of the optimal control of NeurODEs. Our modeling assumptions include the typical forward mappings \eqref{classical} that appear throughout the literature related to neural networks, with for instance $\rho:= \tanh$. As a matter of fact, most of the results available in the literature do not fully encompass this simple model, as they often require global Lipschitz bounds on the transport velocity field.

Let us now describe with more details the fundamental results of the paper. In Section \ref{sec:Existence}, we start by showing that the mean-field optimal control problem \eqref{Pcost} has solution when the regularization parameter $\lambda > 0$ is sufficiently large, and that the latter is in fact unique. By leveraging compactness arguments akin to that classically appearing in the theory of $\Gamma$-convergence, we also establish non-quantitative stability results for the training problem with respect to finite-samples, both at the level of the cost and of the controls. We then proceed by investigating first-order optimality conditions in Section \ref{sec:4}. We initiate the discussion by providing in Section \ref{sec:FormalLag} a heuristic derivation of the  following \textit{mean-field Pontryagin Maximum Principle} (``PMP'' in the sequel) 
\begin{equation}
\label{mfPMP}
\left\{
\begin{aligned}
\partial_t\mu_t & + \nabla_x\cdot (\mathcal F(t,x,\theta_t)\mu_t)=0, \hspace{0.5cm} \mu_t|_{t=0}=\mu_0\, ,\\
\partial_t\psi & + \nabla_x\psi\cdot \mathcal F(t,x,\theta_t)=0, \hspace{0.5cm} \psi_t|_{t=T}=\ell \, , \\
\theta^{\top}_t & = -\frac{1}{2\lambda}\int_{\RR^{2d}}\nabla_x\psi\cdot \nabla_\theta \mathcal F(t,x,\theta_t)\rd\mu_t(x,y)\,, 
\end{aligned} 
\right.
\end{equation}
which characterizes optimal trajectory-control pairs $(\mu,\theta)$ for \eqref{Pcost}. In Section \ref{sec:WellPosed}, we show that the above optimality system is well-posed, and prove in Theorem \ref{thmPMP} that it admits a unique control solution $\theta^* \in \operatorname{Lip}([0,T];\mathbb R^m)$. 
Consequently, we are able to show that the function $\mu_0 \to \theta^*$ which maps initial data distributions to the optimal parameters is single-valued, and to prove that it is also Lipschitz continuous with respect to the Wasserstein distance. Such a precise description of how data are encoded in the parameters of the network is a quite remarkable feature of our results. In particular, it allows us to establish a quantitative \textit{generalization error} for finite samples in Corollary \ref{thmrate}, which writes
\begin{equation}
\label{Pcost2}
\bigg| \int_{\mathbb R^{2d}} \ell(x,y) \ud \mu_T(x,y) - \frac{1}{N}\sum_{i=1}^N \ell(X_T^i,Y_T^i) \bigg| \leq C W_1(\mu_0^N,\mu_0).
\end{equation}
In particular, \eqref{Pcost2} provides a rate of convergence that depends exclusively on the approximability of $\mu_0$ by empirical measures $\mu_0^N$. We should stress at this point the relevance of  \eqref{Pcost2} as it is one of the few results in the literature that rigorously explains the {\it double descent} of both empirical and generalization error in the training of deep neural networks. In Section \ref{sec:numres} we present numerical experiments fully confirming the double descent phenomenon as theoretically predicted by \eqref{Pcost2}, see Figure \ref{fig:statistical}.

\begin{rmk}[Comparison with the existing literature on generalization errors]
We point out that while the generalization errors established in \cite{Jabir2021} are sharper than those of the present paper (in the sense that they express a rate of convergence in $N$ which is dimension-independent), this improved stability comes at the price of considering \emph{relaxed controls} -- i.e. probability measures over $\R^m$ --, that are forced to be non-deterministic by means of entropic regularization terms (see also \cite{cavagnari2020lagrangian}). On the contrary, the generalization errors that we obtain here relate to \emph{deterministic} optimal controls with values in $\R^m$. A similar bound, yielding \eqref{Pcost2}, also appears in a completely different context in  \cite[Theorem 5.1]{burger}, under the constraint that the control is in a ball of $H^1((0,T),\R^m)$, which is a quite restrictive a priori assumption. 
\end{rmk}

After establishing the general form of the optimality system along with some of its interesting properties and applications, we move on to the rigorous derivation of the mean-field PMP in Section \ref{sec:RigorDev}. 
At this stage, let it be noted that while part of our results may be derived by due adaptations from other approaches developed, e.g., in \cite{burger,eweinan19} or \cite{PMPWassConst,PMPWass,SetValuedPMP}, we are able to obtain a few stronger properties on the solutions of the optimal control problem than those generally presented in the literature. Whereas in \cite{PMPWassConst,PMPWass,SetValuedPMP} the first order optimality conditions are established in greater generality -- but also with significant technical effort --, we propose in this paper a new and alternative derivation (very much inspired by the previous work \cite{ACFK16} of the third author), which is significantly simpler and hopefully more accessible to non-specialists. The latter can be heuristically explained as follows: under the technical assumption that the optimal control is continuous in time -- which is motivated by the well-posedness of \eqref{mfPMP} in $\operatorname{Lip}([0,T];\mathbb R^m)$ discussed in Theorem \ref{thmPMP} --, we prove in Theorem \ref{thmcondi} that the mean-field PMP \eqref{mfPMP} can be obtained by means of a generalized Lagrange Multiplier Theorem on the convex subset of Radon measures with unit mass. To this end, we use a new form of calculus recently introduced in \cite{ambrosio2018spatially}, which is simpler than the calculus in Wasserstein spaces used in \cite{burger}. In contrast to this latter work, our approach is applied in a slightly simpler setting, as the forward and backward equations in \eqref{mfPMP} are linear and decoupled, while therein the authors consider models for which they are non-linear and coupled. 
This novel interpretation of the mean-field PMP as result of a Lagrange Multiplier Theorem in spaces of measures is in our view quite powerful, because it can be applied in other mean-field optimal control problems and be more easily understood by a broader community in optimization.

The main theoretical results of the paper can then be summarized as follows.

\begin{thm}[Main contributions of the article]
\label{MAINthm} 
Let $T>0$ be given, consider a map $\MF$ satisfying Assumptions \ref{asum1} and \ref{asum2} of Section \ref{sec:Existence}, fix an initial data distribution $\mu_0\in \mc{P}_c(\RR^{2d})$, and suppose that the regularization parameter $\lambda>0$ is sufficiently large. 

Then, the mean-field optimal control problem \eqref{Pcost} admits solutions, and an admissible control $\theta^* \in L^2([0,T],\mathbb R^m)$ fulfills the mean-field PMP \eqref{mfPMP} \emph{if and only if} it is optimal. In addition, the optimal control $\theta^*$ is uniquely determined, Lipschitz continuous in time, and depends continuously on the initial data distribution $\mu_0$.
\end{thm}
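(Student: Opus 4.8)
The statement bundles together four assertions about \eqref{Pcost} — existence of minimizers, the equivalence ``PMP \eqref{mfPMP} $\Leftrightarrow$ optimality'', uniqueness of the optimal control, and its Lipschitz dependence in time and in $\mu_0$ — so the plan is to establish each in turn under Assumptions \ref{asum1}--\ref{asum2} (local Lipschitz bounds and controlled growth of $\MF$, $\nabla_x\MF$, $\nabla_\theta\MF$). For \textbf{existence} I would run the direct method: along a minimizing sequence $(\theta_k)$ the penalization gives $\lambda\int_0^T|\theta_t^k|^2\dt\le J(\theta_k)\le J(0)$, hence a uniform $L^2$-bound and a weak limit $\theta_k\rightharpoonup\theta^*$ up to a subsequence. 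Using the superposition principle one writes $\mu_t^k$ as the push-forward of $\mu_0\in\mc{P}_c(\R^{2d})$ by the flow of \eqref{SDE}, derives from Assumptions \ref{asum1}--\ref{asum2} a bound on $\supp\mu_t^k$ uniform in $k$, and extracts narrow convergence $\mu_t^k\rightharpoonup\mu_t$ for all $t$; the genuinely delicate point is the closedness of the constraint, i.e. that $\mu$ still solves \eqref{PDE} for $\theta^*$ despite the nonlinear dependence of $\MF$ on $\theta$ — this is exactly what the compactness/$\Gamma$-convergence arguments of Section \ref{sec:Existence} are designed to supply. Once this is in place, $\int\ell\,\rd\mu_T^k\to\int\ell\,\rd\mu_T$ by continuity of $\ell$ on the common compact support and weak lower semicontinuity of $\theta\mapsto\int_0^T|\theta_t|^2\dt$ by convexity yield $J(\theta^*)\le\liminf_kJ(\theta_k)$, so $\theta^*$ is optimal.

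For the \textbf{equivalence and uniqueness}, I would first compute the Gâteaux derivative of $J$ on $L^2$: one finds $\nabla J(\theta)_t=2\lambda\theta_t+\int_{\R^{2d}}\nabla_x\psi_t\cdot\nabla_\theta\MF(t,x,\theta_t)\,\rd\mu_t(x,y)$, where $\psi$ solves the linear backward transport equation $\partial_t\psi+\nabla_x\psi\cdot\MF=0$, $\psi_{|t=T}=\ell$. This identity is precisely the output of the generalized Lagrange multiplier theorem on the convex set of probability measures, with $\psi$ the multiplier attached to the PDE constraint, made rigorous in Theorem \ref{thmcondi} under the a priori continuity of the optimal control. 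Necessity then follows by setting $\nabla J(\theta^*)=0$, which reproduces the third line of \eqref{mfPMP}; the backward and forward equations are the first two lines. For \emph{sufficiency} and \emph{uniqueness} I would use the regularizing effect of $\lambda$: the map $\theta\mapsto\int\ell\,\rd\mu_T$ has, on the relevant ball of controls, a Lipschitz gradient with constant $L_0$ independent of $\lambda$ (again from the support bounds and Assumptions \ref{asum1}--\ref{asum2}), so $\nabla J$ is strongly monotone as soon as $2\lambda>L_0$, making $J$ strictly convex. Hence any solution of \eqref{mfPMP} is a critical point of a strictly convex coercive functional, i.e. the unique global minimizer — which closes the ``if and only if'' and gives uniqueness simultaneously.

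For the \textbf{time regularity and continuous dependence}, I would set up the fixed-point map $\Gamma:\theta\mapsto-\tfrac1{2\lambda}\int_{\R^{2d}}\nabla_x\psi_t[\theta]\cdot\nabla_\theta\MF(t,x,\theta_t)\,\rd\mu_t[\theta](x,y)$ on a ball $B_R$ of $C([0,T];\R^m)$. The a priori smallness of $\|\theta^*\|_{L^2}$ for $\lambda$ large places the minimizer in such a ball; on $B_R$ the support of $\mu_t[\theta]$ stays in a fixed compact set $K_R$, the transport velocity is bounded, so $t\mapsto\mu_t[\theta]$ is Lipschitz in $W_1$ and $(t,x)\mapsto\psi_t[\theta]$ is Lipschitz, whence $\Gamma(\theta)\in\Lip([0,T];\R^m)$ and the fixed point $\theta^*$ is Lipschitz in time. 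Choosing $\lambda$ still larger makes $\Gamma$ a contraction on $B_R$ uniformly, so $\theta^*$ is its unique fixed point; feeding the $W_1$-stability of \eqref{PDE} and of the backward equation with respect to $\mu_0$ into the contraction estimate then yields $|\theta^*[\mu_0]-\theta^*[\tilde\mu_0]|\lesssim W_1(\mu_0,\tilde\mu_0)$. This is the content of Theorem \ref{thmPMP}, from which the generalization bound \eqref{Pcost2} of Corollary \ref{thmrate} follows by propagating $W_1(\mu_0^N,\mu_0)$ through the forward flow.

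The \textbf{main obstacle} throughout is that the prototypical $\MF(t,x,\theta)=\rho(Wx+\tau)$ is not globally Lipschitz and $\nabla_\theta\MF$ grows linearly in $x$, so every estimate — closedness of the constraint, the Lipschitz gradient $L_0$, the contraction constant of $\Gamma$ — depends on a compact support bound for $\mu_t$ which itself depends on the size of the control. Breaking this circularity is the delicate part: one must work on a radius-$R$ ball $B_R\subset C([0,T];\R^m)$, propagate a support bound $K_R$, choose $\lambda_0=\lambda_0(R,K_R,\mu_0)$ large enough for strict convexity and for $\Gamma$ to contract, and \emph{then} verify that the genuine minimizer indeed has $C$-norm (not merely $L^2$-norm) at most $R$ — the last point being obtained by reading the bound off the optimality relation $\theta^*=\Gamma(\theta^*)$ itself rather than from coercivity alone.
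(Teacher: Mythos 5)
Your overall architecture matches the paper's — existence via direct method, optimality conditions via the Lagrange multiplier theorem, sufficiency/uniqueness via convexity, Lipschitz regularity and continuous dependence via a contracting fixed-point map — but the existence argument as written has a genuine gap.

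The problematic step is the passage from weak $L^2$-convergence $\theta_k \rightharpoonup \theta^*$ to convergence of the associated measures $\mu^k_T \rightharpoonup \mu^{\theta^*}_T$. You flag this as the ``genuinely delicate point'' and assert that the $\Gamma$-convergence machinery of Section \ref{sec:Existence} supplies the missing closedness of the PDE constraint. It does not: the stability results of Section \ref{sec:Existence} are about finite-sample approximation $\mu_0^N \to \mu_0$ with the control converging \emph{weakly after an application of Mazur's lemma}, i.e.\ they already lean on the convexity of $J$ and never establish closedness of the constraint along arbitrary weakly convergent control sequences. And in fact, for a velocity field such as $\MF(t,x,\theta)=\tanh(\theta x)$ that is genuinely nonlinear in $\theta$, weak $L^2$-convergence of $\theta_k$ does \emph{not} in general propagate to convergence of the characteristic flows $\Phi^{\theta_k}_{(0,T)}$, so the inference $\int \ell\,\rd\mu^k_T \to \int \ell\,\rd\mu^{\theta^*}_T$ is unjustified. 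The paper sidesteps this entirely: Lemma \ref{lem:FinalCostReg} shows that $J_\ell$ is Fr\'echet-differentiable with gradient Lipschitz on bounded sets, Proposition \ref{prop:Semiconvexity} upgrades this to strict convexity of the \emph{full} reduced cost $J$ once $\lambda$ exceeds $\tfrac12\Lcal(T,R,\Gamma)$, and then Theorem \ref{thm:Exist} invokes ``strong continuity $+$ convexity $\Rightarrow$ weak lower semicontinuity'' so that the direct method closes without ever touching constraint closedness. Since you already invoke strict convexity of $J$ in your sufficiency step, the consistent fix is to use it one paragraph earlier as well, which reduces your existence argument to the paper's.

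Two smaller remarks. Your route to the ``if and only if'' is organized slightly differently from the paper's: you identify ``satisfies the PMP'' with ``$\nabla J(\theta)=0$'' and let strict convexity do the rest, whereas the paper proves necessity (Theorems \ref{thmcondi}--\ref{thm:PMP}, Proposition \ref{prop:LinkAdjoint}) and then obtains sufficiency indirectly from uniqueness of the PMP solution (Theorem \ref{thmPMP}) together with existence of an optimal control. Your version is arguably cleaner, but it requires the gradient formula $\nabla J(\theta)_t = 2\lambda\theta_t + \int\nabla_x\psi_t\cdot\nabla_\theta\MF\,\rd\mu_t$ to be established for \emph{all} admissible $\theta$, not merely at the minimizer; the Lagrange multiplier theorem \ref{thmcondi} only hands it to you at an optimal point, so you would need to supplement it with the resolvent expression \eqref{eq:ReducedCostGrad} from Lemma \ref{lem:FinalCostReg} and the identification carried out in Proposition \ref{prop:LinkAdjoint}. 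Finally, your treatment of Lipschitz-in-time regularity and $W_1$-continuity via the fixed-point map, including the circularity warning about support bounds versus the size of the control ball, accurately reflects the structure of Theorem \ref{thmPMP} and Corollary \ref{thmrate}.
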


We then close the article by presenting numerical experiments to test the novel mean-field Pontryagin maximum principle that we propose, in which we show the training of simple classification models in $\mathbb R^2$. The reason for working on simple two-dimensional examples is to provide full understanding of the properties of the resulting algorithm and a relatively easy reading and visualization of the results. 

\bigskip

The paper is organized as follows. In Section \ref{sec:2} we introduce  notations and recall a series of preliminary results. In Section \ref{sec:Existence}, we derive a general semiconvexity estimate for the reduced cost functional, and provide sufficient conditions ensuring the existence and stability of its minimizers. In Section \ref{sec:4} we address investigate the mean-field maximum principle by first studying its well-posedness and deriving the generalization error estimate \eqref{Pcost2}, and then showing rigorously how it can be derived either by using a Lagrange multiplier theorem, or via a reduction of the Hamiltonian form. We finally present instructive numerical experiments in Section \ref{sec:5}, where solution of the mean-field maximum principle are computed by means of a shooting method. The Appendix contains proofs of auxiliary results, including the proof of a generalized Lagrange multiplier theorem, Theorem \ref{thmla}, for constrained problems defined over convex subsets of Banach spaces.


\section{Preliminaries and notations}\label{sec:2}
In this section we list some preliminary notations and results from \cite[Section 2.1 and Appendix A.1]{ambrosio2018spatially}, which will be useful throughout the paper.


\subsection{Analysis in measure spaces and optimal transport}

We denote by $\mc{M}(\RR^d)$ the space of signed  Borel measures in $\RR^d$ with finite total variation. 
Note that the space $\mc{M}(\RR^d)$ endowed with the total variation norm
\begin{equation}
\norm{\mu}_{TV}:=\sup\left\{\int_{\RR^d} \varphi \rd\mu~\big|~ \varphi\in \mc{C}_0(\RR^d), ~ \norm{\varphi}_\infty\leq 1\right\}\, ,
\end{equation}
is a Banach space, where $\mc{C}_0(\RR^d)$ represents the set of continuous functions on $\RR^d$ which vanish at infinity. By the Riesz-Markov theorem, it is known that $\mc{M}(\RR^d)=(\MC_0(\RR^d))'$ can be identified with the topological dual of $\MC_0(\RR^d)$ \cite[Theorem 1.54]{AmbrosioFuscoPallara}. 
We further denote $\mc{M}^+(\RR^d)$ the space of positive measures
and by $\mc{P}(\RR^d) \subset \mc{M}^+(\RR^d)$ the subset of probability measures. Furthermore, $\mc{P}_c(\RR^d)\subset \mc{P}(\RR^d)$ represents the set of probability measures with compact support, while $\mc{P}^N_c(\RR^d)\subset\mc{P}_c(\RR^d)$ denotes the subset of empirical or atomic probability measures.  We will also use the following representation formulas for the subset of measures with zero mass 
\begin{equation}
\mc{M}_0(\RR^d) :=\left\{\mu\in (\MC_0(\RR^d))' ~\big|~ \mu(\RR^d)=\int_{\RR^d}1 \rd\mu=0\right\}=: (\MC_0(\RR^d))'_0\,,
\end{equation}
and the subset of measures with unit mass 
\begin{equation}
\mc{M}_1(\RR^d) :=\left\{\mu\in (\MC_0(\RR^d))' ~\big|~ \mu(\RR^d)=\int_{\RR^d}1 \rd\mu=1\right\}=: (\MC_0(\RR^d))'_1\,.
\end{equation}
Moreover, we shall denote by $\mc{M}_{0,c}(\RR^d),\mc{M}_{1,c}(\RR^d)$ the corresponding subsets of measures whose supports are compact. One can also note that given $\mu\in \mc{M}(\RR^d)$, the Jordan decomposition theorem tells us that $\mu=\mu^+-\mu^-$ and $\|\mu\|_{TV}=\mu^+(\RR^d)+\mu^-(\RR^d)$, where $\mu^+,\mu^-\in \mc{M}^+(\RR^d)$.

For the convenience of the reader, we briefly recall the definition of the Wasserstein metrics of optimal transport in the following definition, and refer to \cite[Chapter 7]{AGS} for more details.

\begin{definition}
Let $1\leq p < \infty$ and $\mc{P}_p(\RR^{d})$ be the space of Borel probability measures on $\RR^{d}$ with finite $p$-moment. In the sequel, we endow the latter with the $p$-\emph{Wasserstein metric}
\begin{equation}\label{wassdis}
W_p^{p}(\mu, \nu):=\inf\left\{\int_{\RR^{2d}} |z-\hat{z}|^{p}\ d\pi(z,\hat{z})\ \big| \ \pi \in \Pi(\mu, \nu)\right\}
\end{equation}
where $\Pi(\mu, \nu)$ denotes the set of \emph{transport plan} between $\mu$ and $\nu$, that is the collection of all Borel probability measures on $\RR^d\times \RR^d$ with marginals $\mu$ and $\nu$ in the first and second component respectively. The Wasserstein distance can also be expressed as
\begin{equation}
W_p^{p}(\mu, \nu) = \inf \left\{\mathbb{E} \big[ |Z-\hat{Z}|^{p} \big]\right\}
\end{equation}
where the infimum is taken over all possible joint distributions of random variables $(Z,\hat{Z})$ which laws are given by $\mu$ and $\nu$ respectively.
\end{definition} 

It is a well-known result in optimal transport theory that when $p =1$ and $\mu,\nu \in \Pcal_c(\R^d)$, the following alternative representation holds for the Wasserstein distance
\begin{equation}\label{Kanto}
W_1(\mu,\nu)=\sup\left\{\int_{\RR^d}\varphi(x)\rd (\mu-\nu)(x) ~\big|~ \varphi\in \mbox{Lip}(\RR^d),~ \mbox{Lip}(\varphi)\leq 1\right\}\,,
\end{equation}
by Kantorovich's duality \cite[Chapter 6]{AGS}. Here, $\mbox{Lip}(\RR^d)$ stands for the space of real-valued Lipschitz continuous functions on $\RR^d$, and $\mbox{Lip}(\varphi)$ is the Lipschitz constant of a mapping $\varphi$. In the sequel, we shall also use the signed generalized Wasserstein distance $\mathbb{W}_1^{1,1}$ introduced in \cite{piccoli2019wasserstein}, which coincides with the bounded Lipschitz distance. Given $\mu,\nu\in \mc{M}(\RR^d)$, we set 
\begin{equation}
\mathbb{W}_1^{1,1}(\mu,\nu)=\sup\left\{\int_{\RR^d}\varphi(x)\rd (\mu-\nu)(x) ~\big|~ \varphi\in \mbox{Lip}_b(\RR^d),~ \norm{\varphi}_{\mbox{Lip}_b}\leq 1\right\}\,,
\end{equation}
where
\begin{equation}
\norm{\varphi}_{\mbox{Lip}_b}:=\sup\limits_{x\in\RR^d}|\varphi(x)|+\mbox{Lip}(\varphi)\,.
\end{equation}
In this context, we also define the bounded Lipschitz norm of a signed measure as
\begin{equation}
\|\mu\|_{BL}:=\mathbb{W}_1^{1,1}(\mu,0)\,.
\end{equation}


\subsection{Continuity equations in the space of measures}

In what follows, we recollect some basic facts about continuity equations in the space of measures, following \cite[Section 8.1]{AGS}.
\begin{definition}
\label{defweak}
For any given $T>0$ and $\theta\in L^2([0,T];\RR^m)$, we say that $\mu\in\mc{C}([0,T];\mc{P}_c(\RR^{2d}))$ is a weak solution of \eqref{PDE} on the time interval $[0,T]$ if   
\begin{equation}
\label{eqweak}
\int_0^T\int_{\RR^{2d}} \Big( \partial_t \psi(t,x,y) + \nabla_x \psi(t,x,y) \cdot \mathcal{F}(t,x,\theta_t) \Big)\rd\mu_t(x,y)\dt = 0,
\end{equation}
for every $\psi\in \MC_c^1((0,T)\times \RR^{2d})$.
\end{definition}

\begin{rmk}
First, note that \eqref{eqweak} is equivalent to
\begin{equation}
\label{eqweak1}
\int_{\RR^{2d}}\psi(x,y)\rd \mu_{t_2}(x,y) - \int_{\RR^{2d}}\psi(x,y)\rd \mu_{t_1}(x,y) = \int_{t_1}^{t_2} \int_{\RR^{2d}} \nabla_x \psi(x,y) \cdot \mathcal F(s,x,\theta_s) \rd\mu_s(x,y)\ds
\end{equation}
for all $\psi\in \MC_b^1(\RR^{2d})$ and every $t_1,t_2 \in [0,T]$. This follows from the fact that the linear span of functions of the form $\psi(t,x,y) := \eta(t)\xi(x,y)$ with $\eta\in \MC_c^1((0,T))$ and $\xi\in \MC_c^1(\RR^{2d})$ is dense in $\MC_c^1((0,T)\times \RR^{2d})$ (see e.g.  \cite[Remark 8.1.1]{AGS}). Also, observe that since $\mu$ is a curve of compactly supported probability measures, we can use the simpler testing space $\MC_b^1(\RR^{2d})$ instead of $\mc{C}_c^1(\RR^{2d})$  or $\mc{C}_0^1(\RR^{2d})$ in \eqref{eqweak1}.
\end{rmk}

Classical well-posedness result for \eqref{PDE} for arbitrary initial measures is usually established under the following type of standard Cauchy-Lipschitz assumptions (or minimal variations thereof).  

\begin{assum}
\label{asum1}
For any given $T>0$, the vector field $\MF$ satisfies the following.
\begin{enumerate}
\item[$(i)$] For any fixed $\theta \in \R^m$, the map $(t,x) \mapsto \MF(t,x,\theta) \in \R^d$ is continuous.
\item[$(ii)$] There exists a constant  $C_{\MF}>0$ that may depend on $d,m$ such that for every $\theta \in \R^m$, it holds
\begin{equation*}
|\mathcal F(t,x,\theta)|\leq C_{\MF}(1+|x|),\quad \mbox{ for a.e. } t\in [0,T] \mbox{ and every }x\in\RR^{d} \,.
\end{equation*}
\item[$(iii)$] There exists a constant  $L_{\MF} >0$  independent of $d,m$ such that for every $\theta \in \R^m$, it holds
\begin{equation*}
|\mathcal F(t,x_1,\theta)-\mathcal F(t,x_2,\theta)|\leq L_{\MF}(1+|\theta|) |x_1-x_2| ,\quad \mbox{ for a.e. } t\in [0,T] \mbox{ and every }x_1, x_2 \in \RR^d  \, ,
\end{equation*}
and we denote $L_{\mc{F},T,\|\theta\|_1} := L_{\MF} \INTSeg{(1+|\theta_t|)}{t}{0}{T}$
\item[$(iv)$] For all $(t,x) \in [0,T] \times \R^d$, the map $\theta \mapsto \MF(t,x,\theta)$ is twice differentiable. Moreover for each $R > 0$, there exists a constant $C(d,m,R)>0$  such that
\begin{equation*}
\|\nabla_\theta \MF\|_{\mc{C}([0,T]\times B(R) \times \R^m;\R^{d\times m})} \, + \, \| \nabla_{\theta}^2 \mathcal F \|_{\mc{C }([0,T] \times B(R)\times \RR^m;\R^{d\times m\times m})} \leq  C(d,m,R) \,.
\end{equation*}
\end{enumerate}	
\end{assum}

Under the set of assumptions listed above, we can prove the well-posedness of \eqref{PDE} as stated in the following theorem. The proof of the latter is standard and deferred to Appendix \ref{subsection:Flows}.

\begin{thm}[Classical well-posedness for continuity equation]
\label{thm:Wellposed}
Consider a measure $\mu_0\in \mc{P}_c(\RR^{2d})$ with $\supp(\mu_0) \subset B(R)$ for some $R>0$, and suppose that $\MF$ satisfies Assumption \ref{asum1}. 

Then for any given $T>0$ and $\theta\in L^2([0,T];\RR^{m})$, there exists a unique solution $\mu\in\mc{C}([0,T];\mc{P}_c(\RR^{2d}))$ to \eqref{PDE} in the sense of Definition \ref{defweak}. Moreover, there exists a radius $R_T > 0$ depending only on $R$ and $C_{\MF}$ such that
\begin{equation}
\label{suppt}
\supp(\mu_t) \subset B(R_T),
\end{equation}
for all times $t \in [0,T]$, and additionally it holds for any $s,t\in[0,T]$ that
\begin{equation}
\label{eq:LipEst}
W_1(\mu_t,\mu_s)\leq C(R,T,C_{\MF})|t-s|\,.
\end{equation}
Denoting by $\mu^i$ for $i=1,2$ two solutions of \eqref{PDE} with initial data $\mu_0^i$ satisfying the above assumptions, the following stability estimate
\begin{equation}\label{stablity}
W_1(\mu_t^1,\mu_t^2)\leq e^{L_{\MF,T,\| \theta \|_1}}W_1(\mu_0^1,\mu_0^2), 
\end{equation}
holds for all times $t\in[0,T]$, where $C_{\MF}$ and $L_{\MF,T,\| \theta \|_1}$ are defined as in Assumption \ref{asum1}.
\end{thm}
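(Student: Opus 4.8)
The plan is to construct the solution by the method of characteristics. First I would fix $\theta\in L^2([0,T];\R^m)\subset L^1([0,T];\R^m)$ and, for each $z_0=(x_0,y_0)\in\R^{2d}$, consider the Carath\'eodory problem $\dot X_t=\MF(t,X_t,\theta_t)$, $\dot Y_t=0$ with $(X_0,Y_0)=(x_0,y_0)$. By Assumption \ref{asum1}$(i)$ the field is continuous in $x$ and, being composed with $\theta$, measurable in $t$; the sublinear bound $(ii)$ together with the $L^1$-in-time Lipschitz estimate $(iii)$ place us in the standard Carath\'eodory existence--uniqueness framework, so there is a unique absolutely continuous curve $t\mapsto\Phi_t(z_0):=(X_t(x_0),y_0)$ defined on all of $[0,T]$. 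A Gr\"onwall argument based on $(ii)$ gives $|X_t(x_0)|\le (1+|x_0|)e^{C_{\MF}T}$, which combined with $\supp(\mu_0)\subset B(R)$ produces a radius $R_T=R_T(R,T,C_{\MF})$ such that $\Phi_t(\supp\mu_0)\subset B(R_T)$ for every $t\in[0,T]$.

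Next I would set $\mu_t:=(\Phi_t)_\#\mu_0$ and verify it is a weak solution in the sense of Definition \ref{defweak}. For $\psi\in\MC_b^1(\R^{2d})$, the map $t\mapsto\psi(\Phi_t(z_0))$ is absolutely continuous with a.e.\ derivative $\nabla_x\psi(\Phi_t(z_0))\cdot\MF(t,\Phi_t(z_0),\theta_t)$; integrating in $t$, then integrating against $\mu_0$, and exchanging the order of integration by Fubini (justified by the uniform support bound on $\mu_t$ and the local bounds on $\MF$ and $\nabla\psi$) yields exactly \eqref{eqweak1}, hence \eqref{eqweak}. The inclusion \eqref{suppt} is immediate from the first step. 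For \eqref{eq:LipEst}, the coupling $(\Phi_t\times\Phi_s)_\#\mu_0$ is admissible for $W_1(\mu_t,\mu_s)$, so $W_1(\mu_t,\mu_s)\le\int|\Phi_t(z_0)-\Phi_s(z_0)|\rd\mu_0(z_0)\le\int\big|\int_s^t\MF(\tau,\Phi_\tau(z_0),\theta_\tau)\rd\tau\big|\rd\mu_0(z_0)\le C_{\MF}(1+R_T)|t-s|$ using $(ii)$ and \eqref{suppt}; since $W_1$ metrizes narrow convergence on the fixed ball $B(R_T)$, this also gives $\mu\in\MC([0,T];\mc{P}_c(\R^{2d}))$.

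For uniqueness I would invoke the superposition principle \cite[Section 8.2]{AGS}: any weak solution $\mu\in\MC([0,T];\mc{P}_c(\R^{2d}))$ automatically satisfies $\int_0^T\!\!\int_{\R^{2d}}|\MF(t,x,\theta_t)|\rd\mu_t\dt<\infty$ thanks to its compact support and $(ii)$, hence can be written as $\mu_t=(e_t)_\#\eta$, where $e_t$ is the evaluation map at time $t$ and $\eta$ is a probability measure concentrated on absolutely continuous integral curves of the characteristic system. Since that system admits a unique solution issued from each initial point, $\eta$ is forced to coincide with the push-forward of $\mu_0$ by $z_0\mapsto\Phi_\cdot(z_0)$, and therefore $\mu_t=(\Phi_t)_\#\mu_0$ for all $t$, which proves uniqueness.

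Finally, for the stability estimate \eqref{stablity}, observe that both solutions $\mu^1,\mu^2$ are transported by the \emph{same} flow $\Phi_t$. Taking an optimal plan $\pi_0\in\Pi(\mu_0^1,\mu_0^2)$ for $W_1$, the push-forward $(\Phi_t\times\Phi_t)_\#\pi_0$ is admissible for $W_1(\mu_t^1,\mu_t^2)$. A Gr\"onwall estimate comparing two characteristics via $(iii)$ gives $|X_t(x)-X_t(\hat x)|\le e^{L_{\MF,T,\|\theta\|_1}}|x-\hat x|$, and the $y$-components are left unchanged, so $|\Phi_t(z)-\Phi_t(\hat z)|\le e^{L_{\MF,T,\|\theta\|_1}}|z-\hat z|$; integrating this against $\pi_0$ yields \eqref{stablity}. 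The only genuinely delicate point in the whole argument is the uniqueness step, where one must check that compactness of supports suffices to apply the superposition principle and that the resulting disintegration is necessarily carried by the unique characteristic curves; everything else reduces to Carath\'eodory ODE theory and Gr\"onwall's inequality.
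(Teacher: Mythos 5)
Your proof is correct and follows the same basic structure as the paper's (method of characteristics, push-forward construction $\mu_t=(\Phi_t)_\#\mu_0$, Gr\"onwall for support and stability, and coupling of characteristics for the Wasserstein estimates). The one place where you go genuinely beyond the paper's own appendix proof is the uniqueness step: the paper constructs the characteristic solution, verifies the weak formulation, and derives the quantitative bounds, but never explicitly shows that an arbitrary weak solution must coincide with the push-forward, implicitly treating uniqueness as classical. You fill that gap cleanly via the superposition principle of \cite[Section~8.2]{AGS}, noting that the integrability hypothesis $\int_0^T\int|\MF(t,x,\theta_t)|\rd\mu_t\dt<\infty$ required by that theorem is guaranteed because $t\mapsto\mu_t$ is a $W_1$-continuous curve on the compact interval $[0,T]$ (hence has bounded first moments), and then that uniqueness of the Carath\'eodory ODE forces the disintegrating measure on path space to concentrate on the unique flow. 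This is a worthwhile addition; the rest of the argument (support bound from $(ii)$ and Gr\"onwall, the $|t-s|$ modulus via $\|\Phi_t-\Phi_s\|_{L^\infty(\supp\mu_0)}$, and the stability bound by pushing an optimal plan through $\Phi_t\times\Phi_t$ and using the $L^1$-in-time Lipschitz constant $L_{\MF,T,\|\theta\|_1}$) matches the paper's line by line.
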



\subsection{Differential calculus over convex subsets of Banach spaces}
\label{subsection:Convex}

We end this series of preliminaries by introducing a notion of multi-valued Fr\'{e}chet differential for functions defined on convex sets. To this end, given a convex subset $E$ of a normed vector space $X$, we define
\begin{equation*}
X_E := \RR(E-E) = \Big\{ x \in X ~\big|~ x = \alpha (e_1 - e_2) ~\text{with $\alpha \in \R$ and $e_1,e_2 \in E$} \Big\},
\end{equation*}
and given $e\in E$, we denote by $X_e := \RR_+(E-e)$ the convex cone of directions at $e$.
\begin{definition}
\label{def:Fdiff}
Let $X$, $Y$ be normed vector spaces, $E\subset X$ be a convex set and $f:E\to Y$. Then, $f$ is \emph{$F$-differentiable} at $e\in E$ if there exists $L\in\mc{L}(X_E,Y)$ such that
\begin{equation}\label{A1}
\lim\limits_{\substack{e'\to e \\ e' \in E}}\frac{\norm{f(e')-f(e)-L(e'-e)}_Y}{\norm{e'-e}_X}=0\,,
\end{equation}
where $\mc{L}(X_E,Y)$ denotes the space of bounded linear operators from $X_E$ into $Y$.
\end{definition}

Following the previous definition, we define the \emph{$F$-differential} of $f$ at $e\in E$ by
\begin{equation}
Df(e):=\Big\{ L\in\mc{L}(X_E,Y) ~\big|~  L \mbox{ satisfies $\eqref{A1}$} \Big\}\,.
\end{equation}
It can be checked that if $X_e$ is not dense in $X_E$, then the mapping $D$ is set-valued (similarly to classical convex subdifferentials). However if $v\in \overline X_e$, then the evaluation $Df(e)(v)$ is uniquely determined, namely it does not depend on the choice of $L$ in $Df(e)$, and in this case we will slightly abuse the notation and write $Df(e)(v)$ to mean $L(v)$ for any $L\in Df(e)$. By a density argument, each $L\in Df(e)$ can be uniquely extended to an operator $\overline L$ in $\mc{L}(\overline {X}_E,Y)$. We will then say that $f\in \MC^1(E;Y)$ if $f$ is $F$-differentiable at each $e\in E$, and there exists a selection $e \in E \mapsto L_e\in Df(e)$ such that
\begin{equation}\label{A2}
e\mapsto L_e \quad \mbox{ is continuous from } E \mbox{ into }\mc{L}(X_E,Y)\,,
\end{equation}
where $\mc{L}(X_E,Y)$ is endowed with the distance induced by the standard operator norm. 
 
\begin{definition}
Let $X$, $Y$ be normed vector spaces, $E\subset X$ be a convex set, and $f: E\to Y$. Then, $f$ is \emph{$G$-differentiable} at $e\in E$ if the directional right derivatives
\begin{equation}\label{Gdiff}
df(e,v):=\lim\limits_{h\to 0^+}\frac{f(e+hv)-f(e)}{h}, 
\end{equation}
exist in $Y$ for all $v\in X_e$.
\end{definition}

\begin{rmk}
Obviously if $f$ is $F$-differentiable at some $e \in E$, then it is $G$-differentiable as well with $df(e,v)=Df(e)(v)$ for all $v \in X_e$.
\end{rmk}

We shall also use the following lemma as a criterion for $\MC^1$ regularity, see \cite[Lemma A.4]{ambrosio2018spatially}.

\begin{lem}\label{lmC1}
Let $f:~E\to F$ be a continuous map and suppose that there exists a continuous application 
\begin{equation}
e \in E \mapsto L_e\in  \mc{L}(X_E,Y), 
\end{equation}
such that $df(e,v)=L_ev$ for all $e\in E$ and any $v\in X_e$. Then $f\in \MC^1(E;Y)$ and $e \mapsto L_e\in Df(e)$ is an admissible selection.
\end{lem}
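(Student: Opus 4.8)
The statement to prove is Lemma \ref{lmC1}: given a continuous map $f\colon E\to F$ (here $F$ should be the normed space $Y$) and a continuous selection $e\mapsto L_e\in\mathcal L(X_E,Y)$ with $df(e,v)=L_ev$ for all $e\in E$, $v\in X_e$, conclude that $f\in\mathcal C^1(E;Y)$ with $e\mapsto L_e$ an admissible selection.

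\textbf{Proof strategy.} The plan is to show that each $L_e$ satisfies the Fr\'echet-type estimate \eqref{A1} at the point $e$; once this is done, the continuity hypothesis on $e\mapsto L_e$ immediately gives property \eqref{A2}, hence $f\in\mathcal C^1(E;Y)$ with $e\mapsto L_e\in Df(e)$ an admissible selection. Fix $e\in E$ and let $e'\in E$. Set $v:=e'-e\in X_e$ and consider the auxiliary function $g\colon[0,1]\to Y$ defined by $g(h):=f(e+hv)-hL_ev$; note $e+hv=(1-h)e+he'\in E$ by convexity, so $g$ is well-defined, and $g$ is continuous on $[0,1]$ by continuity of $f$. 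The key point is that $g$ has a one-sided derivative at every $h\in[0,1)$: indeed $e+hv\in E$ and $v=\tfrac{1}{1-h}(e'-(e+hv))\in X_{e+hv}$, so the directional derivative hypothesis gives $g'_+(h)=df(e+hv,v)-L_ev=L_{e+hv}v-L_ev$.

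\textbf{Main step — a mean-value / increment estimate.} I would then apply the standard mean value inequality for vector-valued functions with a one-sided derivative: if $g\colon[0,1]\to Y$ is continuous and has a right derivative $g'_+(h)$ on $[0,1)$ with $\|g'_+(h)\|_Y\le M$, then $\|g(1)-g(0)\|_Y\le M$. Applying this to our $g$ and using $\|g'_+(h)\|_Y=\|(L_{e+hv}-L_e)v\|_Y\le \|L_{e+hv}-L_e\|_{\mathcal L(X_E,Y)}\|v\|_X$, we obtain
\begin{equation*}
\|f(e')-f(e)-L_e(e'-e)\|_Y = \|g(1)-g(0)\|_Y \le \Big(\sup_{h\in[0,1]}\|L_{e+hv}-L_e\|_{\mathcal L(X_E,Y)}\Big)\|e'-e\|_X.
\end{equation*}
Now as $e'\to e$ in $E$, the segment $\{e+hv : h\in[0,1]\}$ shrinks to $e$, and by continuity of $e\mapsto L_e$ the supremum $\sup_{h\in[0,1]}\|L_{e+hv}-L_e\|_{\mathcal L(X_E,Y)}$ tends to $0$; dividing by $\|e'-e\|_X$ yields \eqref{A1}, so $L_e\in Df(e)$. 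This completes the argument.

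\textbf{Expected obstacle.} The one genuinely delicate point is the vector-valued mean value inequality with a merely one-sided derivative and the required uniform control of $\sup_h\|L_{e+hv}-L_e\|$ as $e'\to e$. For the former, the cleanest route is to prove it directly: fix $\varepsilon>0$ and show the set $\{t\in[0,1] : \|g(t)-g(0)\|_Y\le (M+\varepsilon)t+\varepsilon t\}$ — or a similar connectedness/continuity argument — is all of $[0,1]$, then let $\varepsilon\to0$; this only uses continuity of $g$ and the existence of right derivatives, not differentiability. For the uniform control, one should note that by continuity of $e\mapsto L_e$ at $e$, for any $\delta>0$ there is a neighborhood $U$ of $e$ in $E$ with $\|L_{e''}-L_e\|<\delta$ for all $e''\in U$; since $E$ is convex, whenever $e'\in U$ the whole segment from $e$ to $e'$ lies in $U$, so the supremum over $h$ is bounded by $\delta$. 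I would also remark that Lemma \ref{lmC1} is quoted from \cite[Lemma A.4]{ambrosio2018spatially}, so one may alternatively simply invoke that reference; but the self-contained argument above is short enough to include.
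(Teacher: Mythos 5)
Your proof is correct, and the argument you give --- parametrizing the segment $h\mapsto e+h(e'-e)$, using the identity $e'-(e+hv)=(1-h)v$ to see that $v\in X_{e+hv}$ for every $h\in[0,1)$ so the directional-derivative hypothesis applies along the whole segment, applying a one-sided Banach-space mean value inequality to $g(h)=f(e+hv)-hL_ev$, and controlling $\sup_{h\in[0,1]}\|L_{e+hv}-L_e\|_{\mc{L}(X_E,Y)}$ via continuity of the selection on a convex ball neighborhood of $e$ in $E$ --- is the standard route to such $\mc{C}^1$ criteria. Note that the paper does not prove this lemma itself; it is quoted from \cite[Lemma A.4]{ambrosio2018spatially}, so there is no in-paper proof to compare against. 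Your self-contained reconstruction is what one expects to find in that reference, and the two technical points you single out as delicate (the mean value inequality with only right derivatives, and the uniform control of the operator norms along the shrinking segment) are exactly the ones that require care; your treatment of both is correct.
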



\section{Existence of minimizers and stability of solutions}
\label{sec:Existence}

In this section, we investigate sufficient conditions ensuring the existence of optimal solutions to the mean-field optimal control problem \eqref{Pcost}, as well as stability properties for the minimizers and costs stemming from large finite-sample training. Throughout the remainder of this article, we will use Assumption \ref{asum1} and the  following additional hypotheses to establish most of our results.

\begin{assum}\label{asum2} For any given $T > 0$ and $R > 0$, the vector field $\MF$ satisfies the following. 
\begin{enumerate}
\item[$(i)$]  The map $x \in \R^d \mapsto \MF(t,x,\theta)$ is of class $\mc{C}^2$ all times $t\in[0,T]$ and any $\theta \in \RR^m$, and for each $x\in B(R)$, it holds 
\begin{equation}
|\nabla_x\cdot \nabla_\theta \MF(t,x,\theta)|+	|\nabla_x\MF(t,x,\theta)|+	|\nabla_x^2\MF(t,x,\theta)|\leq C(d,m,R,|\theta|)\,;
\end{equation}
\item[$(ii)$]  For any $\theta^1,\theta^2 \in \RR^m$, every $s,t \in[0,T]$ and all $x\in B(R)$, it holds
\begin{equation}\label{time}
|\mathcal F(t,x,\theta^1) - \mathcal F(s,x,\theta^2)|\leq C(d,m,R) \big(|t-s| + |\theta^1-\theta^2| \big)\,;
\end{equation}
\item[$(iii)$] For all fixed $\theta$ and $t \in[0,T]$, it holds
\begin{equation}
|\nabla_\theta \mathcal F(t,x,\theta) - \nabla_\theta \mathcal F(t,y,\theta)|\leq C(d,m,R,|\theta|)|x-y|, 
\end{equation}
for every $x,y\in B(R)$.
\end{enumerate}
\end{assum}

Before moving on to the discussion pertaining to the existence and stability properties for solutions of \eqref{Pcost}, we highlight the adequacy of our working hypotheses in connection with classical machine learning models.

\begin{rmk}[Adequacy of smooth sigmoidal activations]
\label{rmtanh}
Assumptions \ref{asum1} and \ref{asum2} require smooth activation functions that exhibit also some boundedness properties with respect to the parameter $\theta$, e.g. as in Assumption \ref{asum1}-$(ii)$. These latter are needed both to express the PMP and to establish its well-posedness, as will become apparent in Section \ref{sec:4}. Hence, some popular network models which use for instance ReLu activations are not covered by our results. However, we check here that the sets of hypotheses listed in Assumptions \ref{asum1} and \ref{asum2} include the popular subclass of feed-forwarding  dynamics \eqref{classical} involving sigmoidal-type activation functions, such as
\begin{equation*}
\MF(t,x,\theta) = \MF(x,\theta) := \tanh(\theta x)\in\RR^d\,,
\end{equation*}
where $\theta\in \RR^{m}=\RR^{d\times d}$ and $x\in\RR^{d}$. In that case, Assumption \ref{asum1}-$(i)$ obviously holds, and since
\begin{equation*}
\MF_k(x,\theta) =\tanh \Big(\mathsmaller{\sum}\limits_{l=1}^d \theta_{k,l}  x_l \Big)
\end{equation*}
for each $k \in \{1,\dots,d\}$ and $|\tanh(r)|\leq 1$ for all $r \in \R$, we have that $|\MF(x,\theta)|\leq \sqrt{d}$ for all $(x,\theta) \in \R^d \times \R^{d \times d}$, and Assumption \ref{asum1}-$(ii)$ also holds. This uniform boundedness property of the driving field implies in particular that the radius $R_T > 0$ given by Theorem \ref{thm:Wellposed} and controlling the support sizes of the solutions of \eqref{PDE} will scale polynomially and not exponentially on $d \geq 1$, along with all the relevant constants depending polynomially thereon. Moreover, observe that
\begin{equation*}
\partial_{x_i} \MF_k(x,\theta) =\tanh' \Big( \mathsmaller{\sum}\limits_{l=1}^d \theta_{k, l} \, x_l \Big) \theta_{k,i}
\end{equation*}
for each $i,k \in \{1,\dots,d\}$, which implies in particular that $|\nabla_x \mathcal F(x,\theta)| \leq |\theta|$ for all $(x,\theta) \in \R^d \times \R^{d \times d}$ by using the fact that $|\tanh'(r)|=|1-\tanh(r)^2|\leq 1$ for each $r \in \R$. By the mean-value theorem, this latter fact directly implies that
\begin{equation*}
|\MF(t,x_1,\theta)-\MF(t,x_2,\theta)|\leq |\theta||x_1-x_2|
\end{equation*}
for all $\theta \in \R^{d \times d}$ and $x_1,x_2 \in \R^d$, which verifies Assumption \ref{asum1}-$(iii)$. Concerning Assumption \ref{asum1}-$(iv)$, one has that 
\begin{equation*}
\partial_{\theta_{ij}} \MF_k(x,\theta) = \delta_{k,i} \tanh' \Big( \mathsmaller{\sum}\limits_{l=1}^d \theta_{k,l} x_l \Big) x_j 
\end{equation*}
for each $i,j,k \in \{,1\dots,d\}$ -- where $\delta_{k,i}$ refers here to the Kronecker symbol --, which implies that $|\nabla_\theta \MF(x,\theta)|\leq \sqrt{d}|x|$ for all $\theta\in \RR^m$ and $x\in\RR^d$.  Furthermore, one can easily see that 
\begin{equation*}
\partial^2_{\theta_{i,j},\theta_{m,n}} \MF_k(x,\theta) = \delta_{k,m}\delta_{k,i}\tanh'' \Big( \mathsmaller{\sum}\limits_{l=1}^d \theta_{k,l} x_l \Big)x_j x_n
\end{equation*}
for each $i,j,k,m,n \in \{1,\dots,d\}$, which then yields $|\nabla_\theta^2 \MF(x,\theta)|\leq 4\sqrt{d} \,|x|^2$ for all $(x,\theta) \in \R^d \times \R^{d \times d}$ since $|\tanh''(r)|=|2\tanh(r)\left(\tanh(r)-1\right)|\leq 4$ for every  $r \in \R^d$. Thence, it holds 
\begin{equation}
\label{3.5}
\max\limits_{(x,\theta) \in B(R) \times \RR^{m}}|\nabla_\theta \MF(x,\theta)|\leq \sqrt{d}R \qquad \text{and} \qquad \max\limits_{(x,\theta) \in B(R) \times \RR^{m}}|\nabla^2_\theta \MF(x,\theta)| \leq 4\sqrt{d}R^2,
\end{equation}
which completes the verification of Assumption \ref{asum1}.

We now shift our attention to the verification of Assumption \ref{asum2}. First of all, one has that 
\begin{equation*}
\partial^2_{x_i,x_j} \MF_k(x,\theta) = \tanh'' \Big( \mathsmaller{\sum}_{l=1}^d  \theta_{k,l} x_l \Big) \theta_{k,i} \theta_{k,j}
\end{equation*}
for each $i,j,k \in \{1,\dots,d\}$, which yields the  estimate $|\nabla_x^2 \mathcal F(x,\theta)|\leq 4|\theta|^2$ for all $(x,\theta) \in \R^d \times \R^{d \times d}$. Moreover, one can check that 
\begin{equation*}
\partial_{x_n} \partial_{\theta_{i,j}} \MF_k(x,\theta) = \delta_{k,i} \tanh'' \Big( \mathsmaller{\sum}\limits_{l=1}^d \theta_{k,l} x_l \Big) x_j\theta_{k,n} + \delta_{k,i}\delta_{j,n} \tanh' \Big(\mathsmaller{\sum}\limits_{l=1}^d \theta_{k,l} x_l \Big)
\end{equation*} 
for each $i,j,k,n \in \{1,\dots,d\}$. Thus, we obtain the estimates 
\begin{equation*}
|\nabla_x\cdot \nabla_\theta \MF(x,\theta) |\leq \sqrt{d}|\nabla_x \nabla_\theta \MF(x,\theta)| \leq \sqrt{2} \sqrt{d}\big ( 4 |x||\theta| + d\big) 
\end{equation*}
for all $(x,\theta) \in \R^d \times \R^{d \times d}$, which leads to Assumption \ref{asum2}-$(i)$ being fulfilled. Moreover, we can also deduce from the previous estimate that Assumption \ref{asum2}-$(iii)$ holds, since 
\begin{equation*}
|\nabla_\theta \mathcal F(t,x,\theta)-\nabla_\theta \mathcal F(t,y,\theta)|\leq  \sqrt{2} (4 R|\theta| + d)|x-y|
\end{equation*}
for all $\theta \in \R^{d \times d}$ and $x,y \in B(R)$. Lastly, it follows from \eqref{3.5} that  
\begin{equation*}
|\mathcal F(t,x,\theta^1) - \mathcal F(s,x,\theta^2)| \leq  \sqrt{d}R|\theta^1-\theta^2|
\end{equation*}
for all $\theta^1,\theta^2 \in \R^{d \times d}$ and $x \in \R^d$, which equivalently means that Assumption \ref{asum2}-$(ii)$ is satisfied and completes the verification of Assumption \ref{asum2}.
\end{rmk}


\subsection{Convexity of the reduced cost functional and existence of minimizers}

As already recalled in the introduction, $L^2$-regularization of network parameters is a standard practice in machine learning which helps stabilizing the training procedure, while promoting the generalization capacities of networks \cite{Goodfellow-et-al-2016,Kukacka-et-al-2017}. In this section, we show that for regularization parameters $\lambda > 0$ that are sufficiently large, the reduced cost of the problem is actually strictly convex, which in particular implies the existence and uniqueness of an optimal control $\theta^* \in L^2([0,T];\R^m)$ for the mean-field optimal control problem \eqref{Pcost}. Given the smoothness of the forward map $\mathcal F$ the convexity of $J$ is perhaps not surprising, but it has never been noticed before in the literature in connection to mean-field optimal control problems, and appears to have far-reaching practical implications that we shall explore in the remainder of the paper.

For any fixed $\theta\in L^2([0,T],\RR^m)$, we denote by $(\Phi^{\theta}_{(\tau,t)}(\cdot))_{\tau,t \in [0,T]}$ the \textit{characteristic flow} generated by the controlled velocity field $(t,x) \in [0,T] \times \R^d \mapsto \MF(t,x,\theta_t) \in \R^d$, defined by 
\begin{equation}
\label{eq:ThetaFlow}
\left\{
\begin{aligned}
\partial_t \Phi^{\theta}_{(\tau,t)}(x) & = \MF \big(t,\Phi^{\theta}_{(\tau,t)}(x),\theta_t\big), \\
\Phi^{\theta}_{(\tau,\tau)}(x) & = x, 
\end{aligned}
\right.
\end{equation}
for every $x \in \R^d$. It is a well-known result in the theory of non-linear dynamical systems (see e.g. \cite[Theorem 2.3.2]{BressanPiccoli}) that under Assumption \ref{asum1}, the flow maps $\Phi^{\theta}_{(\tau,t)} : \R^d \rightarrow \R^d$ are continuously differentiable for every  $\tau,t \in [0,T]$, and the application $t \in [0,T] \mapsto \nabla_x \Phi^{\theta}_{(\tau,t)}(x) \in \R^{d \times d}$ is the unique solution of the forward linearized Cauchy problem
\begin{equation}
\label{eq:linearizedFlow}
\left\{
\begin{aligned}
\partial_t w(t,x) & = \nabla_x \MF\big( t , \Phi^{\theta}_{(\tau,t)}(x), \theta_t \big)w(t,x) \\
w(\tau,x) & =\mbox{Id}.
\end{aligned}
\right.
\end{equation}
This allows us to establish the following semiconvexity result for the reduced cost of \eqref{Pcost}.

\begin{proposition}[Semiconvexity of the reduced cost functional]
\label{prop:Semiconvexity}
Let $T,R >0$ and $\mu_0 \in \Pcal_c(\R^d)$ be such that $\supp(\mu_0) \subset B(R)$, and suppose that Assumptions \ref{asum1} and \ref{asum2} hold. Then, for every ball $\Gamma \subset L^2([0,T];\R^m)$, there exists a constant $\Lcal(T,R,\Gamma) > 0$ such that the reduced cost functional
\begin{equation}
\label{eq:ReducedCost}
J : \theta \in L^2([0,T];\R^m) \mapsto
\left\{
\begin{aligned}
& \INTDom{\ell(x,y)}{\R^{2d}}{\mu_T^{\theta}(x,y)} + \lambda \INTSeg{|\theta_t|^2}{t}{0}{T},  \\
& \;\, \textnormal{s.t.} \,\,
\left\{ 
\begin{aligned}
& \partial_t \mu_t^{\theta} + \nabla_x \big( \Fcal(t,x,\theta_t) \mu_t^{\theta} \big) = 0, \\
& \mu_0^{\theta} = \mu_0, 
\end{aligned}
\right.
\end{aligned}
\right.
\end{equation}
satisfies the semiconvexity estimate 
\begin{equation}
\label{eq:Semiconvexity}
J \big((1-\zeta)\theta^1 + \zeta \theta^2 \big) \leq (1-\zeta) J(\theta^1) + \zeta J(\theta^2) - (2\lambda - \Lcal(T,R,\Gamma)) \tfrac{\zeta(1-\zeta)}{2} \| \theta^1 - \theta^2\|_2^2 
\end{equation}
for any $\theta^1,\theta^2 \in \Gamma$ and all $\zeta \in [0,1]$. In particular if $\lambda > \tfrac{1}{2} \Lcal(T,R,\Gamma)$, the reduced cost functional is then strictly convex over $\Gamma$.
\end{proposition}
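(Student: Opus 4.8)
The plan is to combine a Lagrangian (pushforward) reformulation of the PDE constraint with a second-order analysis of the control-to-flow map along segments. By the method of characteristics applied to the velocity field $(t,(x,y)) \mapsto (\MF(t,x,\theta_t),0)$ on $\RR^{2d}$ (cf.\ \eqref{eq:ThetaFlow}) together with the uniqueness in Theorem~\ref{thm:Wellposed}, the unique weak solution of the continuity equation appearing in \eqref{eq:ReducedCost} is the pushforward $\mu_T^\theta = (\Phi^\theta_{(0,T)}\times\mathrm{Id})_\#\mu_0$. Hence, setting $G(\theta) := \int_{\RR^{2d}} \ell\big(\Phi^\theta_{(0,T)}(x),y\big)\rd\mu_0(x,y)$, we may write $J(\theta) = G(\theta) + \lambda\|\theta\|_2^2$, and since $\supp(\mu_0)\subset B(R)$, Theorem~\ref{thm:Wellposed} confines all characteristics $\Phi^\theta_{(0,t)}(x)$ with $x\in\supp(\mu_0)$ and $\theta\in\Gamma$ to a fixed ball $B(R_T)$, with $R_T$ depending only on $R,T,C_{\MF}$.

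Fix a ball $\Gamma\subset L^2([0,T];\RR^m)$ and $\theta^1,\theta^2\in\Gamma$. Since $\Gamma$ is convex, $\theta_\zeta := (1-\zeta)\theta^1 + \zeta\theta^2\in\Gamma$ for $\zeta\in[0,1]$, so it suffices to show that $\phi(\zeta) := J(\theta_\zeta)$ is of class $\mathcal C^2$ on $[0,1]$ and satisfies $\phi''(\zeta)\geq (2\lambda - \Lcal(T,R,\Gamma))\|\theta^1-\theta^2\|_2^2$ for some constant $\Lcal(T,R,\Gamma)>0$ that is independent of $\theta^1,\theta^2\in\Gamma$. Indeed, \eqref{eq:Semiconvexity} is then the elementary statement that a $\mathcal C^2$ function $\phi$ on $[0,1]$ with $\phi''\geq c$ lies below its chord by $c\,\tfrac{\zeta(1-\zeta)}{2}$ — proved by observing that $\psi(\zeta):=\phi(\zeta)-(1-\zeta)\phi(0)-\zeta\phi(1)+c\,\tfrac{\zeta(1-\zeta)}{2}$ vanishes at the endpoints and is convex, hence nonpositive on $[0,1]$ — and strict convexity over $\Gamma$ is immediate once $2\lambda>\Lcal(T,R,\Gamma)$. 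The regularizer contributes exactly $\tfrac{d^2}{d\zeta^2}\big(\lambda\|\theta_\zeta\|_2^2\big)=2\lambda\|\theta^1-\theta^2\|_2^2$, so everything reduces to the twice-differentiability of $\zeta\mapsto G(\theta_\zeta)$ with $\big|\tfrac{d^2}{d\zeta^2}G(\theta_\zeta)\big|\leq\Lcal(T,R,\Gamma)\|\theta^1-\theta^2\|_2^2$.

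To estimate this term I would differentiate the characteristics in $\zeta$. Writing $v := \theta^2-\theta^1$ and $\Phi_\zeta(t):=\Phi^{\theta_\zeta}_{(0,t)}(x)$, the first variation $w_\zeta := \partial_\zeta\Phi_\zeta$ solves the linear Cauchy problem $\dot w_\zeta = \nabla_x\MF(t,\Phi_\zeta,\theta_{\zeta,t})w_\zeta + \nabla_\theta\MF(t,\Phi_\zeta,\theta_{\zeta,t})v_t$, $w_\zeta(0)=0$ (in the spirit of \eqref{eq:linearizedFlow}, by standard differentiability of ODE solutions with respect to parameters), and the second variation $w^{(2)}_\zeta := \partial^2_\zeta\Phi_\zeta$ solves the linear problem with the same homogeneous part and source $\nabla_x^2\MF(w_\zeta,w_\zeta) + 2\,\nabla_x\nabla_\theta\MF(w_\zeta,v_t) + \nabla_\theta^2\MF(v_t,v_t)$, with all derivatives evaluated along $(\Phi_\zeta(t),\theta_{\zeta,t})$ and $w^{(2)}_\zeta(0)=0$. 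Estimating through Duhamel's formula, the resolvent of the homogeneous equation is bounded by $e^{L_{\MF,T,\|\theta_\zeta\|_1}}$, which is uniform over $\Gamma$ since $\|\theta_\zeta\|_1\leq\sqrt T\sup_\Gamma\|\cdot\|_2$ (Assumption~\ref{asum1}-$(iii)$); the terms $\nabla_\theta\MF$ and $\nabla_\theta^2\MF$ are bounded on $B(R_T)\times\RR^m$ uniformly in $\theta$ by Assumption~\ref{asum1}-$(iv)$; and $\nabla_x\MF$, $\nabla_x^2\MF$, $\nabla_x\nabla_\theta\MF$ are bounded on $B(R_T)$ by Assumption~\ref{asum2}-$(i)$ with at most polynomial dependence on $|\theta_{\zeta,t}|$, so their $t$-integrals against $|w_\zeta|$ and $|v_t|$ are controlled by $\|v\|_2$, respectively $\|v\|_2^2$, via Cauchy--Schwarz and $\|\theta_\zeta\|_2\leq\sup_\Gamma\|\cdot\|_2$. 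This produces the uniform-in-$\zeta$ bounds $\sup_{t\in[0,T]}|w_\zeta(t)|\leq C(T,R,\Gamma)\|v\|_2$ and $\sup_{t\in[0,T]}|w^{(2)}_\zeta(t)|\leq C(T,R,\Gamma)\|v\|_2^2$. Since $\ell\in\mathcal C^2$ and $\big(\Phi_\zeta(T)(x),y\big)$ ranges in the compact set $B(R_T)\times B(R)$, one may then differentiate under the integral sign to obtain
\[
\tfrac{d^2}{d\zeta^2}G(\theta_\zeta)=\int_{\RR^{2d}}\Big(\nabla_x\ell\big(\Phi_\zeta(T),y\big)\cdot w^{(2)}_\zeta(T)+w_\zeta(T)^{\!\top}\nabla_x^2\ell\big(\Phi_\zeta(T),y\big)w_\zeta(T)\Big)\rd\mu_0(x,y),
\]
whose modulus is bounded by $\Lcal(T,R,\Gamma)\|v\|_2^2$. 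Combining with the contribution of the regularizer gives $\phi''(\zeta)\geq(2\lambda-\Lcal(T,R,\Gamma))\|\theta^1-\theta^2\|_2^2$, hence \eqref{eq:Semiconvexity}, and strict convexity on $\Gamma$ when $\lambda>\tfrac12\Lcal(T,R,\Gamma)$.

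I expect the main obstacle to be the technical core of the last paragraph: rigorously justifying the $\zeta$-differentiation of the characteristics and proving the uniform-in-$(\zeta,x)$ bounds on $w_\zeta$ and $w^{(2)}_\zeta$ when the control is merely $L^2$ in time and the $x$-derivatives of $\MF$ have $\theta$-dependent bounds. This is precisely where Assumptions~\ref{asum1}-$(iii)$--$(iv)$ and \ref{asum2}-$(i)$ must be used in a balanced way — the worst $|\theta_t|$-growth appearing in the source terms being absorbed by $L^1_t$ integrability in the resolvent and by $L^2_t$ integrability in the mixed and pure-$x$ second-derivative terms — and it is the reason the constant $\Lcal$ depends on the radius of $\Gamma$ (in addition to $T$ and $R$). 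All remaining steps (the pushforward identity, the chord inequality, and differentiating under the integral using the just-obtained uniform bounds and dominated convergence) are routine.
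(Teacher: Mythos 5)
Your proof is correct, but the route is genuinely different from the paper's. The paper first establishes Lemma~\ref{lem:FinalCostReg}: the reduced final cost $J_\ell$ (your $G$) is Fr\'echet-differentiable with a gradient that is Lipschitz on bounded balls, with Lipschitz constant $\Lcal(T,R,\Gamma)$; this is deduced from the \emph{first-order} Taylor expansion of the flow with respect to the control (Proposition~\ref{prop:DiffFlow}) and the explicit gradient formula \eqref{eq:ReducedCostGrad}. Proposition~\ref{prop:Semiconvexity} then follows purely at the level of convex analysis: a Lipschitz gradient yields the one-sided estimate $J_\ell((1-\zeta)\theta^1+\zeta\theta^2)\leq(1-\zeta)J_\ell(\theta^1)+\zeta J_\ell(\theta^2)+\Lcal\tfrac{\zeta(1-\zeta)}{2}\|\theta^1-\theta^2\|_2^2$ via the integral form of Taylor's theorem, and this is combined with the parallelogram-type inequality for $\|\cdot\|_2^2$. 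You instead restrict $J$ to the segment and differentiate twice, computing $\partial_\zeta^2 G(\theta_\zeta)$ from the first and second variations $w_\zeta$, $w^{(2)}_\zeta$ of the characteristic flow, and close with the elementary chord inequality for $\Ccal^2$ functions with $\phi''\geq c$. Both routes reduce to the same flow-regularity estimates; the difference is that the paper only needs $J_\ell\in\Ccal^{1,1}$ (Lipschitz gradient), a strictly weaker requirement than your $\Ccal^2$-along-segments, and Lemma~\ref{lem:FinalCostReg} is reused elsewhere (e.g.\ in Lemma~\ref{lem:CostDepN} and Theorem~\ref{thm:Stability}), whereas your second-variation approach is more self-contained and gives the semiconvexity constant a direct interpretation as a uniform Hessian bound. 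One caveat worth flagging: the second-order Taylor expansion of $\Phi^\theta_{(0,T)}$ with respect to $\theta$ that your $w^{(2)}_\zeta$ relies on is not recorded in Appendix~\ref{subsection:ControlFlow} (Proposition~\ref{prop:DiffFlow} stops at first order), so you would need to extend that proposition; and your appeal to ``at most polynomial dependence on $|\theta_t|$'' in the second-derivative bounds of Assumption~\ref{asum2}-$(i)$ needs the growth to be at most quadratic for the integrals to close under the $L^2$-control hypothesis (this holds for $\tanh$ as verified in Remark~\ref{rmtanh}, but the stated assumptions leave the $\theta$-growth of $C(d,m,R,|\theta|)$ unspecified — the paper's own Lemma~\ref{lem:FinalCostReg} is equally informal at exactly this point).
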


The proof of this convexity estimate is almost entirely contained in the following regularity result, which itself relies on a series of technical properties for characteristic flows which are exposed in Appendix \ref{subsection:ControlFlow}.

\begin{lem}[Regularity of the reduced final cost]
\label{lem:FinalCostReg}
Let $T,R > 0$ and $\mu_0 \in \Pcal_c(\R^{2d})$ be such that $\supp(\mu_0) \subset B(R)$, and suppose that Assumptions \ref{asum1} and \ref{asum2} hold. Then, the reduced final cost 
\begin{equation}
\label{eq:FinalReduced}
J_{\ell} : \theta \in L^2([0,T];\R^m) \mapsto  
\left\{
\begin{aligned}
& \INTDom{\ell(x,y)}{\R^{2d}}{\mu_T^{\theta}(x,y)},  \\
& \; \, \textnormal{s.t.} \,\,
\left\{ 
\begin{aligned}
& \partial_t \mu_t^{\theta} + \nabla_x \big( \Fcal(t,x,\theta_t) \mu_t^{\theta} \big) = 0, \\
& \mu_0^{\theta} = \mu_0, 
\end{aligned}
\right.
\end{aligned}
\right.
\end{equation}
is Fr\'echet-differentiable. Moreover, denoting its gradient by $\nabla_{\theta} J_{\ell}(\theta) \in L^2([0,T];\R^m)$ and choosing $\theta^1,\theta^2 \in L^2([0,T];\R^m)$, there exists a constant $\Lcal(T,R,\|\theta^1\|_1, \| \theta^2\|_1) > 0$ such that 
\begin{equation*}
\big\| \nabla_{\theta} J_{\ell}(\theta^1) - \nabla_{\theta} J_{\ell}(\theta^2) \big\|_2 \leq \Lcal(T,R,\|\theta^1\|_1, \| \theta^2\|_1) \, \big\| \theta^1 - \theta^2 \big\|_2.
\end{equation*}
\end{lem}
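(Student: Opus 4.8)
The plan is to reduce everything to a representation of $J_\ell$ along the characteristic flow and then differentiate that representation. Since $\mu_t^\theta = (\Phi^\theta_{(0,t)})_\# \mu_0$ for the flow in \eqref{eq:ThetaFlow} (acting on the $x$-variable, while the $y$-variable is frozen), we may write
\begin{equation*}
J_\ell(\theta) = \int_{\R^{2d}} \ell\big(\Phi^\theta_{(0,T)}(x),y\big)\,\rd\mu_0(x,y).
\end{equation*}
Because $\supp(\mu_0)\subset B(R)$ and $\MF$ has linear growth (Assumption \ref{asum1}-$(ii)$), Theorem \ref{thm:Wellposed} gives a uniform radius $R_T$ containing the supports of all $\mu_t^\theta$, so all integrands live on a fixed compact set and $\ell\in\MC^2$ is as smooth as needed there. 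The first step is therefore to establish that $\theta\mapsto\Phi^\theta_{(0,T)}(x)$ is Fr\'echet-differentiable from $L^2([0,T];\R^m)$ into $\MC^0(B(R);\R^d)$, with derivative in direction $\eta$ given by the solution at time $T$ of the linearized, inhomogeneous Cauchy problem
\begin{equation*}
\partial_t v(t) = \nabla_x\MF\big(t,\Phi^\theta_{(0,t)}(x),\theta_t\big)v(t) + \nabla_\theta\MF\big(t,\Phi^\theta_{(0,t)}(x),\theta_t\big)\eta_t, \qquad v(0)=0,
\end{equation*}
which by Duhamel's formula and \eqref{eq:linearizedFlow} can be written explicitly via the fundamental matrix $\nabla_x\Phi^\theta_{(s,T)}$. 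All the Gronwall-type bounds and continuous dependence estimates needed here are precisely the ``technical properties for characteristic flows'' the statement defers to Appendix \ref{subsection:ControlFlow}, so I would invoke them as black boxes.

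Next I would differentiate under the integral sign: by the chain rule the Fr\'echet derivative of $J_\ell$ at $\theta$ in direction $\eta$ is
\begin{equation*}
DJ_\ell(\theta)(\eta) = \int_{\R^{2d}} \nabla_x\ell\big(\Phi^\theta_{(0,T)}(x),y\big)\cdot v^{\theta,\eta}(T,x)\,\rd\mu_0(x,y),
\end{equation*}
and inserting the Duhamel representation of $v^{\theta,\eta}(T,x)$ and swapping the time integral with the $\mu_0$-integral (Fubini, justified by the uniform compact support and boundedness of all factors) lets me read off the Riesz representative: for a.e.\ $t\in[0,T]$,
\begin{equation*}
\nabla_\theta J_\ell(\theta)_t = \int_{\R^{2d}} \nabla_\theta\MF\big(t,\Phi^\theta_{(0,t)}(x),\theta_t\big)^{\!\top}\, \nabla_x\Phi^\theta_{(t,T)}\big(\Phi^\theta_{(0,t)}(x)\big)^{\!\top}\, \nabla_x\ell\big(\Phi^\theta_{(0,T)}(x),y\big)\,\rd\mu_0(x,y).
\end{equation*}
This is the object whose Lipschitz dependence on $\theta$ I must control; it is exactly the adjoint/costate structure underlying the third line of \eqref{mfPMP}, so this representation is also what makes the later PMP derivation transparent.

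The Lipschitz estimate is then obtained by subtracting the above expression for $\theta^1$ and $\theta^2$, adding and subtracting terms so that the difference is split into three pieces, each isolating the variation of one of the three factors: (i) $\nabla_\theta\MF(t,\Phi^{\theta^1}_{(0,t)}(x),\theta^1_t) - \nabla_\theta\MF(t,\Phi^{\theta^2}_{(0,t)}(x),\theta^2_t)$, controlled by Assumption \ref{asum1}-$(iv)$ together with the Lipschitz-in-$x$ bound of Assumption \ref{asum2}-$(iii)$ and the flow stability estimate $\|\Phi^{\theta^1}_{(0,t)} - \Phi^{\theta^2}_{(0,t)}\|_{\MC^0(B(R))}\le C\|\theta^1-\theta^2\|_1$ from the appendix; (ii) the difference of the fundamental matrices $\nabla_x\Phi^{\theta^1}_{(t,T)} - \nabla_x\Phi^{\theta^2}_{(t,T)}$, controlled by a Gronwall argument applied to \eqref{eq:linearizedFlow} using Assumption \ref{asum2}-$(i)$ (bounds on $\nabla_x\MF$, $\nabla_x^2\MF$) — this yields again a bound by $\|\theta^1-\theta^2\|_1$; and (iii) the difference $\nabla_x\ell(\Phi^{\theta^1}_{(0,T)}(x),y) - \nabla_x\ell(\Phi^{\theta^2}_{(0,T)}(x),y)$, controlled by $\ell\in\MC^2$ on the compact set $B(R_T)$ together with flow stability. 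Summing, using the boundedness of every remaining factor on the fixed compact set, and bounding $\|\theta^1-\theta^2\|_1\le\sqrt{T}\,\|\theta^1-\theta^2\|_2$, gives
\begin{equation*}
\big\|\nabla_\theta J_\ell(\theta^1) - \nabla_\theta J_\ell(\theta^2)\big\|_2 \le \Lcal\big(T,R,\|\theta^1\|_1,\|\theta^2\|_1\big)\,\big\|\theta^1-\theta^2\big\|_2,
\end{equation*}
with $\Lcal$ depending on $\|\theta^1\|_1,\|\theta^2\|_1$ only through the (exponential) Gronwall constants. I expect the main obstacle to be the careful bookkeeping in piece (ii): establishing a quantitative stability estimate for the linearized flow $\nabla_x\Phi^\theta_{(t,T)}$ with respect to $\theta$ in $L^2$, since this requires differentiating the matrix ODE \eqref{eq:linearizedFlow} in the coefficient $\nabla_x\MF(t,\Phi^\theta_{(0,t)}(x),\theta_t)$, which itself depends on $\theta$ both directly and through the nonlinear flow — this is the step most in need of the auxiliary lemmas of Appendix \ref{subsection:ControlFlow}, and where uniform-in-$x$ (over the compact support) control matters most.
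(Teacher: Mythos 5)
Your proposal follows essentially the same route as the paper: rewrite $J_\ell$ via the pushforward as $\int \ell(\Phi^\theta_{(0,T)}(x),y)\,\rd\mu_0$, differentiate the flow in $\theta$ via the linearized/Duhamel representation (the paper invokes its Proposition~\ref{prop:DiffFlow}, whose resolvent $\Rcal^\theta_{(t,T)}(x)$ coincides with your $\nabla_x\Phi^\theta_{(t,T)}(\Phi^\theta_{(0,t)}(x))$, so the gradient formulas agree), swap integrals by Fubini, and obtain the Lipschitz bound by splitting the difference into the three factor variations with Gr\"onwall control on the compact support. The paper is in fact more terse on the Lipschitz step — it simply asserts the gradient is a product of quantities bounded and Lipschitz in $\theta$ on $L^1$-bounded sets and cites the appendix estimates — so your three-piece decomposition is a more explicit rendering of the same argument.
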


\begin{proof}
We start by fixing a control signal $\theta \in L^2([0,T];\R^m)$. Following the discussion in Appendix \ref{subsection:Flows} below, the unique solution $\mu^{\theta} \in \Ccal([0,T];\Pcal_c(\R^{2d}))$ of the controlled continuity equation can be expressed as $\mu_t^{\theta} = \BPhi^{\theta}_{(0,t)}\sharp \mu_0$, where
\begin{equation*}
\BPhi_{(0,t)}^{\theta}(x,y) = \big( \Phi^{\theta}_{(0,t)}(x),y \big)
\end{equation*}
for all $(x,y) \in \R^{2d}$, with $(\Phi^{\theta}_{(0,t)}(\cdot))_{t \in [0,T]}$ being the characteristic flow defined in \eqref{eq:ThetaFlow}. In particular, this allows us to rewrite the reduced final cost as 
\begin{equation*}
J_{\ell}(\theta) = \INTDom{\ell \Big( \Phi^{\theta}_{(0,T)}(x),y \Big)}{\R^{2d}}{\mu_0(x,y)}. 
\end{equation*}
Given another control signal $\vartheta \in L^2([0,T];\R^m)$ and some $\varepsilon > 0$, we know by Proposition \ref{prop:DiffFlow} that the following Taylor expansion 
\begin{equation}
\label{eq:FlowTaylorCost1}
\Phi^{\theta+\epsilon \vartheta}_{(0,T)}(x) = \Phi^{\theta}_{(0,T)}(x) + \varepsilon \INTSeg{\Rcal^{\theta}_{(t,T)}(x) \nabla_{\theta} \Fcal \big( t , \Phi^{\theta}_{(0,t)}(x) , \theta_t \big) \vartheta_t}{t}{0}{T} + o_{\theta}(\varepsilon)
\end{equation}
holds for all $(t,x) \in [0,T] \times B(R)$, where $(\Rcal^{\theta}_{(\tau,t)}(\cdot))_{t \in [0,T]} \subset \Ccal^1(\R^d;\R^{d \times d})$ are the resolvent maps of the linearized Cauchy problem defined as in \eqref{eq:Resolvent}. Since the small-o in \eqref{eq:FlowTaylorCost1} is uniform in $x \in B(R)$, it holds by  Lebesgue's dominated convergence and Fubini's theorems that 
\begin{equation}
\label{eq:FlowTaylorCost2}
\begin{aligned}
& \INTDom{\ell \Big( \Phi^{\theta+\varepsilon \vartheta}_{(0,T)}(x),y \Big)}{\R^{2d}}{\mu_0(x,y)} \\
& = \INTDom{\ell \Big( \Phi^{\theta}_{(0,T)}(x),y \Big)}{\R^{2d}}{\mu_0(x,y)} \\
& \hspace{0.45cm} + \varepsilon \INTSeg{\bigg\langle \INTDom{\Big( \Rcal^{\theta}_{(t,T)}(x) \nabla_{\theta} \Fcal \big( t , \Phi^{\theta}_{(0,t)}(x),\theta_t \big) \Big)^{\hspace{-0.1cm} \top} \nabla_x \ell \big(\Phi^{\theta}_{(0,T)}(x), y \big)}{\R^{2d}}{\mu_0(x,y)} , \vartheta_t \bigg\rangle}{t}{0}{T} + o_{\theta}(\varepsilon),
\end{aligned}
\end{equation}
for every $\varepsilon > 0$ small enough. From the regularity estimates of Assumption \ref{asum1}, Proposition \ref{prop:BoundFlow} and Proposition \ref{prop:DiffFlow}, we may infer that the Gateaux derivative expressed in \eqref{eq:FlowTaylorCost2} is continuous with respect to $\theta \in L^2([0,T];\R^m)$, so that the reduced final cost is Fr\'echet-differentiable, with
\begin{equation}
\label{eq:ReducedCostGrad}
\nabla_{\theta} J_{\ell}(\theta) : t \in [0,T] \mapsto \INTDom{\Big( \Rcal^{\theta}_{(t,T)}(x) \nabla_{\theta} \Fcal \big( t , \Phi^{\theta}_{(0,t)}(x),\theta_t \big) \Big)^{\hspace{-0.1cm} \top} \nabla_x \ell \big(\Phi^{\theta}_{(0,T)}(x), y \big)}{\R^{2d}}{\mu_0(x,y)}.
\end{equation}
At this stage, by resorting again to Assumptions \ref{asum1} and \ref{asum2}, Proposition \ref{prop:BoundFlow} and Proposition \ref{prop:DiffFlow}, one can check that the previous expression is a (formal) product of quantities which are bounded and Lipschitz with respect to $\theta$ on bounded subsets of $L^1([0,T];\R^m)$. Whence, for every pair $\theta^1,\theta^2 \in L^2([0,T];\R^m)$, there exists a constant $\Lcal(T,R,\|\theta^1\|_1,\|\theta^2\|_1)$ such that 
\begin{equation*}
\big\| \nabla_{\theta} J_{\ell}(\theta^1) - \nabla_{\theta} J_{\ell}(\theta^2) \big\|_2 \leq \Lcal(T,R,\|\theta^1\|_1, \| \theta^2\|_1) \, \big\| \theta^1 - \theta^2 \big\|_2,
\end{equation*}
which ends the proof of our claim. 
\end{proof}

We are now ready to move on to the proof of Proposition \ref{prop:Semiconvexity}. 

\begin{proof}[Proof of Proposition \ref{prop:Semiconvexity}]
First, observe that the reduced cost of the problem can be written as 
\begin{equation*}
J(\theta) = J_{\ell}(\theta) + \lambda \| \theta \|_2^2
\end{equation*}
for all $\theta \in L^2([0,T];\R^m)$, where $J_{\ell}(\mu_0,\theta)$ stands for the reduced final cost defined in \eqref{eq:FinalReduced}. Whence, it can be easily checked as a consequence of Lemma \ref{lem:FinalCostReg} that the reduced cost is Fr\'echet-differentiable, with 
\begin{equation}
\nabla_{\theta} J(\theta) = \nabla_{\theta} J_{\ell}(\theta) + 2\lambda \theta.
\end{equation}
Let $\Gamma \subset L^2([0,T];\R^m)$ be a closed ball and $\theta^1,\theta^2 \in \Gamma$. By performing routine computations based on the integral version of Taylor's theorem (see e.g. \cite[Lemma 6]{LipReg} for a detailed proof in the finite-dimensional case), one can show that 
\begin{equation*}
\begin{aligned}
J_{\ell} \big( (1-\zeta)\theta^1 + \zeta \theta^2) \big) & \leq (1-\zeta) J_{\ell}(\theta^1) + \zeta J_{\ell}(\theta^2) + \Lip( \nabla_{\theta} J_{\ell} \, ; \Gamma) \tfrac{\zeta(1-\zeta)}{2} \big\| \theta^1-\theta^2 \big\|_2^2 \\
& \leq (1-\zeta) J_{\ell}(\theta^1) + \zeta J_{\ell}(\theta^2) + \Lcal(T,R,\Gamma) \tfrac{\zeta(1-\zeta)}{2} \big\| \theta^1-\theta^2 \big\|_2^2, 
\end{aligned}
\end{equation*}
for all $\zeta \in [0,1]$, where the constant $\Lcal(T,R,\Gamma) := \Lcal(T,R,\|\theta^1\|_1 , \|\theta^2\|_1)$ is given as in Lemma \ref{lem:FinalCostReg}. This, together with the standard fact of convex analysis in Hilbert spaces stating that 
\begin{equation*}
\big\| (1-\zeta) \theta^1 + \zeta \theta^2 \big\|_2^2 \leq (1-\zeta)\|\theta^1\|_2^2 + \zeta\|\theta^2\|_2^2 - \tfrac{\zeta(1-\zeta)}{2} \big\| \theta^1 - \theta^2 \big\|_2^2
\end{equation*}
allows us to conclude that the reduced cost functional satisfies the semiconvexity estimate \eqref{eq:Semiconvexity} over $\Gamma$. 
\end{proof}

By leveraging the semiconvexity result of Proposition \ref{prop:Semiconvexity}, we are able to derive sufficient conditions for the existence of mean-field optimal controls.

\begin{thm}[Existence of minimizers]
\label{thm:Exist}
Let $T,R > 0$, $\mu_0 \in \Pcal_c(\R^{2d})$ be such that $\supp(\mu_0) \subset B(R)$, and $\Gamma \subset L^2([0,T];\R^m)$ be the closed ball of radius $C_{\Gamma}^{1/2} := \| \ell \|_{\Ccal(B(R))}+1$. If the regularization parameter is such that $\lambda > \tfrac{1}{2}\Lcal(T,R,\Gamma)$ where the latter constant is given as in Lemma \ref{lem:FinalCostReg}, then there exists a unique optimal control $\theta^* \in \Gamma$ for \eqref{Pcost}.
\end{thm}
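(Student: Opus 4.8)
The plan is to combine the semiconvexity estimate of Proposition \ref{prop:Semiconvexity} with the direct method of the calculus of variations, taking some care to ensure that minimizing sequences can be assumed to lie in the ball $\Gamma$. First I would observe that the regularization term alone provides a useful a priori bound: evaluating $J$ at the constant control $\theta \equiv 0$ gives $J(0) = \int_{\R^{2d}} \ell(x,y) \rd \mu_T^0(x,y) \leq \|\ell\|_{\Ccal(B(R_T))}$, where $R_T$ is the support radius from Theorem \ref{thm:Wellposed}; since $J(\theta) \geq \lambda \|\theta\|_2^2$ for every admissible control, any control $\theta$ with $J(\theta) \leq J(0)$ must satisfy $\lambda \|\theta\|_2^2 \leq \|\ell\|_{\Ccal(B(R_T))}$. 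The slight subtlety is matching this to the stated radius $C_\Gamma^{1/2} = \|\ell\|_{\Ccal(B(R))} + 1$: one needs that for $\lambda$ large enough (which is already assumed), the bound $\lambda^{-1}\|\ell\|_{\Ccal(B(R_T))}$ is below $C_\Gamma$, so that the sublevel set $\{J \leq J(0)\}$ is contained in the interior of $\Gamma$. Hence $\inf_{L^2([0,T];\R^m)} J = \inf_\Gamma J$, and it suffices to minimize over the (weakly compact) convex set $\Gamma$.

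Next I would run the direct method on $\Gamma$. Take a minimizing sequence $(\theta^k)_k \subset \Gamma$; by weak compactness of balls in the Hilbert space $L^2([0,T];\R^m)$, extract a subsequence with $\theta^k \rightharpoonup \theta^*$, and $\theta^* \in \Gamma$ since $\Gamma$ is convex and closed, hence weakly closed. I then need lower semicontinuity of $J$ along this weakly convergent sequence. The regularization term $\lambda \|\theta\|_2^2$ is convex and strongly continuous, hence weakly lower semicontinuous, so that part is free. For the final-cost term $J_\ell$, the cleanest route is to invoke the strict convexity established in Proposition \ref{prop:Semiconvexity}: since $\lambda > \tfrac12 \Lcal(T,R,\Gamma)$, the functional $J$ is convex on $\Gamma$, and by Lemma \ref{lem:FinalCostReg} it is Fréchet-differentiable hence (norm-)continuous; a convex, strongly lower semicontinuous functional on a Banach space is weakly lower semicontinuous. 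Therefore $J(\theta^*) \leq \liminf_k J(\theta^k) = \inf_\Gamma J$, so $\theta^*$ is a minimizer. Uniqueness is immediate from strict convexity of $J$ on the convex set $\Gamma$: two distinct minimizers would contradict \eqref{eq:Semiconvexity} with $\zeta = 1/2$.

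The main obstacle, as I see it, is the bookkeeping around the radius of $\Gamma$ — making sure that "$\lambda$ sufficiently large" is quantified so that simultaneously (a) $\lambda > \tfrac12 \Lcal(T,R,\Gamma)$ for the particular ball $\Gamma$ of radius $C_\Gamma^{1/2}$, and (b) the a priori bound forces minimizing controls into $\Gamma$; note that $\Lcal(T,R,\Gamma)$ itself depends on the radius of $\Gamma$ through the $\|\theta^i\|_1$ arguments in Lemma \ref{lem:FinalCostReg}, so one cannot let $\Gamma$ grow with $\lambda$ — the radius must be fixed first (as the statement does), and only then is $\lambda$ chosen large. Once the ball is pinned down as in the statement, everything else is the standard direct-method argument, and I would keep that part brief. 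A cosmetic alternative to the weak-lsc step, avoiding any convexity subtlety, would be to note that $\Gamma$ is bounded in $L^2$ hence in $L^1$, so along $\theta^k \rightharpoonup \theta^*$ the corresponding flows $\Phi^{\theta^k}_{(0,T)}$ converge uniformly on $B(R)$ (by the flow estimates in the appendix and the fact that weak $L^2$ convergence gives convergence of $\int_0^t \theta^k_s \rd s$), whence $\mu_T^{\theta^k} \to \mu_T^{\theta^*}$ narrowly and $J_\ell(\theta^k) \to J_\ell(\theta^*)$ by continuity of $\ell$; combined with weak lsc of $\lambda\|\cdot\|_2^2$ this again yields $J(\theta^*) \leq \liminf_k J(\theta^k)$. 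I would present whichever of these two is shorter given the appendix results actually proved.
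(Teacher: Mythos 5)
Your main argument is exactly the paper's: use $J(0)$ to trap minimizing sequences in $\Gamma$, invoke weak sequential compactness of the closed ball in $L^2$, deduce weak lower semicontinuity of $J$ on $\Gamma$ from the strict convexity of Proposition \ref{prop:Semiconvexity} together with the strong $L^2$-continuity coming from Lemma \ref{lem:FinalCostReg}, and obtain uniqueness from strict convexity. Your radius bookkeeping is in fact tighter than the paper's own: the paper writes $J(0) \le \|\ell\|_{\Ccal(B(R))}$ and concludes $\|\theta^n\|_2 \le C_\Gamma^{1/2}$ directly, which implicitly assumes both $\lambda \ge 1$ and something like $\Fcal(t,x,0)=0$, whereas your version correctly uses $R_T$ and observes that ``$\lambda$ sufficiently large'' is what pushes the sublevel set of $J(0)$ into $\Gamma$.

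The ``cosmetic alternative'' you sketch at the end, however, has a genuine gap and should be dropped. Weak $L^2$-convergence $\theta^k \rightharpoonup \theta^*$ does \emph{not} yield uniform convergence of the flows $\Phi^{\theta^k}_{(0,T)}$: Proposition \ref{prop:BoundFlow} is a Lipschitz estimate in the \emph{strong} norm $\|\theta^1-\theta^2\|_2$, not in any weak topology, and convergence of the primitives $\int_0^t \theta^k_s\,\rd s$ is insufficient because $\Fcal$ is nonlinear in $\theta$. Concretely, for a scalar model with $\Fcal(t,x,\theta)=\theta^2$ one can take $\theta^k_t=\sin(kt)\rightharpoonup 0$ while $\int_0^T(\theta^k_s)^2\,\rd s\to T/2\ne 0$, so the flows do not converge to the flow of $\theta^*\equiv 0$ and $J_\ell(\theta^k)\not\to J_\ell(\theta^*)$. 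The convexity route is therefore the essential ingredient here, not a stylistic choice.
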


\begin{proof}
The result follows from a standard application of the direct method of the calculus of variations. Given a minimizing sequence $(\theta^n) \subset L^2([0,T];\R^m)$ for which
\begin{equation}
\label{eq:minimizing}
J(\theta^n) ~\underset{n \to +\infty}{\longrightarrow}~ \inf_{\theta \in L^2([0,T];\R^m)} J(\theta), 
\end{equation}
it necessarily holds for $n \geq 1$ sufficiently large that 
\begin{equation*}
J(\theta^n) \leq J(0) + 1 \leq \| \ell \|_{\Ccal(B(R))} + 1.
\end{equation*}
Recalling the expression \eqref{eq:ReducedCost} of the reduced cost, this implies in particular that $\| \theta^n \|_2 \leq C_{\Gamma}^{1/2}$ for each $n \geq 1$, or equivalently $(\theta^n) \subset \Gamma$. Remark now that $\Gamma \subset L^2([0,T];\R^m)$ is weakly compact since it is a closed ball in a Hilbert space (see e.g. \cite[Theorem 3.17]{Brezis}), so that there exists an element $\theta^* \in \Gamma$ for which
\begin{equation*}
\qquad \theta^{n_k} ~\underset{k \to +\infty}{\rightharpoonup}~ \theta^* \qquad \text{in $L^2([0,T];\R^m)$}, 
\end{equation*}
along an adequate subsequence. Moreover, it easily follows from Lemma \ref{lem:FinalCostReg} that $\theta \mapsto J(\theta) \in \R$ is continuous in the strong $L^2$-topology, as well as convex since we assumed that $\lambda > \tfrac{1}{2} \Lcal(T,R,\Gamma)$. As such, it is weakly lower-semicontinuous (see e.g. \cite[Corollary 3.9]{Brezis}), which together with \eqref{eq:minimizing} implies that 
\begin{equation*}
J(\theta^*) \, \leq \, \liminf_{n \to +\infty} J(\theta^n) \, = \inf_{\theta \in L^2([0,T];\R^m)} J(\theta).
\end{equation*}
Hence, we have shown that $\theta^* \in \Gamma$ is a solution of the mean-field optimal control problem \eqref{Pcost}, and its uniqueness follows straightforwardly from the strict convexity of the reduced cost. 
\end{proof}


\subsection{Stability of finitely-sampled costs and controls}

In this section, we establish a general stability property for solutions of the mean-field optimal control problem \eqref{Pcost} with respect to finite-samples. More precisely, assume that we are given a sample $\{(X_0^i,Y_0^i)\}_{i=1}^N$ of size $N \geq 1$ independently and identically distributed according to $\mu_0 \in \Pcal_c(\R^{2d})$, and let us consider the empirical loss minimization problem
\begin{equation}
\label{PNCost}
\inf_{\theta \in L^2([0,T];\R^m)} J^N(\theta) := 
\left\{
\begin{aligned}
& \inf_{\theta \in L^2([0,T];\R^m)} \frac{1}{N} \sum_{i=1}^N \ell(X_T^i,Y_T^i) + \lambda \INTSeg{|\theta_t|^2}{t}{0}{T} \\
& \,\; \textnormal{s.t.} \,\,
\left\{
\begin{aligned}
& \dot X_t^i = \Fcal(t,X_t^i,\theta_t), \hspace{2.45cm} \dot Y_t^i =0, \\
& (X_t^i,Y_t^i)_{|t=0} = (X_0^i,Y_0^i), ~~ i \in \{1,\dots,N\}.
\end{aligned}
\right.
\end{aligned}
\right.
\end{equation}
By introducing the empirical measure $\mu_0^N \in \Pcal_c^N(\R^{2d})$, defined by 
\begin{equation}
\label{eq:EmpiricalDef}
\mu_0^N := \frac{1}{N} \sum_{i=1}^N \delta_{(X_0^i,Y_0^i)}, 
\end{equation}
the latter can be rewritten as the mean-field optimal control problem \eqref{Pcost} with initial datum $\mu^N_0$. In the following theorem, we show that when the regularization parameter $\lambda > 0$ is sufficiently large and the empirical samples satisfy
\begin{equation}
\label{eq:EmpiricalConv}
W_1(\mu_0^N,\mu_0) ~\underset{N \to +\infty}{\longrightarrow}~ 0, 
\end{equation}
then the minimizers and optimal values of the problems \eqref{PNCost} converge in a suitable sense towards those of \eqref{Pcost}. Even though we do not resort explicitly to this terminology in the sequel, this stability result amounts to showing that the sequence $(J^N)$ is $\Gamma$-converging towards $J$ for the weak topology of $L^2([0,T];\R^m)$ in the sense e.g. of \cite{DM}. Although it bears some interest and provides insights on the finite data consistency of the problem, the result that follows  is non-quantitative and purely based on compactness arguments. In order to obtain a quantitative version of this stability property, it is necessary to establish a smooth relation between optimal controls the $\theta^*$ and the data distributions $\mu_0$. Such a connection will be realized through the fundamental formula  \eqref{eq3} below, by leveraging the mean-field PMP studied in Section \ref{sec:4}.

\begin{thm}[Stability of finitely sampled costs and controls]
\label{thm:Stability}
Let $T,R > 0$ be given, $\mu_0 \in \Pcal_c(\R^{2d})$ be such that $\supp(\mu_0) \subset B(R)$, and assume that Assumptions \ref{asum1} and \ref{asum2} hold. Moreover, suppose that $\lambda > 0$ is sufficiently large in the sense of Theorem \ref{thm:Exist}. 

Then for every empirical approximating sequence $(\mu_0^N)$ satisfying \eqref{eq:EmpiricalDef}-\eqref{eq:EmpiricalConv}, the corresponding sequence of optimal controls $(\theta^N) \subset L^2([0,T];\R^m)$ is such that 
\begin{equation}
\label{eq:GammaConvCont}
\theta^N ~\underset{N \to +\infty}{\rightharpoonup}~ \theta^* \qquad \text{in $L^2([0,T];\R^m)$}, 
\end{equation}
where $\theta^* \in L^2([0,T];\R^m)$ is the unique solution of \eqref{Pcost}. Moreover, the optimal values converge as well, in the sense that
\begin{equation}
\label{eq:GammaConvCost}
J^N(\theta^N) ~\underset{N \to + \infty}{\longrightarrow}~ J(\theta^*) ~ = \min_{\theta \in L^2([0,T];\R^m)} J(\theta).
\end{equation}
\end{thm}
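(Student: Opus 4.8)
The plan is to follow the direct method / $\Gamma$-convergence strategy sketched in the paragraph preceding the statement, combining the uniform a priori bounds already established in Theorem \ref{thm:Exist} with the compactness of balls in $L^2$ and a careful passage to the limit in the final cost.

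\textbf{Step 1: Uniform bound and extraction.} First I would note that since $\mathcal F(t,x,0)$ is bounded on $B(R_T)$ by Assumption \ref{asum1}-$(ii)$ and $\ell \in \mathcal C^2$, the argument in the proof of Theorem \ref{thm:Exist} applies verbatim with $\mu_0$ replaced by $\mu_0^N$: testing against $\theta \equiv 0$ gives $J^N(\theta^N) \leq J^N(0) = \int \ell \, \rd(\mu_T^N)_{|\theta=0} \leq \|\ell\|_{\mathcal C(B(R_T))}$, where the radius $R_T$ depends only on $R$ and $C_{\mathcal F}$ (and not on $N$, since $\supp(\mu_0^N) \subset B(R)$ as soon as the sample points lie in $B(R)$; note $W_1(\mu_0^N,\mu_0) \to 0$ forces the supports to accumulate on $\supp(\mu_0) \subset B(R)$, so one may work on a fixed slightly larger ball). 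Hence $\|\theta^N\|_2 \leq C_\Gamma^{1/2}$ uniformly, so $(\theta^N) \subset \Gamma$, and by weak compactness of $\Gamma$ there is a subsequence with $\theta^{N_k} \rightharpoonup \bar\theta$ in $L^2$ for some $\bar\theta \in \Gamma$.

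\textbf{Step 2: $\liminf$ inequality.} Next I would show $J(\bar\theta) \leq \liminf_k J^{N_k}(\theta^{N_k})$. Split $J^N = J_\ell^N + \lambda\|\cdot\|_2^2$, where $J_\ell^N(\theta) = \int \ell(x,y)\,\rd\mu_T^{N,\theta}(x,y) = \int \ell(\Phi^\theta_{(0,T)}(x),y)\,\rd\mu_0^N(x,y)$. The map $\theta \mapsto J_\ell^N(\theta)$ is convex and strongly continuous (by Lemma \ref{lem:FinalCostReg}, whose constants are uniform in $N$ since they depend only on $T,R$ and $\|\theta\|_1$), hence weakly lsc; but I need to handle the simultaneous variation of $\theta$ and $\mu_0^N$. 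The clean way: write $|J_\ell^N(\theta^{N_k}) - J_\ell(\theta^{N_k})| = |\int \ell(\Phi^{\theta^{N_k}}_{(0,T)}(x),y)\rd(\mu_0^{N_k} - \mu_0)|$; since all flows $\Phi^{\theta^{N_k}}_{(0,T)}$ are equi-Lipschitz on $B(R)$ with constant $e^{L_{\mathcal F,T,C_\Gamma^{1/2}}}$ (Theorem \ref{thm:Wellposed}) and $\ell$ is Lipschitz on the fixed ball $B(R_T)$, the integrand $x \mapsto \ell(\Phi^{\theta^{N_k}}_{(0,T)}(x),y)$ is Lipschitz with a constant $L^\ast$ independent of $k$, so by Kantorovich duality \eqref{Kanto} this difference is $\leq L^\ast W_1(\mu_0^{N_k},\mu_0) \to 0$. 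Therefore $\liminf_k J_\ell^{N_k}(\theta^{N_k}) = \liminf_k J_\ell(\theta^{N_k}) \geq J_\ell(\bar\theta)$ by weak lsc of $J_\ell$, and combining with $\liminf_k \lambda\|\theta^{N_k}\|_2^2 \geq \lambda\|\bar\theta\|_2^2$ (weak lsc of the norm) gives $\liminf_k J^{N_k}(\theta^{N_k}) \geq J(\bar\theta)$.

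\textbf{Step 3: $\limsup$ inequality / recovery, and conclusion.} For the matching upper bound, let $\theta^*$ be the (unique) minimizer of $J$. Using $\theta^*$ as a competitor for each $J^{N}$: $J^{N_k}(\theta^{N_k}) \leq J^{N_k}(\theta^*) = J_\ell^{N_k}(\theta^*) + \lambda\|\theta^*\|_2^2$, and by the same Kantorovich estimate as above (now with the single fixed flow $\Phi^{\theta^*}_{(0,T)}$), $J_\ell^{N_k}(\theta^*) \to J_\ell(\theta^*)$, so $\limsup_k J^{N_k}(\theta^{N_k}) \leq J(\theta^*)$. Chaining with Step 2, $J(\bar\theta) \leq \liminf_k J^{N_k}(\theta^{N_k}) \leq \limsup_k J^{N_k}(\theta^{N_k}) \leq J(\theta^*) = \min J$, which forces $\bar\theta = \theta^*$ (uniqueness, since $\lambda$ is large) and $J^{N_k}(\theta^{N_k}) \to J(\theta^*)$. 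Finally, since the limit $\theta^* = \bar\theta$ is the same for every weakly convergent subsequence, a standard subsequence-of-subsequence argument upgrades \eqref{eq:GammaConvCont} and \eqref{eq:GammaConvCost} to convergence of the full sequences. The main obstacle — and the only genuinely non-routine point — is controlling the coupled limit in $(\theta, \mu_0^N)$ in the final cost term; the key device is that the equi-Lipschitz bound on the characteristic flows (uniform over $\theta \in \Gamma$, from Theorem \ref{thm:Wellposed}) lets one absorb the measure convergence via $W_1$-duality separately from the weak-$L^2$ convergence of controls, rather than needing a joint continuity statement.
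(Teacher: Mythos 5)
Your overall strategy is sound, but there is one genuine error in Step 2 that needs to be repaired: you assert that the reduced final cost $J_\ell$ is convex and hence weakly lower semicontinuous. That is not what the paper proves, and it is false in general. Lemma \ref{lem:FinalCostReg} only gives Fr\'echet differentiability with a Lipschitz gradient, and Proposition \ref{prop:Semiconvexity} converts this into a \emph{semiconvexity} estimate: $J_\ell + \tfrac{\Lcal}{2}\|\cdot\|_2^2$ is convex, but $J_\ell$ itself may fail to be convex since the flow $\theta \mapsto \Phi^\theta_{(0,T)}$ enters $\ell$ nonlinearly. Convexity only holds for the \emph{regularized} cost $J = J_\ell + \lambda\|\cdot\|_2^2$ once $\lambda > \tfrac{1}{2}\Lcal$. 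Since a semiconvex $\Ccal^{1,1}$ function need not be weakly lsc (think of $-\|\cdot\|_2^2$), your inequality $\liminf_k J_\ell(\theta^{N_k}) \geq J_\ell(\bar\theta)$ is unjustified as written. The fix is immediate and does not require changing the structure: do not split. Write $J^{N_k}(\theta^{N_k}) = J(\theta^{N_k}) + \big(J^{N_k}_\ell - J_\ell\big)(\theta^{N_k})$, observe that the bracket tends to $0$ by exactly the Kantorovich/equi-Lipschitz-flow estimate you already set up (this difference only involves the final-cost term, as the $L^2$-penalties cancel), and then invoke weak lower semicontinuity of the \emph{full} $J$ — which is legitimate, since $J$ is convex and strongly continuous on $\Gamma$ for large $\lambda$. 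This yields $\liminf_k J^{N_k}(\theta^{N_k}) = \liminf_k J(\theta^{N_k}) \geq J(\bar\theta)$.

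With that repair your route is correct, and it is in fact somewhat different from (and leaner than) the paper's. For the $\liminf$ bound, the paper exploits convexity through the first-order gradient inequality $J^N(\theta^N) \geq J^N(\theta^*) + \langle \nabla J^N(\theta^*), \theta^N - \theta^*\rangle$, combined with the strong $L^2$-convergence $\nabla J^N(\theta^*) \to \nabla J(\theta^*)$ from Lemma \ref{lem:CostDepN}; you use weak lower semicontinuity of the convex cost directly, which buys the same thing once the correction above is in place. For the $\limsup$ bound, the paper passes through Mazur's lemma to produce strongly convergent convex combinations $\tilde\theta^N$ and then uses equi-Lipschitz continuity of $J^N$, identifying the minimizer only afterwards by a separate argument with a minimizing sequence; you instead test $J^{N_k}$ directly against the already-known minimizer $\theta^*$ of $J$ (available from Theorem \ref{thm:Exist}) and chain the inequalities to simultaneously identify $\bar\theta = \theta^*$, which avoids Mazur entirely and is cleaner. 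One last minor point: the remark that $W_1(\mu_0^N,\mu_0)\to 0$ ``forces'' the supports to accumulate on $\supp(\mu_0)$ is not literally true (a single light atom may wander arbitrarily far with vanishing $W_1$-contribution); the intended reading, as in the paper's proof, is simply that the samples are i.i.d.\ from $\mu_0$, so $\supp(\mu_0^N)\subset B(R)$ for all $N$.
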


Before proving Theorem \ref{thm:Stability}, we state a useful auxiliary lemma exhibiting the dependence of the reduced empirical cost with respect to the sample size $N \geq 1$. 

\begin{lem}[Dependence of the reduced cost with respect to $N$]
\label{lem:CostDepN}
For every $\theta \in L^2([0,T];\R^m)$, there exists a constant $C(T,R,\|\theta\|_1) > 0$ such that  
\begin{equation}
\label{eq:CostDepN1}
\big| J(\theta) - J^N(\theta) \big| \leq C(T,R,\|\theta\|_1) W_1(\mu_0^N,\mu_0) 
\end{equation}
and 
\begin{equation}
\label{eq:CostDepN2}
\big\| \nabla J(\theta) - \nabla J^N(\theta) \big\|_2 \leq C(T,R,\|\theta\|_1) W_1(\mu_0^N,\mu_0)
\end{equation}
for each $N \geq 1$.
\end{lem}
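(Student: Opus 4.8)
The plan is to exploit the fact that the $L^2$-regularization term $\lambda\|\theta\|_2^2$ is common to $J$ and $J^N$. As observed after \eqref{eq:EmpiricalDef}, the empirical problem \eqref{PNCost} is nothing but the mean-field control problem with initial datum $\mu_0^N$; writing $J^N_{\ell}$ for the reduced final cost \eqref{eq:FinalReduced} associated with $\mu_0^N$, we thus have $J(\theta)-J^N(\theta)=J_{\ell}(\theta)-J^N_{\ell}(\theta)$ and $\nabla_{\theta}J(\theta)-\nabla_{\theta}J^N(\theta)=\nabla_{\theta}J_{\ell}(\theta)-\nabla_{\theta}J^N_{\ell}(\theta)$. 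Since the $N$-particle trajectories of \eqref{PNCost} satisfy $X_T^i=\Phi^{\theta}_{(0,T)}(X_0^i)$ and $Y_T^i=Y_0^i$, both $\mu_T^{\theta}$ and its empirical analogue are pushforwards of $\mu_0$, resp. $\mu_0^N$, by the \emph{same} map $\BPhi^{\theta}_{(0,T)}$. Using the flow representation of $J_{\ell}$ and the expression \eqref{eq:ReducedCostGrad} for its gradient, I would therefore write both differences as integrals of a fixed function against the \emph{signed} measure $\mu_0-\mu_0^N$:
\[ J_{\ell}(\theta)-J^N_{\ell}(\theta)=\int_{\R^{2d}} g_{\theta}(x,y)\,\rd(\mu_0-\mu_0^N)(x,y), \qquad g_{\theta}(x,y):=\ell\big(\Phi^{\theta}_{(0,T)}(x),y\big), \]
\[ \big(\nabla_{\theta}J_{\ell}(\theta)-\nabla_{\theta}J^N_{\ell}(\theta)\big)(t)=\int_{\R^{2d}} h_{\theta,t}(x,y)\,\rd(\mu_0-\mu_0^N)(x,y)\quad\text{for a.e. }t\in[0,T], \]
with $h_{\theta,t}(x,y):=\big(\Rcal^{\theta}_{(t,T)}(x)\nabla_{\theta}\Fcal(t,\Phi^{\theta}_{(0,t)}(x),\theta_t)\big)^{\top}\nabla_x\ell\big(\Phi^{\theta}_{(0,T)}(x),y\big)$.

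Because $\supp(\mu_0)$ and $\supp(\mu_0^N)$ are both contained in $B(R)$, both estimates then follow from the Kantorovich--Rubinstein duality \eqref{Kanto} once the Lipschitz constants of $g_{\theta}$ and $h_{\theta,t}$ on $B(R)$ are controlled. For \eqref{eq:CostDepN1} I would use that, by Theorem \ref{thm:Wellposed}, the flow $\Phi^{\theta}_{(0,T)}$ maps $B(R)$ into $B(R_T)$ and is Lipschitz on $B(R)$ with constant $e^{L_{\MF,T,\|\theta\|_1}}$ (this is the characteristic-level version of the stability estimate \eqref{stablity}), while $\ell\in\Ccal^2$ is Lipschitz on the compact set $B(R_T)\times B(R)$; composing, $g_{\theta}$ is Lipschitz on $B(R)$ with a constant $C(T,R,\|\theta\|_1)$. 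Extending it to a globally Lipschitz function on $\R^{2d}$ with the same constant (McShane's lemma) and testing $\mu_0-\mu_0^N$ against it through \eqref{Kanto} yields \eqref{eq:CostDepN1}.

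For the gradient bound \eqref{eq:CostDepN2}, the same argument applied for each fixed $t$ gives $\big|\big(\nabla_{\theta}J_{\ell}(\theta)-\nabla_{\theta}J^N_{\ell}(\theta)\big)(t)\big|\leq \kappa(t)\,W_1(\mu_0^N,\mu_0)$ with $\kappa(t):=\Lip(h_{\theta,t}|_{B(R)})$, so that squaring and integrating over $[0,T]$ produces \eqref{eq:CostDepN2} with $C(T,R,\|\theta\|_1):=\|\kappa\|_{L^2([0,T])}$ — \emph{provided} this norm is finite and controlled by the data. Establishing this last point is, I expect, the main technical obstacle. It requires the \emph{spatial} Lipschitz dependence of the resolvent maps $\Rcal^{\theta}_{(\cdot,T)}$ (not merely their boundedness), which I would obtain by differentiating the resolvent equation \eqref{eq:Resolvent} in $x$ and invoking the bounds on $\nabla_x\Fcal$, $\nabla_x^2\Fcal$ and $\nabla_x\nabla_{\theta}\Fcal$ from Assumptions \ref{asum1}--\ref{asum2} together with Propositions \ref{prop:BoundFlow} and \ref{prop:DiffFlow}; combined with the $\Ccal^2$-regularity of $\ell$ and the Lipschitz bounds on $\Phi^{\theta}_{(0,t)}$ and $\Phi^{\theta}_{(0,T)}$, this gives a pointwise bound for $\kappa(t)$ that is polynomial in $|\theta_t|$ with coefficients depending only on $T$ and $R$, hence square-integrable in $t$ with $\|\kappa\|_{L^2([0,T])}\leq C(T,R,\|\theta\|_1)$. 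The delicate part of the bookkeeping is to verify that only $\|\theta\|_1$ (and not a higher norm) enters the final constant; this is entirely parallel to the estimates already carried out in the proof of Lemma \ref{lem:FinalCostReg}, the only genuinely new ingredient being the regularity of $\Rcal^{\theta}$ in the space variable.
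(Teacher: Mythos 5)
Your proposal takes essentially the same route as the paper. For \eqref{eq:CostDepN1} the paper goes through the stability estimate $W_1(\mu_T,\mu_T^N)\leq e^{L_{\Fcal,T,\|\theta\|_1}}W_1(\mu_0^N,\mu_0)$ from Theorem~\ref{thm:Wellposed} and then applies Kantorovich duality at the final time, while you fold these into the Lipschitz constant of $\ell\circ\Phi^{\theta}_{(0,T)}$ and apply duality at time $0$ — the same bound in a different order; for \eqref{eq:CostDepN2} the paper writes exactly your representation of the gradient difference against $\mu_0-\mu_0^N$, controls the time-integrated $\Ccal^1(B(R))$-norm of the integrand using Assumptions~\ref{asum1}–\ref{asum2}, the resolvent definition~\eqref{eq:Resolvent} and Propositions~\ref{prop:BoundFlow}–\ref{prop:DiffFlow}, and concludes by Kantorovich duality, which is precisely your plan.
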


\begin{proof}
Let us denote by $\mu,\mu^N \in \Ccal([0,T];\Pcal_c(\R^{2d})$ the solutions of \eqref{PDE} with control $\theta$ and initial data $\mu_0^N,\mu_0 \in \Pcal_c(\R^{2d})$ respectively. Under Assumptions \ref{asum1}, it follows from Theorem \ref{thm:Wellposed} that 
\begin{equation*}
\sup_{t \in [0,T]} W_1(\mu_t^N,\mu_t) \leq e^{L_{\Fcal,T,\|\theta\|_1}} W_1(\mu_0^N,\mu_0)
\end{equation*}
for some $L_{\Fcal,T,\|\theta\|_1} > 0$. This combined with Kantorovich's duality formula \eqref{Kanto} implies that
\begin{equation*}
\big| J(\theta) - J^N(\theta) \big| = \bigg| \INTDom{\ell(x,y)}{\R^{2d}}{\big( \mu_T - \mu_T^N \big)(x,y)} \bigg| \leq \Lip(\ell \, ; B(R)) \, e^{L_{\Fcal,T,\|\theta\|_1}} W_1(\mu_0^N,\mu_0), 
\end{equation*}
for each $N \geq 1$. Analogously by leveraging the analytical expression \eqref{eq:ReducedCostGrad} of the gradient of the reduced final cost , one also has that 
\begin{equation}
\label{eq:NablaEst1}
\begin{aligned}
& \big\| \nabla J(\theta) - \nabla J^N(\theta) \big\|_2^2 \\
& \hspace{1.25cm} \leq \INTSeg{ \bigg( \INTDom{\Big( \Rcal^{\theta}_{(t,T)}(x) \nabla_{\theta} \Fcal \big( t , \Phi^{\theta}_{(0,t)}(x),\theta_t \big) \Big)^{\hspace{-0.1cm} \top} \nabla_x \ell \big(\Phi^{\theta}_{(0,T)}(x), y \big)}{\R^{2d}}{\big( \mu_0 - \mu_0^N \big)(x,y)} \bigg)^2}{t}{0}{T}. 
\end{aligned}
\end{equation}
At this stage, one can check that as a consequence of Assumptions \ref{asum1} and \ref{asum2} along with the definition \eqref{eq:Resolvent} of the resolvent maps that there exists a constant $C'(T,R,\|\theta\|_1) > 0$ such that 
\begin{equation}
\label{eq:NablaEst2}
\INTSeg{\Big\| \Big( \Rcal^{\theta}_{(t,T)}(\cdot) \nabla_{\theta} \Fcal \big( t , \Phi^{\theta}_{(0,t)}(\cdot),\theta_t \big) \Big)^{\hspace{-0.1cm} \top} \nabla_x \ell \big(\Phi^{\theta}_{(0,T)}(\cdot), \cdot \big) \Big\|_{\Ccal^1(B(R))}^2}{t}{0}{T} \leq C'(T,R,\|\theta\|_1)^2.
\end{equation}
By combining \eqref{eq:NablaEst1} and \eqref{eq:NablaEst2} with an application of Kantorovich's duality formula \eqref{Kanto}, we finally obtain that 
\begin{equation*}
\big\| \nabla J(\theta) - \nabla J^N(\theta) \big\|_2 \leq C'(T,R,\|\theta\|_1) W_1(\mu_0^N,\mu_0)
\end{equation*}
for each $N \geq 1$, which concludes the proof of Lemma \ref{lem:CostDepN} by simply setting $C(T,R,\|\theta\|_1) := \max \big\{\Lip(\ell \, ; B(R)) \, e^{L_{\Fcal,T,\|\theta\|_1}} \, , \, C'(T,R,\|\theta\|_1) \big\}$.
\end{proof}

Building on these a priori estimates, we can move on to the proof of Theorem \ref{thm:Stability}.

\begin{proof}[Proof of Theorem \ref{thm:Stability}]
Observe first that and because $\supp(\mu_0^N) \subset B(R)$ for each $N \geq 1$ and we assumed $\lambda > 0$ to be sufficiently large, there exists a unique optimal control $\theta^N \in L^2([0,T];\R^m)$ solution of \eqref{PNCost} as a consequence of Theorem \ref{thm:Exist}. Noticing again that 
\begin{equation*}
J^N(\theta^N) \leq J^N(0) \leq \| \ell \|_{\Ccal^1(B(R))} +1 
\end{equation*}
for each $N \geq 1$, the sequence $(\theta^N)$ is uniformly contained in the closed ball $\Gamma \subset L^2([0,T];\R^m)$ whose radius is defined in Theorem \ref{thm:Exist}, and as such it admits a subsequence (that we do not relabel) which converges weakly to some $\theta^* \in L^2([0,T];\R^m)$. 

Our goal is to show that $\theta^*$ is the unique minimizer of $J$ and that the optimal values $(J^N(\theta^N))$ converge towards $J(\theta^*)$. To this end observe first that by Mazur's lemma (see e.g. \cite[Corollary 3.8]{Brezis}), there exists a sequence $(\tilde{\theta}^N)$ made of convex combinations of the elements of $(\theta^N)$ such that 
\begin{equation*}
\tilde{\theta}^N ~\underset{N \to +\infty}{\longrightarrow}~ \theta^* \qquad \text{in $L^2([0,T];\R^m)$}.
\end{equation*}
Recalling that $\theta^N$ are minimizers of $J^N$ and that these latter are uniformly equi-Lipschitz over $\Gamma$ as a consequence of Lemma \ref{lem:FinalCostReg}, it further holds that 
\begin{equation*}
\begin{aligned}
J^N(\theta^N) & \leq J^N(\tilde{\theta}^N) \\
& \leq J^N(\theta^*) + \big( \Lcal(T,R,\Gamma) + 2 \lambda \big) \big\| \theta^* - \tilde{\theta}^N \big\|_2,
\end{aligned}
\end{equation*}
for each $N \geq 1$. Using the stability estimate \eqref{eq:CostDepN1} of Lemma \ref{lem:CostDepN}, we can pass to the limit in the previous expression and obtain that 
\begin{equation}
\label{eq:GammaConvCost1}
\limsup_{N \to +\infty} J^N(\theta^N) \leq J(\theta^*).
\end{equation}
In order to recover a similar inequality for the liminf, notice that the reduced costs $J^N$ are convex by Proposition \ref{prop:Semiconvexity}, which implies that 
\begin{equation}
\label{eq:GammaConstCost2}
\begin{aligned}
J^N(\theta^N) & \geq J^N(\theta^*) + \big\langle \nabla J^N(\theta^*) , \theta^N - \theta^* \big\rangle_{L^2([0,T];\R^m)} \\
& \geq J^N(\theta^*) + \big\langle \nabla J(\theta^*) , \theta^N - \theta^* \big\rangle_{L^2([0,T];\R^m)} + \big\langle \nabla J(\theta^*) - \nabla J^N(\theta^*) , \theta^N - \theta^* \big\rangle_{L^2([0,T];\R^m)}
\end{aligned}
\end{equation}
for each $N \geq 1$. Observe now that by \eqref{eq:CostDepN2} in Lemma \ref{lem:CostDepN}, one has that
\begin{equation*}
\big\| \nabla J(\theta^*) - \nabla J^N(\theta^*) \big\|_2 ~\underset{N \to +\infty}{\longrightarrow}~ 0,
\end{equation*}
which together with the fact that $(\theta^N) \subset \Gamma$ is converging weakly towards $\theta^*$ then yields
\begin{equation*}
\big\langle \nabla J(\theta^*) - \nabla J^N(\theta^*) , \theta^* - \theta^N \big\rangle_{L^2([0,T];\R^m)} ~\underset{N \to +\infty}{\longrightarrow}~ 0,
\end{equation*}
by standard results on weak-strong convergence (see e.g. \cite[Proposition 3.5]{Brezis}).
Thus, by passing to the limit as $N \to +\infty$ in \eqref{eq:GammaConstCost2} while using \eqref{eq:CostDepN1} of Lemma \ref{lem:CostDepN}, we recover
\begin{equation}
\label{eq:GammaConvCost3}
J(\theta^*) \leq \liminf_{N \to + \infty} J^N(\theta^N), 
\end{equation}
which together with \eqref{eq:GammaConvCost1} finally implies that 
\begin{equation}
\label{eq:GammaConvCost4}
J^N(\theta^N) ~\underset{N \to +\infty}{\longrightarrow}~ J(\theta^*). 
\end{equation}
In order to conclude that $\theta^*$ is a minimizer of $J$, it is sufficient to consider a minimizing sequence $(\theta^n) \subset \Gamma$ for \eqref{Pcost} and to observe that by Lemma \ref{lem:CostDepN} and \eqref{eq:GammaConvCost4}, it holds that 
\begin{equation*}
\begin{aligned}
J(\theta^n) = \lim_{N \to +\infty} J^N(\theta^n) \geq \lim_{N \to +\infty} J^N(\theta^N) = J(\theta^*)
\end{aligned}
\end{equation*}
and to let $n \to +\infty$. The strict convexity of $J$ in turn provides the uniqueness of $\theta^*$, from whence we can deduce that it is the weak limit of the whole sequence $(\theta^N)$. 
\end{proof}


\section{Mean-Field Maximum Principle}
\label{sec:4}

In this section, we investigate first-order optimality conditions for the mean-field optimal control problem \eqref{Pcost}, which take the form of a mean-field Pontryagin Maximum Principle (``PMP'' for short). Their derivation -- which is based on a Lagrange multiplier rule for the convex calculus introduced in Section \ref{sec:2} -- is heuristically presented in Section \ref{sec:FormalLag}. After studying the well-posedness of the optimality system in Section \ref{sec:WellPosed}, we proceed to rigorously establish the PMP throughout Section \ref{sec:RigorDev}.


\subsection{Formal derivation of the Lagrangian maximum principle}
\label{sec:FormalLag}

We start this section by providing a formal derivation of the mean-field PMP. To this end, we first introduce the Lagrangian of the mean-field optimal control problem \eqref{Pcost}, defined by 
\begin{align}
\mc{L}(\mu,\theta,\psi) & =\int_{\RR^{2d}}\ell(x,y)\rd\mu_T(x,y)+\lambda\int_0^T|\theta_t|^2\dt\nn\\
& \hspace{0.41cm} +\int_{\RR^{2d}}\psi(0,x,y)\rd\mu_0(x,y)-\int_{\RR^{2d}}\psi(T,x,y)\rd\mu_T(x,y)\nn\\
&\hspace{0.41cm} +\int_0^T\int_{\RR^{2d}}\Big( \partial_t \psi(t,x,y) + \nabla_x \psi(t,x,y) \cdot \mathcal F(t,x,\theta_t) \Big)\rd\mu_t(x,y)\dt\,.
\end{align}
Next, we compute its functional derivatives with respect to the curves $\mu$ and $\theta$, namely
\begin{align*}
\frac{\delta \mc L}{\delta \mu_t} =
\begin{cases}
0,&\quad \text{for $t=0$ (the initial condition is fixed)}\\
\partial_t \psi+\nabla_x\psi \cdot \MF,&\quad \text{for $t \in (0,T)$},\\
\ell - \psi_T,&\quad \text{for $t=T$},
\end{cases}
\end{align*}
and
\begin{equation*}
\frac{\delta \mc L}{\delta \theta_t} = 2\lambda\theta_t^\top+ \INTDom{\nabla_x\psi\cdot \nabla_\theta \MF(t,x,\theta_t)}{\RR^{2d}}{\mu_t(x,y)} \,.
\end{equation*}
for almost every $t \in [0,T]$. Then, given an optimal trajectory-control pair $(\mu^*,\theta^*)$ for the problem \eqref{Pcost}, we will show that there exists a Lagrange multiplier $\psi^*$ such that
\begin{equation}
\frac{\delta \mc L}{\delta \mu}(\mu^*,\theta^*,\psi^*) = 0 \qquad \mbox{and} \qquad \frac{\delta \mc L}{\delta \theta}(\mu^*,\theta^*,\psi^*) =0\,.
\end{equation}
These latter will in turn provide us with the following backward adjoint dynamics 
\begin{equation}
\label{P2}
\partial_t\psi^*+\nabla_x\psi^*\cdot \mathcal F(t,x,\theta_t^*)=0,
\end{equation}
subject to the terminal condition $\psi^*_T =\ell$, along with the fixed-point equation 
\begin{equation}
\label{P3}
2\lambda\theta^{*\top}_t+\int_{\RR^{2d}}\nabla_x\psi^*\cdot \nabla_\theta \mathcal F(t,x,\theta^*_t)\rd \mu_t^*(x,y)=0,
\end{equation}
characterizing the optimal controls, where the curve $\mu^*$ satisfies the native forward dynamics
\begin{equation}
\label{P1}
\partial_t\mu_t^*+\nabla_x\cdot (\mathcal F(t,x,\theta_t^*)\mu_t^*)=0,\qquad \mu_t^*|_{t=0}=\mu_0\,.
\end{equation}
We will see below that \eqref{P2} is understood in the sense of \eqref{PMP2}, and that \eqref{P3} is understood in the sense of \eqref{PMP3}. 


\subsection{Well-posedness of the maximum principle}
\label{sec:WellPosed}

This section is devoted to discussing the existence and uniqueness of a solution $(\mu^*,\theta^*,\psi^*)\in \MC([0,T];\mc{P}_c(\RR^{2d}))\times \operatorname{Lip}([0,T];\RR^m)\times \mc{C}^1([0,T];\MC_c^2(\RR^{2d})) $ to the first-order optimality system 
\begin{empheq}[left=\empheqlbrace]{align}
\partial_t\mu_t^* & + \nabla_x\cdot (\mathcal F(t,x,\theta_t^*)\mu_t^*)=0, \hspace{0.5cm} \mu_t^*|_{t=0}=\mu_0, \label{eq1}\\
\partial_t\psi^* &+ \nabla_x\psi^*\cdot \mathcal F(t,x,\theta_t^\ast)=0, \hspace{0.5cm} \psi_t^*|_{t=T}=\ell , \label{eq2}\\
\theta^{*\top}_t & = -\frac{1}{2\lambda}\int_{\RR^{2d}}\nabla_x\psi^*\cdot \nabla_\theta \mathcal F(t,x,\theta^*_t)\rd\mu_t^*(x,y). \label{eq3}
\end{empheq}
To do so, we consider a compact and convex subset $\Gamma_{M,C}$ of the subspace $\mbox{Lip}([0,T];\RR^{m})\subset\MC([0,T];\RR^{m})$, defined by 
\begin{equation}
\label{Gm}
\Gamma_{M,C} := \Big\{ \theta \in\MC([0,T];\RR^{m})~\big|~|\theta_t-\theta_s|\leq M|t-s|,~\|\theta\|_\infty\leq C_{\Gamma} \Big\}\,.
\end{equation}
for some constants $M,C_{\Gamma} > 0$. We will also make use of the following ball in $L^2([0,T];\RR^m)$ 
\begin{equation}\label{Gm1}
\Gamma_C:=\left\{\theta\in L^2([0,T];\RR^m)~|~\|\theta\|_2\leq C_{\Gamma}T^{\frac{1}{2}}\right\}\,.
\end{equation}
One can easily notice that $\Gamma_{M,C}\subset \Gamma_C$.

\begin{thm}
\label{thmPMP} 
For any given $T>0$, take an initial data $\mu_0\in \mc{P}_c(\RR^{2d})$ and a terminal condition $\psi_T$ satisfying \eqref{tem}, let $\MF$ be a map satisfying Assumptions \ref{asum1} and \ref{asum2}, and suppose that $\lambda>0$ is large enough.

Then, there exists a triple $(\mu^*,\theta^*,\psi^*)\in \MC([0,T];\mc{P}_c(\RR^{2d}))\times \operatorname{Lip}([0,T];\RR^m)\times \mc{C}^1([0,T];\MC_c^2(\RR^{2d}))$ solution of \eqref{eq1}-\eqref{eq3}. Moreover, the control solution $\theta^*$ is unique in $\Gamma_{C}\subset L^2([0,T];\RR^m)$ defined as in \eqref{Gm1}, and $\psi^*\in \mc{C}^1([0,T];\MC_c^2(\RR^{2d}))$ is in characteristic form.
\end{thm}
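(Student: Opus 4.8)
The plan is to recast the optimality system \eqref{eq1}--\eqref{eq3} as a fixed-point problem for the control, posed on the compact convex set $\Gamma_{M,C}$ of \eqref{Gm}, and to close it via Schauder's theorem for $\lambda$ large. Given $\theta\in\Gamma_{M,C}$, I would let $\mu^\theta\in\MC([0,T];\mc{P}_c(\RR^{2d}))$ be the forward solution of \eqref{eq1} furnished by Theorem \ref{thm:Wellposed}, and $\psi^\theta$ the backward transport solution of \eqref{eq2} in characteristic form, $\psi^\theta(t,x,y)=\ell(\Phi^\theta_{(t,T)}(x),y)$; its $\MC^1$-in-time and $\MC^2$-in-space regularity follows from $\ell\in\MC^2$ together with the flow estimates of Appendix \ref{subsection:ControlFlow}, using that $\MF\in\MC^2$ in $x$ (Assumption \ref{asum2}-$(i)$). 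One then sets
\begin{equation*}
\mc{T}(\theta)_t^\top := -\frac{1}{2\lambda}\int_{\RR^{2d}}\nabla_x\psi^\theta(t,x,y)\cdot\nabla_\theta\MF(t,x,\theta_t)\rd\mu_t^\theta(x,y),
\end{equation*}
so that, since \eqref{eq1} and \eqref{eq2} determine $\mu^\theta$ and $\psi^\theta$ from $\theta$, fixed points of $\mc{T}$ are exactly the controls solving \eqref{eq1}--\eqref{eq3}.

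The crucial — and I expect most technical — step is the self-map property: with $C_\Gamma$ fixed as in Theorem \ref{thm:Exist} and $M$ chosen suitably, $\mc{T}(\Gamma_{M,C})\subset\Gamma_{M,C}$ once $\lambda$ is large enough. The sup bound is easy: $\nabla_\theta\MF$ is bounded uniformly in $\theta$ (Assumption \ref{asum1}-$(iv)$) by a constant depending only on $d,m$ and the support radius $R_T$ from Theorem \ref{thm:Wellposed}, while $\|\nabla_x\psi^\theta\|_\infty$ is at most $\operatorname{Lip}(\ell)$ times a first-order flow estimate depending only on $\|\theta\|_1\le C_\Gamma T$; hence $\|\mc{T}(\theta)\|_\infty\le C_1(T,R,C_\Gamma)/(2\lambda)$, which is $\le C_\Gamma$ for $\lambda$ large. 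The Lipschitz-in-time bound is obtained by differentiating the defining integral in $t$ and combining: the time modulus of $\mu^\theta_t$ (estimate \eqref{eq:LipEst}); that of $\psi^\theta$ and, crucially, of $\nabla_x\psi^\theta$, propagated through the backward equation using the bounds on $\nabla_x\MF,\nabla_x^2\MF$ from Assumption \ref{asum2}-$(i)$ (finite since $\|\theta\|_\infty\le C_\Gamma$); and the joint regularity of $\MF$ and $\nabla_\theta\MF$ together with $\operatorname{Lip}(\theta)\le M$. This produces $\operatorname{Lip}(\mc{T}(\theta))\le C_2(T,R,C_\Gamma)(1+M)/(2\lambda)$, which is $\le M$ for $\lambda$ large. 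This bookkeeping, resting entirely on the flow regularity results of the Appendix (Propositions \ref{prop:BoundFlow} and \ref{prop:DiffFlow}), is where the real work lies.

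Continuity of $\mc{T}:\Gamma_{M,C}\to\Gamma_{M,C}$ for the uniform topology is then routine: if $\theta^n\to\theta$ uniformly, then $\Phi^{\theta^n}\to\Phi^\theta$ and $\nabla_x\psi^{\theta^n}\to\nabla_x\psi^\theta$ locally uniformly, and $\mu^{\theta^n}\to\mu^\theta$ in $\MC([0,T];(\mc{P}_c(\RR^{2d}),W_1))$, by Propositions \ref{prop:BoundFlow} and \ref{prop:DiffFlow}, so one passes to the limit under the integral. Since $\Gamma_{M,C}$ is convex and, by the Arzel\`a--Ascoli theorem, compact in $\MC([0,T];\RR^m)$, Schauder's fixed-point theorem yields $\theta^*\in\Gamma_{M,C}$ with $\mc{T}(\theta^*)=\theta^*$; setting $\mu^*:=\mu^{\theta^*}$ and $\psi^*:=\psi^{\theta^*}$ gives the asserted solution of \eqref{eq1}--\eqref{eq3}, with $\theta^*\in\operatorname{Lip}([0,T];\RR^m)$, $\mu^*\in\MC([0,T];\mc{P}_c(\RR^{2d}))$, and $\psi^*\in\MC^1([0,T];\MC^2_c(\RR^{2d}))$ in characteristic form. (Alternatively, for $\lambda$ large one may verify directly that $\mc{T}$ is a $\|\cdot\|_2$-contraction on the $L^2$-closed set $\Gamma_{M,C}$, obtaining existence and uniqueness in $\Gamma_{M,C}$ in one stroke via Banach's theorem.)

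Finally, uniqueness in the larger ball $\Gamma_C$ comes from convexity. Writing $\mu_t^\theta=\Phi^\theta_{(0,t)}\sharp\mu_0$, using the resolvent identity $\nabla_x\psi^\theta(t,\Phi^\theta_{(0,t)}(x),y)=\big(\Rcal^\theta_{(t,T)}(x)\big)^\top\nabla_x\ell(\Phi^\theta_{(0,T)}(x),y)$ and the formula \eqref{eq:ReducedCostGrad}, equation \eqref{eq3} is seen to be equivalent to the Euler--Lagrange identity $\nabla_\theta J(\theta)=\nabla_\theta J_\ell(\theta)+2\lambda\theta=0$. For $\lambda>\tfrac12\Lcal(T,R,\Gamma_C)$, the reduced cost $J$ is strictly convex on $\Gamma_C$ by Proposition \ref{prop:Semiconvexity}, so the (trivially satisfied) variational inequality $\langle\nabla_\theta J(\theta^*),\theta-\theta^*\rangle\ge0$ for all $\theta\in\Gamma_C$ identifies $\theta^*$ as the unique minimizer of $J$ over $\Gamma_C$; any solution of \eqref{eq1}--\eqref{eq3} lying in $\Gamma_C$ satisfies the same condition and hence coincides with $\theta^*$. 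Choosing $\lambda$ above the finitely many thresholds that have appeared completes the proof; the main obstacle throughout remains the self-map estimate of the second paragraph, i.e. propagating sharp time-regularity bounds through the forward/backward flow.
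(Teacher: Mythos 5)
Your existence argument is essentially the paper's: you define the same fixed-point operator, prove the same self-map property ($\|\cdot\|_\infty$ bound and Lipschitz-in-time bound) for $\lambda$ large, invoke compactness of $\Gamma_{M,C}$ via Arzel\`a--Ascoli, and conclude via Schauder. The splitting you describe in the time-modulus estimate (contributions of the time modulus of $\mu^\theta$, of $\psi^\theta$ and $\nabla_x\psi^\theta$, and of $\theta$ itself through $\operatorname{Lip}(\theta)\le M$) matches the paper's decomposition into $I_1,I_2,I_3$, and the resulting requirement $C_2(1+M)/(2\lambda)\le M$ is the same size condition on $\lambda$.

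Where you genuinely diverge from the paper is in the uniqueness step. The paper shows uniqueness in $\Gamma_C$ by proving that $\Lambda$ is a $\|\cdot\|_2$-contraction, with a careful three-term estimate ($I_1,I_3,I_4$) combining $W_1$-stability of the forward flow, $\Ccal^0$-stability of the backward characteristics, and the second-order $\theta$-regularity of $\Fcal$. You instead observe that, once $\mu^\theta$ and $\psi^\theta$ are the characteristic solutions, the resolvent identity $\nabla_x\psi^\theta(t,\Phi^\theta_{(0,t)}(x),y) = \big(\Rcal^\theta_{(t,T)}(x)\big)^\top\nabla_x\ell(\Phi^\theta_{(0,T)}(x),y)$ together with \eqref{eq:ReducedCostGrad} rewrites \eqref{eq3} as $\nabla_\theta J(\theta)=0$, and then you conclude from the strict convexity of $J$ on $\Gamma_C$ guaranteed by Proposition \ref{prop:Semiconvexity} for $\lambda>\tfrac12\Lcal(T,R,\Gamma_C)$: a strictly convex function has at most one critical point. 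Both routes are valid. Your convexity route is conceptually cleaner and recycles the Section \ref{sec:Existence} machinery, but note that the paper's quantitative contraction estimate is not merely a vehicle for uniqueness: it is reused almost verbatim in the proof of Corollary \ref{thmrate} to establish the $W_1$-stability bound \eqref{rate} and hence the generalization error \eqref{generr}. If you adopt the convexity argument for uniqueness, you would still need to establish the quantitative Lipschitz dependence $\theta\mapsto\Lambda(\theta)$ in $\|\cdot\|_2$ separately in order to obtain Corollary \ref{thmrate}, so in the context of the paper as a whole the contraction route is doing double duty. As a local proof of Theorem \ref{thmPMP} alone, your argument is correct and complete.
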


\begin{rmk}
If there exists an optimal control $\theta^*\in L^2([0,T];\RR^m)$ satisfying the maximum principle \eqref{eq1}-\eqref{eq3}, then the uniqueness result in Theorem \ref{thmPMP} ensures that $\theta^*$ coincides with a Lipschitz continuous function almost everywhere. This means that in such a case there exists a smooth optimal control $\theta^*\in \operatorname{Lip}([0,T];\RR^m)$.
\end{rmk}

Using arguments that are similar to those of Theorem \ref{thm:Wellposed}, one can show the following result.

\begin{proposition}
\label{prop1}
Consider an initial data $\mu_0\in \mc{P}_c(\RR^{2d})$ with $\supp(\mu_0) \subset B(R)$ for some $R>0$, and let $\MF$ satisfy Assumption \ref{asum1}. Then for any $T>0$ and $\theta\in \Gamma_{M,C}$, there exists a unique solution $\mu^\theta\in\mc{C}([0,T];\mc{P}_c(\RR^{2d}))$ to \eqref{eq1} in the sense of Definition \ref{defweak}. Moreover, there exists some $R_T >0$ depending only on $R$ and $C_{\MF}$, such that
\begin{equation}
\label{suppt1}
\supp(\mu_t^\theta) \subset B(R_T) \qquad \mbox{for all } t\in[0,T]\,.
\end{equation}
Additionally, for any $s,t\in[0,T]$, it holds
\begin{equation}\label{conW}
W_1(\mu_t^\theta,\mu_s^\theta)\leq C(R,C_{\MF})|t-s|\,.
\end{equation}
If $\mu^{\theta,i}$, $i=1,2$ are two solutions with initial data $\mu_0^i$ satisfying the above assumptions, we have
\begin{equation}\label{stablity1}
W_1(\mu_t^{\theta,1},\mu_t^{\theta,2})\leq e^{L_{\MF,T,C_{\Gamma} }}W_1(\mu_0^1,\mu_0^2)\quad \mbox{ for all }t\in[0,T]\,.
\end{equation}
Here $C_{\MF}$ and $L_{\MF,T,C_{\Gamma} }$ are defined as in Assumption \ref{asum1} by replacing $\|\theta\|_1$ by $C_{\Gamma} T$. 
\end{proposition}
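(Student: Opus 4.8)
The plan is to deduce Proposition \ref{prop1} directly from Theorem \ref{thm:Wellposed}, the only real work being to track the constants so that they depend on $R$, $C_{\MF}$, $C_{\Gamma}$ and $T$, but never on the individual control $\theta \in \Gamma_{M,C}$. Indeed, for a fixed $\theta \in \Gamma_{M,C}$, equation \eqref{eq1} is nothing but \eqref{PDE} with control $\theta$, and since $\Gamma_{M,C} \subset \mc{C}([0,T];\RR^m) \subset L^2([0,T];\RR^m)$, Theorem \ref{thm:Wellposed} immediately yields the existence and uniqueness of a weak solution $\mu^\theta \in \mc{C}([0,T];\mc{P}_c(\RR^{2d}))$ in the sense of Definition \ref{defweak}, together with preliminary forms of \eqref{suppt1}, \eqref{conW} and \eqref{stablity1}. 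The improvement over a naive invocation of Theorem \ref{thm:Wellposed} is that all three estimates can be made \emph{uniform} over $\Gamma_{M,C}$, which is possible precisely because the growth bound in Assumption \ref{asum1}-$(ii)$ is already uniform in $\theta$, and because $\|\theta\|_\infty \leq C_{\Gamma}$ on $\Gamma_{M,C}$.

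For the support bound \eqref{suppt1}, I would use the flow representation $\mu_t^\theta = \BPhi^\theta_{(0,t)}\sharp\mu_0$ recalled in Appendix \ref{subsection:Flows}, where $\BPhi^\theta_{(0,t)}(x,y) = (\Phi^\theta_{(0,t)}(x),y)$ and $\Phi^\theta$ is the characteristic flow \eqref{eq:ThetaFlow}. Along a characteristic, Assumption \ref{asum1}-$(ii)$ gives $\tfrac{\rd}{\rd t}|\Phi^\theta_{(0,t)}(x)| \leq |\MF(t,\Phi^\theta_{(0,t)}(x),\theta_t)| \leq C_{\MF}\big(1+|\Phi^\theta_{(0,t)}(x)|\big)$, so Grönwall's lemma yields $|\Phi^\theta_{(0,t)}(x)| \leq (R+C_{\MF}T)e^{C_{\MF}T} =: R_T$ for every $|x|\leq R$ and $t\in[0,T]$; pushing $\mu_0$ forward gives $\supp(\mu_t^\theta) \subset B(R_T)$ with $R_T$ depending only on $R$, $C_{\MF}$ (and $T$). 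The time-continuity estimate \eqref{conW} then follows from the same representation: the plan $(\BPhi^\theta_{(0,t)},\BPhi^\theta_{(0,s)})\sharp\mu_0$ belongs to $\Pi(\mu_t^\theta,\mu_s^\theta)$, so $W_1(\mu_t^\theta,\mu_s^\theta) \leq \int_{\RR^{2d}} |\Phi^\theta_{(0,t)}(x)-\Phi^\theta_{(0,s)}(x)| \rd\mu_0(x,y)$, and $|\Phi^\theta_{(0,t)}(x)-\Phi^\theta_{(0,s)}(x)| \leq \int_s^t |\MF(\tau,\Phi^\theta_{(0,\tau)}(x),\theta_\tau)| \rd\tau \leq C_{\MF}(1+R_T)|t-s|$ by Assumption \ref{asum1}-$(ii)$ and the support bound — once again a constant independent of $\theta$. (Alternatively one may test the weak formulation \eqref{eqweak1} against $1$-Lipschitz functions and invoke Kantorovich's duality \eqref{Kanto}, which gives the same bound.)

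Finally, for the stability estimate \eqref{stablity1}, I would simply note that on $\Gamma_{M,C}$ one has $\|\theta\|_1 = \int_0^T |\theta_t|\,\rd t \leq C_{\Gamma} T$, so the exponent $L_{\MF,T,\|\theta\|_1} = L_{\MF}\int_0^T(1+|\theta_t|)\,\rd t$ appearing in \eqref{stablity} of Theorem \ref{thm:Wellposed} is bounded above by $L_{\MF}T(1+C_{\Gamma}) =: L_{\MF,T,C_{\Gamma}}$, and monotonicity of the exponential upgrades \eqref{stablity} into \eqref{stablity1}. There is no genuine obstacle in this argument beyond the bookkeeping of constants; the main point — and the reason the statement is worth isolating — is that this bookkeeping produces constants depending only on $R$, $C_{\MF}$, $C_{\Gamma}$ and $T$, which is exactly what is needed for the fixed-point argument underlying Theorem \ref{thmPMP} to keep its associated map acting on a fixed compact set.
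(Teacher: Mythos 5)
Your argument is correct and follows the same route the paper takes: the paper explicitly says Proposition \ref{prop1} is proved "using arguments similar to those of Theorem \ref{thm:Wellposed}," and your approach — invoking Theorem \ref{thm:Wellposed} directly, observing that the support radius and the modulus of time-continuity never depend on $\theta$ through more than the $\theta$-uniform growth bound of Assumption \ref{asum1}-$(ii)$, and replacing $\|\theta\|_1$ by $C_\Gamma T$ in the Grönwall exponent — is precisely that bookkeeping. The only cosmetic point is that $R_T$ and the Lipschitz constant in \eqref{conW} in fact also depend on $T$, as in \eqref{eq:LipEst}, but this is an imprecision in the statement of the proposition itself rather than in your proof.
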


In what follows, we will only be interested  in what is happening inside the supports of $\mu^\theta$ for $\theta \in \Gamma_{M,C}$. Therefore, we shall recast the terminal condition in \eqref{eq2} as $\psi_T\in \MC_c^2(\RR^{2d})$ with
\begin{equation}
\label{tem}
\supp(\psi_T) = B(R_T) \quad \text{and} \quad \psi_T(x,y)=\ell(x,y) ~~ \mbox{ for all }x,y\in B(R_T)\,.
\end{equation}
In this context, we are able to derive the following norm estimate on $\psi^{\theta}$. 

\begin{proposition}
\label{prop2}
Suppose that $\MF$ satisfies Assumption \ref{asum1}. Then for any $T>0$ and $\theta\in \Gamma_{M,C}$, there exists a  unique characteristic solution $\psi^\theta\in\mc{C}^1([0,T];\MC_c^2(\RR^{2d}))$ to the equation \eqref{eq2} which terminal condition satisfies \eqref{tem}. Moreover it holds
\begin{equation}\label{rad}
\big\| \psi_t^\theta \big\|_{\MC_c^2(\RR^{2d})}\leq C(R',T,C_{\Gamma},C_{\MF},L_{\MF,T,C_{\Gamma}})\norm{\psi_T}_{\MC^2(B(R_T))}\,,
\end{equation}
for all times $t\in[0,T]$. Here the supports of $\psi_t^\theta$ satisfies the inclusion $\supp(\psi_t^\theta) \subset B(R_T')$ where $R' = R + (R+C_{\MF}T)e^{C_{\MF}T}$.
\end{proposition}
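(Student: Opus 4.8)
The plan is to build $\psi^\theta$ by the method of characteristics, check by a direct computation that it solves \eqref{eq2} with terminal data \eqref{tem}, and then read off the support inclusion and the $\MC^2$-bound \eqref{rad} from Gr\"onwall estimates on the characteristic flow; uniqueness will follow from the flow being a diffeomorphism. Fix $\theta\in\Gamma_{M,C}$ and let $(\Phi^\theta_{(\tau,t)})_{\tau,t\in[0,T]}$ be the characteristic flow \eqref{eq:ThetaFlow} generated by $(t,x)\mapsto\MF(t,x,\theta_t)$, which is well defined and $\MC^2$ in the space variable under Assumptions~\ref{asum1} and \ref{asum2} (as in Proposition~\ref{prop1}); since $t\mapsto\MF(t,x,\theta_t)$ is Lipschitz in $t$ on bounded sets by Assumption~\ref{asum2}$(ii)$ and the Lipschitz regularity of $\theta$, the map $(t,x)\mapsto\Phi^\theta_{(t,T)}(x)$ is also $\MC^1$ in $t$. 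Set $\psi_t^\theta(x,y):=\psi_T\big(\Phi^\theta_{(t,T)}(x),y\big)$. Using the standard identity $\partial_t\Phi^\theta_{(t,T)}(x)=-\nabla_x\Phi^\theta_{(t,T)}(x)\,\MF(t,x,\theta_t)$ (obtained by differentiating $\Phi^\theta_{(s,T)}\circ\Phi^\theta_{(t,s)}=\Phi^\theta_{(t,T)}$ at $s=t$) together with the chain rule, one verifies at once that $\partial_t\psi_t^\theta+\nabla_x\psi_t^\theta\cdot\MF(t,x,\theta_t)=0$ with $\psi_T^\theta=\psi_T$; the composition is $\MC^2$ in $(x,y)$ for each $t$ and $\MC^1$ in $t$ with values in $\MC^2$, so $\psi^\theta\in\mc{C}^1([0,T];\MC_c^2(\RR^{2d}))$ once the support bound below is in place.

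For the support inclusion, note that $\supp(\psi_T)=B(R_T)$ forces $\psi_t^\theta(x,y)\ne0$ only when $(\Phi^\theta_{(t,T)}(x),y)\in B(R_T)$, in particular $|\Phi^\theta_{(t,T)}(x)|\le R_T$ and $|y|\le R_T$. Flowing the characteristic $s\mapsto\Phi^\theta_{(t,s)}(x)$ backward from $s=T$ to $s=t$ and applying Gr\"onwall's lemma to the sublinear growth bound $|\MF(t,x,\theta)|\le C_\MF(1+|x|)$ of Assumption~\ref{asum1}$(ii)$ produces a bound $|x|\le\rho$ with $\rho$ depending only on $R_T,C_\MF,T$; recalling from Theorem~\ref{thm:Wellposed} that $R_T$ depends only on $R$ and $C_\MF$, one arrives at $\supp(\psi_t^\theta)\subset B(R')$ with $R'$ explicit in $R,C_\MF,T$ (for instance $R'=R+(R+C_\MF T)e^{C_\MF T}$).

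For the $\MC^2$-estimate, differentiating $\psi_t^\theta(x,y)=\psi_T(\Phi^\theta_{(t,T)}(x),y)$ in $(x,y)$ shows that $\nabla\psi_t^\theta$ and $\nabla^2\psi_t^\theta$ are polynomial expressions in $\nabla_x\Phi^\theta_{(t,T)}$, $\nabla_x^2\Phi^\theta_{(t,T)}$ and in the first two derivatives of $\psi_T$ evaluated at $(\Phi^\theta_{(t,T)}(x),y)\in B(R_T)$, hence controlled by $\| \psi_T \|_{\MC^2(B(R_T))}$ times flow derivatives. The first variation $w(t,\cdot)=\nabla_x\Phi^\theta_{(t,T)}$ solves the linearized equation \eqref{eq:linearizedFlow}, and since Assumption~\ref{asum1}$(iii)$ gives $|\nabla_x\MF(t,x,\theta)|\le L_\MF(1+|\theta|)$ with $\|\theta\|_\infty\le C_\Gamma$ on $\Gamma_{M,C}$, Gr\"onwall yields $|\nabla_x\Phi^\theta_{(t,T)}|\le e^{L_{\MF,T,C_\Gamma}}$. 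Differentiating \eqref{eq:linearizedFlow} once more gives a linear Cauchy problem for $\nabla_x^2\Phi^\theta_{(t,T)}$ with source $\nabla_x^2\MF\cdot(\nabla_x\Phi^\theta_{(t,T)})^{\otimes2}$, bounded on the relevant tube of trajectories (contained in a fixed ball depending on $R',C_\MF,T$) by Assumption~\ref{asum2}$(i)$; a further Gr\"onwall argument --- analogous to the flow estimates gathered in Appendix~\ref{subsection:ControlFlow} and already used in the proof of Lemma~\ref{lem:FinalCostReg} --- bounds $\nabla_x^2\Phi^\theta_{(t,T)}$ uniformly in $t$ by a constant depending only on $R',T,C_\Gamma,C_\MF,L_{\MF,T,C_\Gamma}$. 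Combining these bounds yields \eqref{rad}.

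Finally, for uniqueness, let $\psi^1,\psi^2\in\mc{C}^1([0,T];\MC_c^2(\RR^{2d}))$ both solve \eqref{eq2}--\eqref{tem} and set $w:=\psi^1-\psi^2$, so that $w_T=0$ and $\partial_tw+\nabla_xw\cdot\MF(t,x,\theta_t)=0$. For fixed $(x_0,y)$ the map $t\mapsto w\big(t,\Phi^\theta_{(T,t)}(x_0),y\big)$ has vanishing derivative along the characteristic, hence is constant equal to $w(T,x_0,y)=0$; as $x\mapsto\Phi^\theta_{(T,t)}(x)$ is a $\MC^1$-diffeomorphism of $\RR^d$ under Assumption~\ref{asum1}, every triple $(t,x,y)$ is reached in this way, so $w\equiv0$ and $\psi^1=\psi^2$. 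The main obstacle is the quantitative $\MC^2$-bound: one must propagate second-order regularity of the flow in space while keeping uniform control of its Lipschitz-in-time dependence over all $\theta\in\Gamma_{M,C}$, which is precisely what the extra structure of Assumption~\ref{asum2} provides; the remaining ingredients are the standard Cauchy--Lipschitz theory already invoked in Theorem~\ref{thm:Wellposed}.
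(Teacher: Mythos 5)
Your proof is correct and follows essentially the same route as the paper: build $\psi^\theta$ by the method of characteristics as $\psi^\theta(t,x,y)=\psi_T(\Phi^\theta_{(t,T)}(x),y)$, verify the transport identity along characteristics, obtain the support inclusion by Gr\"onwall on the backward flow, and bound $\|\psi^\theta_t\|_{\MC^2}$ by controlling $\nabla_x\Phi^\theta_{(t,T)}$ and $\nabla_x^2\Phi^\theta_{(t,T)}$. Two aspects deserve comment. First, where the paper cites \cite[Lemma 2.3]{ying2006phase} for the $\MC^2$-bound on $\BPhi^\theta_{(T,t)}$, you derive it directly by differentiating the linearized Cauchy problem \eqref{eq:linearizedFlow} once more and applying Gr\"onwall to the resulting linear equation for $\nabla_x^2\Phi^\theta_{(t,T)}$; both are fine, and your version is more self-contained. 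Second, your uniqueness argument (showing that any two $\mc{C}^1([0,T];\MC_c^2(\RR^{2d}))$ solutions of \eqref{eq2}--\eqref{tem} agree, by differentiating their difference along the flow $t\mapsto\Phi^\theta_{(T,t)}(x_0)$ and using that the flow is onto) actually proves uniqueness among all classical solutions, not merely among characteristic ones; the paper only claims the latter and is explicitly cautious in its remark afterwards, so you have closed a gap the authors left open, and the argument is sound since the chain rule is available for $\mc{C}^1$ data. A last very minor point: you consistently use the forward flow $\Phi^\theta_{(t,T)}$ in the definition of $\psi^\theta$, which is the correct direction (it agrees with the formula used later in Proposition~\ref{prop:LinkAdjoint}); the proof in the appendix writes $\BPhi^\theta_{(T,t)}$ in the defining formula, which looks like a notational slip there.
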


The results of Proposition \ref{prop2} are classical, and we postpone their proof to Appendix \ref{subsection:Flows}.

\begin{rmk}
Here, the fact that $\psi^{\theta}$ is a \emph{characteristic solution} means that it is obtained via the characteristic method, and is of the form $\psi^\theta(t,x,y)=\psi_T(\BPhi_{(T,t)}^{\theta}(x,y))$. Therein, we denoted by $(\BPhi_{(\tau,t)}^{\theta})_{\tau,t \in [0,T]}$ the flow maps defined as in \eqref{eq:Characteristics} with $\MF(t,x) := \MF(t,x,\theta_t)$. Characteristic solutions to \eqref{eq3} are unique because of the way they depends on the terminal condition \eqref{rad}.  Note here that we do not claim to have general uniqueness in $\mc{C}^1([0,T];\MC_c^2(\RR^{2d}))$ for \eqref{eq3}, i.e. there may exist $\mc{C}^1([0,T];\MC_c^2(\RR^{2d}))$ solutions that are not in the characteristic form. In what follows however, we will only consider characteristic solutions.
\end{rmk}

\begin{proof}[Proof of Theorem \ref{thmPMP}] 
The existence of optimal controls $\theta^*$ in $\Gamma_{M,C}$ is based on the Schauder fixed point theorem \cite[Theorem 11.1]{gilbarg2015elliptic}. Then, the uniqueness will be obtained by additionally showing that the underlying fixed-point map is a contraction in $\Gamma_C$. \medskip

\hspace{-0.75cm} $\bullet$ \textit{(Existence in $\Gamma_{M,C}$)} For any $\theta \in \Gamma_{M,C}$, denote by $\mu^{\theta}\in \mc{C}([0,T];\mc{P}_c(\RR^{2d}))$ the corresponding solution of \eqref{eq1} and by $\psi^\theta\in \mc{C}^1([0,T]; \MC_c^2(\RR^{2d}))$ the unique characteristic solution of \eqref{eq2}. In this context, we introduce the continuous mapping $\Lambda:~\Gamma_{M,C}\to \MC([0,T];\RR^m)$, defined by 
\begin{equation}
\Lambda(\theta)(t)^{\top} = -\frac{1}{2\lambda}\int_{\RR^{2d}}\nabla_x\psi_t^\theta\cdot \nabla_\theta \mathcal F(t,x,\theta_t)\rd\mu_t^\theta(x,y),
\end{equation}
for every $\theta \in \Gamma_{M,C}$ and all times $t \in [0,T]$. We start by checking that $\Lambda(\Gamma_{M,C})\subset \Gamma_{M,C}$ for $\lambda$ large enough. On the one hand, it follows Assumption \ref{asum1}-$(iii)$ and \eqref{rad} that
\begin{align*}
|\Lambda(\theta)(t)| & \leq \frac{1}{2\lambda} \int_{B(R_T)} \big| \nabla_x \psi_t^\theta\cdot \nabla_\theta \mathcal F(t,x,\theta_t) \big| \rd\mu_t^\theta(x,y) \\
& \leq \frac{1}{2\lambda}C(R_T,T)\sup_{t\in[0,T]} \big\| \psi_t^\theta \big \| _{\MC^1(B(R_T'))} \\
& \leq \frac{1}{2\lambda} C(R_T,T)C(R_T',T,C_{\Gamma},C_{\MF},L_{\MF,T,C_{\Gamma}}) \norm{\psi_T}_{\MC^1(B(R_T))},
\end{align*}
for all $t \in [0,T]$, with the explicit constant $R_T':= R + (R+C_{\MF}T)e^{C_{\MF}T}$. Hence, upon choosing a parameter $\lambda > 0$ that is large enough, it holds 
\begin{equation}
\norm{\Lambda(\theta)}_{L^{\infty}([0,T];\R^m)} \leq C_{\Gamma}.
\end{equation}
On the other hand, one has for any $s,t\in[0,T]$ that
\begin{align*}
|\Lambda(\theta)(t)-\Lambda(\theta)(s)|&\leq \frac{1}{2\lambda}\left|\int_{B(R_T)} \big( \nabla_x\psi_t^\theta-\nabla_x\psi_s^\theta \big) \cdot \nabla_\theta \mathcal F(t,x,\theta_t)\rd\mu_t^\theta(x,y)\right|\\
& \hspace{0.45cm} +\frac{1}{2\lambda}\left|\int_{B(R_T)}\nabla_x\psi_s^\theta\cdot (\nabla_\theta \mathcal F(t,x,\theta_t)-\nabla_\theta \mathcal F(s,x,\theta_s))\rd\mu_t^\theta(x,y)\right|\\
& \hspace{0.45cm}+\frac{1}{2\lambda}\left|\int_{B(R_T)}\nabla_x\psi_s^\theta\cdot \nabla_\theta \mathcal F(s,x,\theta_s)(\rd\mu_t^\theta-\rd\mu_s^\theta)(x,y)\right|\\
&=:I_1+I_2+I_3.
\end{align*}
Using the fact that $\psi^\theta \in \MC^1([0,T];\MC_c^2(\RR^{2d}))$ along with Assumption \ref{asum1}-$(iii)$, one can see that
\begin{equation}
\label{I1}
I_1\leq \frac{1}{2\lambda}C(R_T,T)|t-s|\, ,
\end{equation}
for all $s,t \in [0,T]$. Furthermore, it follows from assumption \eqref{time} and the estimate \eqref{rad} that
\begin{align}
\label{I2}
I_2&\leq \frac{1}{2\lambda}C(R_T)\sup_{t\in[0,T]} \big\| \psi_t^\theta \big\|_{\MC^1(B(R_T'))} \big( |t-s| + |\theta_t-\theta_s| \big) \nn\\
&\leq \frac{1}{2\lambda}C(R_T)C(R_T',T,C_{\Gamma},C_{\MF},L_{\MF,T,C_{\Gamma}})\norm{\psi_T}_{\MC^1(B(R_T))} M|t-s|,
\end{align}
with $R_T' := R + (R+C_{\MF}T)e^{C_{\MF}T}$. Lastly by Kantorovich's duality formula \eqref{Kanto}, one has
\begin{equation}
I_3\leq \frac{1}{2\lambda}\mbox{Lip} \big( \nabla_x\psi_s^\theta\cdot \nabla_\theta \mathcal F(s,\cdot,\theta_s) \, ; B(R_T) \big) W_1(\mu_t,\mu_s),
\end{equation}
and can further notice that
\begin{align*}
\mbox{Lip} \big( \nabla_x\psi_s^\theta\cdot \nabla_\theta \mathcal F(s,\cdot,\theta_s) \, ; B(R_T) \big)&  \leq C(R_T',T,C_{\Gamma},C_{\MF},L_{\MF,T,C_{\Gamma}})\norm{\psi_T}_{\MC^2(B(R_T))} \\
& \hspace{0.5cm} \times \Big( \|\nabla_\theta \MF(s,\cdot,\theta_s) \|_{L^\infty(B(R_T))}+\mbox{Lip}(\nabla_\theta \MF(s,\cdot,\theta_s) \, ; B(R_T)) \Big)\\
& \leq C(R_T',T,C_{\Gamma},C_{\MF},L_{\MF,T,C_{\Gamma}},R_T)\norm{\psi_T}_{\MC^2(B(R_T))},
\end{align*}
where we have used \eqref{rad} and Assumption \ref{asum2}-$(iii)$. This combined with \eqref{conW} thus yields 
\begin{equation}\label{I3}
I_3 \leq \frac{1}{2\lambda}C(R_T',T,C_{\Gamma},C_{\MF},L_{\MF,T,C_{\Gamma}},R_T)\norm{\psi_T}_{\MC^2(B(R_T))}|t-s|.
\end{equation}
Collecting estimates \eqref{I1}, \eqref{I2} and \eqref{I3}, we deduce that for $\lambda > 0$ large enough, it holds
\begin{equation}
|\Lambda(\theta)(t)-\Lambda(\theta)(s)|\leq M|t-s|.
\end{equation}
Thus, we have proven that $\Lambda(\Gamma_{M,C})\subset \Gamma_{M,C}$ when $\lambda > 0$ is taken to be sufficiently large. Hence by Schauder's fixed point theorem, the mapping $\Lambda$ has at least a fixed point $\theta^*$, namely
\begin{equation}
\theta^{*\top}=-\frac{1}{2\lambda}\int_{\RR^{2d}}\nabla_x\psi_t^{\theta^*}\cdot \nabla_\theta \mathcal F(t,x,\theta_t^*)\rd\mu_t^{\theta^*}(x,y).
\end{equation}
This concludes the existence part of the proof. \medskip

\hspace{-0.75cm} $\bullet$ \textit{(Uniqueness in $\Gamma_{C}$)} Our goal now is to prove that $\Lambda$ is a contraction over $\Gamma_{M,C}$ with respect to the $L^2$-norm, so that that the fixed point $\theta^* \in \Gamma_{M,C}$ is actually unique in $\Gamma_{C}$.  Indeed assuming that $\Lambda$ had two distinct fixed points $\theta^1$ and $\theta^2$, it would hold 
\begin{equation*} 
\|\theta^1 - \theta^2\|_2= \|\Lambda(\theta^1)-\Lambda(\theta^2)\|_2  \leq \kappa \|\theta^1-\theta^2\|_2,
\end{equation*}
which leads to a contradiction for contraction constants satisfying $0\leq \kappa <1$. In order to prove the contractivity of $\Lambda$, we start by fixing $t \in [0,T]$ and denote by $\mu^{\theta^1},\mu^{\theta^2}$ two solutions of \eqref{eq1} driven by $\theta^1,\theta^2$ respectively, with the same initial condition $\mu_0$. Similarly, denote by $\psi^{\theta^1},\psi^{\theta^2}$ the solutions of \eqref{eq2} generated by $\theta^1,\theta^2$ with the same terminal condition $\psi_T$. Then 
\begin{equation*}
\label{contractivity}
\begin{aligned}
& |\Lambda(\theta^1)(t)-\Lambda(\theta^2)(t)| \\
& = \frac{1}{2\lambda}\left|  \int_{\RR^{2d}}\nabla_x\psi_t^{\theta^1}\cdot \nabla_\theta \mathcal F(t,x,\theta_t^1)\rd\mu_t^{\theta^1}(x,y) -\int_{\RR^{2d}}\nabla_x\psi_t^{\theta^2}\cdot \nabla_\theta \mathcal F(t,x,\theta_t^2)\rd\mu_t^{\theta^2}(x,y) \right|
\end{aligned}
\end{equation*}
which can in turn be estimated by inserting suitable crossed terms as 
\begin{align*}
|\Lambda(\theta^1)(t)-\Lambda(\theta^2)(t)| & \leq \frac{1}{2\lambda} \left| \int_{\RR^{2d}}\nabla_x\psi_t^{\theta^1}\cdot \nabla_\theta \mathcal F(t,x,\theta_t^1)(\rd\mu_t^{\theta^1} - \rd \mu_t^{\theta^2})(x,y) \right|\\
& \quad + \frac{1}{2\lambda} \left| \int_{\RR^{2d}} \Big( \nabla_x\psi_t^{\theta^1}\cdot \nabla_\theta \mathcal F(t,x,\theta_t^1) - \nabla_x\psi_t^{\theta^2}\cdot \nabla_\theta \mathcal F(t,x,\theta_t^2) \Big) \rd\mu_t^{\theta^2}(x,y) \right| \\
& =: \frac{1}{2\lambda} (| I_1| +|I_2| ).
\end{align*}
We start by further simplifying the integral term $I_2$, which can be recast as 
\begin{align*}
|I_2 | & = \bigg| \int_{\RR^{2d}}\Big( \nabla_x\psi_t^{\theta^1}\cdot \nabla_\theta \mathcal F(t,x,\theta_t^1) - \nabla_x \psi_t^{\theta^2} \cdot \nabla_\theta \mathcal F(t,x,\theta_t^1) \\
& \qquad \qquad +\nabla_x\psi_t^{\theta^2}\cdot \nabla_\theta \mathcal F(t,x,\theta_t^1) - \nabla_x \psi_t^{\theta^2} \cdot \nabla_\theta \mathcal F(t,x,\theta_t^2) \Big) \rd\mu_t^{\theta^2}(x,y) \bigg|  \\
& \leq \left| \int_{\RR^{2d}} (\nabla_x \psi_t^{\theta^1} - \nabla_x \psi_t^{\theta^2} )\cdot \nabla_\theta \mathcal F(t,x,\theta_t^1) \rd\mu_t^{\theta^2}(x,y) \right|\\
&\quad+ \left| \int_{\RR^{2d}} \nabla_x \psi_t^{\theta^2} \cdot (\nabla_\theta \mathcal F(t,x,\theta_t^1)- \nabla_\theta \mathcal F(t,x,\theta_t^2) ) \rd\mu_t^{\theta^2}(x,y) \right| \\ 
& =: |I_3| +|I_4|.
\end{align*}
Hence, the estimate in \eqref{contractivity} is equivalent to
\begin{equation}
\label{split}
| \Lambda(\theta^1)(t)- \Lambda(\theta^2)(t) | \leq \frac{1}{2\lambda} ( |I_1| + |I_3| + |I_4|)\,.
\end{equation}
Let us focus on each term separately, starting with the integral $I_1$. Henceforth, we only consider the integrals over $B(R_T)$, in which the curves $\mu^{\theta^i}$ are supported for $i=1,2$. By using the same reasoning as in \eqref{I3}, we have that
\begin{align}
\label{I1_parts}
|I_1| &= \left| \int_{B(R_T)}\nabla_x\psi_t^{\theta^1}\cdot \nabla_\theta \mathcal F(t,x,\theta_t^1)(\rd\mu_t^{\theta^1}-\rd\mu_t^{\theta^2})(x,y) \right| \nn\\
& \leq \operatorname{Lip} \big( \nabla_x \psi_t^{\theta^1} \cdot \nabla_\theta \mathcal F(t,x,\theta_t^1) \, ; B(R_T) \big) W_1(\mu_t^{\theta^1}, \mu_t^{\theta^2}) \nn \\
& \leq C(R_T', T, C_{\Gamma}, C_{\MF}, L_{\MF,T,C_{\Gamma}},R_T) \norm{\psi_T}_{\MC^2(B(R'_T))}W_1(\mu_t^{\theta^1}, \mu_t^{\theta^2}).
\end{align}
Observe now that following Appendix \ref{subsection:Flows}, the curves $\mu_t^{\theta^1}$ and $\mu_t^{\theta^2}$ are characteristic solutions of \eqref{eq1}, in the sense that
\begin{equation}
\mu_t^{\theta^i} = \BPhi_{(0,t)}^{\theta^i} \sharp \mu_0 
\end{equation}
for all times $t \in [0,T]$, where $\BPhi_{(0,t)}^{\theta^i}(\cdot)$ are the flow maps of the underlying ODEs  
\begin{align*}
\frac{\rd X^i_t}{\dt}=\mathcal F(t,X^i_t,\theta^i_t),\qquad 	\frac{\rd Y^i_t}{\dt}=0, \qquad (X_0^i, Y_0^i) =(x_0,y_0)
\end{align*}
for $i=1,2$. Then, it follows from  Assumption \ref{asum1} that
\begin{align}
\label{esti}
\big| (X_t^1,Y_t^1)-(X_t^2,Y_t^2) \big| &= \left| \left(x_0-x_0+\int_0^t (\mathcal F(s,X_s^1,\theta_s^1)-\mathcal F(s,X_s^2,\theta_s^2)) \rd s, y_0-y_0\right)\right| \nn\\
& \leq \int_0^t \left|(\mathcal F(s,X_s^1,\theta_s^1)-\mathcal F(s,X_s^2,\theta_s^2)\right| \rd s \nn \\
& \leq \int_0^t \big|\mathcal F(t,X_s^1,\theta_s^1) -\mathcal F(t,X_s^2,\theta_s^1) \big| \rd s +\int_0^t \mathcal F(t,X_s^2,\theta_s^1) -\mathcal F(t,X_s^2,\theta_s^2) \big| \rd s \nn \\
& \leq  L_{\MF,T,C_\Gamma} \int_0^t |X_s^1-X_s^2| \rd s + C(R_T,T) \int_0^t |\theta_s^1-\theta_s^2| \rd s.
\end{align}
Then by Gronwall's lemma and the definition of Wasserstein distance, we obtain
\begin{equation}
\label{forw}
W_1(\mu_t^{\theta^1}, \mu_t^{\theta^2}) \leq W_1 \Big( \BPhi_{(0,t)}^{\theta^1} \sharp \mu_0,\BPhi_{(0,t)}^{\theta^2} \sharp \mu_0 \Big) \leq  C(R_T,T)e^{L_{\MF,T,C_\Gamma}T} \|\theta^1-\theta^2\|_2,
\end{equation}
and by using \eqref{forw} in \eqref{I1_parts}, it further holds that
\begin{equation}
\label{I_1}
|I_1|  \leq C(R_T', T, C_{\Gamma}, C_{\MF}, L_{\MF,T,C_{\Gamma}}, R_T)\norm{\psi_T}_{\MC^2(B(R_T))}\|\theta^1-\theta^2\|_2.
\end{equation}
We now shift our focus to the integral $I_3$. By Assumption \ref{asum2}-$(i)$, we have that
\begin{align}\label{I3parts}
|I_3| & = \left| \int_{B(R_T)}  (\psi_t^{\theta^1} - \psi_t^{\theta^2} )\nabla_x\cdot \nabla_\theta \mathcal F(t,x,\theta_t^1) \rd\mu_t^{\theta^2}(x,y) \right| \notag\\
& \leq C(R_T, T,C_{\Gamma}) \sup_{t \in [0,T]} \big\| \psi_t^{\theta^1} - \psi_t^{\theta^2} \big\|_{\MC(B(R_T'))}.
\end{align}
Recalling that $\psi^{\theta^1},\psi^{\theta^2}$ are characteristic solutions of \eqref{eq2} while using \eqref{psi_t_norm}, one further has
\begin{align}
\norm{\psi_t^{\theta^1} -\psi_t^{\theta^2}}_{\MC(B(R_T'))} & = \norm{\psi_T\big( \BPhi^{\theta^1}_{(t,T)} \big) - \psi_T\big( \BPhi^{\theta^2}_{(t,T)} \big)}_{\MC(B(R_T'))} \nonumber \\
& \leq \norm{\psi_T}_{\MC^1(B(R_T))} \norm{ \BPhi^{\theta^1}_{(t,T)} - \BPhi^{\theta^2}_{(t,T)}}_{C(B(R_T))}. \label{pippo}
\end{align}
Besides, it simply follows from Proposition \ref{prop:BoundFlow} that 
\begin{equation}\label{pluto}
\sup_{t\in [0,T]} \big\| \BPhi^{\theta^1}_{(t,T)} - \BPhi^{\theta^2}_{(t,T)} \big\|_{\Ccal(B(R_T))} \leq  C(R_T,T)e^{L_{\MF,T,C_\Gamma}T} \|\theta^1-\theta^2\|_2,
\end{equation}
for some given constant $C(R_T,T)e^{L_{\MF,T,C_\Gamma}T} > 0$. Therefore, the term $I_3$ can be estimated as
\begin{equation}\label{I_3}
|I_3| \leq C(T, R_T,C_{\Gamma},C_{\MF}) \norm{\psi_T}_{\MC^1((B(R_T))} \|\theta^1-\theta^2\|_2\,.
\end{equation}
Lastly, we focus on the integral quantity $I_4$. Using Assumption \eqref{asum1}-$(iv)$, we can write
\begin{align}\label{I_4}
|I_4| & \leq \int_{\RR^{2d}} \nabla_x \psi_T^{\theta^2} \cdot \left| \nabla_\theta (\mathcal F(t,x,\theta_t^1)-\mathcal F(t,x,\theta_t^2)) \right| \rd\mu_t^{\theta^2}(x,y) \nn \\
& \leq \int_{\RR^{2d}} \nabla_x \psi_T^{\theta^2} \cdot \left|\nabla_\theta^2 (\mathcal F(t,x,\theta)\right| \left|\theta_t^1-\theta_t^2\right| \rd\mu_t^{\theta^2}(x,y) \nn \\
& \leq C(R_T,T) |\theta_t^1-\theta_t^2| \sup_{t \in [0,T]} \norm{\psi_t^{\theta^2}}_{C^1(B(R_T')) } \nn\\
& \leq  C(R_T, T,R_T', C_{\Gamma}, C_{\MF}, L_{\MF,T, C_{\Gamma}}) \big\| \psi_T \big\|_{\MC^1(B(R_T))}|\theta_t^1-\theta_t^2|\,.
\end{align}
Collecting the estimates from \eqref{I_1}, \eqref{I_3} and \eqref{I_4}, we can conclude 
\begin{align*}
\|\Lambda(\theta^1) - \Lambda(\theta^2)\|_2 & \leq \frac{1}{2\lambda} C(R_T', R_T,T, C_{\MF}, C_{\Gamma}, L_{\MF,T,C_{\Gamma}}) \norm{\psi_T}_{\MC^2(B(R_T))}\|\theta^1 - \theta^2\|_2 \\
& = \kappa_{\lambda} \|\theta^1- \theta^2\| _2\,.
\end{align*}
Hence by choosing the parameter $\lambda > 0$ to be large enough, we obtain that $\kappa_{\lambda} < 1$, which means that the mapping $\Lambda : ~\Gamma_{M,C}\to ~\Gamma_{M,C}$ is a contraction and thus that its fixed point $\theta^*$ is unique in $\Gamma_{C}$.  Thus we have obtained a solution $(\mu^*,\theta^*,\psi^*)\in \MC([0,T];\mc{P}_c(\RR^{2d}))\times \Gamma_{M,C}\times \mc{C}^1([0,T];\MC_c^2(\RR^{2d})) $ to equations \eqref{eq1}-\eqref{eq3}, and it is unique in $\Gamma_{C}$.
\end{proof}

\begin{rmk}
\label{remark_lambda}
As it was shown in the proof above, the size condition imposed on $\lambda$ depends on some constant $C(|R_T'|, R_T,T, C_{\MF}, C_{\Gamma}, L_{\MF,T,C_{\Gamma}})$ and $\norm{\psi_T}_{\MC^2(R_T)}$. Especially for the case $\Fcal(t,x,\theta):=\tanh(\theta x)$, we can simplify the constant as $C(R_T,T,C_{\Gamma})$, which shows that $\lambda$ depends on the size of the support of $\mu_0$, on the final time $T>0$ and on the constant $C_{\Gamma}$.
\end{rmk}

In addition to its usefulness in characterizing and computing optimal controls, the mean-field maximum principle allows us to derive a quantitative norm rate of convergence of the latter with respect to the $L^p$-norms and a quantitative generalization error.

\begin{cor}
\label{thmrate} 
For any $T,>0$, let $\mu_0\in \mc{P}_c(\RR^{2d})$ be such that $\supp(\mu_0) \subset B(R)$ and and $\psi_T$ be a terminal condition satisfying \eqref{tem}, and suppose Assumptions \ref{asum1} and \ref{asum2} hold. Moreover, assume that for each $N \geq 1$ we are given an approximating empirical measure of the form
\begin{equation*}
\mu_0^N :=  \frac{1}{N} \sum_{i=1}^N \delta_{(X^i_0,Y^i_0)} \in \mc{P}_c^N(\RR^{2d}), 
\end{equation*}
such that 
\begin{equation*}
\lim_{N\to \infty} W_1(\mu^N_0,\mu_0)=0.
\end{equation*}
Let $\lambda>0$ be sufficiently large so that $(\mu^*,\theta^*,\psi^*)\in \MC([0,T];\mc{P}_c(\RR^{2d}))\times \operatorname{Lip}([0,T];\RR^m)\times \mc{C}^1([0,T];\MC_c^2(\RR^{2d}))$ and $(\mu^N,\theta^N,\psi^N)\in \MC([0,T];\mc{P}_c^N(\RR^{2d}))\times \operatorname{Lip}([0,T];\RR^m)\times \mc{C}^1([0,T];\MC_c^2(\RR^{2d})) $ are the unique solutions of \eqref{eq1}-\eqref{eq3} with initial conditions $\mu_0$ and $\mu_0^N$ respectively. Then
\begin{equation}
\label{rate}
\max\bigg\{ \|\theta^N- \theta^*\|_{p} \; , \, \sup_{t \in [0,T]}W_1(\mu^N_t,\mu^*_t) \, , \, \|\psi^N-\psi^*\|_{\mathcal C([0,T] \times B(R_T))} \bigg\} \leq C W_1(\mu_0^N,\mu_0),
\end{equation}
for a constant $C > 0$ which only depends on the parameters of the model and $p \in [1,\infty]$, and where $R_T > 0$ is defined as in Proposition \ref{prop1} above. In particular, we obtain the following quantitative generalization error estimate
\begin{equation}
\label{generr}
\bigg| \int_{\mathbb R^{2d}} \ell(x,y) \ud \mu_T^*(x,y) - \frac{1}{N} \sum_{i=1}^N \ell \big( X^i_T,Y^i_T \big)  \bigg| \leq C W_1(\mu_0^N,\mu_0).
\end{equation}
\end{cor}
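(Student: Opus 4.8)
The plan is to exploit the fixed-point structure of the optimality system established in the proof of Theorem \ref{thmPMP}, and to run a \emph{stability of fixed points with respect to a parameter} argument, the parameter being the initial datum. First I would record that, since $\supp(\mu_0)$ and $\supp(\mu_0^N)$ both lie in $B(R)$, Proposition \ref{prop1} yields one and the same radius $R_T$ for both, hence a common terminal datum $\psi_T$ as in \eqref{tem} and, for each control $\theta \in \Gamma_{M,C}$, a common characteristic adjoint $\psi^\theta$ given by Proposition \ref{prop2}; only the forward curve $\mu^{\theta,\nu}$ depends on whether $\nu = \mu_0$ or $\nu = \mu_0^N$. I would then write $\Lambda_{\mu_0}$ and $\Lambda_{\mu_0^N}$ for the two fixed-point maps
\[
\Lambda_\nu(\theta)(t)^\top = -\tfrac{1}{2\lambda}\int_{\RR^{2d}} \nabla_x\psi_t^\theta \cdot \nabla_\theta \MF(t,x,\theta_t)\rd\mu_t^{\theta,\nu}(x,y),
\]
whose fixed points are $\theta^*$ and $\theta^N$, and recall from Theorem \ref{thmPMP} that, for $\lambda$ large, both are contractions on $\Gamma_{M,C}$ in $\|\cdot\|_2$ with one and the same constant $\kappa_\lambda < 1$.

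Next I would establish the key estimate $\|\Lambda_{\mu_0}(\theta) - \Lambda_{\mu_0^N}(\theta)\|_\infty \le C\,W_1(\mu_0^N,\mu_0)$, uniformly in $\theta \in \Gamma_{M,C}$. This is almost immediate: the difference of the two operators is $-\tfrac{1}{2\lambda}\int \nabla_x\psi_t^\theta\cdot\nabla_\theta\MF(t,x,\theta_t)\rd(\mu_t^{\theta,\mu_0}-\mu_t^{\theta,\mu_0^N})$, so Kantorovich's duality \eqref{Kanto} (exactly as for the term $I_1$ in the proof of Theorem \ref{thmPMP}, using \eqref{rad} and Assumption \ref{asum2} to bound the Lipschitz constant of the integrand on $B(R_T)$) together with the forward stability estimate \eqref{stablity1} of Proposition \ref{prop1} gives it. Combining this with $\theta^* = \Lambda_{\mu_0}(\theta^*)$, $\theta^N = \Lambda_{\mu_0^N}(\theta^N)$ and the crossed term $\Lambda_{\mu_0}(\theta^N)$, I obtain $(1-\kappa_\lambda)\|\theta^*-\theta^N\|_2 \le C\,W_1(\mu_0^N,\mu_0)$, i.e.\ the $L^2$ control estimate. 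To reach a general $L^p$ (in particular $L^\infty$) estimate I would revisit the \emph{pointwise} contraction bounds of Theorem \ref{thmPMP}: there the pieces $I_1,I_3$ are dominated by a constant times $\|\theta^1-\theta^2\|_2$, see \eqref{I_1}, \eqref{I_3}, whereas $I_4$ is dominated by a constant times the pointwise difference $|\theta_t^1-\theta_t^2|$, see \eqref{I_4}; inserting the $L^2$ bound just obtained and the pointwise-in-$t$ data estimate, then absorbing the small pointwise term for $\lambda$ large, yields $\|\theta^*-\theta^N\|_\infty \le C\,W_1(\mu_0^N,\mu_0)$ and hence $\|\theta^*-\theta^N\|_p \le T^{1/p}\|\theta^*-\theta^N\|_\infty \le C\,W_1(\mu_0^N,\mu_0)$ for every $p\in[1,\infty]$.

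The trajectory and adjoint bounds then follow cheaply. For $\mu^* = \mu^{\theta^*,\mu_0}$ and $\mu^N = \mu^{\theta^N,\mu_0^N}$ I would split $W_1(\mu_t^N,\mu_t^*) \le W_1(\mu_t^{\theta^N,\mu_0^N},\mu_t^{\theta^*,\mu_0^N}) + W_1(\mu_t^{\theta^*,\mu_0^N},\mu_t^{\theta^*,\mu_0})$, bounding the first term by $C\|\theta^N-\theta^*\|_2$ via the Gr\"onwall argument behind \eqref{forw} and the second by $e^{L_{\MF,T,C_\Gamma}}W_1(\mu_0^N,\mu_0)$ via \eqref{stablity1}. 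For the adjoints, both $\psi^* = \psi^{\theta^*}$ and $\psi^N = \psi^{\theta^N}$ are characteristic solutions with the same terminal datum, so $\psi_t^{\theta^i}(x,y) = \psi_T(\BPhi^{\theta^i}_{(T,t)}(x,y))$ and the flow-stability bound \eqref{pluto} (Proposition \ref{prop:BoundFlow}), reasoning as in \eqref{pippo}, gives $\|\psi_t^N-\psi_t^*\|_{\MC(B(R_T))} \le \norm{\psi_T}_{\MC^1(B(R_T))}\|\BPhi^{\theta^N}_{(T,t)}-\BPhi^{\theta^*}_{(T,t)}\|_{\MC(B(R_T))} \le C\|\theta^N-\theta^*\|_2$. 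All three pieces are $\le C\,W_1(\mu_0^N,\mu_0)$ by the control estimate, which is \eqref{rate}. Finally, for \eqref{generr} I would note that $\mu_T^N = \BPhi^{\theta^N}_{(0,T)}\sharp\mu_0^N = \tfrac1N\sum_i\delta_{(X_T^i,Y_T^i)}$, so $\tfrac1N\sum_i\ell(X_T^i,Y_T^i) = \int_{\RR^{2d}}\ell\,\rd\mu_T^N$; since $\ell\in\MC^2$ and $\mu_T^*,\mu_T^N$ are both supported in $B(R_T)$, Kantorovich's duality and the $W_1$-bound in \eqref{rate} give the claim. The only genuinely delicate point is the $L^\infty$ upgrade in the control estimate — the contraction in Theorem \ref{thmPMP} being stated only in $L^2$ — but, as indicated, this is handled by the pointwise structure of the bounds $I_1$–$I_4$ already present in that proof, at the cost of possibly enlarging $\lambda$; everything else amounts to reusing, essentially verbatim, the estimates of Propositions \ref{prop1} and \ref{prop2} and of the proof of Theorem \ref{thmPMP}.
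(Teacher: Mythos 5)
Your proof is correct and follows essentially the same approach as the paper: both exploit the fixed-point structure of the operator $\Lambda$ from Theorem \ref{thmPMP}, reuse the forward/adjoint stability estimates \eqref{stablity1}, \eqref{esti}--\eqref{forw} and \eqref{pippo}--\eqref{pluto}, and invoke Kantorovich's duality for \eqref{generr}. The only difference is organizational -- you make the dependence of $\Lambda$ on the initial datum explicit and bootstrap the control estimate from $L^2$ to $L^\infty$ via the pointwise structure of $I_4$, whereas the paper estimates $\|\Lambda(\theta^N)-\Lambda(\theta^*)\|_p$ directly and absorbs in $L^p$ for $\lambda$ large -- but these are two equivalent ways of packaging the same absorption argument.
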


\begin{proof}
By using similar arguments as in the proof of Theorem \ref{thmPMP}, see in particular \eqref{stablity1} and \eqref{esti}-\eqref{forw}, we can prove the stability estimate
\begin{align}
\label{estima1}
\sup_{t \in [0,T]}W_1(\mu_t^N,\mu_t^*) & \leq \sup_{t \in [0,T]}W_1(\mu_t^N,\mu_t^{\theta^N})+\sup_{t \in [0,T]}W_1(\mu_t^{\theta^N},\mu_t^{*})\notag\\
& \leq C \left (W_1(\mu_0^N,\mu_0)+\int_0^T |\theta^N_t- \theta^*_t| \dt \right) \\
& \leq C \Big( W_1(\mu_0^N,\mu_0)+\|\theta^N- \theta^*\|_{p} \Big),
\end{align}
where $\mu_t^{\theta^N}$ is the unique solution of \eqref{P1} driven by $\theta^N$ with initial datum $\mu_0$, and $C > 0$ is an overloaded constant depending on the data of the problem. Similarly, from \eqref{esti}, \eqref{pippo} and \eqref{pluto}, we have that
\begin{equation}\label{estima2}
\|\psi^N-\psi^*\|_{\mathcal C([0,T] \times B(R_T))} \leq C \int_0^T |\theta^N_t- \theta^*_t| \dt \leq C \|\theta^N- \theta^*\|_{p}, 
\end{equation}
for any $p \in [1,+\infty]$. Finally, by using the fixed point equations 
\begin{equation*}
\theta^N=\Lambda(\theta^N) \qquad \text{and} \qquad \theta^*=\Lambda(\theta^*),
\end{equation*}
and following the estimates in the proof of  Theorem \ref{thmPMP}, see in particular \eqref{split}, \eqref{I1_parts}, \eqref{I3parts} and \eqref{I_4},  we obtain
\begin{equation*}
\begin{aligned}
\|\theta^N- \theta^*\|_{p} &=\|\Lambda(\theta^N)-\Lambda(\theta^*)\|_p  \\
&\leq \frac{C}{\lambda} \bigg( \|\theta^N- \theta^*\|_{p} + \sup_{t \in [0,T]} W_1(\mu_t^N,\mu_t^*)+ \|\psi^N-\psi^*\|_{\mathcal C([0,T] \times B(R_T))} \bigg)\\
&\leq \frac{C}{\lambda}\left  (W_1(\mu_0^N,\mu_0)+\|\theta^N- \theta^*\|_{p} \right),
\end{aligned}
\end{equation*}
where we applied \eqref{estima1} and \eqref{estima2} in the last inequality. Hence for $\lambda>0$ large enough, it holds 
\begin{equation}
\label{estima3}
\|\theta^N- \theta^*\|_{p} \leq CW_1(\mu_0^N,\mu_0).
\end{equation}
Combining now \eqref{estima1}, \eqref{estima2}  and \eqref{estima3} finally yields \eqref{rate}. The generalization error displayed in \eqref{generr} follows from \eqref{estima1} and \eqref{estima3}, since 
\begin{equation*}
\begin{aligned}
\left | \int_{\mathbb R^{2d}} \ell(x,y) \ud (\mu_T^*(x,y)-\mu^N_T(x,y)) \right | &\leq \operatorname{Lip}(\ell \, ; B(R_T)) \sup_{t \in [0,T]} W_1(\mu_t^N,\mu_t^*)\\
&\leq C \left (W_1(\mu_0^N,\mu_0)+\|\theta^N- \theta^*\|_{p} \right ) \\
&\leq C W_1(\mu_0^N,\mu_0).
\end{aligned}
\end{equation*}
This completes the proof of Corollary \ref{thmrate}.
\end{proof}

\begin{rmk}[Data bounds, regularization parameters and error estimates] The estimate \eqref{generr} is in the worst case affected by the curse of dimension, although it will not be the case in practice e.g. for networks driven by sigmoid activation functions. The constant $C$ in \eqref{generr} is encoding the complexity of the NeurODE and is derived as a consequence of \eqref{estima3} as
$$
C=C_1(1- C_0/\lambda) >0.
$$
Therein, the constant $C_0 > 0$ may depend exponentially on the constants $C_{\MF}$ and $L_{\MF}$ appearing in Assumptions \ref{asum1} -- and in particular on the dimension $d \geq 1$ of the state space --, and polynomially on those of Assumptions \ref{asum2}, owing to the pessimistic nature of deterministic Gr\"onwall estimates. Thus, as long as the worst-case Gr\"onwall estimates do indeed reflect the actual stability of the PMP, the constant $C_0 > 0$ may be extremely large. Nevertheless, in the case of sigmoidal-type activation functions such as $\rho := \tanh$, we detailed in Remark \ref{rmtanh} how the uniform boundedness of the velocity field $\mathcal{F}$ implied a polynomial dependence of all the relevant constants of the problem with respect to the state space dimension. Therefore, in that particular yet relevant case, the quantity $C_0$ will in fact scale polynomially and not exponentially with $d$.

For arbitrary initial measures $\mu_0$, it is known that empirical measures $\mu_N$ supported on finite samples satisfy the estimate
$$
\mathbb E \big[ W_1(\mu_0^N,\mu_0) \big] \leq C N^{-1/d},
$$
see for instance \cite{10.1214/12-AIHP489,FG15}, which scales quite badly with the dimension $d \geq 1$ of the state space. However, if $\mu_0$ is concentrated around manifolds of lower dimension, then the factor $C > 0$ depends favorably on that intrinsic lower dimension \cite{10.3150/18-BEJ1065}. In practice, it is expected that data distributions do concentrate around such lower-dimensional structures.
\end{rmk}


\subsection{Rigorous derivation of the mean-field maximum principle}
\label{sec:RigorDev}

The previous section, we proved the well-posedness of the mean-field PMP \eqref{eq1}-\eqref{eq3} in the class of control that are Lipschitz continuous with respect to time. Under this  assumption, we rigorously derive in what follows the optimality conditions by using a generalized Lagrange multiplier theorem over convex sets. The method we present is to a certain extent a standard calculus of variations approach, and allows to bypass the more technical ones based either on the abstract differential calculus of Wasserstein as in \cite{SetValuedPMP,PMPWassConst,PMPWass}, or on the fine structural results for continuity equations leveraged in \cite{burger}.

Let it be stressed that the requirement of continuity of the control is purely technical, and stems from our use of \cite[Theorem 1]{piccoli2019wasserstein} concerning the well-posedness of transport equations with sources. Were such results available in the case where the source terms are merely measurable in time -- which seems true but is not written anywhere yet --, we could then remove the continuity assumption and prove the mean-field PMP in its full generality using the Lagrangian approach.

\subsubsection{A Lagrange Multiplier Theorem over convex sets}
\label{sec:LagMult}

Let $X$ and $Y$ be Banach spaces, $E\subset X$ be a convex set, $J: E \rightarrow \RR$ be a continuous functional and $G:E\rightarrow Y$ be a linear mapping, both continuously $F$-differentiable on $E$ in the sense of \eqref{A2}. For $x^*\in E$, we introduce the notation
\begin{equation}
DG(x^*):=\Big\{ L\in \mc L(X_E,Y) ~\big|~ L \mbox{ satisfies } \eqref{A1}\Big\}\,.
\end{equation}
It is known that every $L \in \mc{L}(X_E,Y)$ can be uniquely extended to a operator $\overline L\in \mc L(\overline{X}_E,Y)$ over the Banach space $\overline{X}_E$. In what follows, we will slightly abuse the notation $DG(x^*)$ to denote the set of operators obtained after extending the convex subgradients to $\overline{X}_E$. 

In the following theorem, we extend the Lagrange multiplier theorem for the Banach space \cite[Section 4.14]{zeidler1995applied} to the setting of the calculus for convex subsets introduced in Section \ref{sec:2}. To ease the readability of the paper, the proof of this result is reported in Appendix \ref{subsection:Lagrange}.
\begin{thm}\label{thmla}
Let $x^* \in E$ be a solution of the constrained optimization problem
\begin{equation}
\label{eqopt}
\left\{
\begin{aligned}
\inf\limits_{x\in E} & J(x), \\ 
\textnormal{s.t.} ~ & G(x)=0.
\end{aligned}
\right.
\end{equation}
Suppose moreover that the inclusion $x^*+X_E\subset E$ holds, and that there exists some $G'(x^*)\in DG(x^*)$ that is a surjective operator from $\overline {X}_E$ into $Y$. Then for any $J'(x^*)\in DJ(x^*)$, there exists a non-zero covector $ p^* \in Y'$ which satisfies 
\begin{equation}
 \la J'(x^\ast),z \ra+\la G'(x^\ast)z,p^*\ra = 0
\end{equation}
for all $z\in \overline{X}_E$.
\end{thm}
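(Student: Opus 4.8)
The plan is to reduce Theorem~\ref{thmla} to the classical Lagrange multiplier rule on Banach spaces (in the spirit of \cite[Section 4.14]{zeidler1995applied}), after flattening the feasible set into a translate of a Banach space. The first step is to note that the hypothesis $x^*+X_E\subseteq E$, combined with convexity of $E$ and the trivial inclusion $E-x^*\subseteq\mathbb{R}(E-E)=X_E$, forces $E=x^*+X_E$; after the translation $x\mapsto x-x^*$ we may thus assume $x^*=0$ and $E=X_E$. Since $J\in\mathcal{C}^1(E;\mathbb{R})$ admits a continuous, hence locally bounded, selection $e\mapsto L_e\in DJ(e)$, the mean value inequality along segments of the convex set $E$ shows that $J$ is locally Lipschitz; it therefore extends (by uniform continuity on bounded sets) to a function $\overline{J}$ on the Banach space $\overline{X}_E$ which is $F$-differentiable at $0$ with $D\overline{J}(0)$ equal to the unique bounded extension $\overline{L}\in(\overline{X}_E)'$ of the chosen $L:=J'(x^*)$. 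Likewise the linear map $G$ extends to a bounded linear operator $\overline{G}\colon\overline{X}_E\to Y$ whose differential at $0$ is the prescribed surjection $T:=G'(x^*)\in\mathcal{L}(\overline{X}_E,Y)$.

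The second step extracts first-order information from optimality. For every $v\in X_E$ with $Tv=0$ --- equivalently, since $G$ is linear with $G(x^*)=0$, with $G(x^*+v)=0$ --- the entire line $\{x^*+tv:t\in\mathbb{R}\}$ lies in $E=x^*+X_E$ and is admissible, so $t\mapsto J(x^*+tv)$ attains its minimum at $t=0$; differentiating gives $\langle L,v\rangle=0$. Hence $L$ annihilates $N:=\ker\!\big(T|_{X_E}\big)$, and by continuity $\overline{L}$ annihilates the closure $\overline{N}\subseteq\overline{X}_E$.

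The third step identifies the multiplier. Assume for a moment that $\overline{L}$ in fact annihilates the whole kernel $K:=\ker T\subseteq\overline{X}_E$. Then $\overline{L}$ is constant along cosets of $K$ and descends to a bounded functional on $\overline{X}_E/K$; since $T$ is surjective, the open mapping theorem makes $\overline{X}_E/K$ isomorphic to $Y$ and the closed range theorem yields $\operatorname{range}(T^*)=K^{\perp}$. As $\overline{L}\in K^{\perp}$, there is a (unique, since $T^*$ is injective) covector $p^*\in Y'$ with $T^*p^*=-\overline{L}$, i.e. $\langle J'(x^*),z\rangle+\langle G'(x^*)z,p^*\rangle=0$ for all $z\in\overline{X}_E$; this is exactly the asserted optimality condition (and $p^*\neq 0$ whenever $J'(x^*)\neq 0$).

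The crux --- and the step I expect to be the main obstacle --- is the passage from $\overline{N}$ to $K$ in the third step, i.e. proving that $\ker T\cap X_E$ is dense in $\ker T$ within $\overline{X}_E$. I would establish this by an approximate-correction iteration: given $z\in K$, pick $u\in X_E$ close to $z$, note that $\|Tu\|=\|T(u-z)\|$ is small, then use the open mapping theorem for the surjection $T$ together with the density of $X_E$ in $\overline{X}_E$ to subtract from $u$ a small element of $X_E$ that reduces $\|Tu\|$ by a fixed factor, and iterate to the limit. It is worth stressing that in the setting where this theorem is applied --- $E\subseteq\mathcal{M}_{1}(\mathbb{R}^d)$, so that $X_E=\mathcal{M}_{0}(\mathbb{R}^d)$ is already a closed subspace and $\overline{X}_E=X_E$ --- one has $N=K$ and this difficulty disappears entirely; everything else above is then routine bookkeeping (the translation, the extension of $J$ and $G$ to $\overline{X}_E$, and the closed range / Lagrange multiplier argument).
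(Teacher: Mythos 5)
Your proposal takes a genuinely different route from the paper's, and it has a gap that the paper's route avoids. The paper establishes $DJ(x^*)h=0$ for \emph{every} $h \in \overline{X}_E$ with $G'(x^*)h=0$ via a Lyusternik-type correction scheme (their Steps 1.1--1.2): an iterative quasi-Newton argument produces a correction $u(\varepsilon)\in\overline{X}_E$ with $\|u(\varepsilon)\|=o(\varepsilon)$ and $\overline{G}(x^*+\varepsilon h+u(\varepsilon))=0$, after which minimality of $J$ at $x^*$ and a Taylor expansion give the conclusion. This works directly for all $h\in\ker G'(x^*)$ and uses only surjectivity of $G'(x^*)$, not linearity of $G$. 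Your approach instead exploits linearity of $G$ to obtain first-order optimality along the literal lines $\{x^*+tv\}$ for $v\in X_E\cap\ker G'(x^*)$, and then has to pass to the closure.

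That passage is exactly where your argument breaks, as you already suspect. You establish $L\perp N:=\ker T\cap X_E$, hence $\overline{L}\perp\overline{N}$; but the closed range step needs $\overline{L}\perp K:=\ker T$. Your approximate-correction iteration does not deliver this: starting from $u\in X_E$ close to $z\in K$ and subtracting corrections $w_k\in X_E$, the accumulated sum $\sum_k w_k$ only converges in the completion $\overline{X}_E$, so the limit $u-\sum_k w_k$ lies in $K$ but in general not in $X_E$ --- you have produced an element of $K$, not of $N$, and density of $N$ in $K$ remains unproven (and is not automatic for a general dense subspace $X_E$ and bounded surjection $T$). Your fall-back remark that in the paper's application $\overline{X}_E=X_E$ is also incorrect: there $X_E$ (cf.\ \eqref{DefXE}) contains $U_V=\tilde{\mathcal{C}}([0,T];\mathcal{M}_{0,c}(\mathbb{R}^{2d}))\cap\mathcal{C}^1([0,T];(\mathcal{C}^1_b(\mathbb{R}^{2d}))')$, and neither the uniformly-compact-support constraint nor the property of the values being measures at all is preserved under limits in $\mathcal{C}^1([0,T];(\mathcal{C}^1_b)')$, so this set is not closed. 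Finally, although the preamble to the theorem calls $G$ ``linear,'' the concrete $G$ of \eqref{Gform} is bilinear in $(\mu,\theta)$, and the paper's proof does not use linearity; since your first-order-optimality-along-lines step does use it, your argument would not carry over to the constraint map the theorem is actually applied to. The Lyusternik route sidesteps both issues at once.
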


\subsubsection{Preparation and verification of assumptions}

Recall that in Theorem \ref{thm:Wellposed},  we have shown that for every $\theta\in L^2([0,T];\RR^m)$, there exists a unique solution $\mu\in \mc{C}([0,T];\mc{P}_c(\RR^{2d}))$ to the continuity equation \ref{PDE}. In the sequel, we assume that $\theta\in \MC([0,T];\RR^m)$ so that the map $t\mapsto \mathcal F(t,x,\theta_t)$ is continuous on $[0,T]$, and that 
$\mathcal F$ satisfies Assumption \ref{asum1}.

Under these working assumption we can further prove that the solution $\mu$ is such that $\partial_t\mu \in \MC([0,T];(\MC_b^1(\RR^{2d}))')$. Indeed for any $\varphi\in \MC_b^1(\RR^{2d})$, one has
\begin{align}
\label{conti}
\norm{\partial_t\mu_t}_{(\MC_b^1(\RR^{2d}))'} & =\sup\limits_{\norm{\varphi}_{\MC_b^1}\leq 1}|\la \partial_t\mu_t,\varphi\ra| \\
& = \sup\limits_{\norm{\varphi}_{\MC_b^1}\leq 1}|\la \mathcal F(t,\cdot,\theta_t)\mu_t,\nabla_x\varphi\ra|\notag \\
& \leq \norm{\mathcal F}_{L^\infty(\supp(\mu_t))}\leq C_{\MF}(1+|R_T|).
\end{align}
Additionally, it holds for any $s,t \in[0,T]$ that
\begin{align}
\label{conti1}
\norm{\partial_t\mu_t-\partial_s\mu_s}_{(\MC_b^1(\RR^{2d}))'} & = \sup\limits_{\norm{\varphi}_{\MC_b^1}\leq 1}|\la \partial_t\mu_t-\partial_s\mu_s,\varphi\ra| \\
& = \sup\limits_{\norm{\varphi}_{\MC_b^1}\leq 1}|\la \mathcal F(t,\cdot,\theta_t)\mu_t-\mathcal F(s,\cdot,\theta_s)\mu_s,\nabla_x\varphi\ra|\notag\\
& \leq \sup\limits_{\norm{\varphi}_{\MC_b^1}\leq 1} \Big| \big\la (\mathcal F(t,\cdot,\theta_t)-\mathcal F(s,\cdot,\theta_s))\mu_t,\nabla_x\varphi \big\ra \Big| \\
& \hspace{0.4cm} +\sup\limits_{\norm{\varphi}_{\MC_b^1}\leq 1}|\la \mathcal F(s,\cdot,\theta_s)(\mu_t-\mu_s),\nabla_x\varphi\ra| \\
& \leq C|t-s|+\sup\limits_{\norm{\varphi}_{\MC_b^1}\leq 1}|\la \mathcal F(s,\cdot,\theta_s)(\mu_t-\mu_s),\nabla_x\varphi\ra|,
\end{align}
Observe that by standard density results, there exists for every $\varphi\in \MC_b^1(\RR^{2d})$ a sequence $(\varphi^n) \subset \MC_b^2(\RR^{2d})$ such that $\norm{\varphi^n-\varphi}_{\MC^1_b(\R^{2d})} \rightarrow 0$ as $n\to +\infty$. Thus, one has that
\begin{align}
\label{comar1}
& \hspace{-0.15cm} \sup\limits_{\norm{\varphi}_{\MC_b^1}\leq 1}|\la \mathcal F(s,\cdot,\theta_s)(\mu_t-\mu_s),\nabla_x\varphi\ra|\notag\\
& \leq \sup\limits_{\norm{\varphi}_{\MC_b^1}\leq 1} \big| \big\la \mathcal F(s,\cdot,\theta_s)(\mu_t-\mu_s),(\nabla_x \varphi - \nabla_x\varphi^n) \big\ra  \big| + \sup\limits_{\norm{\varphi}_{\MC_b^1}\leq 1}|\la \mathcal F(s,\cdot,\theta_s)(\mu_t-\mu_s),\nabla_x\varphi^n\ra|\notag\\
& \leq C\norm{\varphi^n-\varphi}_{\MC^1_b(\R^{2d})} + \mbox{Lip} \big( \MF(t,\cdot,\theta_t) \cdot \nabla_x\varphi^n \big) W_1(\mu_t,\mu_s) \\
& \leq C\norm{\varphi^n-\varphi}_{\MC^1_b(\R^{2d})} + C_n|t-s| ,
\end{align}
where  we have used the Kantorovitch duality \eqref{Kanto} and \eqref{Was}, which further yields that 
\begin{equation}
\label{comar1bis}
\lim_{s \to t}\sup\limits_{\norm{\varphi}_{\MC_b^1}\leq 1}|\langle \mathcal F(s,\cdot,\theta_s)(\mu_t-\mu_s),\nabla_x\varphi \rangle| \leq \norm{\varphi^n-\varphi}_{\MC^1_b(\R^{2d})},
\end{equation}
for every $n \in \NN$. Therefore letting $n\to + \infty$ in \eqref{comar1bis}, we can conclude 
\begin{equation}
\label{comar2}
\sup\limits_{\norm{\varphi}_{\MC_b^1}\leq 1}|\la \mathcal F(s,\cdot,\theta_s)(\mu_t-\mu_s),\nabla_x\varphi\ra| ~\underset{s \to t}{\longrightarrow}~ 0.
\end{equation}
This combined with \eqref{conti1} and the fact that $t \mapsto \MF(t,x,\theta_t) \in \RR^d$ is continuous implies that $\partial_t\mu \in \MC([0,T];(\MC_b^1(\RR^{2d}))')$. In the sequel, we will therefore consider trajectory-control pairs $(\mu^*,\theta^*)\in \mc{C}^1([0,T];(\MC_b^1(\RR^{2d}))')\times \MC([0,T];\RR^{m})$ solution of the optimal control problem \eqref{Pcost}, where we have used the notation $\mu\in\mc{C}^1([0,T];(\MC_b^1(\RR^{2d}))')$ to represent that $\mu\in \mc{C}([0,T];(\MC_b^1(\RR^{2d}))')$ and $\partial_t\mu\in\mc{C}([0,T];(\MC_b^1(\RR^{2d}))')$.

\vspace{-0.35cm}

\paragraph*{$\circ$ The setup of spaces and sets.} Let us start by defining the spaces
\begin{equation}
V:=\tilde{\mc{C}}([0,T];\mc{M}_{1,c}(\RR^{2d})) \, \cap \, \MC^1([0,T];(\MC_b^1(\RR^{2d}))') \qquad \text{and} \qquad  Q:=\MC([0,T];\RR^{m}),
\end{equation}
where
\begin{equation}
\begin{aligned}
\tilde{\mc{C}}([0,T];\mc{M}_{1,c}(\RR^{2d})) := \bigg\{\mu\in \MC([0,T];\mc{M}_{1,c}(\RR^{2d})) ~\big|~ & \supp(\mu_t) \subset S_{\mu}~ \text{for all $t \in [0,T]$} \\
& \text{where $S_{\mu} \subset \R^d$ is a compact set} \bigg\},
\end{aligned}
\end{equation}
and fix
\begin{equation}
\label{DefE}
E := V\times Q ~=~ \tilde{\mc{C}}([0,T];\mc{M}_{1,c}(\RR^{2d})) \, \cap \, \MC^1([0,T];(\MC_b^1(\RR^{2d}))')\times \MC([0,T];\RR^{m}).
\end{equation}
Clearly, $(\mu^*,\theta^*)\in E$ since $\mc{P}_c(\RR^{2d})\subset \mc{M}_{1,c}(\RR^{2d})$. We also observe that $E$ is a convex subset of the Banach space
\begin{equation}
X:= U \times Q = \MC^1([0,T];(\MC_b^1(\RR^{2d}))')\times \MC([0,T];\RR^{m}).
\end{equation}
Due to this embedding, we shall from now on endow $\mc{M}_{1,c}(\RR^{2d})$ with the weak$-^*$ topology of $(\MC_b^1(\RR^{2d}))'$. In what follows, we use the notation $U_V := \RR(V-V)$ as well as the identity
\begin{equation}
U_V:=\tilde{\mc{C}}([0,T];\mc{M}_{0,c}(\RR^{2d}))\cap\MC^1([0,T];(\MC_b^1(\RR^{2d}))').
\end{equation}
For $\nu \in V$, we shall define $U_\nu$ as the convex cone of directions
\begin{equation}
U_{\nu}:=\RR_+(V-\nu)\subset U_V\, , 
\end{equation}
in keeping with the concepts introduced in Section \ref{sec:2}. In fact, one can easily check that $U_{\nu}=U_V$, since for any $\mu\in U_V$, one has $\mu=\mu+\nu-\nu$ with $\mu+\nu\in V$.
Next we introduce 
\begin{equation}\label{DefXE}
X_E:=U_V\times Q  ~=~ \tilde{\mc{C}}([0,T];\mc{M}_{0,c}(\RR^{2d}))\cap\MC^1([0,T];(\MC_b^1(\RR^{2d}))')\times \MC([0,T];\RR^{m}).
\end{equation}
that is seen as a convex subset of $X$. It follows from the definitions of $E$ and $X_E$ that $(\mu^*,\theta^*)+X_E\subset E$, which is compatible with the assumptions of Theorem \ref{thmla}.

\paragraph*{$\circ$ The setup of maps.} For any $(\mu,\theta)\in E$, we denote the full cost functional of \eqref{Pcost} by
\begin{equation}
J(\mu,\theta):=\int_{\RR^{2d}}\ell(x,y)\rd\mu_T(x,y)+ \lambda \int_0^T|\theta_t|^2\dt,
\end{equation}
and observe that it is a map from $E$ into $\RR_+$. We also introduce the notation
\begin{equation}
\label{Gform}
G(\mu,\theta) :=-\partial_t\mu-\nabla_x\cdot (\mathcal F(t,x,\theta)\mu)\,.
\end{equation}
Seeing $G(\mu,\theta)$ as time-dependent quantity, it is easy to check that $ G(\mu,\theta)\in \MC([0,T];(\MC_b^1(\RR^{2d}))')$ for $(\mu,\theta)\in E$, and that $\la G(\mu,\theta)_t,1 \ra=0$ for all $t\in[0,T]$.  Indeed for any $\varphi\in \MC_b^1(\RR^{2d})$, it holds
\begin{align*}
\norm{G(\mu,\theta)_t-G(\mu,\theta)_s}_{(\MC_b^1)'} &= \sup\limits_{\|\varphi\|_{\MC_b^1}\leq 1}\left|\la G(\mu,\theta)_t-G(\mu,\theta)_s,\varphi\ra\right| \\
& = \norm{\partial_t\mu_t-\partial_s\mu_s}_{(\MC_b^1)'} + \sup\limits_{\| \varphi\|_{\MC_b^1}\leq 1} \big| \big\la (\mathcal F(t,\cdot,\theta_t)-\mathcal F(s,\cdot,\theta_s))\mu_t, \nabla\varphi \big\ra \big| \\
& \hspace{0.45cm} +\sup\limits_{\|\varphi\|_{\MC_b^1}\leq 1} \big| \big\la \mathcal F(s,\cdot,\theta_s)(\mu_t-\mu_s), \nabla\varphi \big\ra \big|.
\,.
\end{align*}
By performing density arguments similar to those of \eqref{comar1}-\eqref{comar2}, one has that
\begin{equation}
\sup\limits_{\|\varphi\|_{\MC_b^1}\leq 1}\left|\la \mathcal F(s,x,\theta_s)(\mu_t-\mu_s), \nabla\varphi\ra\right|\leq C\|\mu_t-\mu_s\|_{(\MC^1)'}.
\end{equation}
This with together with the fact that $\mu \in \MC^1([0,T];(\MC_b^1(\RR^{2d}))') $ and that $t \in [0,T] \mapsto \mc F(t,\cdot,\theta_t)$ is continuous in time yields $ G(\mu,\theta)\in \MC([0,T];(\MC_b^1(\RR^{2d}))')$. Observe now that for any $\mu \in \tilde{\mc{C}}([0,T];\mc{M}_{1,c}(\RR^{2d}))$, there exists some compact set $S_{\mu} \subset \RR^d$ such that 
\begin{equation}
\supp(\mu_t) \subset S_{\mu} \quad \mbox{ for all }t\in[0,T]\, .
\end{equation} 
This implies that $G(\mu,\theta)$ is uniformly compactly supported in the sense of  distribution, namely $G : E \to Y_0$ with
\begin{align*}
Y_0 :&=\tilde{\mc{C}}([0,T];(\MC_b^1(\RR^{2d}))'_{0,c})\nn\\
& =\bigg\{ g\in \MC([0,T];(\MC_b^1(\RR^{2d}))') ~\big|~ \la g_t,1\ra=0 \mbox{ and } \supp(g_t) \subset S_{g}\Subset \RR^{2d}, ~ \forall t\in[0,T] \bigg\}.
\end{align*}
This allows us to define the Banach space
\begin{equation}
Y:=\overline{Y}_0 = \overline{\tilde{\mc{C}}([0,T];(\MC_b^1(\RR^{2d}))'_{0,c})},
\end{equation}
which is a closed subspace of the  Banach space $\MC([0,T];(\MC_b^1(\RR^{2d}))')$.

Now let us verify that $G\in \MC^1(E;Y)$ and $J\in \MC^1(E;\RR)$. For any $t\in[0,T]$, it holds that
\begin{align*}
\norm{G(\mu^1,\theta^1)_t-G(\mu^2,\theta^2)_t}_{(\MC_b^1(\RR^{2d}))'} & = \sup\limits_{\|\varphi\|_{\MC_b^1}\leq 1}\left|\la G(\mu^1,\theta^1)_t-G(\mu^2,\theta^2)_t,\varphi\ra\right| \\
& = \norm{\partial_t\mu_t^1-\partial_t\mu_t^2}_{(\MC_b^1)'}+ \sup\limits_{\|\varphi\|_{\MC_b^1}\leq 1}\left|\la \mathcal F(t,x,\theta_t^1)(\mu_t^1-\mu_t^2), \nabla\varphi\ra\right| \\
& \hspace{0.45cm} +\sup\limits_{\|\varphi\|_{\MC_b^1}\leq 1}\left|\la (\mathcal F(t,x,\theta_t^1)-\mathcal F(t,x,\theta_t^2))\mu_t^2, \nabla\varphi\ra\right| \\
& \hspace{-0.25cm} \leq \norm{\partial_t\mu_t^1-\partial_t\mu_t^2}_{(\MC_b^1)'} +C\norm{\mu_t^1-\mu_t^2}_{(\MC_b^1)'}+C(R_T,T)|\theta_t^1-\theta_t^2|\
\end{align*}
where we have again used  density arguments similar to that of \eqref{comar1}-\eqref{comar2}. Thus, we have proven that
\begin{align}
\norm{G(\mu^1,\theta^1)-G(\mu^2,\theta^2)}_{\MC([0,T];(\MC_b^1(\RR^{2d}))')}\notag 
& \leq C\norm{\mu^1-\mu^2}_{\mc{C}^1([0,T];\MC_b^1(\RR^{2d}))} \\
& \hspace{0.45cm} +C(R_T,T)\norm{\theta_1-\theta_2}_{\MC([0,T])},
\end{align} 
which implies that $G\in \MC(E;Y)$. Similarly we have
\begin{align*}
& |J(\mu^1,\theta^1)-J(\mu^2,\theta^2)| \\
& \leq \left|\int_{\RR^{2d}}\ell(x,y)\rd(\mu_T^1-\mu_T^2)(x,y)+\int_0^T(|\theta_t^1|^2-|\theta_t^2|^2)\dt\right|  \\
& \leq C\norm{\mu_T^1-\mu_T^2}_{(\MC_b^1)'} +C \big( T,\norm{\theta_1}_{\MC([0,T]},\norm{\theta_2}_{\MC([0,T])} \big) \norm{\theta_1-\theta_2}_{\MC([0,T]},
\end{align*}
where we used the fact that $\mu_T^1$ and $\mu_T^2$ are compactly supported. This in turn implies that $J\in \MC(E;\RR)$. 

Next, we use Lemma \ref{lmC1} to prove that both mappings are in fact $\MC^1$-smooth. It follows from the definition \eqref{Gdiff} of G-derivative that for all $\mu\in V$, $\nu\in U_\mu=U_V$ and $\varphi\in\MC_b^1(\RR^{2d})$, one has
\begin{align}
\la \rd_{\mu}G(\mu,\theta)(\nu),\varphi\ra & = \bigg\la \lim\limits_{\varepsilon\rightarrow 0^+} \frac{G(\mu+\varepsilon\nu,\theta)-G(\mu,\theta)}{\varepsilon}, \varphi\bigg\ra \\
&  =\lim\limits_{\varepsilon\rightarrow 0^+}\frac{\la G(\mu+\varepsilon\nu,\theta),\varphi\ra-\la G(\mu,\theta),\varphi\ra}{\varepsilon}\nn\\
& = \la -\partial_t\nu-\nabla_x\cdot (\mathcal F(t,x,\theta)\nu),\varphi\ra<+\infty.
\end{align}
Thus we have found a continuous operator $\mu \in V \mapsto L_{\theta}(\mu)\in \mc{L}(U_V,Y)$ such that $L_{\theta}(\mu)(\nu):=-\partial_t\nu-\nabla_x\cdot (\mathcal F(t,x,\theta)\nu)=\rd_{\mu}G(\mu,\theta)(\nu)$  for all $\mu\in V$ and $\nu\in U_\mu$.
 Applying Lemma \ref{lmC1} allows us to conclude that $L_{\theta}(\mu)\in D_\mu G(\mu,\theta)$  and $G(\cdot,\theta)\in \MC^1(V;Y)$. Additionally, remark that the standard Fr\'{e}chet differential $G'_{\theta}(\mu,\theta):Q\to Y$ with respect to the control curve satisfies
\begin{equation}
\la G'_{\theta}(\mu,\theta)(\alpha),\varphi\ra=\lim\limits_{\varepsilon\rightarrow 0^+} \frac{\la G(\mu,\theta+\varepsilon \alpha),\varphi\ra-\la G(\mu,\theta),\varphi\ra }{\varepsilon}=\la -\nabla_x\cdot (\nabla_\theta \mathcal F(t,x,\theta)\alpha\mu),\varphi\ra<+\infty \,.
\end{equation}
for all $\alpha\in Q$. The continuity of $\theta \in \RR^m \mapsto \nabla_\theta \MF(t,x,\theta) \in \RR^d$ implies that $G(\mu,\cdot)\in \MC^1(Q;Y)$ for every $\mu \in V$, and thus $G\in \MC^1(E;Y)$. Similarly, we have 
\begin{equation}
J'_\mu(\mu,\theta)(\nu)=\int_{\RR^{2d}}\ell(x,y)\rd\nu_T \qquad \mbox{ and } \qquad J'_\theta(\mu,\theta)(\alpha)=\int_0^T2\lambda\theta_t\cdot\alpha_t\dt\,,
\end{equation}
for all $\nu\in U_\mu=U_V$ and $\alpha\in Q$. It is then easy to check that $J\in \MC^1(E;\RR)$.


\subsubsection{The mean-field PMP for continuous controls: a Lagrangian approach}

We are now ready to present the derivation of the first order optimality condition \eqref{eq1}-\eqref{eq3} in the class of continuous controls, by means of a Lagrange multiplier rule tailored to the calculus for convex functions introduced in Section \ref{subsection:Convex}.

\begin{thm}[Abstract Lagrange multiplier theorem]
\label{thmcondi}
Let $(\mu^*,\theta^*)\in E\subset X=U\times Q$ be a solution to the optimal control problem \eqref{Pcost}. Then there exists $p^\ast\in Y'$ such that
\begin{empheq}[left=\empheqlbrace]{align}
&\la G'_\mu(\mu^\ast,\theta^\ast)(\nu),p^*\ra+ J'_\mu(\mu^\ast,\theta^\ast)(\nu)=0,\quad \mbox{ for all }\nu \in \overline U_{V}, \label{PMP2}\\
&\la G'_\theta(\mu^\ast,\theta^\ast)(\alpha),p^*\ra+ J'_\theta(\mu^\ast,\theta^\ast)(\alpha)=0,\quad \mbox{ for all }\alpha \in Q\label{PMP3}\,.
\end{empheq}
\end{thm}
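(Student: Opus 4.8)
The plan is to read the mean-field optimal control problem \eqref{Pcost} as an instance of the abstract constrained problem \eqref{eqopt} for the data $(X,E,Y,J,G)$ assembled in the preparation above, and then to invoke the generalized Lagrange multiplier rule of Theorem \ref{thmla}. Concretely, letting $\mu^*$ be the unique solution of the controlled continuity equation driven by $\theta^*$ and issued from the prescribed datum $\mu_0$, the pair $(\mu^*,\theta^*)\in E$ is a minimizer of $J$ over the zero level set $\{(\mu,\theta)\in E : G(\mu,\theta)=0\}$ (the initial datum $\mu_0$ being built into the constrained problem), so it solves \eqref{eqopt}. From the preparatory computations we already have $J\in\MC^1(E;\RR)$ and $G\in\MC^1(E;Y)$, together with the explicit formulas for $J'_\mu,J'_\theta,G'_\mu,G'_\theta$, and we have also verified the admissibility inclusion $(\mu^*,\theta^*)+X_E\subset E$ that is required by Theorem \ref{thmla}. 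Hence the only hypothesis of Theorem \ref{thmla} still to be checked is the existence of some $G'(\mu^*,\theta^*)\in DG(\mu^*,\theta^*)$ that is surjective, as a bounded operator, from $\overline{X}_E=\overline{U}_V\times Q$ onto $Y$.

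To establish this surjectivity — which I expect to be the crux of the argument — I would argue as follows. Since $G'_\mu(\mu^*,\theta^*)(\nu)=-\partial_t\nu-\nabla_x\cdot\big(\MF(t,\cdot,\theta^*_t)\nu\big)$, it suffices, taking the control component $\alpha=0$, to solve for every source $g$ in the dense subspace $Y_0=\tilde{\mc{C}}([0,T];(\MC_b^1(\RR^{2d}))'_{0,c})$ the linear transport equation with source
\[
\partial_t\nu_t+\nabla_x\cdot\big(\MF(t,\cdot,\theta^*_t)\nu_t\big)=-g_t,\qquad \nu_{|t=0}=0,
\]
and to obtain a solution lying in $U_V=\tilde{\mc{C}}([0,T];\mc{M}_{0,c}(\RR^{2d}))\cap\MC^1([0,T];(\MC_b^1(\RR^{2d}))')$. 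Well-posedness of this equation in the appropriate class is exactly the content of \cite[Theorem 1]{piccoli2019wasserstein} under Assumption \ref{asum1}; the preservation of zero mass follows by testing against the constant function $1$ and using $\la g_t,1\ra=0$, the uniform compactness of $\supp(\nu_t)$ follows from the at-most-linear growth of $\MF$ exactly as in Theorem \ref{thm:Wellposed}, and the $\MC^1$-in-time regularity is read off the equation itself. This shows that $G'(\mu^*,\theta^*)$ maps $U_V\times\{0\}$ onto $Y_0$; since the solution operator $g\mapsto\nu$ is linear and bounded in the relevant norms, it extends to a bounded right inverse on the whole of $Y=\overline{Y_0}$, whence the desired surjectivity onto $Y$.

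With all the hypotheses of Theorem \ref{thmla} in place, that theorem produces a non-zero covector $p^*\in Y'$ such that, for the selection $J'(\mu^*,\theta^*)=\big(J'_\mu(\mu^*,\theta^*),J'_\theta(\mu^*,\theta^*)\big)$,
\[
\la J'(\mu^*,\theta^*),z\ra+\la G'(\mu^*,\theta^*)z,p^*\ra=0\qquad\text{for all }z\in\overline{X}_E.
\]
It then remains only to specialize this identity to the two natural families of directions in the product $\overline{X}_E=\overline{U}_V\times Q$: taking $z=(\nu,0)$ with $\nu\in\overline{U}_V$ and using that $J'$ and $G'$ act separately on the two components yields \eqref{PMP2}, while $z=(0,\alpha)$ with $\alpha\in Q$ yields \eqref{PMP3}. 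Recovering from the abstract multiplier $p^*$ the classical adjoint equation \eqref{eq2} and the pointwise fixed-point relation \eqref{eq3} is a separate identification step, not needed for the present statement.

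The main obstacle is the surjectivity step: it is the only place where the well-posedness theory for measure-valued transport equations with (measure-valued) sources is genuinely invoked, and it is precisely why the argument is restricted to time-continuous controls, since \cite[Theorem 1]{piccoli2019wasserstein} is stated for sources continuous in time. A secondary technical point is the passage to the closure $Y=\overline{Y_0}$, which requires the boundedness of the solution operator of the linearized equation; by contrast, the verifications $J\in\MC^1(E;\RR)$, $G\in\MC^1(E;Y)$ and $(\mu^*,\theta^*)+X_E\subset E$ are routine and have already been carried out above.
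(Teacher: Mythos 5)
Your overall plan is exactly the paper's: cast \eqref{Pcost} as the abstract constrained problem \eqref{eqopt} for the data $(X,E,Y,J,G)$, verify the hypotheses of Theorem~\ref{thmla} (with surjectivity of $G'(\mu^*,\theta^*)$ as the only non-routine one), and then split the resulting covector identity along $\overline{U}_V\times\{0\}$ and $\{0\}\times Q$ to read off \eqref{PMP2}--\eqref{PMP3}. Your choice of initial datum $\nu_0=0$ is a legitimate simplification of what the paper does (it uses $\nu_0=\mu_0^1-\mu_0$ via a probability datum $\mu_0^1$ and then subtracts $\mu^*$; the two are equivalent for the purpose of landing in $\overline{U}_V$).

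However, there is a real gap in your surjectivity step. You propose to solve $\partial_t\nu+\nabla_x\cdot(\MF\nu)=-g$ for every source $g$ in $Y_0=\tilde{\mc{C}}([0,T];(\MC_b^1(\RR^{2d}))'_{0,c})$ by citing \cite[Theorem 1]{piccoli2019wasserstein}, but that theorem applies to \emph{measure}-valued sources, i.e.\ sources taking values in $\mc{M}_{0,c}(\RR^{2d})=(\MC_0(\RR^{2d}))'_{0,c}$. The fibre $(\MC_b^1(\RR^{2d}))'_{0,c}$ is strictly larger: it contains distributions of order one (e.g.\ divergences of compactly supported measures), to which the well-posedness theory for transport with measure sources does not directly apply. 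The paper resolves this with an additional approximation layer: any $\eta\in Y$ is first approximated by $\eta^n\in\tilde{\MC}([0,T];(\MC_b(\RR^{2d}))'_{0,c})$ (genuine measure-valued curves), the source problem is solved for each $\eta^n$ via an explicit Euler-type splitting scheme, and the required uniform bounds (on support and $(\MC_b^1)'$-norm) are extracted before passing to the limit in $n$ by Arzel\`a--Ascoli. Your "bounded solution operator extends by density" shortcut is fine in principle, but you have not actually established boundedness of that operator, nor identified the correct dense subspace on which it is defined; both are supplied by the splitting construction in the paper. The remaining points you flag (zero-mass preservation, uniform compact support, $\MC^1$-in-time regularity) are indeed routine.
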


\begin{rmk}The solution $\psi^*=p^*\in\mc{C}^1([0,T];\MC_c^2(\RR^{2d}))$ constructed in Proposition \ref{prop2} is in $Y'$. This comes from the fact that, for any $\eta\in Y\subset \MC([0,T];(\MC_b^1(\RR^{2d}))')$, one has $\la p^*,\eta\ra<+\infty$. 
\end{rmk}

\begin{proof}
In order to prove our set of optimality conditions, we will use Theorem \ref{thmla} which application has already been prepared above. Indeed we have shown that both the cost and constraint functionals are continuously $F$-differentiable, and it follows directly from the definitions \eqref{DefE} and \eqref{DefXE} that $(\mu^*,\theta^*)+X_E\subset E$. Thus, there remains to  prove that the linear operator $G'(\mu^\ast,\theta^\ast):  \overline {X}_E=\overline{U_{V}}\times Q\rightarrow Y$ is surjective. We split  the proof of the surjectivity  into two steps below.

\vspace{-0.3cm}
	
\paragraph*{$\bullet$ Surjectivity of the partial derivative $G'_\mu(\mu^\ast,\theta^\ast):  \overline{U_{V}}\rightarrow Y$.}  We first want to show that for any given element
\begin{equation*}
\eta\in Y :=\overline{\tilde{\mc{C}}([0,T];(\MC_b^1(\RR^{2d}))'_{0,c})}, 
\end{equation*}
there exists a $\nu\in  \overline{U}_V$ such that
\begin{equation}
G'_\mu(\mu^\ast,\theta^\ast)(\nu)=\eta\,,
\end{equation}
which is understood in the sense of 
\begin{equation}
\la G'_\mu(\mu_t^\ast,\theta_t^\ast)(\nu_t), \varphi\ra =\la\eta_t, \varphi\ra \qquad \mbox{ for all } \varphi\in \MC_b^1(\RR^{2d})\,.
\end{equation}
To this end, it suffices to show that for a given $(\mu^\ast,\theta^\ast,\eta)\in V\times Q\times Y$, there exists some $\nu\in  \overline{U}_{V}$ solution of the following  transport equation 
\begin{equation}
\label{eqsource}
\partial_t\nu_t+\nabla_x\cdot(\mathcal F(t,x,\theta^\ast_t)\nu_t)=-\eta_t\,,
\end{equation}
with source term $(-\eta)$ and initial condition $\nu_0\in U_{\mu_0}$. Notice that $(\MC_b(\RR^{2d}))'_{0,c}$ is dense in $(\mc{C}_b^1(\RR^{2d}))'_{0,c}$, namely for any $\eta\in Y=\overline{\tilde{\mc{C}}([0,T];(\MC_b^1(\RR^{2d}))'_{0,c})}$, there exists a sequence $(\eta^n)_{n \in \NN} \subset \tilde{\MC}([0,T];(\MC_b(\RR^{2d}))'_{0,c})$ such that for all $\varphi\in  \MC_b^1(\RR^{2d})$, it holds
\begin{equation}\label {eqeta}
\sup\limits_{t\in[0,T]}|\la\eta_t^n-\eta_t,\varphi\ra| ~\underset{n \to +\infty}{\longrightarrow}~ 0.
\end{equation}
In particular, observe that $\sup\limits_{t\in[0,T],n \in \NN}\|\eta_t^n\|_{(\MC_b^1)'}<+\infty $ is uniformly bounded.

Since $\eta_t^n \in (\MC_b(\RR^{2d}))'_{0,c}\subset(\MC_0(\RR^{2d}))_{0,c}'=\mc{M}_{0,c}(\RR^{2d})$, it then follows from \cite[Theorem 1]{piccoli2019wasserstein} that there exists a unique measure solution $\mu^{1,n}\in V$ to the following transport equation 
\begin{equation}
\label{eq:TransportEqSource}
\partial_t\mu_t^{1,n}+\nabla_x\cdot(\mathcal F(t,x,\theta^\ast_t)\mu_t^{1,n})=-\eta_t^n,\qquad \mu_t^{1,n}|_{t=0}=\mu_0^1\in\mc{P}_c(\RR^{2d})\, ,
\end{equation}
understood analogously to \eqref{eqweak1} in the sense of distribution, namely 
\begin{align*}
&\int_{\RR^{2d}} \varphi(x,y) \rd\mu_{t_2}^{1,n}(x,y) - \int_{\RR^{2d}}\varphi(x,y) \rd\mu_{t_1}^{1,n}(x,y) \\
& =\int_{t_1}^{t_2}\int_{\RR^{2d}} \nabla_x \varphi(x,y) \cdot \mathcal F(s,x,\theta_s^*) \, \rd\mu_s^{1,n}(x,y) \ds -\int_{t_1}^{t_2}\int_{\RR^{2d}}\varphi(x,y) \, \rd\eta_s^n(x,y)\ds
\end{align*}
for all $\varphi\in \MC_b^1(\RR^{2d})$ and every $t_1,t_2 \in [0,T]$. Indeed, we can build a solution to above as a limit of a sequence of approximated solutions satisfying the following Euler-explicit-type splitting scheme. Fix $k\in\NN$, and define $\Delta t=\frac{T}{2^k}$ and set $\mu_0^{1,n,(k)}=\mu_0$. Given $\mu_{i\Delta_t}^{1,n,(k)}$ for $i\in\{0,1,\cdots,2^k-1\}$, we denote by $\MF_{i\Delta t}=\mathcal F(i\Delta t, x,\theta_{i\Delta t}^*)$ and set
\begin{equation}\label{iteration}
\mu_t^{1,n,(k)}=\Gamma_{t-i\Delta t}^{\MF_{i\Delta t}}\sharp \mu_{i\Delta t}^{1,n,(k)}-(t-i\Delta t)\eta_{i\Delta t}^n,\quad t\in[i\Delta t,(i+1)\Delta t]\,,
\end{equation}
where $\Gamma_{t-i\Delta t}^{\MF_{i\Delta t}}\sharp \mu_{i\Delta t}^{1,n,(k)}$ is the unique solution of the linear transport equation
\begin{align}
\label{linear}
\begin{cases}
&\partial_tf_t+\nabla\cdot (\MF_{i\Delta t}f_t)=0,\qquad t\in (i\Delta t,(i+1)\Delta t],\\
&f_{i\Delta t}=\mu_{i\Delta t}^{1,n,(k)}\, ,
\end{cases}
\end{align}
which is is explicitly written as a pushforward through a characteristic flow. From \eqref{iteration}, we know the sequence $(\mu_t^{1,n,(k)})_{k\in\NN}$ has uniformly bounded support, since
\begin{equation}
\supp(\mu_t^{1,n,(k)}) \subset B(R_T)\cup S_{\eta^n}
\end{equation}
where $\supp(\eta_t^n) \subset S_{\eta^n}\Subset \RR^{2d}$ for all $t\in[0,T]$ and we denoted by $B(R_T)$ the support of solutions to the linear transport equation obtained in \eqref{suppt}. Intuitively, the support of $\mu_t^{1,n,(k)}$ is the union of the support of the solution to the linear transport equation \eqref{linear} and the support of the source term. Similarly, it holds for $t\in [i\Delta t, (i+1)\Delta t]$
\begin{align}
\|\mu_t^{1,n,(k)}\|_{(\MC_b^1)'}\leq \|\Gamma_{t-i\Delta t}^{\MF_{i\Delta t}}\sharp \mu_{i\Delta t}^{1,n,(k)}\|_{(\MC_b^1)'}+\Delta t\|\eta_{i\Delta t}^n\|_{(\MC_b^1)'}\leq \| \mu_{i\Delta t}^{1,n,(k)}\|_{(\MC_b^1)'}+\Delta t\|\eta_{i\Delta t}^n\|_{(\MC_b^1)'}\,.
\end{align}
This provides us with the following upper-bound 
\begin{equation}
\sup_{t\in[0,T]}\|\mu_t^{1,n,(k)}\|_{(\MC_b^1)'}\leq \| \mu_{0}^1\|_{(\MC_b^1)'}+T\sup_{t\in[0,T]}\|\eta_{t}^n\|_{(\MC_b^1)'}<+\infty\,,
\end{equation}
which is uniform with respect to $n,k \in \NN$. By letting $k \to +\infty$, we recover the existence of a solution $\mu^{1,n}$ to \eqref{eq:TransportEqSource} such that 
\begin{equation}
\sup_{t \in [0,T]} \mathbb{W}_1^{1,1}(\mu^{1,n},\mu_t^{1,n,(k)}) ~\underset{k \to +\infty}{\longrightarrow}~ 0.
\end{equation}
Recall that the generalized Wasserstein metric introduced in \cite{piccoli2019wasserstein} is equivalent to the bounded-Lipschitz norm $\| \cdot\|_{BL}$, so that the limit curves $(\mu^{1,n})_{n \in \NN}$ satisfy
\begin{equation}
\supp(\mu_t^{1,n}) \subset B(R_T)\cup S_{\eta^n} \quad \text{and} \quad \|\mu_t^{1,n}\|_{(\MC_b^1)'}<+\infty
\end{equation}
for all $t\in[0,T]$. This in turn implies that the sequence $(\mu_t^{1,n})_{n\in \NN}$ is uniformly equi-bounded in $\MC([0,T];(\MC_b^1(\RR^{2d}))')$. According to \cite[Theorem 1]{piccoli2019wasserstein}, it follows that each curve $t \in [0,T] \mapsto \mu^{1,n}$ is Lipschitz continuous with respect to the $\|\cdot\|_{BL}$-norm, and thus it is uniformly equi-continuous with respect to the  $(\MC_b^1)'$-norm. By a direct application of the Arzel\`a-Ascoli theorem, there exists a subsequence of $(\mu^{1,n})_{n\in \NN}$ that converges uniformly in $\MC([0,T];(\MC_b^1(\RR^{2d}))')$ to some curve $\mu^1$, which then satisfies
\begin{align}
&\int_{\RR^{2d}}\varphi(x,y) \rd\mu_{t_2}^1(x,y) - \int_{\RR^{2d}}\varphi(x,y) \rd\mu_{t_1}^{1}(x,y) \\
& = \int_{t_1}^{t_2}\int_{\RR^{2d}} \nabla_x \varphi(x,y) \cdot \mathcal F(s,x,\theta_s^*)\rd\mu_s^{1}(x,y)\ds - \int_{t_1}^{t_2}\int_{\RR^{2d}} \varphi(x,y) \rd\eta_s(x,y)\ds\,.
\end{align}
However, recall now that the optimal curve $\mu^*\in V$ satisfies
\begin{equation}
\partial_t\mu_t^*+\nabla_x\cdot(\mathcal F(t,x,\theta^\ast_t)\mu_t^*)=0,\qquad \mu_t^*|_{t=0}=\mu_0\in \mc{P}_c(\RR^{2d})\, ,
\end{equation}
Then, defining the curves $(\mu^{1,n}-\mu^*)_{n \in \NN} \subset U_V$ and letting $n \to +\infty$, we can find a solution
\begin{equation*}
\nu := \mu^1-\mu^* =\lim\limits_{n\to\infty}(\mu^{1,n}-\mu^*)\in\overline{U}_V, 
\end{equation*}
to the transport equation with source term \eqref{eqsource}, with the initial datum $\nu_0=\mu_0^1-\mu_0\in U_{\mu_0}$. This completes the proof of the surjectivity of $G'_\mu(\mu^\ast,\theta^\ast)$.

\medskip

\paragraph*{$\bullet$ Surjectivity of the full derivative $G'(\mu^\ast,\theta^\ast):  \overline{X}_E = \overline{U}_V \times Q \rightarrow Y$.}  Assume that $\nu\in \overline{U}_V$ is a curve obtained as above. Then for any $\eta\in Y$, there exists $(\nu,0)\in \overline{U}_V \times Q$ such that 
\begin{equation}
G'(\mu^\ast,\theta^\ast)(\nu,0)=G'_{\mu}(\mu^\ast,\theta^\ast)(\nu)+G'_{\theta}(\mu^\ast,\theta^\ast)(0)=\eta\,.
\end{equation}
Thus, we have proven that $G'(\mu^\ast,\theta^\ast)$ is surjective.
\end{proof}


\subsubsection{The mean-field PMP for measurable controls: an Hamiltonian approach}
\label{sec:Hamil}

The goal of this subsection is to show that solutions $(\mu^*,\theta^*) \in \mc{C}([0,T];\mc{P}_c(\R^d)) \times L^2([0,T];\R^m)$ the optimality condition \eqref{eq1}-\eqref{eq3} by using the Pontryagin Maximum Principle  in Wasserstein spaces studied in \cite{PMPWassConst,PMPWass,SetValuedPMP}.

In the sequel, we suppose that the optimal control problem \eqref{Pcost} admits an optimal trajectory-control pair $(\mu^*,\theta^*) \in \Lip([0,T];\mc{P}_c(\R^{2d}) \times L^2([0,T];\R^m)$. The \textit{Hamiltonian} function $\mathbb{H} : [0,T] \times \Pcal_c(\R^{4d}) \times L^2([0,T];\R^m) \rightarrow \R$ associated with the optimal control problem is defined by 
\begin{equation}
\label{eq:Hamiltonian}
\mathbb{H}(t,\nu,\theta) := \INTDom{\langle r , \MF(t,x,\theta) \rangle}{\R^{4d}}{\nu(x,y,r,s)} - \lambda |\theta|^2, 
\end{equation}
for almost every $t \in [0,T]$ and all $(\nu,\theta) \in \Pcal_c(\R^{4d}) \times \R^m$, and we denote by
\begin{equation*}
\J_{4d} := \begin{pmatrix}
0 && \operatorname{Id} \\ -\operatorname{Id} && 0
\end{pmatrix},
\end{equation*}
the standard symplectic matrix of $\R^{4d}$. In this context, the PMP of \cite{SetValuedPMP} was adapted to unbounded control sets in \cite{SemiconcavityCDC}, and can be written in context as follows.

\begin{thm}[Pontryagin Maximum Principle]
\label{thm:PMP}
There exists a radius  $R_T' > 0$ and a uniquely determined state-costate curve $\nu^* \in \Lip([0,T],\Pcal_c(\R^{4d}))$ with $\supp(\nu^*_t) \subset B(R_T') \times B(R_T')$ for all times $t \in [0,T]$, such that the following holds. 
\begin{enumerate}
\item[$(i)$] The curve $\nu^*$ solves the \emph{forward-backward} Hamiltonian continuity equation
\begin{equation}
\label{eq:Theorem_Dynamics}
\left\{
\begin{aligned}
& \partial_t \nu^*_t + \nabla_{(x,y,r,s)} \cdot \big( \J_{4d} \nabla_{\nu} \mathbb{H}(t,\nu^*_t,\theta^*_t) \nu^*_t \big) = 0, \\
& \pi^1_{\#} \nu^*_t = \mu^*_t \hspace{1.65cm} \text{for all times $t \in [0,T]$}, \\
& \nu^*_T = (\operatorname{Id},-\nabla_x \ell) \sharp \mu^*_T, 
\end{aligned}
\right.
\end{equation}
where the \emph{Wasserstein gradient} of the Hamiltonian is given explicitly by 
\begin{equation*}
\nabla_{\nu} \mathbb{H}(t,\nu^*_t,\theta^*_t)(x,y,r,s) = \begin{pmatrix}
\nabla_x \MF(t,x,\theta^*_t)^{\top} r \\ 0 \\ \MF(t,x,\theta^*_t) \\ 0
\end{pmatrix}, 
\end{equation*}
for almost every $t \in [0,T]$ and all $(x,y,r,s) \in B(R_T') \times B(R_T')$.
\item[$(ii)$] The \emph{maximization condition}
\begin{equation}
\label{eq:Theorem_Maximisation}
\mathbb{H}(t,\nu^*_t,\theta^*_t) = \max_{\theta \in \R^m} \, \mathbb{H}(t,\nu^*_t,\theta), 
\end{equation}
holds for almost every $t \in [0,T]$. 
\end{enumerate} 
\end{thm}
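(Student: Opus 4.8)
The plan is to obtain Theorem~\ref{thm:PMP} as a direct specialization of the Pontryagin Maximum Principle for mean-field optimal control in Wasserstein spaces --- in the form of \cite{PMPWass,PMPWassConst,SetValuedPMP}, together with the adaptation to unbounded control sets from \cite{SemiconcavityCDC} --- applied to the extended state $(x,y)\in\R^{2d}$ whose second component is frozen. Indeed, \eqref{Pcost} is an instance of the abstract template treated there, with running cost $L(t,x,\theta):=\lambda|\theta|^2$ (state-independent), terminal cost $\nu\mapsto\int_{\R^{2d}}\ell\,\rd\nu$, velocity field $\MF$, and control set $U:=\R^m$. The first step is to reduce to a globally regular situation so that the abstract theorem applies: by Theorem~\ref{thm:Wellposed} (whose sublinear-growth constant $C_{\MF}$ is $\theta$-independent) the optimal curve satisfies $\supp(\mu^*_t)\subset B(R_T)$ for all $t\in[0,T]$, with $R_T$ depending only on $R$ and $C_{\MF}$; one may therefore replace $\MF(t,\cdot,\theta)$ by a smooth, globally bounded and globally Lipschitz (in the state) modification $\widetilde{\MF}$ coinciding with $\MF$ on a neighbourhood of $B(R_T)$, without altering the pair $(\mu^*,\theta^*)$, the cost, or the optimality system. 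The control set remains $U=\R^m$, which is where the unbounded-control variant of \cite{SemiconcavityCDC} is needed; its applicability rests on the coercivity of $\theta\mapsto L(t,x,\theta)=\lambda|\theta|^2$.

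The second step is to identify the Hamiltonian and compute the Hamiltonian flow. The pre-maximized Hamiltonian is exactly \eqref{eq:Hamiltonian}, and since its integrand depends on $(x,y,r,s)$ only through $\langle r,\MF(t,x,\theta)\rangle$, its Wasserstein (metric) gradient is $\nabla_\nu\mathbb{H}(t,\nu,\theta)(x,y,r,s)=\big(\nabla_x\MF(t,x,\theta)^\top r,\,0,\,\MF(t,x,\theta),\,0\big)$, precisely the formula in the statement; applying $\J_{4d}$ gives the symplectic field $(\MF,0,-\nabla_x\MF^\top r,0)$, whose characteristics are $\dot x=\MF$, $\dot y=0$, $\dot r=-\nabla_x\MF(t,x,\theta)^\top r$, $\dot s=0$. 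The abstract PMP then provides all three assertions at once: a curve $\nu^*\in\Lip([0,T];\mc{P}_c(\R^{4d}))$ solving the forward--backward system \eqref{eq:Theorem_Dynamics} with the marginal constraint $\pi^1_{\#}\nu^*_t=\mu^*_t$ and the prescribed terminal datum; the support bound $\supp(\nu^*_t)\subset B(R_T')\times B(R_T')$, where the costate component follows from a Gr\"onwall estimate on the costate ODE --- which is \emph{linear} in $r$, so that $|r_t|\le\|\nabla\ell\|_{\mc{C}(B(R_T))}\exp\!\big(\int_0^T\|\nabla_x\MF(s,\cdot,\theta^*_s)\|_{L^\infty(B(R_T))}\,\rd s\big)$, with finite exponent by Assumption~\ref{asum1}$(iii)$ since $\theta^*\in L^1([0,T];\R^m)$; and the maximization condition \eqref{eq:Theorem_Maximisation}, the supremum being finite and attained thanks to the coercive term $-\lambda|\theta|^2$.

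It then remains to record the uniqueness of $\nu^*$ and to make the link with \eqref{eq1}--\eqref{eq3} transparent, which I would do by building $\nu^*$ explicitly. Once $(\mu^*,\theta^*)$ is fixed, the costate equation is a linear transport equation driven by a uniquely determined field and solved backward from a fixed terminal measure, hence $\nu^*$ is unique; concretely $\nu^*_t=(\operatorname{Id},-\nabla\psi^*_t)\sharp\mu^*_t$, where $\psi^*$ is the characteristic solution of \eqref{eq2} furnished by Proposition~\ref{prop2} (differentiating the transport equation for $\psi^*$ shows that $-\nabla\psi^*$, transported along the characteristics of $\MF(\cdot,\cdot,\theta^*)$, solves exactly the costate ODE with the correct terminal value). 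Substituting $r=-\nabla_x\psi^*_t$ into \eqref{eq:Hamiltonian} turns \eqref{eq:Theorem_Maximisation} into the statement that $\theta\mapsto-\int_{\R^{2d}}\langle\nabla_x\psi^*_t,\MF(t,x,\theta)\rangle\,\rd\mu^*_t-\lambda|\theta|^2$ is maximized at $\theta^*_t$, whose first-order condition is precisely the fixed-point relation \eqref{eq3}; this is also the point where $\lambda$-largeness enters, being exactly what renders this map strictly concave, consistently with Proposition~\ref{prop:Semiconvexity}.

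The main obstacle I anticipate is the first step. The Wasserstein PMPs of \cite{PMPWass,PMPWassConst,SetValuedPMP} are stated under global Lipschitz and growth hypotheses on the velocity field and, in their original form, with compact control sets, whereas here $U=\R^m$ is unbounded and $\MF$ is only locally Lipschitz in $x$ with a $\theta$-dependent Lipschitz constant. Making the reduction rigorous --- invoking the unbounded-control extension of \cite{SemiconcavityCDC}, constructing the truncation $\widetilde{\MF}$ compatibly with its hypotheses, and checking that the a priori support bound from Theorem~\ref{thm:Wellposed} together with $\theta^*\in L^2\subset L^1$ suffices to carry all the relevant constants through --- is where the technical care concentrates; by contrast, the Hamiltonian computation and the identification $\nu^*_t=(\operatorname{Id},-\nabla\psi^*_t)\sharp\mu^*_t$ are essentially bookkeeping once the framework is in place.
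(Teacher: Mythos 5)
Your proposal takes essentially the same route as the paper. The paper gives no standalone proof of Theorem \ref{thm:PMP}; it is stated as a direct specialization of the Wasserstein PMP of \cite{SetValuedPMP} adapted to unbounded control sets in \cite{SemiconcavityCDC}, exactly as you propose. Your computation of the Wasserstein gradient and the symplectic field is correct, and you rightly identify the truncation/locally-Lipschitz issue as the place where technical care concentrates when invoking the abstract result with $U=\R^m$ and $\mathcal F$ only locally Lipschitz in $x$ with $\theta$-dependent constants.

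Two smaller observations. First, your uniqueness argument (``the costate equation is a linear transport equation solved backward from a fixed terminal measure, hence $\nu^*$ is unique'') is a bit too quick: what must be shown is that \emph{every} Lipschitz solution of the constrained forward--backward system \eqref{eq:Theorem_Dynamics} is necessarily of the disintegrated/characteristic form, not merely that such a representative exists. This is the necessary direction of Proposition \ref{prop:Costates}, which the paper attributes to \cite{SemiSensitivity}; the sufficient direction goes back to \cite{PMPWass}. Second, the explicit identification $\nu^*_t=(\operatorname{Id},-\nabla\psi^*_t)\sharp\mu^*_t$ and the derivation of the fixed-point relation \eqref{eq3} from \eqref{eq:Theorem_Maximisation} are not part of the paper's proof of Theorem \ref{thm:PMP}; they are precisely the content of the subsequent development (Proposition \ref{prop:Costates} through Proposition \ref{prop:LinkAdjoint}, relying on Lemma \ref{lem:AdjointFlow}), so your sketch compresses that later material into the proof of the theorem. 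This is not incorrect, but it blurs the logical ordering: Theorem \ref{thm:PMP} is the imported input, and the identification with the Lagrangian adjoint $\psi^*$ is what the paper then works to establish.
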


Below, we provide a representation formula for the state-costate curve $\nu^*$, based on the \emph{disintegration theorem}  (see e.g. \cite[Theorem 5.3.1]{AGS}). The sufficient implication of this statement was used as early as \cite{PMPWass} to build solutions to \eqref{eq:Theorem_Dynamics}, while the necessary part has been established more recently in \cite{SemiSensitivity}. Following the notations of Section \ref{sec:Existence} and Appendix \ref{subsection:Flows}, we denote by $(\BPhi_{(\tau,t)}^*)_{\tau,t \in [0,T]}$ the characteristic flows such that $\mu^*_t = \BPhi_{(0,t)}^* \sharp \mu_0$ for all times $t \in [0,T]$. Observe that by construction, it holds 
\begin{equation*}
\BPhi_{(\tau,t)}^*(x,y) = (\Phi_{(\tau,t)}^*(x),y), 
\end{equation*}
for all times $\tau,t \in [0,T]$ and every $(x,y) \in B(R_T')$, where $(\Phi_{(\tau,t)}^*)_{\tau,t \in [0,T]}$ is the characteristic flow defined via \eqref{eq:ThetaFlow} with $\theta_t := \theta_t^*$ being the optimal control. 

\begin{proposition}[Representation formula for state-costate curves]
\label{prop:Costates}
A state-costate curve $\nu^* \in \Lip([0,T],\Pcal_c(\R^{4d}))$ solves the forward-backward system \eqref{eq:Theorem_Dynamics} \emph{if and only if} it can be represented as $\nu^*_t = (\BPhi_{(T,t)}^* \circ \pi^1,\pi^2)\sharp \nu^T_t$, where the curve $t \in [0,T] \mapsto \nu^T_t \in \Pcal_c(\R^{4d})$ is built via the disintegration formula as 
\begin{equation*}
\nu^T_t := \INTDom{\sigma^*_{t,x,y}(t)}{\R^{2d}}{\mu^*_T(x,y)}, 
\end{equation*}
for all times $t \in [0,T]$. Therein for $\mu^*_T$-almost every $(x,y) \in \R^{2d}$, the curve $t \in [0,T] \mapsto \sigma_{t,x,y}^* \in \Pcal_c(\R^{2d})$ is chosen as the unique solution of the backward adjoint dynamics 
\begin{equation*}
\left\{
\begin{aligned}
& \partial_t \sigma_{x,y}^*(t) + \nabla_{(r,s)} \cdot (\mc{W}_{x,y}(t,r) \sigma_{x,y}^*(t)) = 0, \\
& \sigma_{x,y}^*(T) = \delta_{(-\nabla_x \ell(x,y))}, 
\end{aligned}
\right.
\end{equation*}
where 
\begin{equation*}
\mc{W}_{x,y}(t,r,s) := \begin{pmatrix}
- \nabla_x \MF \big( t,\Phi_{(T,t)}^*(x),\theta^*_t \big)^{\top}r \\ 0
\end{pmatrix}, 
\end{equation*}
for almost every $t \in [0,T]$ and all $(r,s) \in B(R_T')$. 
\end{proposition}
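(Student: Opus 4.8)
The plan is to observe that, in this particular setting, the forward--backward system \eqref{eq:Theorem_Dynamics} is in fact a \emph{linear} continuity equation on $\R^{4d}$, and to solve it by the method of characteristics, exploiting the triangular structure of the associated Hamiltonian field. Since the Wasserstein gradient $\nabla_\nu \mathbb{H}(t,\nu,\theta^*_t)$ computed in Theorem \ref{thm:PMP} does not depend on $\nu$, the PDE in \eqref{eq:Theorem_Dynamics} reads $\partial_t \nu^*_t + \nabla_{(x,y,r,s)} \cdot ( w_t \, \nu^*_t ) = 0$ with the time-dependent velocity field
\[
w_t(x,y,r,s) = \Big( \MF(t,x,\theta^*_t), \, 0, \, -\nabla_x \MF(t,x,\theta^*_t)^{\top} r, \, 0 \Big).
\]
Under Assumptions \ref{asum1}--\ref{asum2} and because $\theta^* \in \Lip([0,T];\R^m)$, the field $w$ is Carath\'eodory, sublinear and locally Lipschitz in space, so by the classical well-posedness theory for continuity equations (as in \cite[Section 8.1]{AGS}, cf. Theorem \ref{thm:Wellposed}) it generates a unique flow $(\Psi_{(\tau,t)})_{\tau,t \in [0,T]}$ on $\R^{4d}$. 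Its crucial feature is the block structure $\Psi_{(\tau,t)}(x,y,r,s) = \big( \BPhi^*_{(\tau,t)}(x,y), \, \Rcal^{x,y}_{(\tau,t)}(r), \, s \big)$: the state components $(x,y)$ evolve \emph{autonomously} along the characteristic flow $\BPhi^*$ of the optimal control, while $r$ evolves, for each frozen state trajectory, along the \emph{linear} ODE with matrix $-\nabla_x\MF(t,\Phi^*_{(\tau,t)}(x),\theta^*_t)^{\top}$, i.e. exactly the field $\mc{W}_{x,y}$ of the statement, and $s$ stays constant.

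For the sufficiency ($\Leftarrow$), I would take $\nu^*$ defined by the representation formula and check the three items of \eqref{eq:Theorem_Dynamics}. Writing $\nu^T_t = \int_{\R^{2d}} \big( \delta_{(x,y)} \otimes \sigma^*_{x,y}(t) \big) \ud \mu^*_T(x,y)$ and using $\sigma^*_{x,y}(T) = \delta_{-\nabla_x\ell(x,y)}$ gives $\nu^*_T = (\mathrm{Id},-\nabla_x\ell)\sharp\mu^*_T$, the terminal condition. Since a linear continuity equation started from a Dirac remains a Dirac transported by its characteristic ODE, one has $\sigma^*_{x,y}(t) = \delta_{\Rcal^{x,y}_{(T,t)}(-\nabla_x\ell(x,y))}$, whence $\nu^*_t = \Psi_{(T,t)}\sharp\nu^*_T$; being the pushforward of the terminal datum by the flow of $w$, it solves the continuity equation. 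Finally $\pi^1 \circ \Psi_{(T,t)} = \BPhi^*_{(T,t)}\circ\pi^1$ yields $\pi^1_\#\nu^*_t = \BPhi^*_{(T,t)}\sharp\mu^*_T = \mu^*_t$, the last equality following from $\mu^*_t = \BPhi^*_{(0,t)}\sharp\mu_0$ and the flow property.

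For the necessity ($\Rightarrow$), I would argue by uniqueness: any $\nu^* \in \Lip([0,T];\Pcal_c(\R^{4d}))$ solving \eqref{eq:Theorem_Dynamics} has, by the propagation-of-support estimate (Gr\"onwall applied to the linear costate ODE over the already-bounded state trajectories) and the uniqueness of bounded measure solutions of the continuity equation driven by $w$ (valid both forward and backward in time), to coincide with $\Psi_{(T,t)}\sharp\nu^*_T$. Disintegrating $\nu^*_T$ along its first marginal, which the terminal condition forces to be $\mu^*_T$ with conditionals $\delta_{-\nabla_x\ell(x,y)}$, and pushing this disintegration through the block-triangular flow $\Psi_{(T,t)}$ produces exactly the claimed formula, with $\sigma^*_{x,y}$ the unique solution of the backward adjoint continuity equation with field $\mc{W}_{x,y}$; alternatively one may invoke directly \cite{SemiSensitivity} for this implication, and \cite{PMPWass} for the converse.

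The step I expect to be the main obstacle is the rigorous commutation of the pushforward by $\Psi_{(T,t)}$ with the disintegration: one must show that pushing $\int \delta_{(x,y)} \otimes \bar\sigma_{x,y} \ud\mu^*_T$ through the block-triangular flow produces $\int \delta_{\BPhi^*_{(T,t)}(x,y)} \otimes \big( \Rcal^{x,y}_{(T,t)} \big)_\sharp \bar\sigma_{x,y} \ud\mu^*_T$, that the fiberwise solution operators $\sigma^*_{x,y}(t)$ depend measurably on $(x,y)$, and that they agree with the solution operator of the adjoint equation of velocity $\mc{W}_{x,y}$. This is where the disintegration/measurable-selection machinery must be combined with the exact product structure of $w$, alongside the verification that $w$ meets the hypotheses of the uniqueness results on the relevant compact supports.
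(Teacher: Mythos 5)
The paper itself supplies no proof of this proposition; it merely remarks that the representation is ``based on the disintegration theorem'' and defers the sufficient implication to \cite{PMPWass} and the necessary one to \cite{SemiSensitivity}. Your argument is therefore a genuine addition: an explicit construction via characteristics that spells out what those citations are appealed to for. It is consistent with the cited approach and mathematically sound. The crucial structural observation — that $\J_{4d}\nabla_\nu\mathbb{H}(t,\cdot,\theta^*_t)$ is $\nu$-independent, hence the Hamiltonian PDE is a linear continuity equation whose flow factorizes block-triangularly as $\Psi_{(\tau,t)}(x,y,r,s) = \big(\BPhi^*_{(\tau,t)}(x,y),\,\Rcal^{x,y}_{(\tau,t)}(r),\,s\big)$, the $r$-block evolving by precisely $\mc W_{x,y}$ along the frozen state characteristic — is exactly what makes the disintegration formula transport correctly, and you use it correctly in both directions: for sufficiency, pushforward of the terminal datum by the flow solves the PDE and the marginal constraint follows from $\pi^1\circ\Psi_{(T,t)}=\BPhi^*_{(T,t)}\circ\pi^1$ plus the flow property $\BPhi^*_{(T,t)}\sharp\mu^*_T=\mu^*_t$; for necessity, you invoke uniqueness of compactly supported solutions to a continuity equation with a locally Lipschitz, sublinear field, which holds here because $\theta^*\in\Lip([0,T];\R^m)$ and Assumptions \ref{asum1}--\ref{asum2} give the required bounds on the already compact supports. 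The technical points you flag at the end — measurable $(x,y)$-dependence of the fiberwise solution operators $\sigma^*_{x,y}$, and the commutation of the block-triangular pushforward with the disintegration $\nu^T_t=\int\big(\delta_{(x,y)}\otimes\sigma^*_{x,y}(t)\big)\ud\mu^*_T(x,y)$ — are exactly what \cite{SemiSensitivity} establishes and what the paper chooses to import rather than reprove, so identifying them as ``the main obstacle'' is accurate. One tiny loose end, inherited from the paper's own notation: $\sigma^*_{x,y}(T)=\delta_{(-\nabla_x\ell(x,y))}$ is written as a measure on $\R^{2d}$ even though $-\nabla_x\ell(x,y)\in\R^d$, so the $s$-block of the terminal costate must be fixed implicitly (e.g.\ $s=0$; harmless since $\dot s=0$ and $s$ never enters the optimality conditions). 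Your proof implicitly makes this choice, which is fine, but it is worth naming.
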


It is easy to see that since the second marginal of $\mu^*$ is fixed, the matching part of the costate measure is also independent of time. In the following lemma, we provide a first-order characterization of the maximization condition \eqref{eq:Theorem_Maximisation}. 

\begin{lem}[Fixed-point expression for the optimal control]
Let $(\mu^*,\theta^*)$ be an optimal pair for the problem \eqref{Pcost}, and $\nu^*$ be the corresponding state-costate curve given by Theorem \ref{thm:PMP}. Then for $\lambda > 0$ large enough, it holds that
\begin{equation}
\label{eq:OptControl}
\theta^*_t  = \frac{1}{2\lambda} \INTDom{\nabla_\theta \MF(t,x,\theta^*_t)^{\top} r \, }{\R^{4d}}{\nu^*_t(x,y,r,s)},
\end{equation}
for almost every $t \in [0,T]$.
\end{lem}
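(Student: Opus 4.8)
The plan is to deduce the fixed-point equation \eqref{eq:OptControl} from the maximization condition \eqref{eq:Theorem_Maximisation} of Theorem \ref{thm:PMP}. First I would fix a time $t \in [0,T]$ at which the maximization holds, and observe that by the explicit form \eqref{eq:Hamiltonian} of the Hamiltonian, the map
\[
\theta \in \R^m \longmapsto \mathbb{H}(t,\nu^*_t,\theta) = \INTDom{\langle r , \MF(t,x,\theta) \rangle}{\R^{4d}}{\nu^*_t(x,y,r,s)} - \lambda |\theta|^2
\]
is differentiable in $\theta$, since $\MF$ is twice differentiable in $\theta$ by Assumption \ref{asum1}-$(iv)$ and $\nu^*_t$ is compactly supported (so differentiation under the integral sign is justified by dominated convergence using the local bounds on $\nabla_\theta \MF$ from Assumption \ref{asum1}-$(iv)$ together with $\supp(\nu^*_t) \subset B(R_T') \times B(R_T')$). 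Since $\theta^*_t$ is an interior maximizer over the open set $\R^m$, the first-order stationarity condition $\nabla_\theta \mathbb{H}(t,\nu^*_t,\theta^*_t) = 0$ must hold, which reads
\[
\INTDom{\nabla_\theta \MF(t,x,\theta^*_t)^{\top} r \, }{\R^{4d}}{\nu^*_t(x,y,r,s)} - 2\lambda \theta^*_t = 0,
\]
and rearranging gives \eqref{eq:OptControl} immediately.

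The remaining point is to justify why, for $\lambda > 0$ large enough, the maximization condition \eqref{eq:Theorem_Maximisation} indeed produces a \emph{finite} interior maximizer rather than the supremum being attained at infinity or not attained at all — in other words, why the argmax is genuinely a critical point. Here I would argue that the cost side of the Hamiltonian, $-\lambda|\theta|^2$, is strongly concave, while the transport term is at most linearly growing in $\theta$ on the relevant support: using $\supp(\nu^*_t) \subset B(R_T') \times B(R_T')$ together with the bound $|\nabla_\theta \MF(t,x,\theta)| \leq C(d,m,R_T',|\theta|)$ from Assumption \ref{asum1}-$(iv)$ (and, in the prototypical $\tanh$ case, the sharper bound $|\nabla_\theta \MF(x,\theta)| \leq \sqrt{d}|x|$ from Remark \ref{rmtanh} which is uniform in $\theta$), one sees that $\theta \mapsto \mathbb{H}(t,\nu^*_t,\theta) \to -\infty$ as $|\theta| \to +\infty$. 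Combined with continuity this yields existence of a maximizer, and the strong concavity of the regularization term (dominating any residual convexity from the transport term once $\lambda$ exceeds the relevant semiconvexity threshold of Proposition \ref{prop:Semiconvexity}) guarantees that $\theta \mapsto \mathbb{H}(t,\nu^*_t,\theta)$ is strictly concave, so the maximizer is unique and characterized exactly by the vanishing of the gradient.

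The main obstacle I anticipate is precisely this concavity/coercivity bookkeeping: unlike the finite-dimensional textbook PMP where the control set is compact and the maximum is automatic, here the control set is all of $\R^m$, so one must carefully exploit the $L^2$-regularization to close the argument. The cleanest route is to note that the bound making $\lambda$ "large enough" is the same kind of threshold already appearing in Theorem \ref{thm:Exist} and Theorem \ref{thmPMP} (depending on $R_T'$, $T$, $C_{\MF}$, $L_{\MF}$ and $\|\psi_T\|_{\MC^2}$, or on $R_T'$, $T$, $C_\Gamma$ in the $\tanh$ case via Remark \ref{remark_lambda}), so no new smallness condition is introduced. Once the pointwise-in-$t$ stationarity is established for a.e. $t$, the measurability in $t$ of the right-hand side of \eqref{eq:OptControl} follows from the Lipschitz regularity of $t \mapsto \nu^*_t$ asserted in Theorem \ref{thm:PMP} and the joint continuity of the integrand, so that \eqref{eq:OptControl} holds as an identity in $L^2([0,T];\R^m)$, which completes the proof.
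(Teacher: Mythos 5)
Your proof is correct and follows essentially the same route as the paper's: both observe that the Hamiltonian is differentiable in $\theta$ by Assumption \ref{asum1}-$(iv)$ with differentiation under the integral sign justified by the compact support of $\nu^*_t$, derive the fixed-point formula from the first-order condition at the interior maximizer $\theta^*_t$, and justify the concavity of $\theta \mapsto \mathbb{H}(t,\nu^*_t,\theta)$ for $\lambda > 0$ large by controlling the Hessian of the transport term uniformly via the compact support. One small slip: you quote Assumption \ref{asum1}-$(iv)$ as giving a bound $|\nabla_\theta \MF(t,x,\theta)| \leq C(d,m,R_T',|\theta|)$ that may depend on $|\theta|$; the assumption actually asserts that $\|\nabla_\theta \MF\|_{\mc{C}([0,T]\times B(R)\times\R^m)} \leq C(d,m,R)$, i.e.\ a bound that is \emph{uniform in $\theta$}. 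If the bound really could grow with $|\theta|$, your coercivity claim that the transport term is at most linearly growing would not follow from it; fortunately the actual (stronger) uniform bound does give the linear growth and hence the coercivity, so the conclusion stands, but as written the citation does not support the step. The paper's proof sidesteps the coercivity discussion entirely by bounding $\nabla^2_\theta$ of the integral term uniformly and directly invoking concavity, which is slightly cleaner given that the existence of the maximizer $\theta^*_t$ is already part of the hypothesis provided by Theorem \ref{thm:PMP}.
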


\begin{proof}
As a consequence Assumptions \ref{asum1}-$(iv)$, the map $\theta \in \R^m \mapsto \Hbb(t,\nu^*_t,\theta)$ is twice differentiable for almost every $t \in [0,T]$. Moreover since $\supp(\nu^*_t) \subset B(R_T') \times B(R_T')$, there exists a constant $C(R_T') > 0$ such that 
\begin{equation*}
\sup_{\theta \in \R^m} \bigg| \nabla^2_{\theta} \INTDom{\big\langle r , \MF(t,x,\theta) \big\rangle}{\R^{4d}}{\nu^*_t(x,y,r,s)} \bigg|  \leq C(R'_T). 
\end{equation*}
Hence for $\lambda > C(R_T')$, the Hamiltonian is a concave function of $\theta$, and the optimal control $\theta^*$ satisfies the pointwise maximization condition \eqref{eq:Theorem_Maximisation} if and only if 
\begin{equation}
\nabla_{\theta} \Hbb(t,\nu^*_t,\theta^*_t) = 0 \qquad \text{for a.e. $t \in [0,T]$}, 
\end{equation}
which is equivalent to the fixed-point equation \eqref{eq:OptControl}. 
\end{proof}

For all times $t \in [0,T]$, we shall denote by $(x,y) \in B(R_T') \mapsto \bar{\sigma}^*(t,x,y) \in \R^{d}$ the $d$ first components of the \textit{barycentric projection} (see e.g. \cite[Definition 5.4.2]{AGS}) of the measures $\nu^T_t$ onto their first marginal $\pi^1_{\#} \nu^T_t = \mu^*_T$, namely 
\begin{equation*}
\bar{\sigma}^*(t,x,y) := \INTDom{r \,}{\R^{2d}}{\sigma^*_{x,y}(t)(r,s)}. 
\end{equation*}
Using this notation, one can easily check by linearity of the integral that the fixed-point equation \eqref{eq:OptControl} can be rewritten as 
\begin{equation*}
\theta^*_t = \frac{1}{2\lambda} \INTDom{\nabla_\theta \MF(t,x,\theta^*_t)^{\top} \, \bar{\sigma}^*\big(t, \Phi_ {(t,T)}^*(x),y \big) \, }{\R^{2d}}{\mu^*_t(x,y)},
\end{equation*}
for $\mu^*_T$-almost every $(x,y) \in \R^{2d}$. Our goal now is to show that $\nabla_x \psi^*(t,\Phi_{(T,t)}^*(x),y) = -\bar{\sigma}^*(t,x,y)$ for all times $t \in [0,T]$ and $\mu^*_T$-almost every $(x,y) \in \R^{2d}$, so that the adjoint variable $\psi^*(\cdot,\cdot)$ stemming from the Lagrangian method described throughout Section \ref{sec:4} satisfies
\begin{equation*}
\theta^*_t = -\frac{1}{2\lambda} \INTDom{\nabla_\theta \MF(t,x,\theta^*_t)^{\top}\nabla_{x}\psi^*(t,x,y)}{\R^{2d}}{\mu^*_t(x,y)}\,
\end{equation*}
which is exactly \eqref{P3}. This is the object of the following proposition, whose proof relies on the explicit characterization of the adjoint of the differential of a flow that we recall in the following lemma. While it is a folklore result in the theory of non-linear ODEs, its proof is provided in very few references, and we include it in Appendix \ref{subsection:Flows} for the sake of completeness.
 
\begin{lem}
\label{lem:AdjointFlow}
For every $x \in \R^d$ and $\theta \in L^2([0,T];\R^m)$, the map $t \in [0,T] \mapsto \nabla_x \Phi^{\theta}_{(t,T)} (\Phi^{\theta}_{(T,t)}(x))^{\top}$ is the unique solution of the backward adjoint Cauchy problem
\begin{equation*}
\left\{
\begin{aligned}
\partial_t w(t,x) & = - \nabla_x \MF\big( t , \Phi^{\theta}_{(T,t)}(x),\theta_t\big)^{\top} w(t,x), \\
w(T,x) & = \operatorname{Id}.
\end{aligned}
\right.
\end{equation*}
\end{lem}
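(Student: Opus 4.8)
The plan is to write the claimed map as the transpose–inverse of the forward linearized flow and then differentiate the resulting algebraic identity. First I would note that the flow ODE \eqref{eq:ThetaFlow} holds for every choice of initial time, so that $t \mapsto \Phi^{\theta}_{(T,t)}(x)$ solves $\partial_t \Phi^{\theta}_{(T,t)}(x) = \MF(t,\Phi^{\theta}_{(T,t)}(x),\theta_t)$ with $\Phi^{\theta}_{(T,T)}(x)=x$. Differentiating this identity with respect to $x$ — which is licit under Assumption \ref{asum1} by the smooth-dependence theory for ODEs, see \cite[Theorem 2.3.2]{BressanPiccoli} — shows that $B(t) := \nabla_x \Phi^{\theta}_{(T,t)}(x)$ is the unique solution of the forward linearized Cauchy problem (the backward-time analogue of \eqref{eq:linearizedFlow})
\begin{equation*}
\partial_t B(t) = \nabla_x \MF\big(t,\Phi^{\theta}_{(T,t)}(x),\theta_t\big) \, B(t), \qquad B(T) = \operatorname{Id}.
\end{equation*}
Liouville's formula then gives $\det B(t) = \exp\!\big( \int_T^t \operatorname{tr} \nabla_x \MF(s,\Phi^{\theta}_{(T,s)}(x),\theta_s) \, \ud s \big) \neq 0$, so that $B(t)$ is invertible on all of $[0,T]$.

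Next I would invoke the group law $\Phi^{\theta}_{(t,T)} \circ \Phi^{\theta}_{(T,t)} = \operatorname{Id}$ and apply the chain rule in $x$, which yields $\nabla_x \Phi^{\theta}_{(t,T)}\big(\Phi^{\theta}_{(T,t)}(x)\big) \, B(t) = \operatorname{Id}$, and therefore
\begin{equation*}
w(t,x) := \nabla_x \Phi^{\theta}_{(t,T)}\big(\Phi^{\theta}_{(T,t)}(x)\big)^{\top} = \big( B(t)^{\top} \big)^{-1}.
\end{equation*}
Evaluating at $t=T$ gives $w(T,x) = \operatorname{Id}$, which is precisely the terminal condition.

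For the differential equation I would differentiate the relation $B(t)^{\top} w(t,x) = \operatorname{Id}$ in time; since $t\mapsto B(t)$ is absolutely continuous and invertible, so is $t\mapsto w(t,x)$, and one gets $\big(\partial_t B(t)\big)^{\top} w(t,x) + B(t)^{\top} \partial_t w(t,x) = 0$. Substituting $\partial_t B(t)^{\top} = B(t)^{\top} \nabla_x \MF(t,\Phi^{\theta}_{(T,t)}(x),\theta_t)^{\top}$ and cancelling the invertible left factor $B(t)^{\top}$ yields exactly
\begin{equation*}
\partial_t w(t,x) = - \nabla_x \MF\big(t,\Phi^{\theta}_{(T,t)}(x),\theta_t\big)^{\top} w(t,x),
\end{equation*}
so $w$ is a solution of the backward adjoint Cauchy problem.

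Finally, uniqueness follows from linear ODE theory: the coefficient $t \mapsto \nabla_x \MF(t,\Phi^{\theta}_{(T,t)}(x),\theta_t)^{\top}$ lies in $L^1([0,T];\R^{d\times d})$, since the backward trajectory $\Phi^{\theta}_{(T,\cdot)}(x)$ remains in a fixed ball $B(R_T)$ (Theorem \ref{thm:Wellposed}) on which $|\nabla_x \MF|$ is bounded by $C(d,m,R_T,|\theta_t|)$ by Assumption \ref{asum2}-$(i)$, while $\theta \in L^2([0,T];\R^m) \subset L^1([0,T];\R^m)$; hence Carath\'eodory's existence--uniqueness theorem together with Gr\"onwall's inequality give a unique absolutely continuous solution, which must therefore coincide with $w$. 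I expect the only points requiring genuine care to be the justification that $\nabla_x$ and $\partial_t$ commute when linearizing the backward-time flow — covered by the standard smooth-dependence theorem for ODEs under Assumption \ref{asum1} — and the invertibility of $B(t)$ throughout $[0,T]$, which is exactly what the Liouville formula secures; everything else is routine matrix calculus.
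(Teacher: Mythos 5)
Your argument follows essentially the same path as the paper: differentiate the identity $\Phi^{\theta}_{(t,T)} \circ \Phi^{\theta}_{(T,t)} = \operatorname{Id}$ in $x$, identify $w(t,x)$ as the transpose-inverse of $B(t) := \nabla_x \Phi^{\theta}_{(T,t)}(x)$, and differentiate in $t$ using the linearized flow ODE; the only cosmetic difference is that you differentiate the product identity $B(t)^{\top} w(t,x) = \operatorname{Id}$ rather than the inverse formula directly, and you supply the invertibility (via Liouville) and uniqueness (via Carath\'eodory and Gr\"onwall) details that the paper leaves implicit. One small quibble: to conclude that $t \mapsto \nabla_x \MF(t,\Phi^{\theta}_{(T,t)}(x),\theta_t)$ lies in $L^1$, the bound you cite from Assumption \ref{asum2}-$(i)$ has unspecified growth in $|\theta|$, so the cleaner citation is Assumption \ref{asum1}-$(iii)$, which gives the explicit linear bound $|\nabla_x \MF| \leq L_{\MF}(1+|\theta|)$ and then $\theta \in L^2 \subset L^1$ finishes the job.
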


\begin{proposition}[Rigorous link between the Hamiltonian and Lagrangian adjoint states]
\label{prop:LinkAdjoint}
Let $\psi^* \in \mc{C}^1([0,T];\MC_c^2(\RR^{2d}))$ be the unique characteristic solution of the formal adjoint equation \eqref{eq2} associated with an optimal pair $(\mu^*,\theta^*) \in \mc{C}([0,T];\mc{P}_c(\R^d)) \times L^2([0,T];\R^m)$. Then, it holds that 
\begin{equation*}
\INTDom{\nabla_\theta \MF(t,x,\theta^*_t)^{\top}\nabla_{x}\psi^*(t,x,y)}{\R^{2d}}{\mu^*_t(x,y)} = -\INTDom{\nabla_\theta \MF(t,x,\theta^*_t)^{\top} \bar{\sigma}^*\big(t, \Phi_ {(t,T)}^*(x),y \big) \, }{\R^{2d}}{\mu^*_t(x,y)}, 
\end{equation*}
for $\Lcal^1$-almost every $t \in [0,T]$. In particular, the triple $(\mu^*,\theta^*,\psi^*)\in \MC([0,T];\mc{P}_c(\RR^{2d}))\times \operatorname{Lip}([0,T];\RR^m)\times Y'$ satisfies the mean-field PMP \eqref{eq1}-\eqref{eq3}. 
\end{proposition}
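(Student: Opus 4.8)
The plan is to deduce both the displayed integral identity and the mean-field PMP \eqref{eq1}--\eqref{eq3} for $(\mu^*,\theta^*,\psi^*)$ from the single pointwise relation
\[
\nabla_x \psi^*\big(t,\Phi^*_{(T,t)}(x),y\big) = -\,\bar{\sigma}^*(t,x,y) \qquad \text{for $\mu^*_T$-a.e. $(x,y)$ and a.e. $t \in [0,T]$},
\]
which expresses that the Lagrangian adjoint $\psi^*$ produced in Section \ref{sec:4} and the barycentric Hamiltonian costate $\bar{\sigma}^*$ of Proposition \ref{prop:Costates} carry the same information. I would prove this relation by writing each side explicitly in terms of the optimal state flow $(\Phi^*_{(\tau,t)})$ and its spatial linearisation.

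First I would use the characteristic form of $\psi^*$ from Proposition \ref{prop2}: since $\Phi^*_{(t,T)}$ maps $\supp(\mu^*_t)$ into $B(R_T)$, where $\psi_T \equiv \ell$ by \eqref{tem}, one has $\psi^*(t,z,y) = \ell(\Phi^*_{(t,T)}(z),y)$ on $\supp(\mu^*_t)$, and differentiating (the flow being $\mc{C}^1$ in space by the results of Appendix \ref{subsection:ControlFlow}) gives $\nabla_z \psi^*(t,z,y) = \nabla_x \Phi^*_{(t,T)}(z)^{\top}\,\nabla_x \ell\big(\Phi^*_{(t,T)}(z),y\big)$, so at $z = \Phi^*_{(T,t)}(x)$ one obtains $\nabla_z \psi^*(t,\Phi^*_{(T,t)}(x),y) = \nabla_x \Phi^*_{(t,T)}(\Phi^*_{(T,t)}(x))^{\top}\nabla_x \ell(x,y)$. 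Next I would solve the adjoint transport equation for $\sigma^*_{x,y}$ from Proposition \ref{prop:Costates}: its velocity field $\mc{W}_{x,y}(t,r,s) = \big(-\nabla_x \MF(t,\Phi^*_{(T,t)}(x),\theta^*_t)^{\top} r,\,0\big)$ is \emph{linear} in $(r,s)$ with coefficients bounded on the relevant compact set by Assumption \ref{asum2}, so its flow is linear and carries the Dirac terminal datum $\sigma^*_{x,y}(T)$ into a Dirac mass for every $t$; hence $\bar{\sigma}^*(t,x,y)$ is just the value at time $t$ of the linear Cauchy problem $\dot w(t) = -\nabla_x \MF(t,\Phi^*_{(T,t)}(x),\theta^*_t)^{\top} w(t)$, $w(T) = -\nabla_x \ell(x,y)$. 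Invoking Lemma \ref{lem:AdjointFlow} to identify the fundamental solution of this problem with $t \mapsto \nabla_x \Phi^*_{(t,T)}(\Phi^*_{(T,t)}(x))^{\top}$ yields $\bar{\sigma}^*(t,x,y) = -\nabla_x \Phi^*_{(t,T)}(\Phi^*_{(T,t)}(x))^{\top}\nabla_x \ell(x,y)$, which matched against the previous display gives the pointwise relation. The hard part here is essentially the bookkeeping of transposes and of which flow, $\Phi^*_{(t,T)}$ or $\Phi^*_{(T,t)}$, is evaluated at which base point; Lemma \ref{lem:AdjointFlow} is exactly the ingredient that makes this matching transparent, and after it the computation is routine.

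To close, I would substitute $\mu^*_T = \BPhi^*_{(t,T)} \sharp \mu^*_t$ into the fixed-point expression \eqref{eq:OptControl}, exactly as in the display preceding the statement, so that it reads $\theta^*_t = \tfrac{1}{2\lambda}\int_{\R^{2d}} \nabla_\theta \MF(t,x,\theta^*_t)^{\top}\,\bar{\sigma}^*(t,\Phi^*_{(t,T)}(x),y)\,\rd\mu^*_t(x,y)$; the pointwise relation, read as $\nabla_x\psi^*(t,x,y) = -\bar{\sigma}^*(t,\Phi^*_{(t,T)}(x),y)$ (i.e. with $x$ replaced by $\Phi^*_{(t,T)}(x)$ and $\Phi^*_{(t,T)}\circ\Phi^*_{(T,t)}=\operatorname{Id}$), both turns this into \eqref{eq3} and, after multiplying by $\nabla_\theta \MF(t,\cdot,\theta^*_t)^{\top}$ and integrating against $\mu^*_t$, yields verbatim the displayed equality of the statement. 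Since $\mu^*$ solves \eqref{eq1} and $\psi^*$ is by construction its characteristic adjoint solving \eqref{eq2}, the triple $(\mu^*,\theta^*,\psi^*)$ solves the mean-field PMP; finally $\psi^* \in Y'$ by the remark following Theorem \ref{thmcondi}, and $\theta^*$ coincides a.e. with a Lipschitz-in-time control by the uniqueness in $\Gamma_C$ established in Theorem \ref{thmPMP}.
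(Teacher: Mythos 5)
Your proposal is correct and follows the same strategy as the paper: both pass through the pointwise identity $\nabla_x\psi^*(t,\Phi^*_{(T,t)}(x),y) = -\bar\sigma^*(t,x,y)$, established by writing $\nabla_x\psi^*$ via the characteristic form and the chain rule, recognising (via Lemma~\ref{lem:AdjointFlow}) the right-hand side as the solution of the linearised backward adjoint ODE, and matching it against $\bar\sigma^*$ by Cauchy--Lipschitz uniqueness, before changing variables $\mu^*_T = \BPhi^*_{(t,T)}\sharp\mu^*_t$ in the fixed-point expression. The one place you genuinely diverge is how you show that $\bar\sigma^*(t,x,y)$ satisfies the linear backward ODE: you observe that the velocity field $\mathcal{W}_{x,y}$ is linear in $(r,s)$, so the disintegration flow maps Diracs to Diracs and the barycentre $\bar\sigma^*$ is literally the ODE trajectory emanating from $-\nabla_x\ell(x,y)$; the paper instead derives the ODE for $\bar\sigma^*$ via a test-function computation in the continuity equation for $\sigma^*_{x,y}$ (choosing $\xi(r,s)=\langle h,r\rangle$ with a cutoff), which does not rely on the measures $\sigma^*_{x,y}(t)$ being concentrated. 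Your Dirac-preservation shortcut is cleaner and fully justified here since the velocity is linear and the terminal datum is atomic, whereas the paper's argument would extend verbatim to non-atomic terminal costates; either is acceptable, and the rest of your derivation, including the substitution $x\mapsto\Phi^*_{(t,T)}(x)$ to land on a $\mu^*_t$-integral, is exactly the paper's.
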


In the following lemma, we prove that for $\mu^*_T$-almost every $(x,y)\in \R^{2d}$, the map $t \in [0,T] \mapsto \bar{\sigma}^*(t,x,y) \in \R^d$ solves the backward linearized adjoint dynamics associated with the controlled velocity field $\MF: [0,T] \times \R^d \times \R^m \rightarrow \R^d$. 
\begin{lem}
For $\mu^*_T$-almost every $(x,y)\in \R^{2d}$, the map $t \in [0,T] \mapsto \bar{\sigma}^*(t,x,y) \in \R^d$ is the unique solution of the backward Cauchy problem
\begin{equation}
\label{eq:BarycenterCauchy}
\left\{
\begin{aligned}
\partial_t \bar{\sigma}^*(t,x,y) & = - \nabla_x \MF \big( t,\Phi_{(T,t)}^*(x),\theta^*_t \big)^{\top} \, \bar{\sigma}^*(t,x,y) \\
\bar{\sigma}^*(T,x,y) &= -\nabla_x \ell(x,y).
\end{aligned}
\right.
\end{equation}
\end{lem}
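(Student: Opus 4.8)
The plan is to exploit the fact that the velocity field $\mc{W}_{x,y}(t,\cdot,\cdot)$ driving the backward adjoint continuity equation for $\sigma^*_{x,y}$ is \emph{linear} in the phase variable $(r,s)$, so that passing to barycenters commutes with the transport. Since $\theta^*\in L^2([0,T];\R^m)$, Assumption \ref{asum1}-$(iii)$ gives $\|\nabla_x \MF(t,\Phi_{(T,t)}^*(x),\theta^*_t)\| \leq L_{\MF}(1+|\theta^*_t|) \in L^1([0,T])$, so the backward Cauchy problem \eqref{eq:BarycenterCauchy} is a linear Carathéodory ODE and therefore admits a unique absolutely continuous solution. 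It suffices then to check that $t \mapsto \bar{\sigma}^*(t,x,y)$ is a solution.

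First I would transfer the support and regularity bounds of $\nu^*$ to the disintegrated fibres. By Theorem \ref{thm:PMP} one has $\supp(\nu^*_t) \subset B(R_T') \times B(R_T')$ for all $t \in [0,T]$, and the disintegration structure of Proposition \ref{prop:Costates} yields, for $\mu^*_T$-almost every $(x,y) \in \R^{2d}$, a curve $t \mapsto \sigma^*_{x,y}(t) \in \mc{P}_c(\R^{2d})$ supported in $B(R_T')$ and continuous in time. In particular $\bar{\sigma}^*(t,x,y) = \int_{\R^{2d}} r \, d\sigma^*_{x,y}(t)(r,s)$ is well defined with $|\bar{\sigma}^*(t,x,y)| \leq R_T'$, and its terminal value is $\bar{\sigma}^*(T,x,y) = -\nabla_x \ell(x,y)$, the $r$-barycenter of the terminal condition $\sigma^*_{x,y}(T) = \delta_{(-\nabla_x \ell(x,y))}$.

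Next, for $\mu^*_T$-a.e.\ $(x,y)$ I would plug the coordinate functions $(r,s) \mapsto r_i$, $i = 1,\dots,d$, into the weak formulation of $\partial_t \sigma^*_{x,y}(t) + \nabla_{(r,s)} \cdot ( \mc{W}_{x,y}(t,r,s) \sigma^*_{x,y}(t) ) = 0$; these are not compactly supported, but since $\sigma^*_{x,y}(t)$ is uniformly supported in $B(R_T')$ they may be replaced by $\mc{C}_c^1$ functions coinciding with $r_i$ on $B(R_T')$ without affecting any integral. Because $|\mc{W}_{x,y}(t,\cdot,\cdot)| \leq L_{\MF}(1+|\theta^*_t|) R_T'$ on $B(R_T')$ is $L^1$ in time, the weak formulation shows that each $t \mapsto \bar{\sigma}^*_i(t,x,y)$ is absolutely continuous with
\[
\frac{d}{dt} \bar{\sigma}^*_i(t,x,y) = \int_{\R^{2d}} \big[ \mc{W}_{x,y}(t,r,s) \big]_i \, d\sigma^*_{x,y}(t)(r,s) = - \int_{\R^{2d}} \big[ \nabla_x \MF\big(t,\Phi_{(T,t)}^*(x),\theta^*_t\big)^{\top} r \big]_i \, d\sigma^*_{x,y}(t)(r,s)
\]
for a.e.\ $t \in [0,T]$, and since the matrix $\nabla_x \MF(t,\Phi_{(T,t)}^*(x),\theta^*_t)^{\top}$ does not depend on $(r,s)$ it factors out of the integral, producing exactly $-\big[ \nabla_x \MF(t,\Phi_{(T,t)}^*(x),\theta^*_t)^{\top} \bar{\sigma}^*(t,x,y) \big]_i$. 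Hence $t \mapsto \bar{\sigma}^*(t,x,y)$ is an absolutely continuous solution of \eqref{eq:BarycenterCauchy}, and by the uniqueness recorded above it is \emph{the} solution.

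The one genuinely delicate point is the measure-theoretic bookkeeping of the second paragraph — transferring the uniform support bound and the time-continuity of $\nu^*$ (and of the intermediary curve $\nu^T$) to the fibres $\sigma^*_{x,y}$ for $\mu^*_T$-a.e.\ $(x,y)$, which is what legitimises both the definition of $\bar{\sigma}^*$ and the differentiation under the integral sign; this is handled by the disintegration theorem together with the measurability and Lipschitz-in-time statements already built into Proposition \ref{prop:Costates} and Theorem \ref{thm:PMP}. Everything else is the routine commutation of barycenters with a linear flow. Alternatively, one may short-circuit the computation by observing that, $\mc{W}_{x,y}(t,\cdot,\cdot)$ being affine with $L^1$-in-time coefficients, well-posedness of the continuity equation forces $\sigma^*_{x,y}(t)$ to be the pushforward of $\delta_{(-\nabla_x \ell(x,y))}$ along the flow of $\mc{W}_{x,y}$, i.e.\ a Dirac mass whose moving location is, by definition of that flow, precisely the solution of \eqref{eq:BarycenterCauchy}.
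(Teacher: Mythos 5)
Your main argument is correct and is essentially the paper's own proof: the paper also plugs cut-off versions of linear test functions into the weak formulation of the continuity equation for $\sigma^*_{x,y}$ (it uses $\xi(r,s)=\zeta(r)\phi(s)$ with $\zeta(r)=\langle h,r\rangle$ on $B(R_T')$ and $\phi\equiv 1$ there, which is identical to your coordinate-function truncation modulo writing $h=e_i$), factors the $(r,s)$-independent matrix out of the resulting integral, and closes with Cauchy--Lipschitz uniqueness for the linear backward ODE. The only genuinely different content in your write-up is the parenthetical shortcut at the end: since $\mc{W}_{x,y}(t,\cdot)$ is linear in $r$ with $L^1$-in-time coefficients, well-posedness of the transport equation forces $\sigma^*_{x,y}(t)$ to remain a Dirac mass transported along the characteristic flow, so that $\bar{\sigma}^*(t,x,y)$ is literally the position of that Dirac and hence solves \eqref{eq:BarycenterCauchy} by definition of the flow. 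This bypasses the test-function computation entirely and is cleaner than what is written in the paper; it is worth keeping as a remark.
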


\begin{proof}
By definition of the barycentric projection, it is clear from the fact that $\sigma^*_{x,y}(T) = \delta_{(-\nabla \ell(x,y))}$ that $\bar{\sigma}^*(T,x,y) = -\nabla_x \ell(x,y)$ for $\mu^*_T$-almost every $(x,y) \in \R^{2d}$. Moreover following the construction detailed in Proposition \ref{prop:Costates}, it holds for any $\xi \in \Ccal^1_c(\R^{2d})$ that 
\begin{equation}
\label{eq:SigmaEq}
\derv{}{t} \INTDom{\xi(r,s)}{\R^{2d}}{\sigma^*_{t,x,y}(r,s)} = \INTDom{\Big\langle \nabla_r \xi(r,s) , - \nabla_x \MF \big( t,\theta^*_t ,\Phi_{(T,t)}^*(x) \big)^{\top} r \Big\rangle}{\R^{2d}}{\sigma^*_{t,x,y}(r,s)}
\end{equation}
for almost every $t \in [0,T]$. We can in particular choose test functions of the form $\xi(r,s) = \zeta(r)\phi(s)$ for some $\zeta,\phi \in \Ccal^1_c(\R^{2d})$. Then given an arbitrary $h \in \R^d$, consider $\zeta,\phi$ to be smooth functions such that 
\begin{equation*}
\zeta(r) = \left\{
\begin{aligned}
& \langle h,r \rangle \hspace{0.755cm} \text{if $|r| \leq R_T'$}, \\
& 0 \qquad \text{if $|r| \geq R_T'+1$,}
\end{aligned}
\right.
\qquad \text{and} \qquad 
\phi(s) = \left\{
\begin{aligned}
& 1 \hspace{1.5cm} \text{if $|s| \leq R_T'$}, \\
& 0 \qquad \text{if $|s| \geq R_T'+1$,}
\end{aligned}
\right.
\end{equation*}
for all $(r,s) \in \R^{2d}$. It then holds that $\nabla_r \xi(r,s) = \phi(s) \nabla \zeta(r) = h$ for every $(r,s) \in B(R_T')$, which upon recalling that $\supp(\sigma_{t,x,y}^*) \subset B(R_T')$ for all times $t \in [0,T]$ yields together with \eqref{eq:SigmaEq} that
\begin{equation*}
\derv{}{t} \langle h, \bar{\sigma}^*(t,x,y) \rangle = \Big\langle h , - \nabla_x \MF \big( t,\theta^*_t ,\Phi_{(T,t)}^*(x) \big)^{\top} \bar{\sigma}^*(t,x,y) \Big\rangle, 
\end{equation*}
for almost every $t \in [0,T]$. Since $h \in \R^d$ is arbitrary, we can indeed conclude that the map $t \in [0,T] \mapsto \bar{\sigma}^*(t,x,y) \in \R^d$ is a solution of the Cauchy problem \eqref{eq:BarycenterCauchy}. The uniqueness follows from Assumption \ref{asum1} together with classical Gr\"onwall estimates.
\end{proof}

\begin{proof}[Proof of Proposition \ref{prop:LinkAdjoint}]
Following Proposition \ref{prop2}, we recall that the adjoint variable $\psi^*$ of the Lagrangian approach is defined via the method of characteristics, namely
\begin{equation*}
\psi^*(t,x,y) := \ell \big( \BPhi_{(t,T)}^*(x,y) \big) = \ell \big( \Phi_{(t,T)}^*(x),y\big),
\end{equation*}
for all $(t,x,y) \in [0,T] \times \R^{2d}$. Differentiating with respect to $x \in \R^d$ in the previous expression, we further obtain that
\begin{equation*}
\nabla_x \psi^*(t,x,y) = \nabla_x \Phi_{(t,T)}^*(x)^{\top} \nabla_x \ell \big( \Phi_{(t,T)}^*(x),y\big).
\end{equation*}
Evaluating this expression at $\Phi_{(T,t)}^*(x)$ for some $(x,y) \in \supp(\mu^*_T)$, the previous identity reads 
\begin{equation*}
\nabla_x \psi^* \big(t, \Phi^*_{(T,t)}(x),y\big) = \nabla_x \Phi_{(t,T)}^* \big(\Phi_{(T,t)}^*(x) \big)^{\top} \nabla_x \ell(x,y),
\end{equation*}
for all times $t \in [0,T]$ and $\mu^*_T$-almost every $(x,y) \in \R^{2d}$. Observe now that by Lemma \ref{lem:AdjointFlow}, the mapping $t \in [0,T] \mapsto \nabla_x \Phi_{(t,T)}^* \big(\Phi_{(T,t)}^*(x) \big)^{\top} \nabla_x \ell(x,y) \in \R^d$ is the unique solution of the backward Cauchy problem
\begin{equation*}
\left\{
\begin{aligned}
\partial_t w(t,x,y) & = - \nabla_x \MF \big( t,\Phi_{(T,t)}^*(x) ,\theta^*_t \big)^{\top} w(t,x,y), \\
w(T,x,y) &= \nabla_x \ell(x,y).
\end{aligned}
\right.
\end{equation*}
By standard Cauchy-Lipschitz uniqueness, this allows us to conclude that $\nabla_x \psi^* \big(t, \Phi^*_{(T,t)}(x),y\big)  = -\bar{\sigma}^*(t,x,y)$ for all times $t \in [0,T]$ and $\mu^*_T$-almost every $(x,y) \in \R^{2d}$, which in particular yields 
\begin{equation*}
\begin{aligned}
\theta_t^* & = \INTDom{\nabla_{\theta} \MF(t,\Phi^*_{(T,t)}(x),\theta^*_t)^{\top} \bar{\sigma}^*(t,x,y)}{\R^{2d}}{\mu^*_T(x,y)} \\
& = -\INTDom{\nabla_{\theta} \MF(t,\Phi^*_{(T,t)}(x),\theta^*_t)^{\top} \nabla_x \psi^* \big(t, \Phi^*_{(T,t)}(x),y\big)}{\R^{2d}}{\mu^*_T(x,y)}  \\
& = -\INTDom{\nabla_{\theta} \MF(t,x,\theta^*_t)^{\top} \nabla_x \psi^* \big(t,x,y\big)}{\R^{2d}}{\mu^*_t(x,y)} 
\end{aligned}
\end{equation*}
for almost every $t \in [0,T]$, and concludes the proof of our claim. 
\end{proof}

We can now conclude this section with the following summarizing result, Theorem \ref{MAINthm}.

\begin{thm}\label{thmMAIN} For any given $T>0$, let $\MF$ satisfy the Assumption \ref{asum1} and \ref{asum2}, the initial data $\mu_0\in \mc{P}_c(\RR^{2d})$, and the terminal condition $\psi_T$ satisfy \eqref{tem}.  Assume further that $\lambda>0$ is large enough. Then, an admissible control $\theta^* \in L^2([0,T],\mathbb R^m)$ fulfills the mean-field PMP \eqref{eq1}-\eqref{eq3} \emph{if and only if} it is optimal. In addition, such an optimal control $\theta^*$ is uniquely determined and Lipschitz continuous. 
\end{thm}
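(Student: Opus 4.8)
The plan is to assemble Theorem \ref{thmMAIN} from the three pillars established above: existence and uniqueness of a global minimiser of the reduced cost (Theorem \ref{thm:Exist}), well-posedness of the optimality system together with the regularity of its control component (Theorem \ref{thmPMP}), and the two derivations of the mean-field PMP --- the Lagrangian one of Theorem \ref{thmcondi} and the Hamiltonian one culminating in Proposition \ref{prop:LinkAdjoint}. Throughout, $\lambda>0$ is fixed large enough that all of these results apply simultaneously, which is legitimate since each of them only imposes a lower bound on $\lambda$.

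For the implication \emph{optimal $\Rightarrow$ PMP}, let $\theta^*\in L^2([0,T];\R^m)$ be the unique optimal control given by Theorem \ref{thm:Exist}, with associated forward trajectory $\mu^*$ and characteristic adjoint $\psi^*$. Since at this stage $\theta^*$ is only known to be square-integrable, I would invoke the Hamiltonian route of Section \ref{sec:Hamil}: Theorem \ref{thm:PMP} produces a state--costate curve $\nu^*$, the concavity of the Hamiltonian in $\theta$ (valid for $\lambda$ large) turns the maximisation condition into the fixed-point equation \eqref{eq:OptControl}, and Proposition \ref{prop:LinkAdjoint} identifies the barycentric projection of the costate with $-\nabla_x\psi^*$, so that $(\mu^*,\theta^*,\psi^*)$ solves \eqref{eq1}-\eqref{eq3}. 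A posteriori, once $\theta^*$ is known to be continuous, one may alternatively apply Theorem \ref{thmcondi} and recover the same conclusion through the Lagrange multiplier rule.

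For the converse \emph{PMP $\Rightarrow$ optimal}, the cleanest path is to show that, for $\theta\in L^2([0,T];\R^m)$, the system \eqref{eq1}-\eqref{eq3} is equivalent to the Euler--Lagrange equation $\nabla_\theta J(\theta)=0$. Indeed \eqref{eq1} forces $\mu^*_t=\BPhi^{\theta}_{(0,t)}\sharp\mu_0$, the characteristic solution of \eqref{eq2} is $\psi^*(t,x,y)=\ell(\Phi^{\theta}_{(t,T)}(x),y)$ on $\supp(\mu^*_t)$ thanks to \eqref{tem}, hence $\nabla_x\psi^*(t,x,y)=\nabla_x\Phi^{\theta}_{(t,T)}(x)^{\top}\nabla_x\ell(\Phi^{\theta}_{(t,T)}(x),y)$; substituting into \eqref{eq3}, changing variables $x=\Phi^{\theta}_{(0,t)}(\xi)$ against $\mu^*_t$, and using the resolvent identity $\mathcal{R}^{\theta}_{(t,T)}(\xi)=\nabla_x\Phi^{\theta}_{(t,T)}(\Phi^{\theta}_{(0,t)}(\xi))$, the right-hand side becomes precisely $-\tfrac{1}{2\lambda}\nabla_\theta J_\ell(\theta)(t)$ with $\nabla_\theta J_\ell$ as in \eqref{eq:ReducedCostGrad}; so \eqref{eq3} reads $\nabla_\theta J(\theta)=\nabla_\theta J_\ell(\theta)+2\lambda\theta=0$, which is essentially Proposition \ref{prop:LinkAdjoint} read backwards. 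Since $J$ is Fr\'echet-differentiable (Lemma \ref{lem:FinalCostReg}) and strictly convex for $\lambda$ large (Proposition \ref{prop:Semiconvexity}), the vanishing of $\nabla_\theta J$ characterises the unique global minimiser, which yields both the equivalence ``satisfies \eqref{eq1}-\eqref{eq3} $\iff$ optimal'' and, incidentally, uniqueness of the $L^2$-solution of the PMP.

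It remains to record Lipschitz regularity and to make the uniqueness unconditional. By Theorem \ref{thmPMP} the system \eqref{eq1}-\eqref{eq3} has a solution whose control component lies in $\Gamma_{M,C}\subset\operatorname{Lip}([0,T];\R^m)$ and is unique in the ball $\Gamma_C$ of \eqref{Gm1}; the optimal control $\theta^*$ satisfies the PMP by the first step and, for a compatible choice of $C_\Gamma$ and $M$, lies in $\Gamma_C$, so it coincides with that Lipschitz solution. Hence $\theta^*$ is uniquely determined and Lipschitz in time, which completes the proof. The main obstacle is the reconciliation carried out in Proposition \ref{prop:LinkAdjoint} --- matching the Hamiltonian costate, respectively the barycentric projection of Section \ref{sec:Hamil}, with the Lagrangian adjoint $\psi^*$ via the change-of-variables identity above --- together with the bookkeeping that makes the three constraint sets $\Gamma$ (minimisers, Theorem \ref{thm:Exist}), $\Gamma_C$ (PMP solutions) and $\Gamma_{M,C}$ (fixed points of $\Lambda$) fit together consistently.
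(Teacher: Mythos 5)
Your proof is correct, and for the heart of the equivalence it takes a genuinely different route from the paper. The paper's own proof is a one-line combination of Theorem \ref{thmPMP} (unique Lipschitz fixed point of $\Lambda$ in $\Gamma_C$), Theorems \ref{thmcondi}--\ref{thm:PMP} (Lagrangian and Hamiltonian derivations of the necessary conditions), and Proposition \ref{prop:LinkAdjoint} (reconciliation of the two adjoints); the converse direction ``PMP $\Rightarrow$ optimal'' is then carried implicitly by the fixed-point uniqueness: the optimal control satisfies the PMP and lies in $\Gamma_C$, so any PMP solution in $\Gamma_C$ must coincide with it. Your proposal instead proves the equivalence by unravelling the system \eqref{eq1}--\eqref{eq3} into the Euler--Lagrange equation $\nabla_\theta J(\theta)=0$: the forward solution $\mu^*_t=\BPhi^\theta_{(0,t)}\sharp\mu_0$, the characteristic adjoint $\psi^*(t,x,y)=\ell(\Phi^\theta_{(t,T)}(x),y)$, the change of variables against $\mu^*_t$, and the resolvent identity $\mathcal{R}^\theta_{(t,T)}(\xi)=\nabla_x\Phi^\theta_{(t,T)}(\Phi^\theta_{(0,t)}(\xi))$ turn \eqref{eq3} precisely into $2\lambda\theta_t+\nabla_\theta J_\ell(\theta)(t)=0$, with $\nabla_\theta J_\ell$ as in \eqref{eq:ReducedCostGrad}; combined with the strict convexity of Proposition \ref{prop:Semiconvexity}, this yields both implications at once and makes the Hamiltonian machinery of Section \ref{sec:Hamil} logically dispensable for the equivalence (you still invoke it, but your own observation that the vanishing of $\nabla_\theta J$ ``yields both'' directions shows it is redundant). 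What your approach buys is a shorter and more self-contained chain --- only Lemma \ref{lem:FinalCostReg}, Proposition \ref{prop:Semiconvexity}, the characteristic representation from Appendix A, and the resolvent identity are needed --- whereas the paper's route additionally imports the external Hamiltonian PMP of Theorem \ref{thm:PMP} and the fixed-point contraction. One caveat applies equally to both: the strict convexity, and likewise the uniqueness of $\Lambda$'s fixed point, are established only on a ball, so the implication ``PMP solution $\Rightarrow$ optimal'' as stated for arbitrary $\theta^*\in L^2$ tacitly requires the constants $\Gamma$, $\Gamma_C$, $\Gamma_{M,C}$ to be aligned so that any PMP solution a priori lands in the region where convexity/contraction hold; you flag this bookkeeping explicitly, which is the honest thing to do. The Lipschitz regularity and the unconditional uniqueness you recover exactly as the paper does, via Theorem \ref{thmPMP}.
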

\begin{proof} The result follows by combining Theorem \ref{thmPMP}, Theorems \ref{thmcondi}-\ref{thm:PMP} and Proposition \ref{prop:LinkAdjoint}.
\end{proof}


\section{Numerical experiments}
\label{sec:5}

We  conclude this paper with a few instructive numerical experiments, which highlight the features of a shooting method for the mean-field maximum principle. Extensive discussions on other numerical implementations and experiments are reported in \cite{carola,10.5555/3122009.3242022,Haber_2017, pmlr-v80-li18b}. In these works, impressive results in high dimensions have been presented and discussed, while in the present work we would like to focus more simply on understanding the mechanism of the algorithm and the interplay of its different parameters. Hence, we look at insightful examples in 1D and 2D, in order to give a simple and immediate explanation of how our method can be employed for a classification task, which is a typical application of deep learning methods. While we focus on moderate dimensions, we believe that our findings are general enough to explain the functioning of the algorithm also for higher dimensional data, such as images, and we refer to the above mentioned papers for more details.

\subsection{General setting}

Shooting techniques are often used to solve deterministic optimal control problems by reducing them locally to finite dimensional equations, which are solved repeatedly for different initial values that are iteratively updated. In our case, we start with an initial random guess of the control parameter $(\theta^0_t )_{t\in [0,T]}$, we solve the optimality conditions \eqref{eq1},\eqref{eq2} and \eqref{eq3} in order to update the control parameter to $(\theta_t^1)_{t\in[0,T]}$, and then use the latter as a datum for the second iteration of the shooting method. This process, more formally written as the update policy 
\begin{equation*}
\theta^{n+1}_t=\Lambda(\theta^n_t),
\end{equation*} 
is repeated iteratively, until the convergence of the method is achieved. The operator $\Lambda$ has been introduced in the proof of  Theorem \ref{thmPMP}, where we showed that the optimal control is its unique fixed point. In particular, we proved therein that such iterations are contractive as soon as they remain bounded, and provided that the regularization parameter $\lambda > 0$ is large enough. Therefore, by construction, the convergence of the shooting scheme is automatically guaranteed in our setting for bounded iterations. Moreover, Corollary \ref{thmrate} also ensures the convergence of the empirical solutions obtained for finite samples as $N\to \infty$. Hence, the combination of the results of Theorem \ref{thmPMP} and  Corollary \ref{thmrate} provides a theoretically guaranteed convergence for the shooting method, which is summarized in Algorithm \ref{shooting}.

\begin{algorithm}[!ht]
\caption{Shooting Technique}\label{shooting}
\begin{algorithmic}[1]
\STATE Initialize the layers $\theta^0 = (\theta^0_t)_{t \in [0,T]}$
\FOR {$k$ = 0 $\ldots$ number of iterations }
\STATE Find a curve $t \in [0,T] \mapsto \mu_t^{k}$ which solves the forward equation
\begin{equation}\label{eq1'}
\partial_t\mu^{k}_t + \nabla_x\cdot \big( \mc{F}(t,x,\theta^k_t)\mu^{k}_t \big)=0,\qquad \mu^{k}_t|_{t=0}=\mu_0\, .
\end{equation}
\STATE Find a curve $t \in [0,T] \mapsto \psi^{k}_t$ which solves the backward equation
\begin{equation}\label{eq2'}
\partial_t\psi_t^{k} + \nabla_x\psi_t^{k} \cdot \mc{F}(t,x,\theta_t^k)=0,\qquad \psi^{k}_t(x,y)|_{t=T} = \abs{x - y}^2\, .
\end{equation}
\STATE Find a new set of layers $(\theta_t^{k+1})_{t \in [0,T]}$ by solving
\begin{equation}\label{eq3'}
\theta_t^{k+1} + \frac{1}{2 \lambda} \int_{\RR^{2d}} \nabla_\theta \mc{F}(t,x,\theta^{k+1}_t)^{\top}\nabla_x\psi_t^{k}(x,y)\rd \mu_t^{k}(x,y) = 0\, .
\end{equation}
\ENDFOR
\end{algorithmic}
\end{algorithm}

\paragraph{Forward Equation.}

As already mentioned in the introduction, the dynamics \eqref{eq1'} is a linear transport equation that describes the forward pass of the initial data through the network. We investigate various ways to solve such a forward equation: our first approach, very much inspired by \cite{pmlr-v80-li18b} and the deep learning task that we aim to solve, is a particle method. Given an initial distribution $\mu_0$, we sample $N$ particles and their corresponding labels and evolve them in time for $t \in (0,T]$ according to their governing ODEs
\begin{align}\label{governing_ode}
\frac{\ud X^i_t}{\dt} = \mathcal F(t,X^i_t,\theta_t), \qquad \frac{\ud Y^i_t}{\dt}= 0\,,
\end{align}
where $X^i_t \in \RR^d$ is the position of i-th sampled particle and $Y^i_t \in \RR^d$ is its label at time -- or equivalently on the layer -- $t \in [0,T]$. 

In order to demonstrate that the convergence and contractivity of the method is independent of the number of particles/samples $N$ and to highlight the power of our mean-field result, we also employ a Monte Carlo method. The idea in this case is to ``break up'' the particles trajectories by performing density estimations and resamplings at every time step. Namely, we start by sampling $N$ particles from the initial distribution $\mu_0$, let them evolve according to the governing ODE \eqref{governing_ode} during a small time, and then perform a kernel density estimation in order to compute the distribution of the evolved particles, i.e. $\mu^m_1$. The apex $m$ indicates that this process sampling-moving-estimating is repeated for a certain number of repetitions $M$ in order to obtain a result that is independent of the initial sample of particles. Then, the distribution $\mu_1$ is computed as the mean over all the repetitions $\mu_1^m$ with $m=0,...,M$. Clearly, this process needs to be repeated for every layer $t \in [0,T]$. More rigorously, the method is summarized in Algorithm \ref{montecarlo}, for a generic iteration $k$ of the shooting method.

\begin{algorithm}[!ht]
\caption{Monte Carlo Method}\label{montecarlo}
\begin{algorithmic}[1]
\FOR {$ t \in [0,T]$}
\FOR {$m = 0 \ldots M$}
\STATE Sample $N$ particles from $\mu_t$
\STATE Evolve the $N$ particles according to the ODE \eqref{governing_ode}
\STATE Use kernel density estimation to compute $\mu^m_{t+1}$
\ENDFOR
\STATE Define $\mu_{t+1} = \frac{1}{M} \sum\limits_{m=1}^M \mu^m_{t+1}$
\ENDFOR
\end{algorithmic}
\end{algorithm}

By using the Monte Carlo method, we are not only highlighting the mean-field nature of our algorithm (since we can sample as many particles as we want), but also distinguish our method from the ODE-based algorithm in \cite{pmlr-v80-li18b}. Indeed, the main difference with their approach is that we are considering a mean-field version of the maximum principle, wherein the dynamics is written in terms of PDEs rather than ODEs, and for which the Monte Carlo method is a suitable solver.  

In the spirit of the latter issue, we also solve the forward equation with a classical finite volume method, which is a well-known numerical scheme to efficiently tackle generic conservation laws in any dimension. This approach is based on a mesh partition of the domain, and on the integration of the PDE over each control volume, i.e. each element of the mesh, in order to obtain a balance equation that is then discretized. One of the fundamental issues of this context lies in the discretization of the fluxes, which have to be conservative and consistent in order to produce an efficient method. In our case, since the flux depends on the function $\mathcal{F}$, we discretize it by means of an upwind spacial scheme. The drawback of this method is that it is highly dependent on the space and time discretization steps, which are very important parameters whose role will be discussed at the end of this section.

\paragraph{Backward Equation.}

The backward equation \eqref{eq2'} is independent of the forward evolution \eqref{eq1'} and, as such, it can be solved simultaneously. Observe that \eqref{eq2'} is also a transport equation, but it is defined backward in time since a boundary condition is prescribed at the final time $t=T$. As the terminal condition is a continuous function, we decide to use finite differences in space and an explicit time-scheme to solve this latter. As it happened for the resolution of \eqref{eq1'} with finite volumes, the upwind method has been used to perform the space discretization of the velocity of the backward equation. Not only is this method suitable for transport equations, but it is also ideal in the case where the velocity $\mathcal F$ depends on both space and time, i.e. when it can change at every point of the domain. Notice that we could solve \eqref{eq2'} using a finite volume method akin to that described for the forward equation, but this may prove to be inefficient because of the oscillations of $\psi_t$ for some choices of the algorithm parameters. Hence, we chose to focus our attention on the finite difference method, which produces very good results in the low dimensional test cases considered here.

\paragraph{Parameter Update.}
Finally, we solve \eqref{eq3'} which allows to update the set of layers. Given the primal-dual solutions $(\mu_t,\psi_t)$ of equations \eqref{eq1'}-\eqref{eq2'}, we can solve \eqref{eq3'} by computing the root of the following non-linear function 
\begin{equation}
f(\theta_t) = \theta_t +\frac{1}{2\lambda} \int_{\RR^{2d}} \nabla_\theta \mathcal F(t,x,\theta_t)^\top\nabla_x\psi_t(x,y)\rd\mu_t(x,y).
\end{equation}
for each $t \in [0,T$]. Inspired by the particle method employed to solve \eqref{eq1'}, the integral with respect to $\mu_t$ can be simply computed by means of a particle approximation as $\mu_t$ is an empirical distribution in our context. Moreover, given the discrete values of $\psi_t(x,y)$ that have been computed as a by-product of the finite difference scheme used to solve the backward equation \eqref{eq2'}, the function $\psi_t$ and its gradient $\nabla_x \psi_t$ can be interpolated, e.g. using splines, in order to be able to evaluate these latter in whatever position $X^i_t$ the particles may be located at in the domain. Ultimately, the fixed point equation \eqref{eq3'} can be therefore be approximated by
\begin{equation}\label{update_discretization}
f(\theta_t) \approx \theta_t + \frac{1}{2\lambda N} \sum_{i=0}^{N} \nabla_\theta \mathcal F(t,X_i(t),\theta_t)^\top\nabla_x \psi_t(X_i(t),Y_i(t)) ,
\end{equation}
and its roots can be computed using any classical non-linear equation solver such as Newton-Raphson, Bisection, or Brent's method, depending on the particular test case at and. Notice that here, the only source of approximation is the interpolation error of $\psi_t$. 

In the case where the forward equation has been solved with a Monte Carlo method, the approximation of the integral needs to be performed many times (as for the forward equation) in order to obtain a result which is independent of the initial particle sample, with same number of repetitions $M \geq 1$. Finally, if one chooses to solve the forward equation with a finite volume method, the result $\mu_t$ for each $t \in [0,T]$ is not obtained through particle approximations, but as a function on a spatially discretized domain and, as such, it is reasonable to approximate the integral using classical numerical quadrature methods. Unfortunately, those high-accuracy methods require a fine space discretization, which involves the introduction of a spline interpolation also for the forward function $\mu_t(x)$, as it was previously done for $\psi_t$ and its gradient, which adds a new source of error on top of that arising from the interpolations of $\psi_t$ and $\mu_t$. For this reason, we also opted for particle and Monte Carlo methods to approximate the integrals, rather than using its spatial values.

\subsection{Results}\label{sec:numres}
In this section, we will show how the three optimality conditions, namely forward, backward, and parameter update (\eqref{eq1'}, \eqref{eq2'}, \eqref{eq3'} respectively) are used to solve a classification task: we are given an initial distribution $\mu_0$ of data and labels, where any point with first coordinate of positive sign is corresponding to a label vector in the corresponding orthant, while a negative orthant label vector is assigned to all those points with first coordinate of negative sign (in 1D we have one coordinate only). Then, our goal is to find the control parameter $\theta$ that moves the particles sampled from $\mu_0$ in a way such that, at the final time $T$, all the particles with positive sign first coordinate are close to the positive orthant label and the particles with negative sign first coordinate are close to the negative orthant one. This task is performed through a neural network with $L \lfloor \frac{T}{\dt} \rfloor$ layers, where $\dt$ is the time discretization step used to solve both the forward \eqref{eq1'} and the backward \eqref{eq2'} equations. We will consider the layer forward map $\mathcal F(t, x, \theta_t) = \tanh( W_t \, x + \tau_t)$ and $\theta_t = (W_t, \tau_t)$, where $W_t \in \mathbb R^{d \times d}$ and $\tau_t \in \RR^d$. However, in some of the experiments reported below, we used also a forward map without shifts $\mathcal F(t, x, \theta_t) = \tanh( W_t x)$, so that simply $\theta_t = W_t$, where $W_t \in \mathbb R^{d \times d}$. 
 The test cases for the initial distribution are the following:
\begin{itemize}
\item \textit{Bimodal Gaussian in 1D and 2D:} in the monodimensional case, the initial distribution $\mu_0$ is a bimodal Gaussian, the particles sampled from it are concentrated around the points $1$ and $-1$ and are assigned to the label $y = 2$ if they have a positive sign, or to the label $y=-2$ if they have a negative sign. Similarly, in the bidimensional case the particles are initially concentrated around $(-1,-1)$ and $(+1,+1)$, but now their labels are assigned according to the sign of their first coordinate, i.e. if $X_i(0) = (X_i^1(0), X_i^2(0))$ is the initial position of the i-th particle, then this will have label $(-2,-2)$ if $X_i^1(0) <0$ and label $(+2,+2)$ if its coordinate $X_i^1(0)$ is positive. \medskip
\item \textit{Unimodal Gaussian in 1D and 2D}: since in the previous case the initial particles are already well-separated in the respective orthant, we also perform the classification of the particles sampled from an initial unimodal Gaussian centered in the origin that have corresponding positive label $+1$ and negative label $-1$ in the monodimensional case. Similarly as before, in the bidimensional case, the particles with positive first coordinate are assigned to a positive label $(+1,+1)$ and to a negative label $(-1,-1)$ when their first coordinate is negative.
\end{itemize}

\begin{figure}[ht]
\begin{minipage}[b]{0.58\linewidth} 
\includegraphics[scale = 0.47]{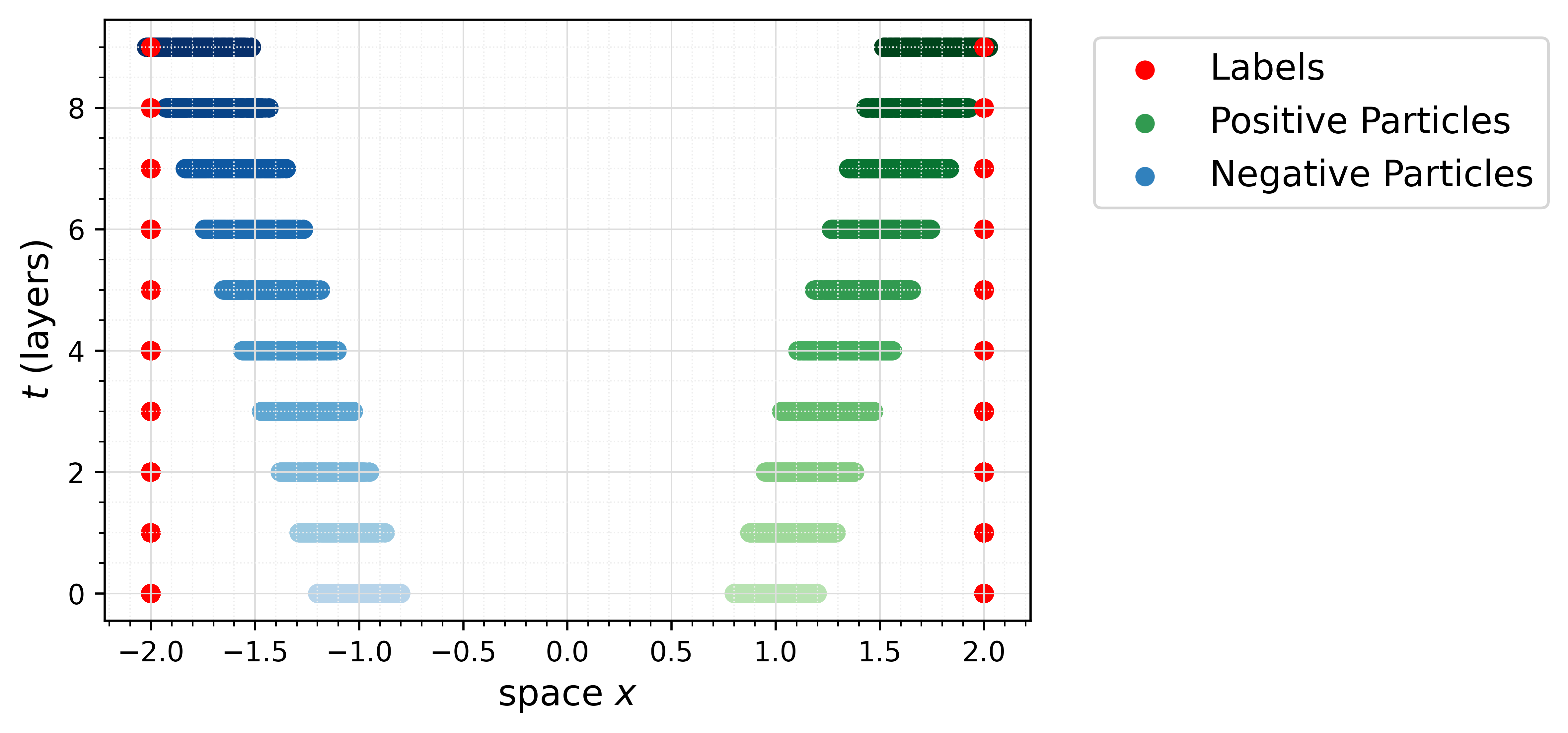}
\end{minipage}
\begin{minipage}[b]{0.58\linewidth} 
\includegraphics[scale = 0.47]{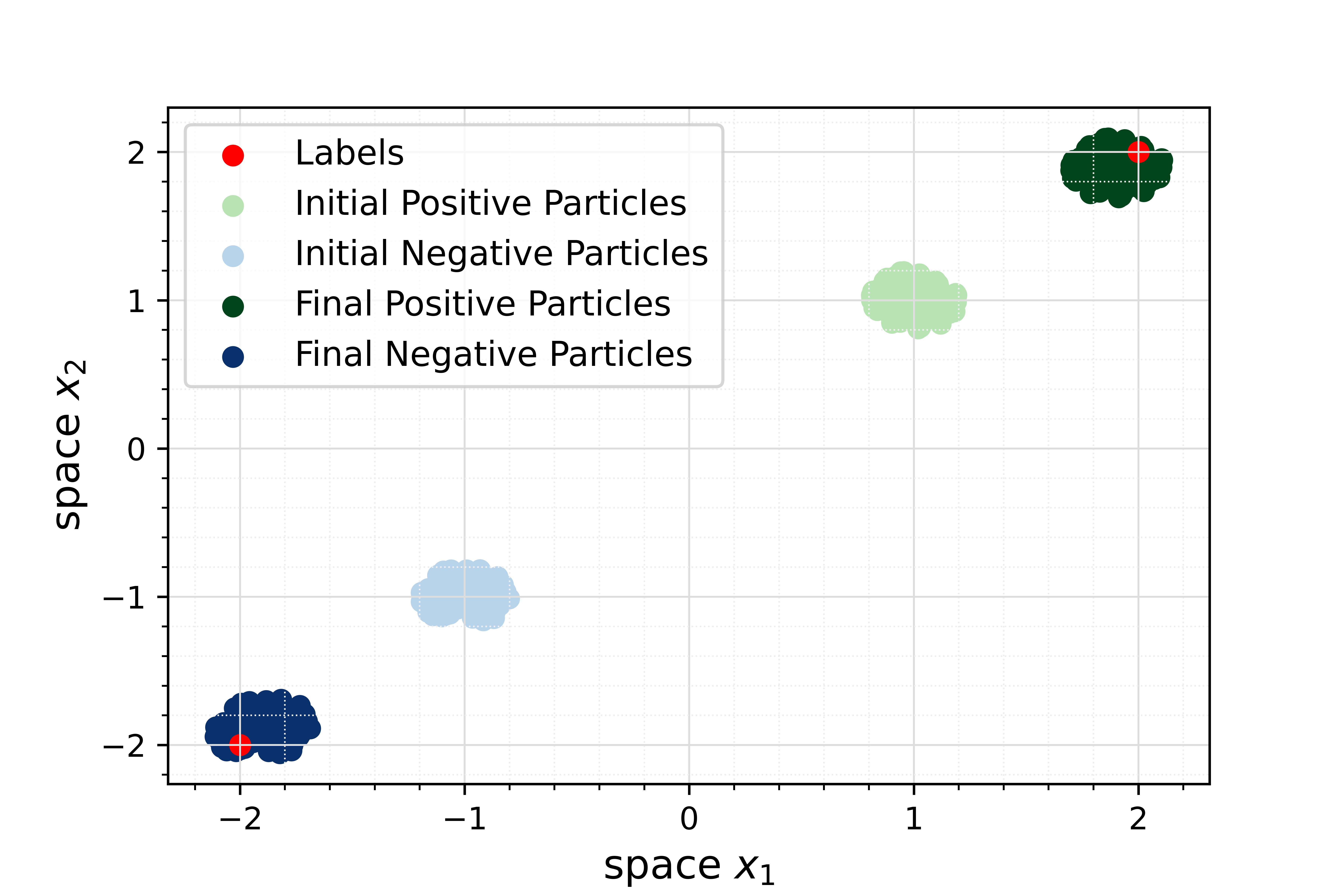}
\end{minipage}
\caption{Left: Evolution in time of the particles from the monodimensional initial bimodal distribution $\mu_0$ to $\mu_T$; Right: Plot of the initial bidimensional bimodal distribution $\mu_0$ and the final distribution $\mu_T$.}
\label{fig:bimodal_evolution}
\end{figure}

Figure \ref{fig:bimodal_evolution} shows the results obtained in the case of the bimodal distribution in 1D (on the left) and its corresponding bidimensional case (on the right). In both cases, $T = 1$ and $\dt=0.05$ which corresponds to a neural network with $L = 20$ layers, and both the layer forward maps with or without biases are used. The initial guess of $\theta^0$ is $\theta^0_t = 0$ for all $t \in [0,T]$ and the parameter $\lambda$ is set to $0.1$. The forward equation \eqref{eq1'} is solved using the particle method with $N = 200$ points, and the backward equation \eqref{eq2'} is solved in the same domain as the forward equation, namely $x \in R_T \subset \RR$ where $R_T$ is defined as in \eqref{suppt}. The $y$ variable is taken in a subset of $\RR$ as large as $R_T$ and the same space discretization in the data dimension $x$ and labels dimension $y$ is used, i.e. $dx= dy= 0.1$. The same holds for the bidimensional case, where $y \in \RR^2$ and hence the space discretization steps $dx_1 = dx_2 = dy_1 =dy_2 = 0.1$ are chosen. Finally, the root of the function in equation \eqref{update_discretization} is found using Brent's method and then the shooting method is applied for a total of $15$ (outer) iterations. \medskip

The results obtained in the case of an initial unimodal distribution in 1D and 2D are presented in Figure \ref{fig:unimodal_evolution}, respectively, left and right plots. The same parameters (namely number of layers, number of particles, space and time discretization, initial guess of $\theta^0$, and number of iterations of the shooting method) can be used in the unimodal case. The only parameter that changes is $\lambda$ which is set to $10^{-3}$ in the monodimensional case, and to $10^{-4}$ in the bidimensional one. The reason for this will be explained below when the role of $\lambda$ will be discussed. The case of unimodal Gaussian is more difficult than the bimodal one as the particles are really close to the splitting point, i.e., the origin, and it might happen that during an iteration of the shooting method some of the values of $\theta_t$ that are obtained move the particles to the other orthant, which will consequently lead these particles to be attracted to the wrong label. We notice that this behavior sometimes happens, but the particles generally learn to split nicely into two groups and move to the proper labels, as depicted in Figure \ref{fig:unimodal_evolution}. In particular, some particles appear to be a bit isolated from the others, even if they go in the direction of the labels: these are  precisely those ``confused'' particles that were first moved to the opposite orthant and then attracted to the wrong label. This is more likely to happen when the ``wrong value'' for $\lambda$ is chosen and, since it is more difficult to tune it in the bidimensional case, it is possible to see those incorrectly classified particles on the right of Figure \ref{fig:unimodal_evolution}.

\begin{figure}[ht]
\begin{minipage}[b]{0.58\linewidth} 
\includegraphics[scale = 0.47]{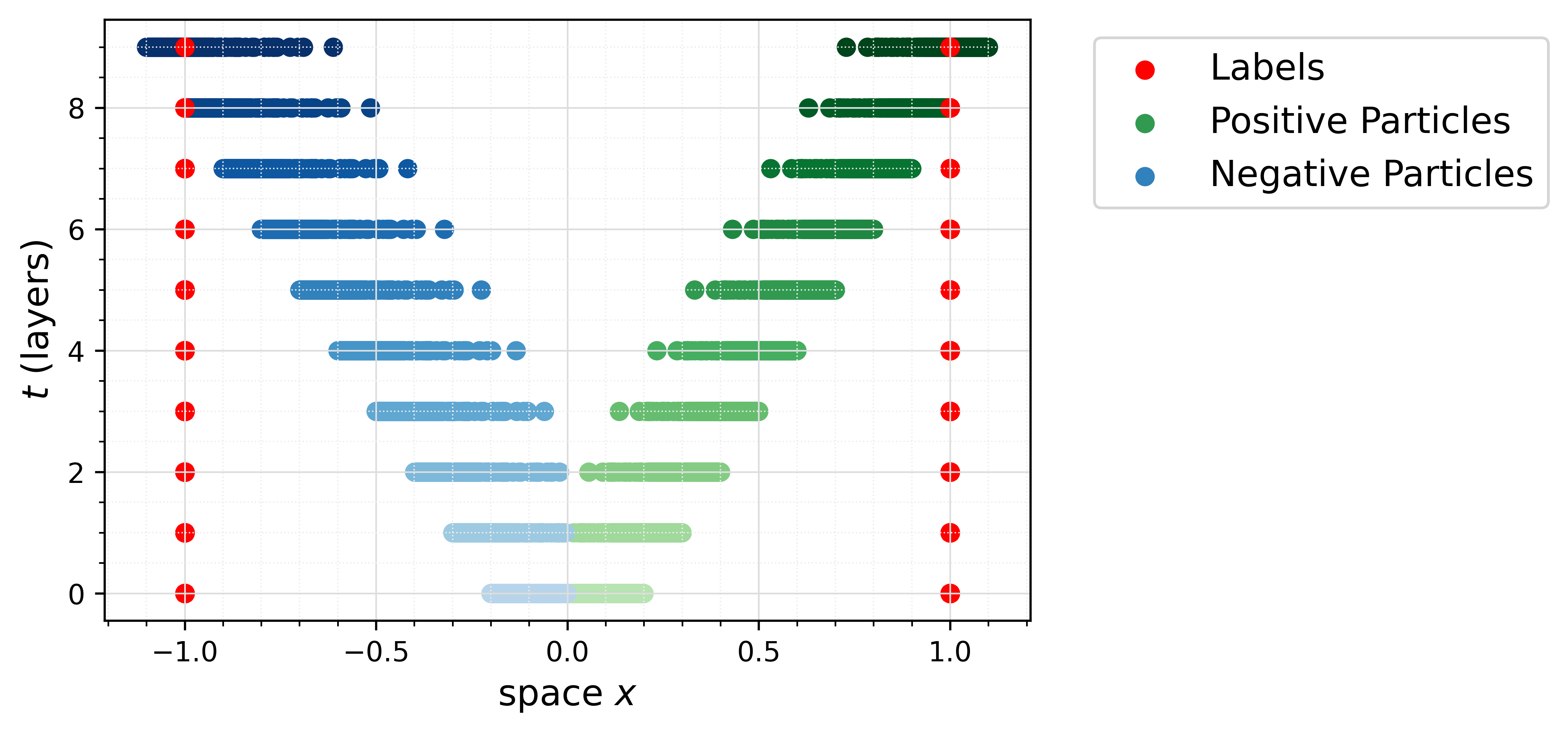}
\end{minipage}
\begin{minipage}[b]{0.58\linewidth} 
\includegraphics[scale = 0.47]{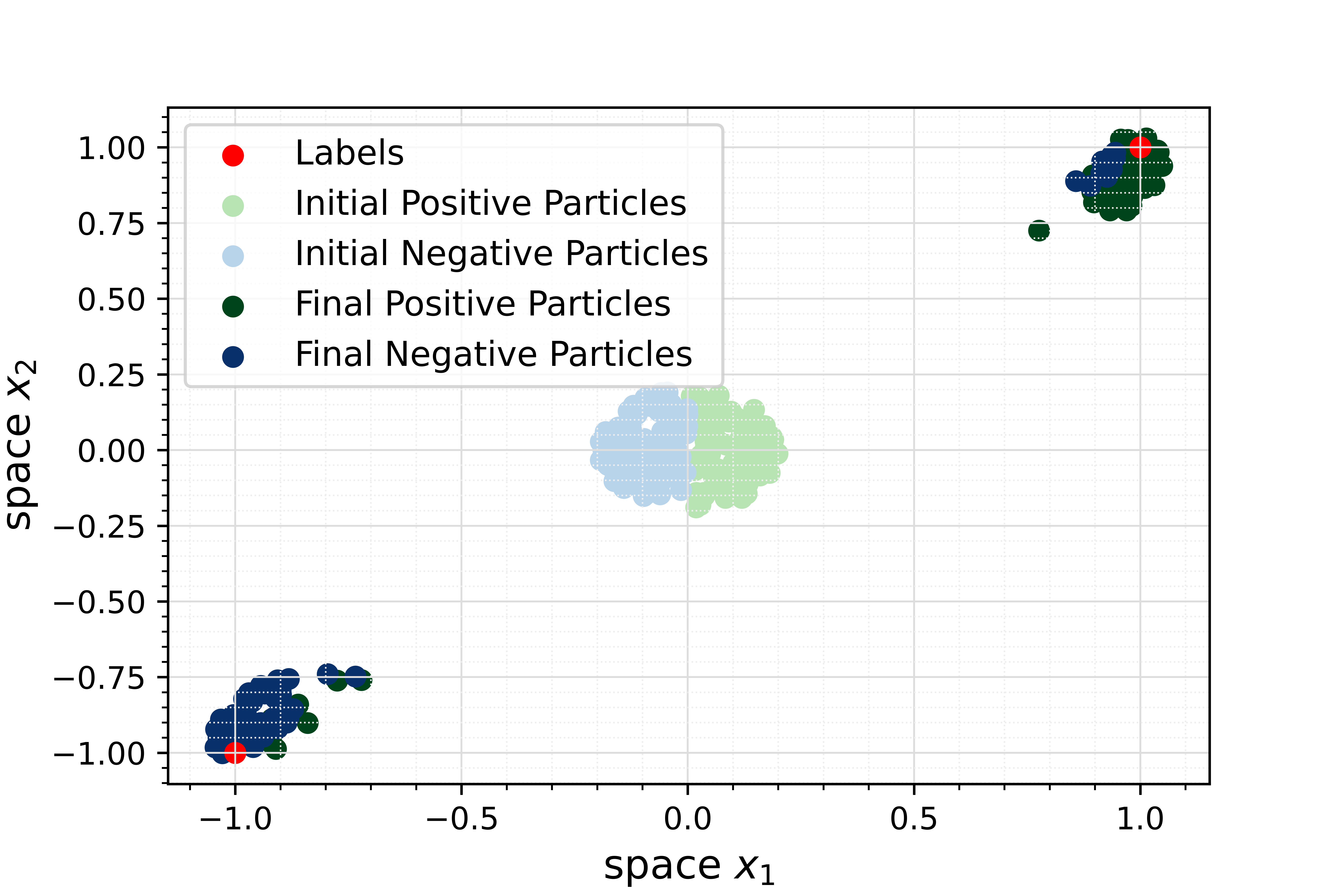}
\end{minipage}
\caption{Left: Evolution in time of the particles from the monodimensional initial unimodal distribution $\mu_0$ to $\mu_T$; Right: Plot of the initial bidimensional unimodal distribution $\mu_0$ and the final distribution $\mu_T$.}
\label{fig:unimodal_evolution}
\end{figure}

\paragraph{Comparing the resolution methods for the forward equation.}
For the monodimensional  example, it is easy to check how the various resolution methods described above perform  relative to each other in solving the forward equation. As already explained, the particle and Monte Carlo methods are more similar and based on a discrete-sampling description of the dynamics. The Monte Carlo method is more sensitive than  its particle counterpart, and needs many repetitions to  produce a result $\theta_t$  that is stable over shooting iterations. In the first row of Figure \ref{fig:resolutions_methods}, the evolution of the estimated distributions is plotted in the case of particle method, on the left, and Monte Carlo method, on the right. It is natural  to expect the Monte Carlo  scheme to be more diffusive, which stems from the high stochasticity of the algorithm. However, in both cases the final distribution is the one that we expect, i.e. both distributions are concentrated around the labels. The same happens in case of resolution with finite volume method, presented on the bottom left of Figure \ref{fig:resolutions_methods} . In this case the solution is not subject to the high stochasticity of the Monte Carlo method and hence it does not show as much diffusion, but it is not as smooth as the solution obtained with particle method. This is due to the fact that the time and space discretizations are correlated and can't be chosen freely, so a relatively big space discretization needs to be chosen to compare experiments with the same number of layers (i.e. time discretization). 
Moreover,  as illustrated by the plot on the bottom right of Figure \ref{fig:resolutions_methods}, the optimal control solution $\theta_t$ does not vary significantly  from an algorithm to the other. The solutions indicated in this graphics are the empirical expected values over multiple shooting iterations for every algorithm, and their standard deviation is also depicted around the lines representing the means. Clearly, the algorithm that has more variations in terms of shooting iterations is the Monte Carlo one, due to its stochasticity, while the particle method and the finite volume method are inherently sharper. 

Hence, in terms of computational speed and stability over iterations (especially in the more difficult case of unimodal initial distribution $\mu_0$), the particle method is the one that performs best, while being also the most suited one for a deep learning task, which in general implies very high-dimensional data.  That being said, the experiments  conducted using the Monte Carlo method  and the finite volume method do allow us to confirm numerically that the shooting method based on our mean-field optimality conditions converge on the space of probability measures as expected by the theory, independently of the number of particles. Indeed,  all the modelling parameters, and in particular the regularization constant $\lambda>0$, can be chosen independently of $N$. Moreover, the iteration does not need any batching of the data, as it is usually done in deep learning, that is, we can take a very large number of particles (as in the Monte Carlo scheme) or a small one (as in particle method), and in both cases our algorithm will return the optimal solution $\theta_t$ for every layer $t \in [0,T]$. 

\begin{figure}[ht]
\begin{minipage}[b]{0.5\linewidth} 
\includegraphics[scale = 0.47]{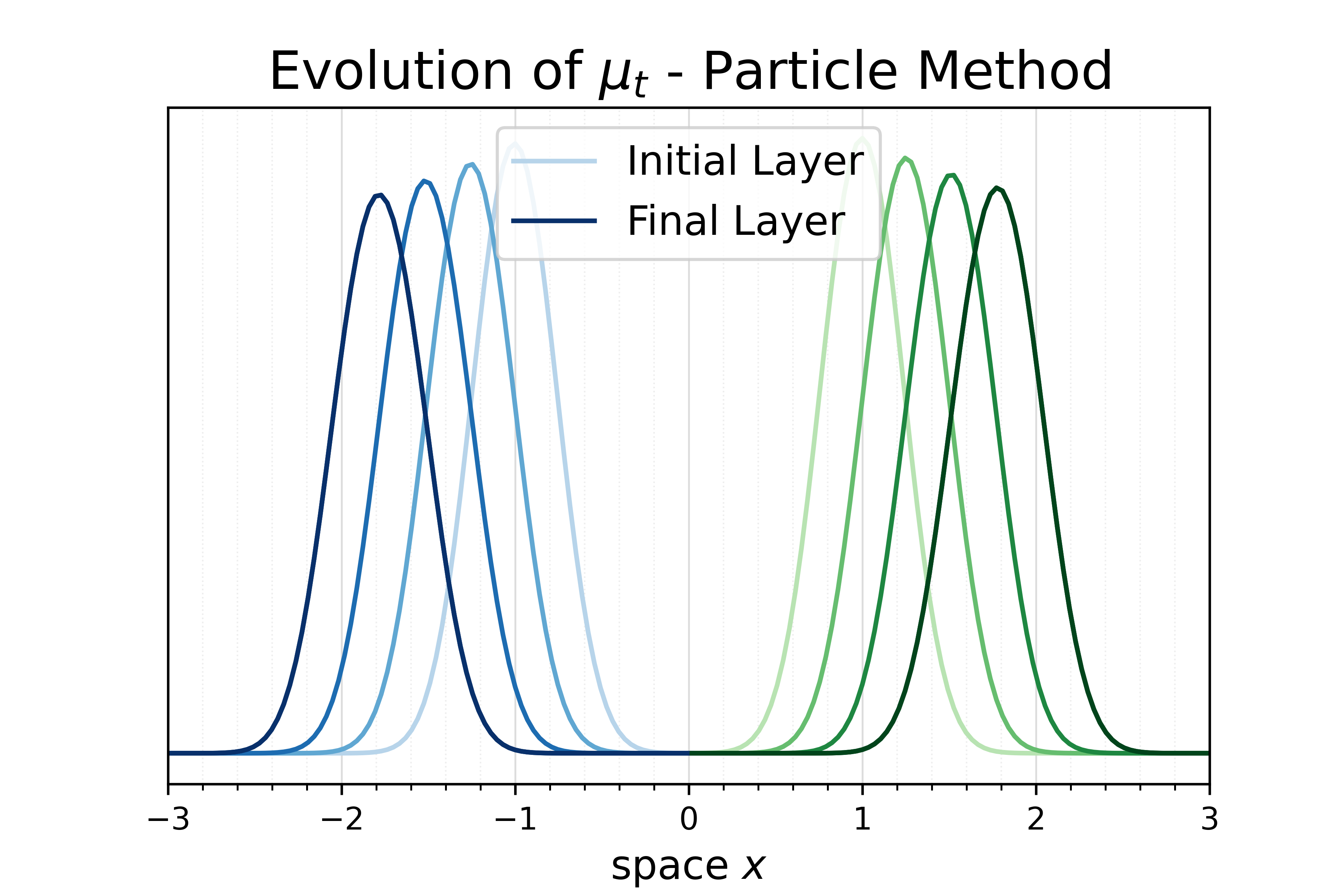}
\end{minipage}
\begin{minipage}[b]{0.5\linewidth} 
\includegraphics[scale = 0.47]{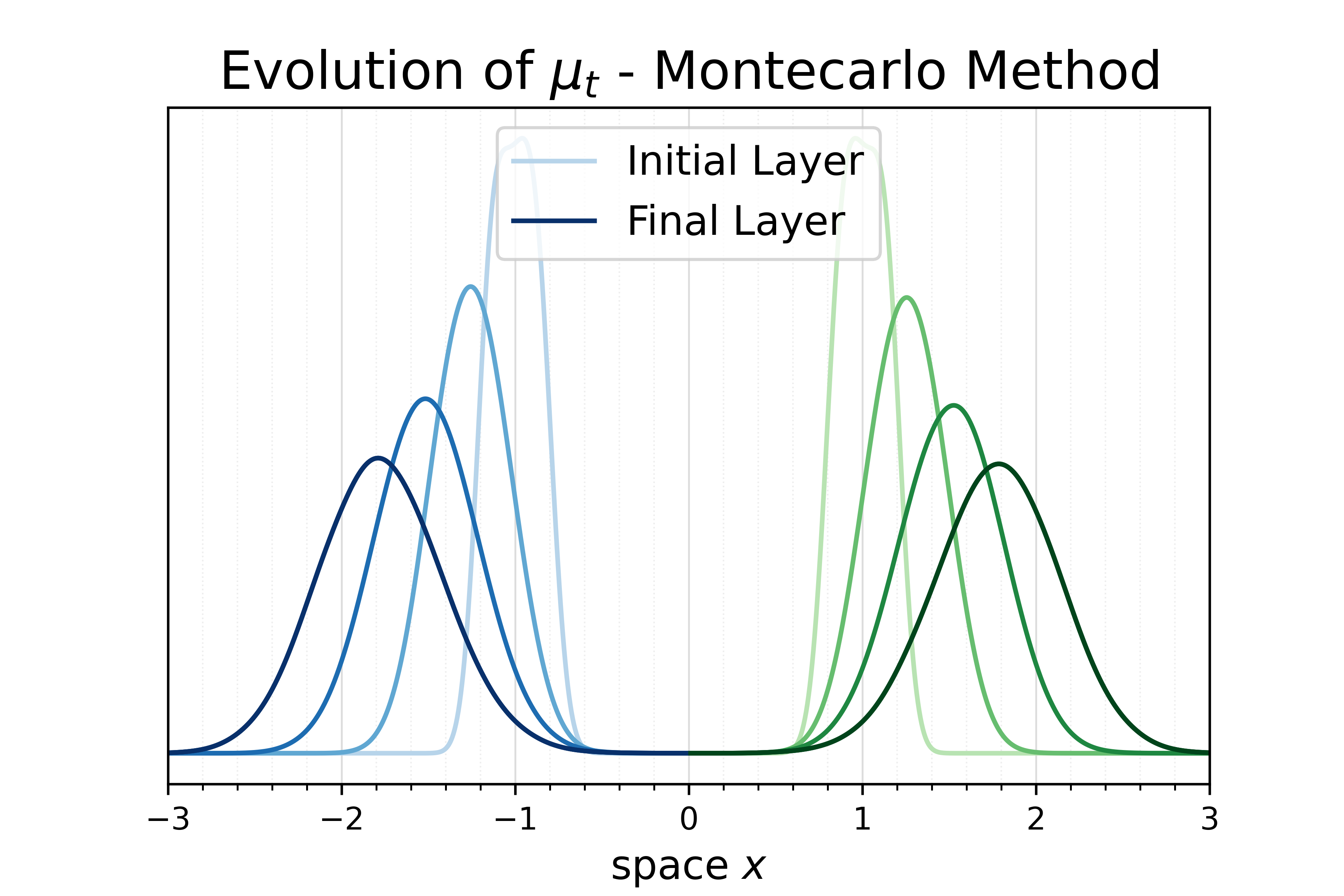}
\end{minipage}
\\
\begin{minipage}[b]{0.5\linewidth} 
\includegraphics[scale = 0.47]{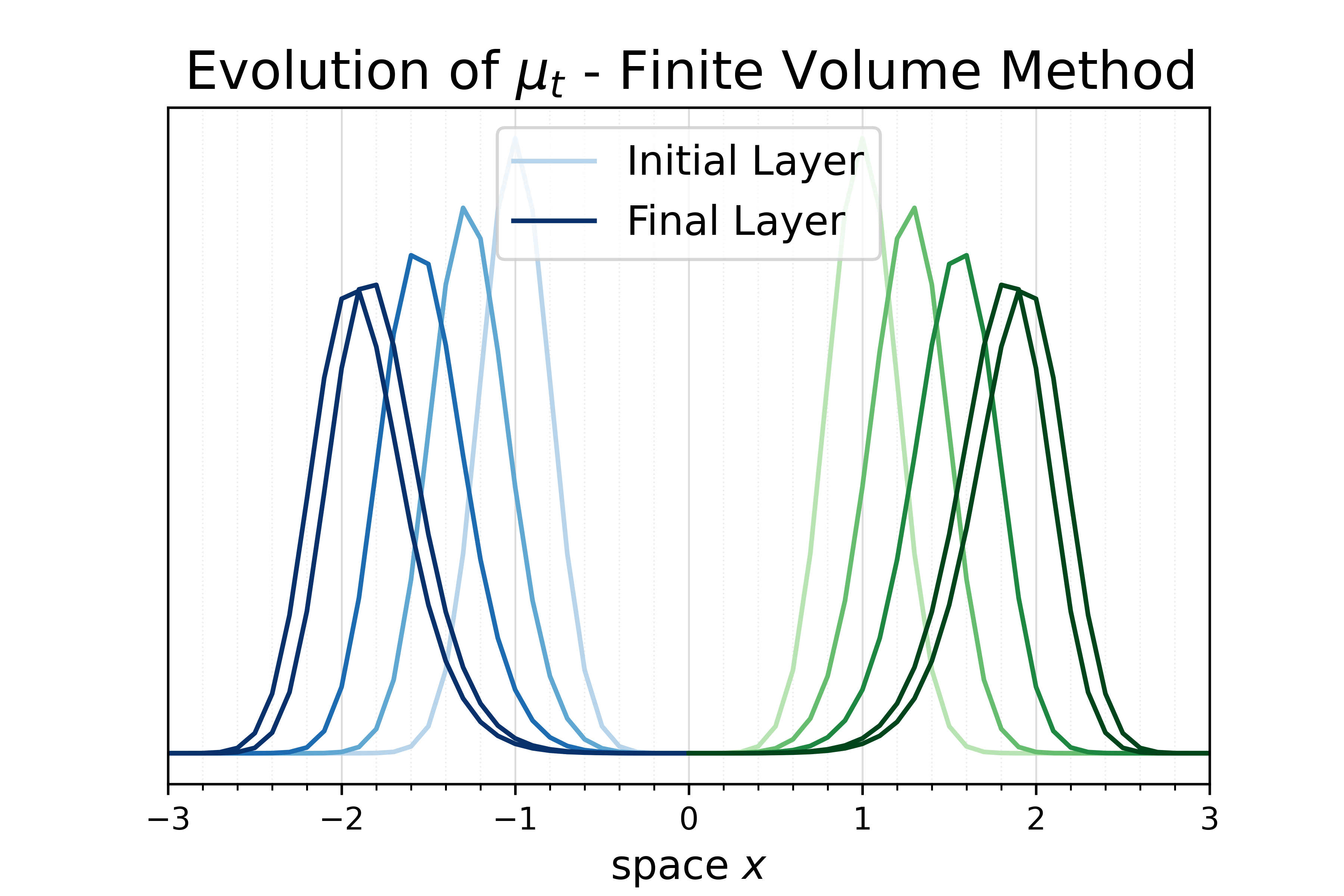}
\end{minipage}
\begin{minipage}[b]{0.5\linewidth} 
\includegraphics[scale = 0.47]{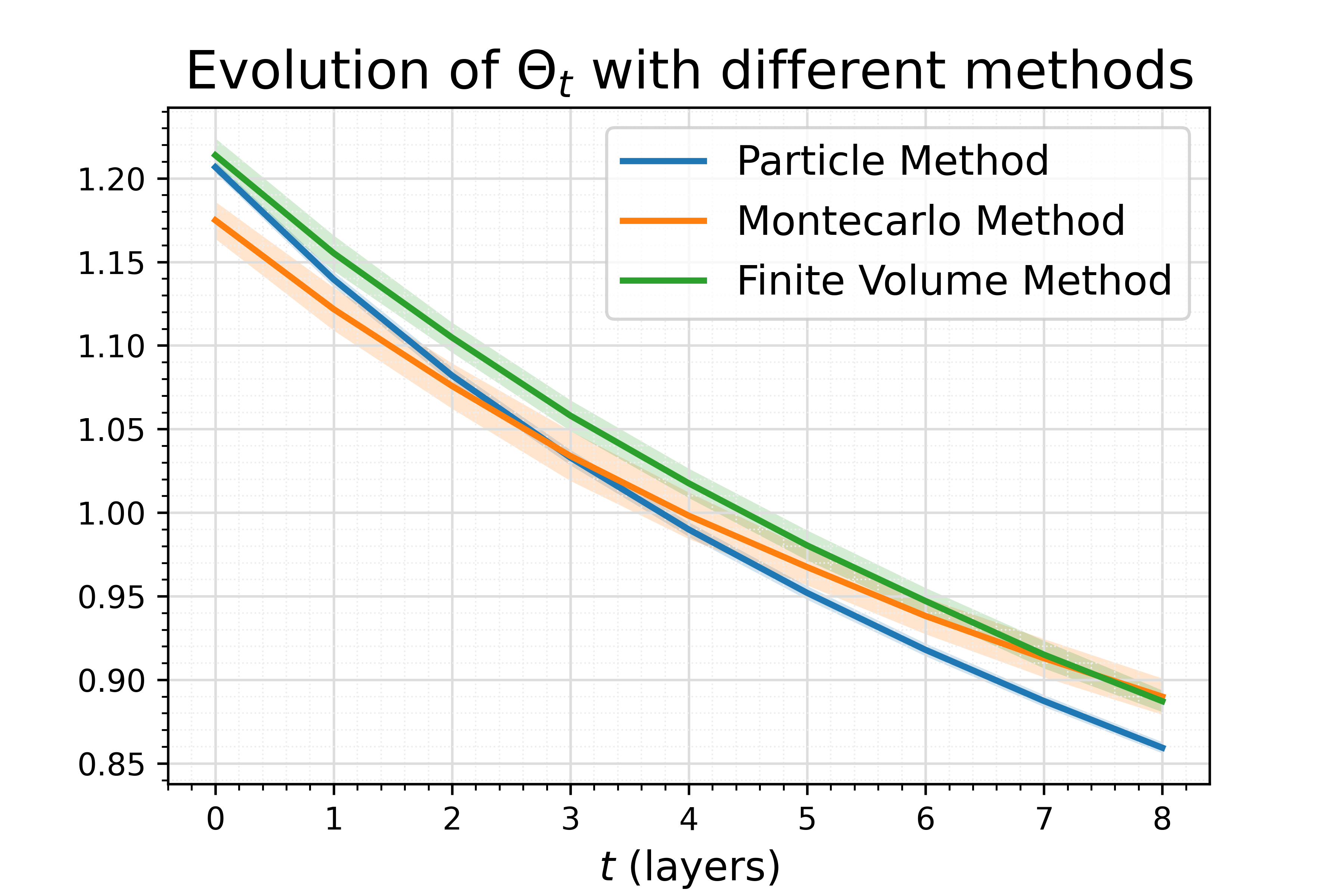}
\end{minipage}
\caption{Top Left: evolution of the estimated $\mu_t$ obtained with particle method; Top Right: evolution of the estimated $\mu_t$ obtained with Monte Carlo method; Bottom Left: evolution of the estimated $\mu_t$ obtained with finite volume method; Bottom Right: comparison of the optimal control $\theta_t$ obtained with the three different resolution methods of the forward.}
\label{fig:resolutions_methods}
\end{figure}

Let us now focus on the particle method and for that, analyze the statistical behavior of our algorithm.

\paragraph{Statistical behavior.}

The power of our mean-field maximum principle relies on the results presented in the previous paragraph regarding the independence of all parameters from the number of particles, but also on its ability to provide a strong quantitative generalization error \eqref{generr}. This means that, if we trained our network and obtained an optimal solution $\theta_t^*$, we have  the extra advantage of knowing through \eqref{generr} how well the latter will be able to perform on test data, i.e. when sampling new, unseen particles from $\mu_0$. Denoting by $J^N(\theta^*)$ the empirical error as in \eqref{PNCost}, the generalization error consists  in computing the same quantity, but with an empirical measure made by sampling new particles from $\mu_0$ that were not used for the training phase and, possibly, a significantly larger number of them. Similarly, we can define the accuracy as the empirical probability that the output of the network will be in a small ball around the corresponding label vectors, again for all the new particles that can be resampled from $\mu_0$. Figure \ref{fig:statistical} presents the expected {\it double descent curve} of the empirical and generalization error, and the corresponding increase of the accuracy. Since both generalization error and accuracy are measured on newly sampled test data,  we perform various samplings and calculations of these quantites and report in Figure \ref{fig:statistical} their mean values and their standard deviation in form of a ``cloud'' of the same color. The numerical results nicely confirm the theoretically predicted double descent phenomenon.
\begin{figure}[ht]
\centering
\includegraphics[scale = 0.47]{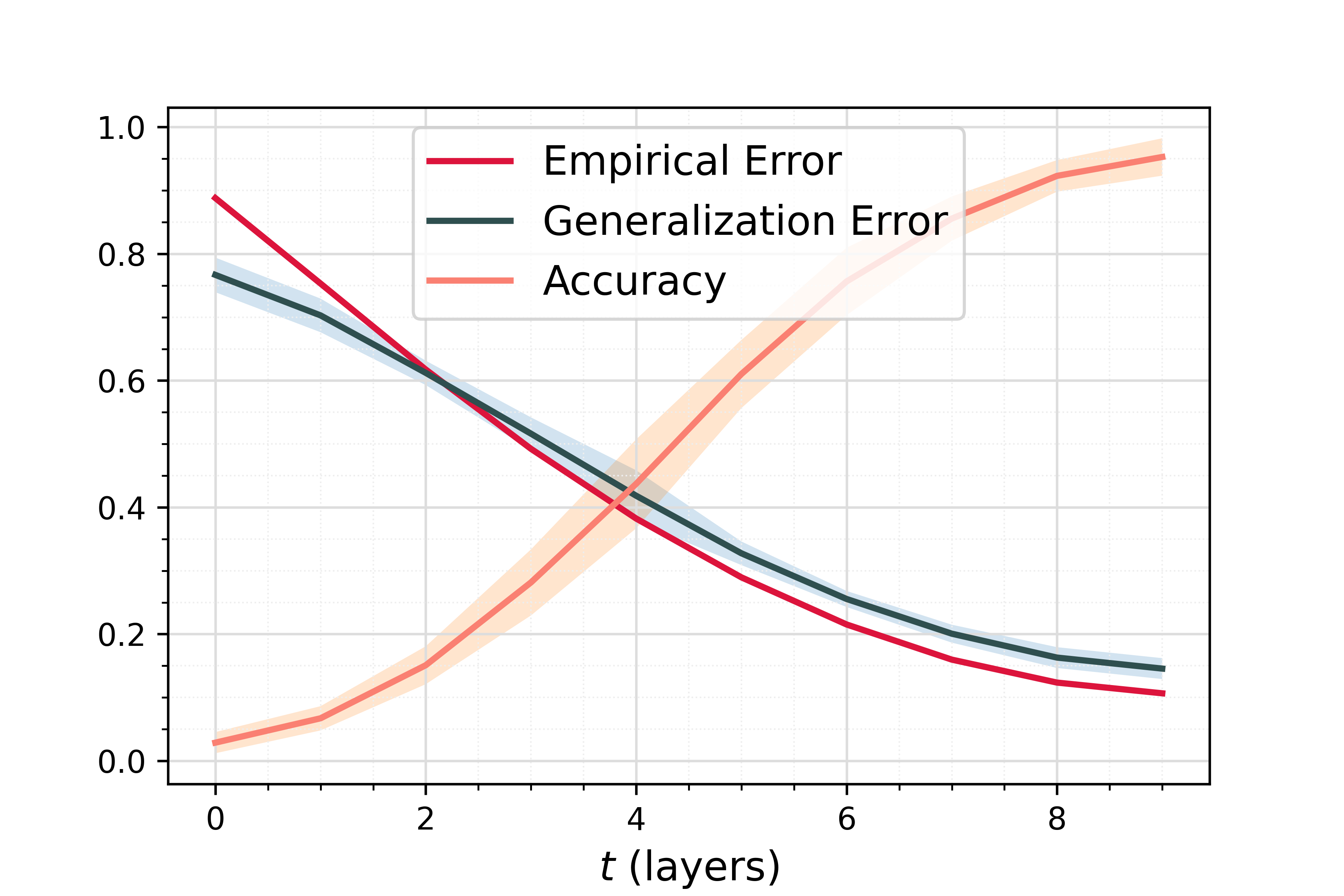}
\caption{Statistical behaviour of the algorithm resulting from the mean-field optimality conditions.}
\label{fig:statistical}
\end{figure}

Now that the resolution methods are clarified and the resulting algorithm is understood from a numerical and statistical perspective, we shift our focus towards expounding the influence of the parameters that are playing a role in our method, by first considering the number of outer iterations and then the interesting role of the regularization parameter $\lambda$ which acts as a learning rate. Finally, the necessary number of layers (i.e. the time discretization) may be examined in relation to the space discretization.

\paragraph{Contribution of the number of iterations of the shooting method.} 

In what follows, we test how many iterations of the shooting method are necessary to obtain a good result, starting first from the initial guess $\theta^0_t \equiv 0$, and then from the initial guess $\theta^0 \equiv 1$, which is closer to the optimal solution. In the case of zero initial guesses, our experiments show that after only one iteration of the shooting method, a reasonable result for $\theta$ is obtained, meaning that the parameter is constant in $t$ but manages to move the particles towards the location of the labels. At the second iteration of the shooting method, the newly learned parameter $\theta$ decreases in time and, after the third iteration, it remains stable to the values previously found, i.e. it converges to a control parameter that correctly moves the particles to the exact location of the labels. While in the case of initial guess close to the optimal solution, i.e., $\theta$ identically equal to one, already at the first iteration, the $\theta$ that is obtained decreases in time and stabilises to the appropriate values. Hence, for both cases, it is clear that it is not necessary to perform many iterations of the shooting method, even while starting from an initial guess $(\theta^0_t)_{t \in [0,T]}$ that is far away from the optimal solution. On the left of Figure \ref{fig:theta_over_time}, the $L^2$ distance between shooting method solutions, denoted by $\epsilon(k) = \big\| \theta^{k+1}_t- \theta^k_t \big\|_2$, is plotted for each $k= 0,...,$number of iterations, starting from different initial guesses $\theta^0_t$. It appears that independently of the initial guess, the distance between consecutive solutions goes to zero in a few iterations (which is also shown on the right of Figure \ref{fig:theta_over_time} where, after the second iteration, it becomes impossible to distinguish between consecutive solutions), with different velocities depending on the initial guess.

Moreover, it is interesting to notice that $\theta$ decreasing in time means that the particles at the beginning are moving faster in the direction of the labels and then when they are close enough, they slow down to precisely stop at the position of the corresponding label. The dynamics of the iterations is depicted in the plot on the right of Figure \ref{fig:theta_over_time}, in the one dimensional case where an initial bimodal Gaussian is fed to a network with layer forward map $\mathcal F(t, x, \theta_t) = \tanh( \theta_t x )$, and where the initial guess is $\theta^0 \equiv 1$.
\begin{figure}[ht]
\begin{minipage}[b]{0.46\linewidth} 
\includegraphics[scale = 0.47]{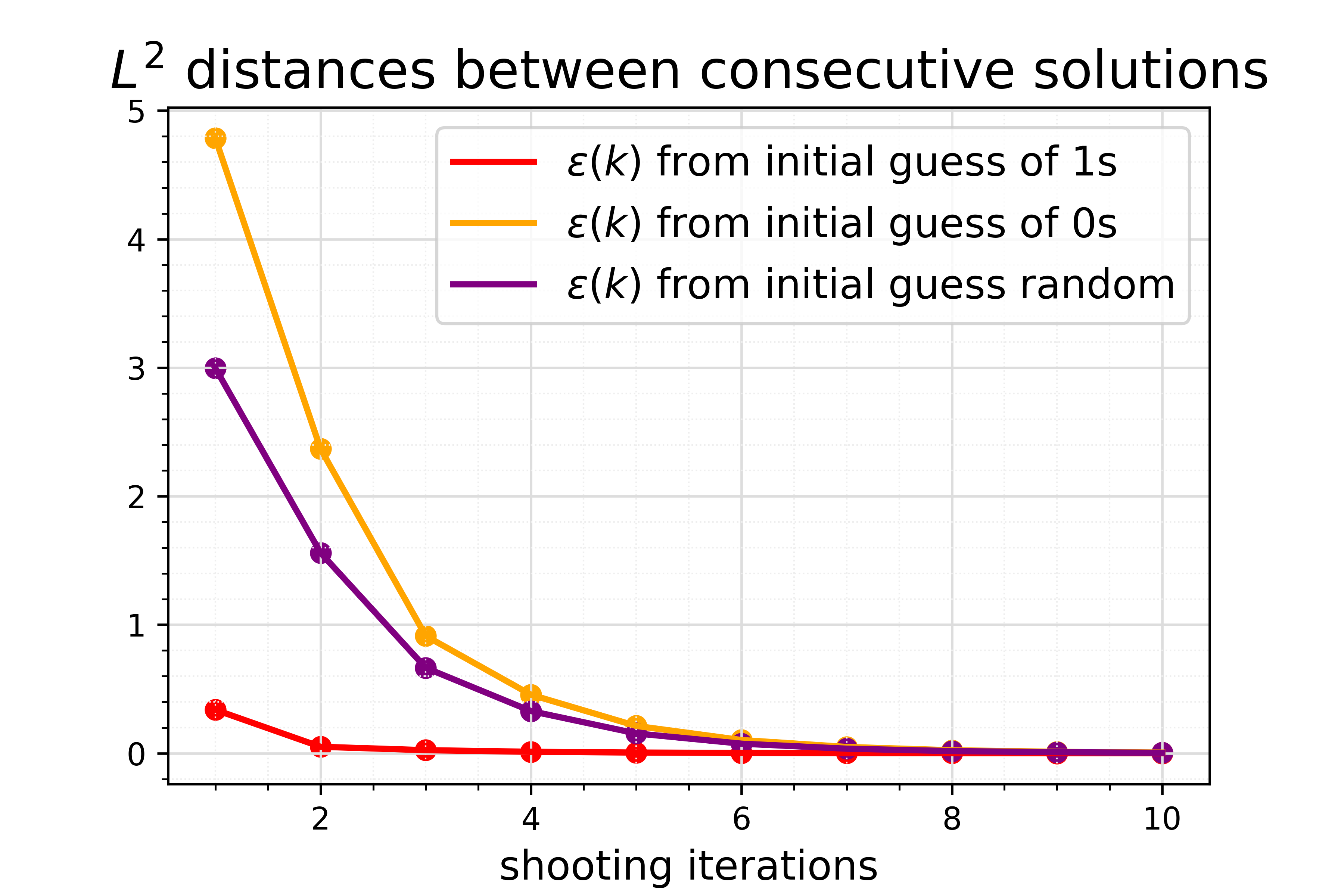}
\end{minipage}
\begin{minipage}[b]{0.46\linewidth} 
\includegraphics[scale = 0.465]{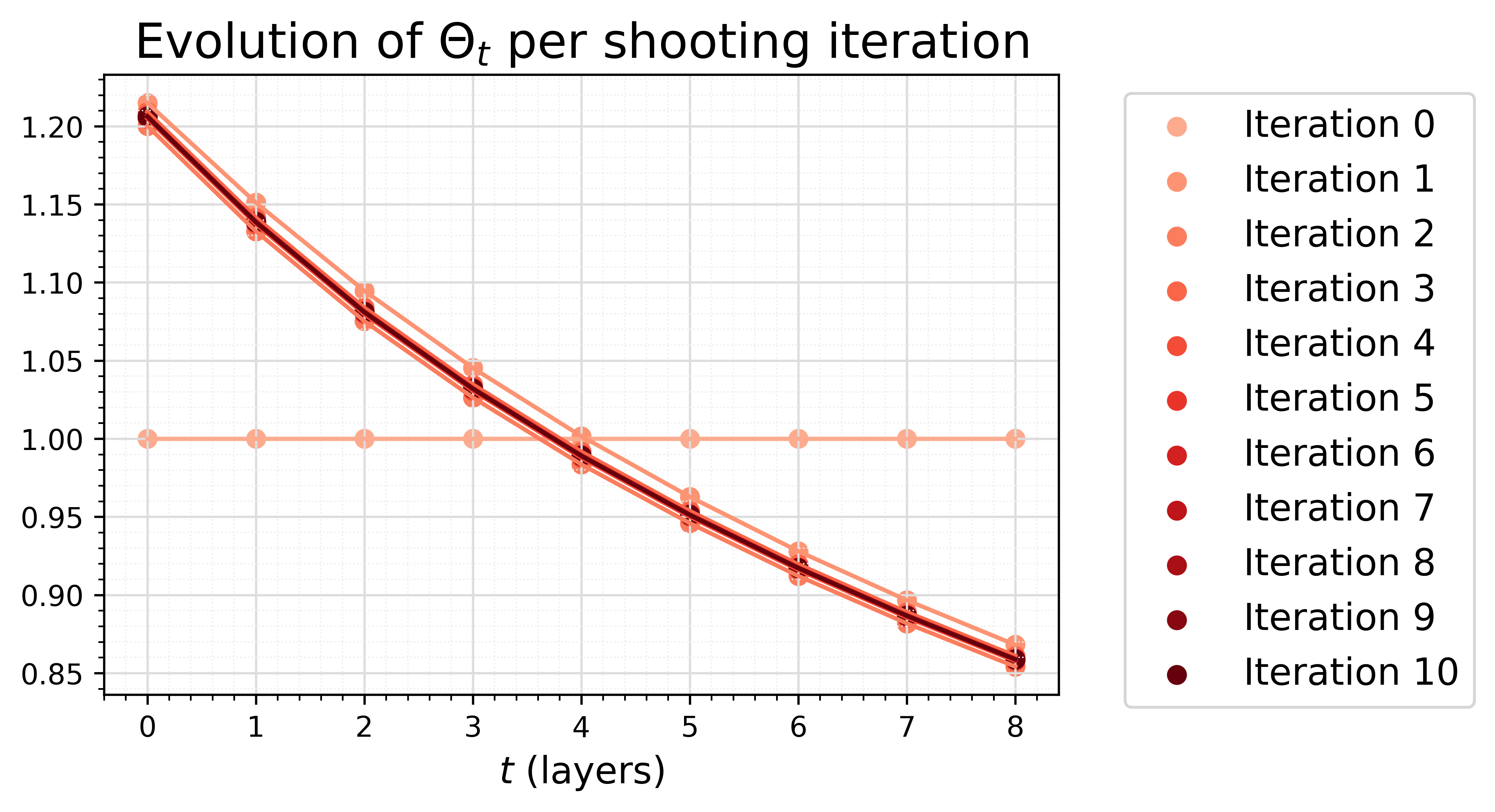}
\end{minipage}
\caption{Left: $L^2$ distance between successive solutions of the shooting method over the number of iterations, and starting from different initial guess, namely $\theta^0_t = 1$, $\theta^0_t = 0$, and $\theta^0_t = r_t$, $r_t \sim \mathcal U(0,1)$ for all $t$; Right: values of $\theta_t$ over time, starting from initial guess $\theta^0_t = 1$ for all $t$.}\label{fig:theta_over_time}
\end{figure}
\paragraph{On the effect of the regularization parameter $\lambda$.} 

A fundamental factor that has to be taken into consideration is that of the  impact of the regularization parameter $\lambda$, appearing in the fixed-point equation \eqref{eq3'} of the optimality conditions. The latter is a real positive number decided a priori, which determines the  competing influence of the regularization term in the loss function \eqref{Ecost}, and hence controls how large the $L^2$-norm of $\theta$ is allowed to be. In particular, since the layer forward map $\mathcal F$ depends on $\theta$, its norm highly influences the velocity flow of the particles in the forward equation. Hence, if the initial distribution $\mu_0$ of the particles is far away from the labels, $\lambda$ needs to be set to a small value -- e.g. smaller than $0.1$  in our examples --, to allow $\norm{\theta}_2$ to be large enough to reach the labels, otherwise the particles will not have enough speed to get to the correct location at time $T > 0$. However, always choosing a small $\lambda$ is not a good choice either, because that would destroy the convexity of the problem and lead, as we discuss below, to  numerical instabilities in the learning process. Indeed, our experiments show that small values of $\lambda$ may cause the mapping $f(\theta_t)$ defined in \eqref{update_discretization} to have many steep picks, which makes it impossible to use derivative-based methods such as Newton's algorithm to find its root. In case of exceedingly small $\lambda$, this can even lead to  functions $f(\theta_t)$ with multiple roots, which may cause the algorithm to lose stability and to oscillate between solutions, also reflecting the loss of convexity. That being said, this issue can be overcome at the price of increasing the total number of layers of the network, as evidenced by the discussion on the role of discretization parameters detailed hereinbelow.

\begin{figure}[ht]
\begin{minipage}[b]{0.5\linewidth} 
\includegraphics[scale = 0.47]{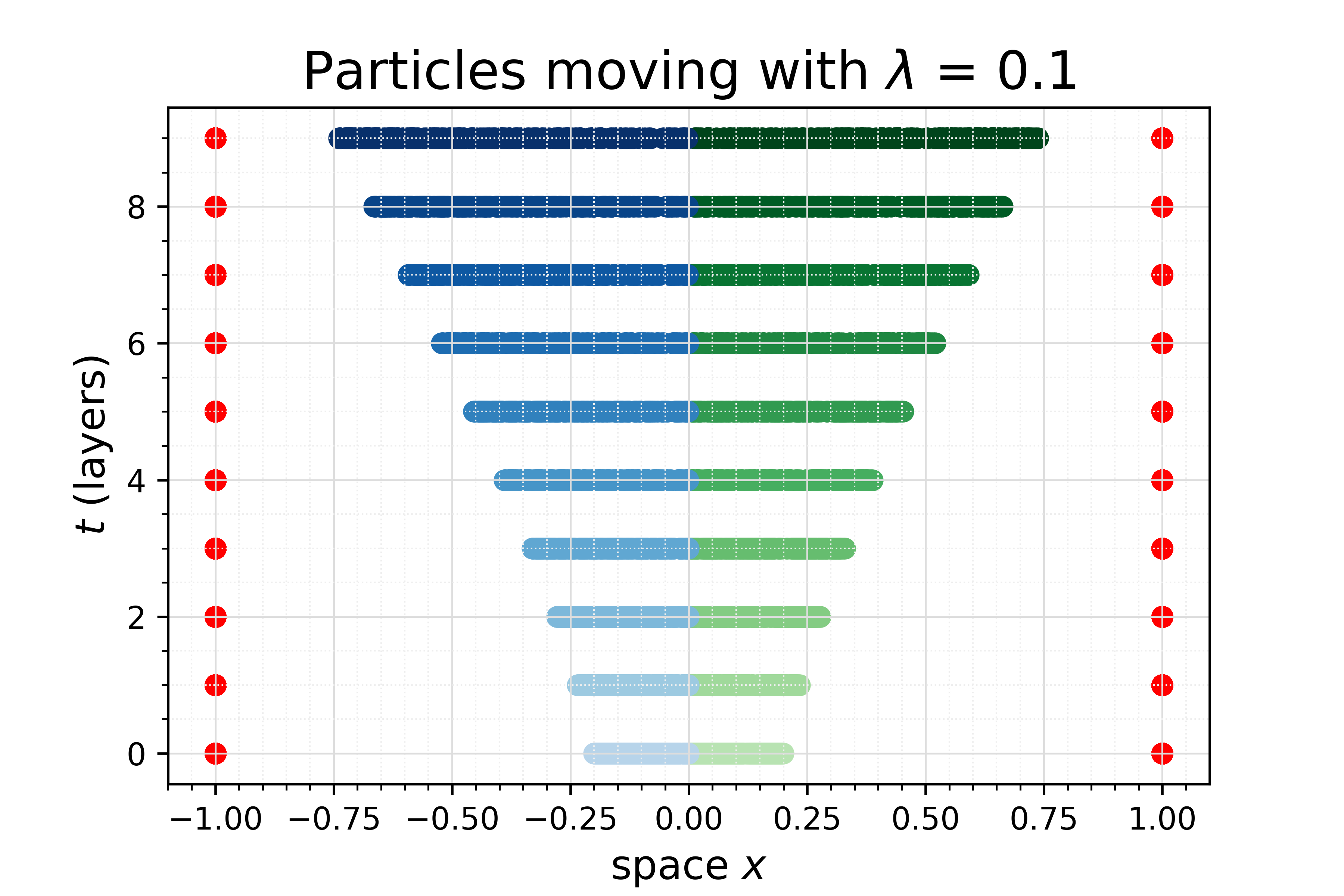}
\end{minipage}
\begin{minipage}[b]{0.5\linewidth} 
\includegraphics[scale = 0.47]{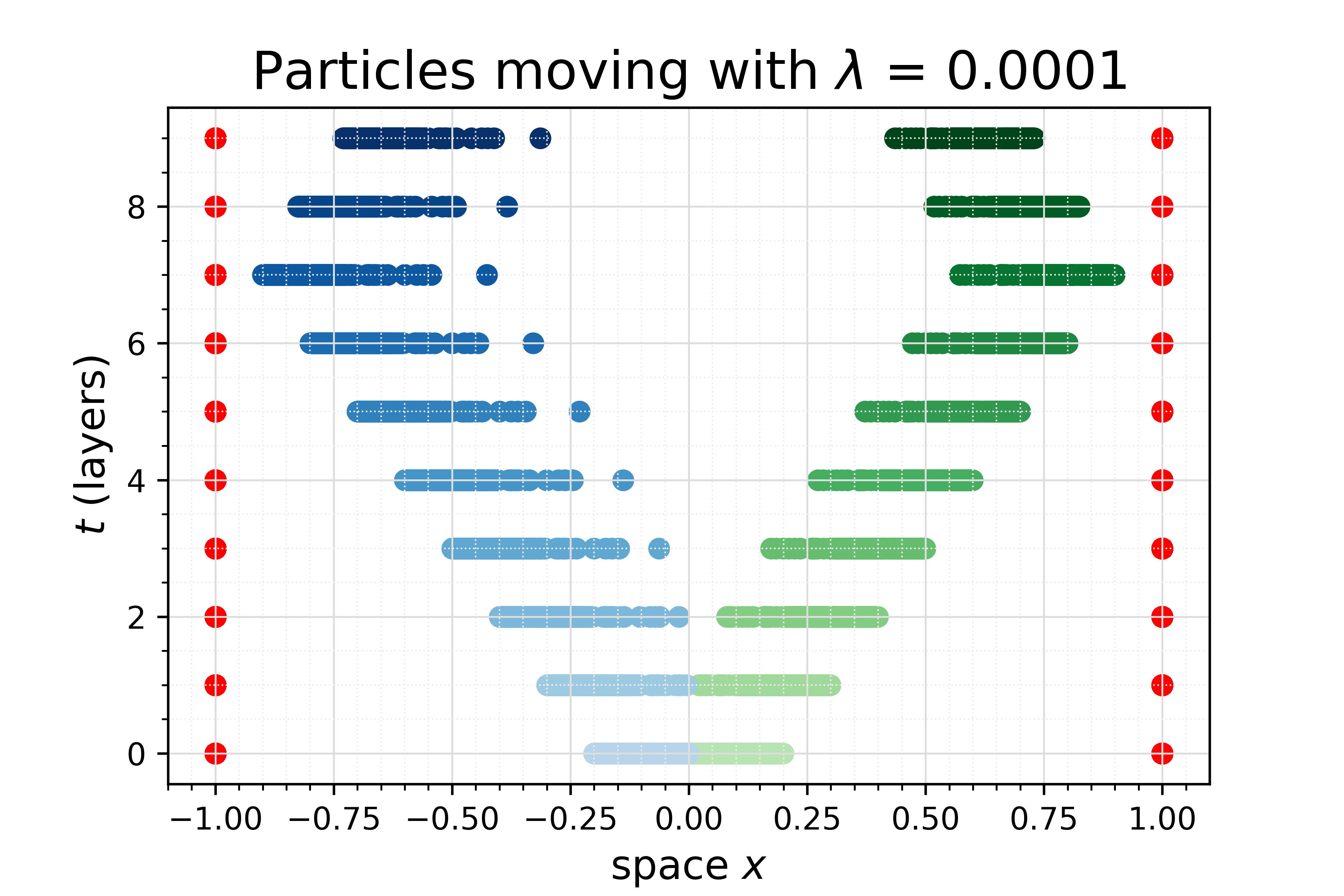}
\end{minipage}
\\
\begin{minipage}[b]{0.5\linewidth}
\hspace*{4cm}
\includegraphics[scale = 0.47]{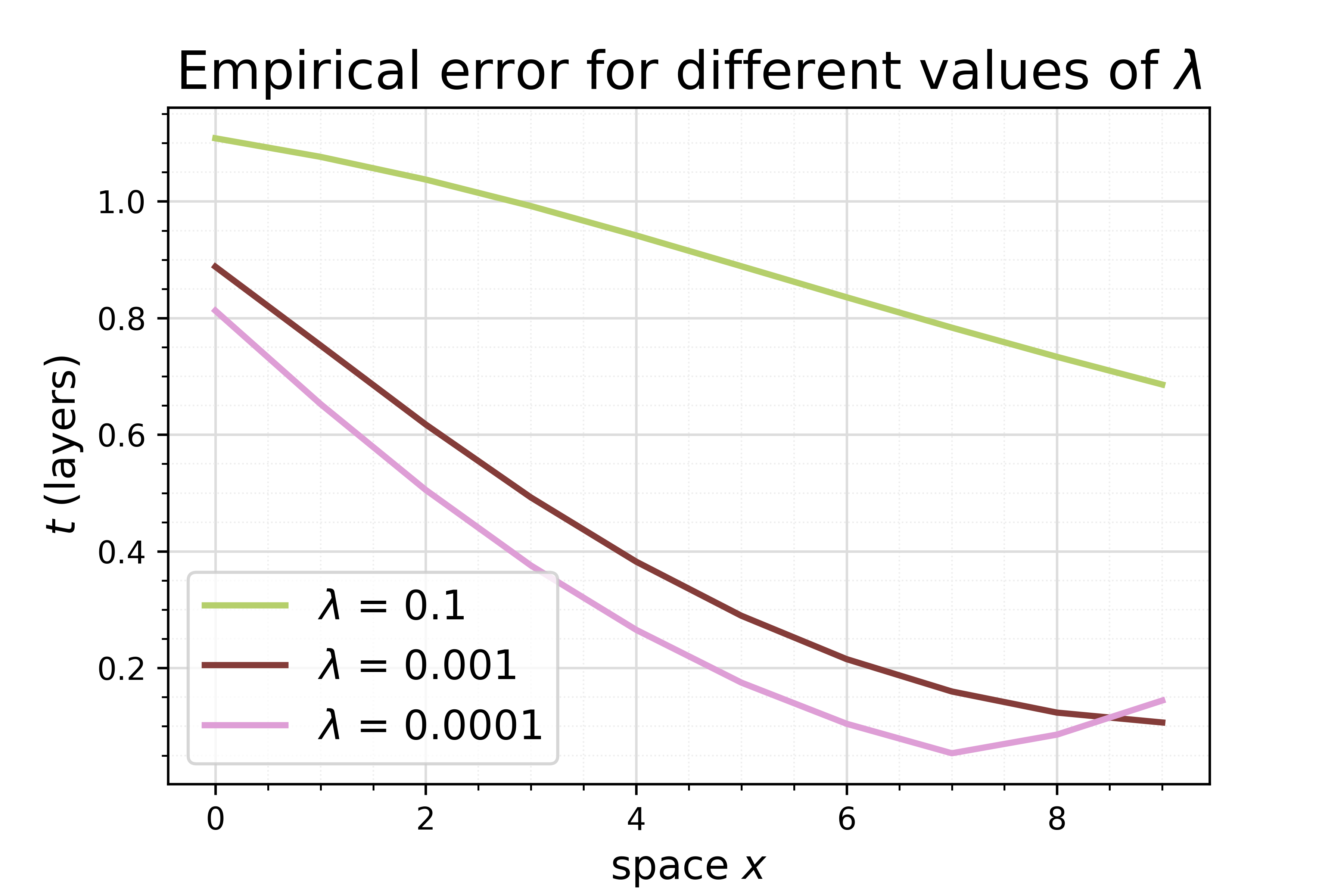}
\end{minipage}
\caption{Top Left: unimodal initial distribution, case with $\lambda=0.1$; Top Right: unimodal initial distribution, case with $\lambda=0.0001$ ; Bottom: Resulting empirical error for different values of the learning rate $\lambda$.}\label{fig:unimodal_problems}
\end{figure}

Let us now look at an instructive example, in which a unimodal monodimensional Gaussian centered  at the origin is fed to a neural network that has layer forward map without biases. In the left plot of Figure \ref{fig:unimodal_evolution}, $\lambda$ is set to $0.01$, leading to a correct solution. But in Figure \ref{fig:unimodal_problems}, we notice that if $\lambda$ is set to be too large, then $\norm{\theta}_2$ is not large enough to move the particles to the location of the labels and thus we obtain the behavior on the left of Figure \ref{fig:unimodal_problems} where the particles are moving in the correct direction but not fast enough to reach the label. On the contrary, if $\lambda$ is too small, the control $\theta_t$ obtained at every iteration of the shooting method leads to an unstable and oscillating behavior between the correct result and another solution, which is shown on the right of Figure \ref{fig:unimodal_problems}. In this case, the particles arrive too quickly to the labels, i.e., for $t < T$, due to the fact that small values of $\lambda$ allow for  large control magnitudes $\norm{\theta}_2$, which influences the velocity of the particles. At this point, the method should be able to learn a $\theta_{t+1}$ which stops the particles in order to remain at the position of the labels, but again the small value of $\lambda$ does not push easily $\theta_{t+1}$ to be zero and allows the norm of $\theta$ to remain large. As a result, the particles, instead of remaining in the location of the labels, start simply moving in the opposite direction. This behavior is not surprising as it is in accordance with Remark \eqref{remark_lambda}, for which $\lambda$ needs to be set to a large value, but the precise quantity that is needed depends on the initial distribution of $\mu_0$ and the domain $C_{\Gamma}$ in which the root can be found. Indeed, in the simpler case of a bimodal Gaussian initial distribution $\lambda$ does not have to be too small (recall that it was set to $0.1$ to produce the plot on the left of Figure \ref{fig:bimodal_evolution}), but in the more challenging case of a unimodal Gaussian initial distribution, its value has to be small enough to give the necessary velocity to the particles in order to let them split and reach the labels, e.g. $\lambda = 10^{-3}$ in the case on the left of Figure \ref{fig:unimodal_evolution}). Besides, these considerations still hold in case of activation function with biases. Indeed in this case, the parameter can be  split into two $\lambda = (\lambda_0, \lambda_1)$, set to different values in order to control separately the norm of $W$ and the one of $\tau$, which is fundamental when the Gaussian is centered in zero and the optimal $W$ should be greater than $1$, while the optimal $\tau$ should be zero.

\paragraph{Influence of the time and space discretization.} A first remark in connection with the role of $\lambda$ regards the number of layers of the neural network, hence the time discretization $\dt$ step. Figure \ref{fig:different_dt} shows an experiments in dimension 2: starting from the bimodal distribution and the same initial guess $\theta^0$, the shooting method is repeated $15$ times with $\lambda=0.1$ and $\dx=0.1$. The difference between the plots in Figure \ref{fig:different_dt} is that different numbers of layers are employed, i.e., $\dt = 0.2$ and $ \dt = 0.05$, respectively from left to right. Clearly, the case with $\dt=0.05$ is the one that works best, because if $\dt$ is too large, the particles do not have enough time to reach the labels (as in the case with $\dt=0.2$, i.e $5$ layers) or they reach them, but not completely (as in the case of $10$ layers, not depicted here). These issues can clearly be overcomed by using a smaller $\lambda$, but considering the difficulty in tuning $\lambda$, it is more convenient to increase the number of layers instead. This is consistent with the common technique in the deep learning community to increase the number of layers to obtain better results.

\begin{figure}[ht]
\begin{minipage}[b]{0.5\linewidth} 
\includegraphics[scale = 0.47]{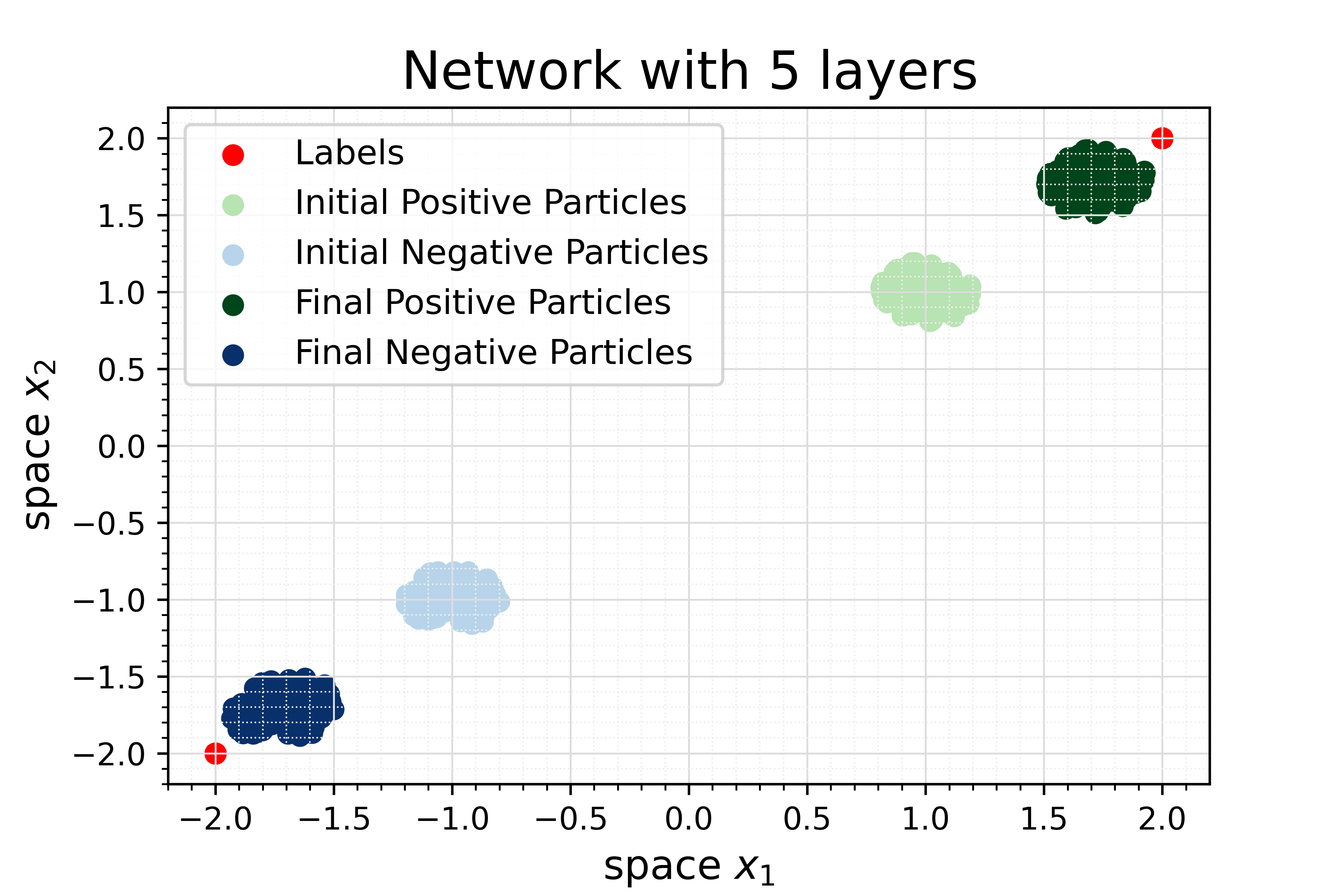}
\end{minipage}
\begin{minipage}[b]{0.5\linewidth} 
\includegraphics[scale = 0.47]{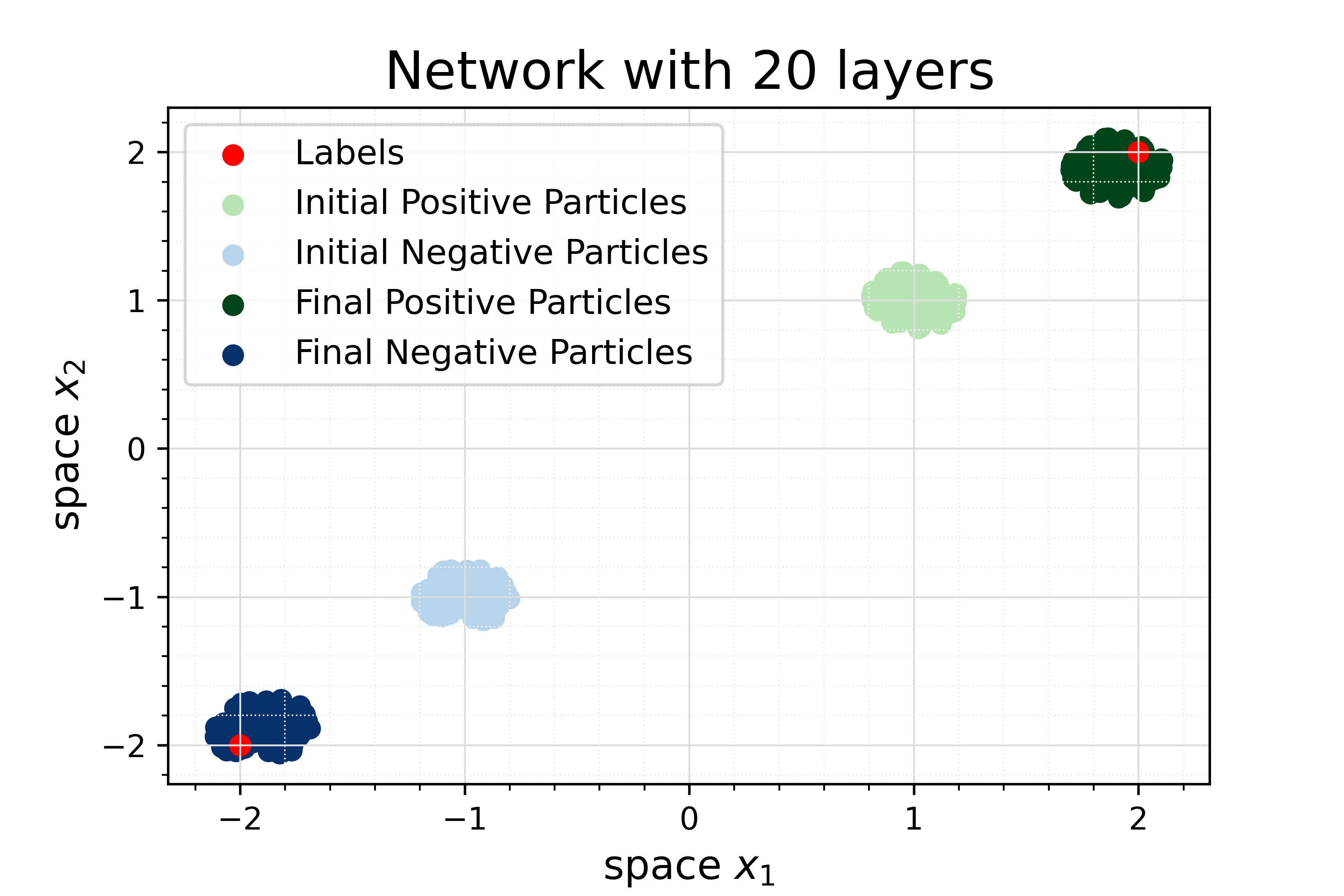}
\end{minipage}
\caption{Left: bimodal initial distribution in 2D with dt=0.2; Right: bimodal initial distribution in 2D with dt=0.05.}\label{fig:different_dt}
\end{figure}

Moreover, we need to keep in mind that the time step 
$\dt$ has to be chosen in accordance with the space step $\dx$ appearing in the backward equation as well, as the Courant number  needs to be kept below $1$ in order  for the CFL condition to be satisfied and to guarantee the convergence and stability of the numerical scheme. It is interesting to notice that in the case of unimodal distribution, increasing the space discretization to $\dx =\dy= 0.2$ is surprisingly beneficial. This is because the Courant number that needs to be set to a value between $0$ and $1$, but not too close to either of them, depends on the function $\mathcal F(t,x,\theta_t)$, and since all the particles $X^i_0$ are initially close to zero, this number tends to be too small. Hence, a better convergence rate is obtained when the space discretization is increased.

An implementation in Python of our algorithms, together with videos and code to reproduce our results, can be found at the following repository \textit{https://github.com/CristinaCipriani/Mean-fieldPMP-NeurODE-training}.


\addcontentsline{toc}{section}{Appendices}
\section*{Appendices}

In the following series of appendices, we recollect some auxiliary results appearing earlier in the paper, and detail the proofs of some intermediate steps in our previous arguments.

\setcounter{section}{0} 
\renewcommand{\thesubsection}{A}

\subsection{Well-posedness continuity equations and properties of characteristic flows}
\label{subsection:Flows}

\setcounter{thm}{0} \renewcommand{\thethm}{A.\arabic{thm}}
\setcounter{equation}{0} \renewcommand{\theequation}{A.\arabic{equation}}

\begin{proof}[Proof of Theorem \ref{thm:Wellposed}]
In what follows, we shall study qualitative properties of the ODEs
\begin{align}\label{ODE}
\frac{\rd X_t}{\dt}=\mathcal F(t,X_t,\theta_t) \qquad \text{and} \qquad \frac{\rd Y_t}{\dt}=0.
\end{align}
Since for any given $\theta\in L^2([0,T];\RR^{m})$ the velocity field $(t,x) \mapsto \mc{F}(t,x,\theta_t)$ satisfies the regularity and growth conditions of Assumption \ref{asum1}, it follows from standard results  that for any initial condition $(x_0,y_0)\in B(R)$, the above system has a unique solution $(X_t,Y_t)\in \Lip([0,T];\RR^{2d})$ on $[0,T]$. Moreover following e.g. \cite[Theorem A.2]{fornasier2014mean}, it holds that
\begin{equation}\label{support}
|X_t|\leq (R+C_{\MF}T)e^{C_{\MF}T} \quad \text{and} \qquad Y_t=y_0 \, ,
\end{equation}
for all $t \in [0,T]$. We consider the underlying characteristic flow between times $\tau,t\in[0,T]$, defined by 
\begin{equation}
\label{eq:Characteristics}
\BPhi_{(\tau,t)}^{\theta} : (x_{\tau},y_{\tau}) \in \R^{2d} \mapsto (X_t^{x_{\tau}},Y_t^{y_{\tau}}) \in \R^{2d},
\end{equation}
where $t \in [0,T] \mapsto (X^{x_0}_t,Y^{y_0}_t)$ is the unique solution of \eqref{ODE} starting from $(x_{\tau},y_{\tau}) \in \R^{2d}$ at time $\tau \in [0,T]$. Given an initial datum $\mu_0\in\mc{P}_c^a(\RR^{2d})$, we can use the characteristic flow to define the following curve of measures 
\begin{equation}
\mu_t:= \BPhi_{(0,t)}^{\theta} \sharp{\mu_0}\,,
\end{equation}
for all times $t \in [0,T]$, which equivalently means that
\begin{equation}
\int_{\RR^{2d}}\psi(t,x,y)\rd\mu_t(x,y)=\int_{\RR^{2d}} \psi(t,X_t^{x_0},Y_t^{y_0})\rd \mu_0(x_0,y_0)\,.
\end{equation}
for all $\psi\in \MC_b^1([0,T]\times \RR^{2d})$. It is well known that $\mu_t$ is a measure solution to the equation \eqref{PDE}. Indeed, using the change of variables formula for the push-forward measure, the chain rule, and once more the change of variables, one has
\begin{align}
\frac{\rd}{\dt}\int_{\RR^{2d}}\psi(t,x,y)\rd\mu_t(x,y)&=\int_{\RR^{2d}}\frac{\rd}{\dt}\psi(t,X_t^{x_0},Y_t^{y_0})\rd \mu_0(x_0,y_0)\nn\\
&=\int_{\RR^{2d}} \Big( \partial_t\psi(t,X_t^{x_0},Y_t^{y_0})+\nabla_x\psi(t,X_t^{x_0},Y_t^{y_0})\cdot \mathcal F(t,X_t^{x_0},\theta_t) \Big)\rd \mu_0(x_0,y_0)\nn\\
&=\int_{\RR^{2d}} \Big( \partial_t\psi(t,x,y)+\nabla_x\psi(t,x,y)\cdot \mathcal F(t,x,\theta_t) \Big)\rd\mu_t(x,y), 
\end{align}
and an integration with respect to the time variable leads to \eqref{eqweak}. Furthermore, it follows e.g. from \cite[Lemma 3.11]{canizo2011well} that for any $s,t\in[0,T]$, it holds
\begin{equation}
\label{Was}
W_1(\mu_t,\mu_s) = W_1 \big( \BPhi_{(0,t)}^{\theta}\sharp \mu_0,\BPhi_{(0,s)}^{\theta}\sharp\mu_0 \big) \leq \big\|\BPhi_{(0,t)}^{\theta} -\BPhi_{(0,s)}^{\theta}\big\|_{L^\infty(\supp(\mu_0))}\leq C|t-s|\,,
\end{equation}
due to the fact that
\begin{equation}
|\BPhi_{(0,t)}^{\theta}(x_0,y_0)-\BPhi_{(0,s)}^{\theta}(x_0,y_0)|=|(X_t^{x_0}-X_s^{x_0},0)|\leq C|t-s|,
\end{equation}
for all $(x_0,y_0) \in \supp(\mu_0)$, where $C$ depends only on $R$, $T$ and $C_{\MF}$. Thus, the curve $\mu$ is Lipschitz continuous with respect to $W_1$-metric, and it is such that $\supp(\mu_t) \in B(R_T)$ for all $t\in[0,T]$ as a consequence of \eqref{support}, where $R_T >0$ depends only on $R$, $T$ and $C_{\MF}$. 

Next we prove the stability estimate. For $i=1,2$, denote by $\mu^i$ be two measure solutions of \eqref{PDE} with initial data $\mu_0^i$. Introducing the notation $(X_t^i,Y_t^i) := \BPhi_{(0,t)}^{\theta}(x_0^i,y_0^i)$ for $t\in[0,T]$ and $(x^i_0,y^i_0) \in \supp(\mu^i_0)$, it holds that
\begin{align*}
\big| (X_t^1,Y_t^1)-(X_t^2,Y_t^2) \big| &= \left| \bigg( (x_0^1-x_0^2) + \int_0^t\mathcal F(s,X_s^1,\theta_s)-\mathcal F(s,X_s^2,\theta_s)\ds \, , \, y_0^1-y_0^2 \bigg) \right|\\
&\leq  |(x_0^1-x_0^2,y_0^1-y_0^2)|+\int_0^t\left|\mathcal F(s,X_s^1,\theta_s)-\mathcal F(s,X_s^2,\theta_s)\right|\ds\\
&\leq |(x_0^1,y_0^1)-(x_0^2,y_0^2)|+\int_0^tL_{\MF} (1+|\theta_s|) |X_s^1-X_s^2|\ds, 
\end{align*}
which by applying Gronwall's Lemma then leads to 
\begin{equation}
\label{Gron}
\big| (X_t^1,Y_t^1)-(X_t^2,Y_t^2) \big| \leq \big| (x_0^1,y_0^1)-(x_0^2,y_0^2) \big| e^{\int_0^tL_{\MF} (1+|\theta_s|) \ds}= |(x_0^1,y_0^1)-(x_0^2,y_0^2)|e^{L_{\MF,T,\| \theta \|_1}}, 
\end{equation}
for all times $t \in [0,T]$. This provides us with the following Lipschitz estimate  
\begin{equation}
|\BPhi_{(0,t)}^{\theta}(x_0^1,y_0^1) - \BPhi_{(0,t)}^{\theta}(x_0^2,y_0^2)|\leq  L_{\mc{T}}|(x_0^1,y_0^1)-(x_0^2,y_0^2)|,
\end{equation}
for all times $t \in [0,T]$, where $L_{\mc{T}}:=e^{L_{\MF,T,\| \theta \|_1}}$. Given an optimal transport plan $\pi_0$ between $\mu_0^1$ and $\mu_0^2$, one can check that the measure $\pi :=(\BPhi_{(0,t)}^{\theta} \times \BPhi_{(0,t)}^{\theta})\sharp \pi$ has marginals $\BPhi_{(0,t)}^{\theta} \sharp\mu_0^1$ and $\BPhi_{(0,t)}^{\theta} \sharp\mu_0^2$. Whence, it holds 
\begin{align*}
W_1 \Big( \BPhi_{(0,t)}^{\theta} \sharp\mu_0^1,\BPhi_{(0,t)}^{\theta} \sharp \mu_0^2 \Big) & \leq \int_{\RR^{2d}\times\RR^{2d}}|x-y|\rd \gamma(x,y) \\
& = \int_{\RR^{2d}\times\RR^{2d}} \big| \BPhi_{(0,t)}^{\theta}(x^1_0,y^1_0) - \BPhi_{(0,t)}^{\theta}(x^2_0,y^2_0) \big| \rd \pi(x^1_0,y^1_0,x^2_0,y^2_0) \\
& \leq L_{\mc T}\int_{\RR^{2d}\times\RR^{2d}} \big| (x^1_0,y^1_0) - (x^2_0,y^2_0) \big|\rd \pi(x^1_0,y^1_0,x^2_0,y^2_0) \\
& =L_{\mc T} \, W_1(\mu_0^1,\mu_0^2)\,,
\end{align*}
which leads to 
\begin{align}
W_1(\mu_t^1,\mu_t^2)=W_1(\BPhi_{(0,t)}^{\theta} \sharp \mu_0^1, \BPhi_{(0,t)}^{\theta} \sharp\mu_0^2)\leq L_{\mc{T}}W_1(\mu_0^1,\mu_0^2), 
\end{align}
for all times $t \in [0,T]$, and completes the proof of Theorem \ref{thm:Exist}.
\end{proof}

\begin{proof}[Proof of Proposition \ref{prop2}]
We shall use the standard characteristic method with backward propagation. For any terminal condition $(X_T,Y_T)=(x,y)\in B(R_T)$, we know thanks to  the classical Cauchy-Lipschitz theory that the ODEs
\begin{align}
\label{ODE1}
\frac{\rd X_t}{\dt}=\mathcal F(t,X_t,\theta_t) \qquad \text{and} \qquad \frac{\rd Y_t}{\dt}=0,
\end{align}
admit a unique solution $t \in [0,T] \mapsto (X_t,Y_t) := \BPhi_{(T,t)}^{\theta}(x,y) \in \R^{2d}$ which can be written explicitly as 
\begin{equation}
\label{eq:BackwardFlow}
\BPhi_{(T,t)}^{\theta}(x,y) = \bigg( x - \int_t^T\mathcal F(s,X_s,\theta_s)\ds \, , \, y \bigg). 
\end{equation}
for all $(x,y)\in B(R_T)$. Moreover, one has  that
\begin{equation*}
\big| \BPhi_{(T,t)}^{\theta}(x,y) \big| \leq (R_T+C_{\MF}T)e^{C_{\MF}T}+R_T
\end{equation*}
by Gronwall's inequality as in \eqref{support}, which equivalently means that $\BPhi_{(T,t)}^{\theta}(B(R_T)) \subset B(R_T)$ with $R_T' := R + (R+C_{\MF}T)e^{C_{\MF}T}$. Furthermore under Assumptions \ref{asum1} and \ref{asum2}, the functions $\BPhi_{(T,t)}^{\theta} :~\RR^{2d}\to\RR^{2d}$ are $\MC^2$ diffeomorphisms for any $t\in[0,T]$, and the application $(t,x,y)\mapsto \BPhi_{(T,t)}^{\theta}(x,y) \in \R^{2d}$ is locally Lipschitz. 

Building on these insights, we can construct solutions of \eqref{eq2} via the standard characteristic method, by setting
\begin{equation}
\psi^\theta(t,x,y):=\psi_T \big( \BPhi_{(T,t)}^{\theta}(x,y) \big),
\end{equation}
for all $(t,x,y)\in [0,T] \RR^{2d}$, where $\psi_T\in \MC_c^2(\RR^{2d})$ satisfies \eqref{tem}. This implies that in particular that 
\begin{equation*}
\psi^\theta \big( t,\BPhi_{(T,t)}^{\theta}(x,y) \big)=\psi_T(x,y), 
\end{equation*}
for all times $t \in [0,T]$, from whence we can deduce 
\begin{align*}
0 & =\tfrac{\rd}{\dt}\psi^\theta(t,X_t,\overline Y_t)\\
& =\partial_t\psi^\theta(t,\overline X_t,Y_t)+\nabla_x\psi^\theta(t,X_t,Y_t)\cdot \tfrac{\rd X_t}{\dt} \\ 
& =  \Big( \partial_t\psi^\theta + \nabla_x\psi^\theta\cdot \MF \Big) \big( t, \BPhi_{(0,t)}^{\theta}(x,y) \big).
\end{align*}
 for any $t \in [0,T)$ and $(x,y)\in \RR^{2d}$. Since $\supp(\psi_T)=B(R_T)$, one has that
\begin{equation*}
\supp(\psi^{\theta}(t)) = \BPhi_{(T,t)}^{\theta}(B(R_T))\subset B(R_T')
\end{equation*}
for all times $t\in[0,T]$. Thus, we have constructed a function $\psi^\theta(t,x,y) = \psi_T \big( \BPhi_{(T,t)}^{\theta}(x,y) \big)$  of class $\mc{C}^1([0,T];\MC_c^2(\RR^{2d}))$ satisfying \eqref{eq2}. 

At this stage by considering the analytical expression \eqref{eq:BackwardFlow}, it follows from arguments similar to those leading to \eqref{Gron} that 
\begin{equation*}
\Big| \BPhi_{(T,t)}^{\theta}(x_1,y_1)-\BPhi_{(T,t)}^{\theta}(x_2,y_2) \Big|\leq (|x_1-x_2|+|y_1-y_2|)e^{L_{\MF,T,C_{\Gamma}}T}, 
\end{equation*} 
which combined Assumption \ref{asum2}-$(i)$, according to \cite[Lemma 2.3]{ying2006phase} further implies that
\begin{equation}
\big\| \BPhi_{(T,t)}^{\theta} \big\|_{\MC^2(\BPhi_{(T,t)}^{\theta}(B(R_T)))} \leq C(R_T',T,C_{\Gamma},C_{\MF},L_{\MF,T,C_{\Gamma}})\,.
\end{equation}
Thus we have for all $t\in[0,T]$
\begin{equation}
\label{psi_t_norm}
\begin{aligned}
\big\| \psi_t^\theta \big\|_{\MC_c^2(\RR^{2d})} = \big\| \psi_t^\theta \big\|_{\MC^2(\BPhi_{(T,t)}^{\theta}(B(R_T)))} & = \big\| \psi_T(\BPhi_{(T,t)}^{\theta}) \big\|_{\MC^2( \BPhi_{(T,t)}^{\theta}(B(R_T)))} \\
& \leq  C\left( \big\| \BPhi_{(T,t)}^{\theta} \big\|_{\MC^2(\BPhi_{(T,t)}^{\theta}(B(R_T)))}\right)\|\psi_T\|_{\MC^2(B(R_T))}, 
\end{aligned}   
\end{equation}
which concludes the proof of \eqref{rad}. 
\end{proof}

We now end this first appendix section by detailing the proof of Lemma \ref{lem:AdjointFlow}.

\begin{proof}[Proof of Lemma \ref{lem:AdjointFlow}]
By construction of the semigroups $(\Phi_{(\tau,t)}^{\theta})_{\tau,t \in [0,T]}$, it holds for all $(t,x) \in [0,T] \times \R^d$ that
\begin{equation}
\label{eq:FlowEq1}
\Phi_{(t,T)}^{\theta} \circ \Phi_{(T,t)}^{\theta}(x) = x,
\end{equation}
where ``$\circ$'' stands for the standard composition operation between functions. Thus by differentiating with respect to $x \in \R^d$ in \eqref{eq:FlowEq1}, we obtain 
\begin{equation*}
\nabla_x \Phi_{(t,T)}^{\theta}(\Phi_{(T,t)}^{\theta}(x)) \nabla_x \Phi_{(T,t)}^{\theta}(x) = \operatorname{Id}, 
\end{equation*}
for every $y \in \R^d$. Thus, recalling that $\nabla_x \Phi_{(T,t)}^{\theta}(x)$ is invertible by construction, one further has
\begin{equation}
\label{eq:FlowEq2}
\nabla_x \Phi_{(t,T)}^{\theta} (\Phi_{(T,t)}^{\theta}(x)) = \nabla_x \Phi_{(T,t)}^{\theta}(x)^{-1}, 
\end{equation}
for every $(t,x) \in [0,T] \times \R^d$. Differentiating with respect to $t \in [0,T]$ in \eqref{eq:FlowEq2} while recalling the ODE characterization derived in \eqref{eq:linearizedFlow} for $t \in [0,T] \mapsto \nabla_x \Phi_{(T,t)}^{\theta}(x)$ then yields
\begin{equation*}
\begin{aligned}
\partial_t \Big( \nabla_x \Phi_{(t,T)}^{\theta} (\Phi_{(T,t)}^{\theta}(x)) \Big) & = - \nabla_x \Phi_{(T,t)}^{\theta}(x)^{-1} \partial_t \Big( \nabla_x \Phi_{(T,t)}^{\theta}(x) \Big) \nabla_x \Phi_{(T,t)}^{\theta}(x)^{-1} \\
& = -\nabla_x \Phi_{(T,t)}^{\theta}(x)^{-1} \nabla_x \Fcal \big(t,\Phi_{(T,t)}^{\theta}(x),\theta_t \big) \\
& = -\nabla_x \Phi_{(t,T)}^{\theta} (\Phi_{(T,t)}^{\theta}(x)) \nabla_x \Fcal \big(t,\Phi_{(T,t)}^{\theta}(x),\theta_t \big), 
\end{aligned}
\end{equation*}
where we used the classical characterization of the differential of the inverse mapping over matrices. Taking the transpose in the previous expression while using the fact that the process of adjoining a matrix is linear, we can conclude that 
\begin{equation*}
\left\{
\begin{aligned}
\partial_t \Big( \nabla_x \Phi_{(t,T)}^{\theta} (\Phi_{(T,t)}^{\theta}(x))^{\top} \Big) & = - \nabla_x \Fcal \big(t,\Phi_{(T,t)}^{\theta}(x),\theta_t \big)^{\top} \nabla_x \Phi_{(t,T)}^{\theta} (\Phi_{(T,t)}^{\theta}(x))^{\top}, \\
\nabla_x \Phi_{(T,T)}^{\theta} (\Phi_{(T,T)}^{\theta}(x))^{\top} & = \operatorname{Id}, 
\end{aligned}
\right.
\end{equation*}
which ends the proof of our claim. 
\end{proof}


\setcounter{section}{0} 
\renewcommand{\thesubsection}{B}

\subsection{Regularity of ODE flows with respect to the control variables}
\label{subsection:ControlFlow}

\setcounter{thm}{0} \renewcommand{\thethm}{B.\arabic{thm}}
\setcounter{equation}{0} \renewcommand{\theequation}{B.\arabic{equation}}

In this second Appendix section, we recollect somewhat elementary results concerning the regularity of the flows of diffeomorphisms $(\Phi^{\theta}_{(0,t)})_{t \in [0,T]} \subset \Ccal(\R^d,\R^d)$ defined in \eqref{eq:ThetaFlow} with respect to the control variable $\theta \in L^2([0,T],\R^m)$.

\begin{proposition}[Lipschitz and supremum bound for controlled flows]
\label{prop:BoundFlow}
For any given $T>0$, suppose that $\Fcal$ satisfies Assumptions \ref{asum1} and \ref{asum2}. Then for every $R > 0$ and any pair of control signals $\theta_1,\theta_2 \in L^2([0,T],\R^m)$, there exists a constant $C(T,R,\| \theta^1 \|) > 0$ such that 
\begin{equation*}
\sup_{t \in [0,T]} \big\| \Phi^{\theta^1}_{(0,t)} \big\|_{\Ccal(B(R))} \leq C(T,R,\| \theta^1 \|_1)
\end{equation*}
and
\begin{equation*}
\sup_{t \in [0,T]} \big\| \Phi^{\theta^1}_{(0,t)} -\Phi^{\theta^2}_{(0,t)} \big\|_{\Ccal(B(R))} \leq C(T,R,\| \theta^1 \|_1) \| \theta^1 - \theta^2 \|_2.
\end{equation*}
\end{proposition}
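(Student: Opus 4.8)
The plan is to prove both estimates by standard Grönwall arguments applied to the flow equation \eqref{eq:ThetaFlow}, exploiting the growth and Lipschitz conditions in Assumption \ref{asum1} and the control-regularity bounds in Assumption \ref{asum2}. I will treat the two bounds in order: first the a priori supremum bound, then the Lipschitz-in-control estimate.

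\textbf{Step 1: The uniform bound.} Fix $x \in B(R)$ and write $X_t := \Phi^{\theta^1}_{(0,t)}(x)$. From the integral form $X_t = x + \int_0^t \MF(s,X_s,\theta^1_s)\ds$ and the sublinear growth bound in Assumption \ref{asum1}-$(ii)$, namely $|\MF(s,X_s,\theta^1_s)| \leq C_{\MF}(1+|X_s|)$, I get $|X_t| \leq R + C_{\MF}T + C_{\MF}\int_0^t |X_s|\ds$. Grönwall's inequality then yields $|X_t| \leq (R+C_{\MF}T)e^{C_{\MF}T}$ for all $t \in [0,T]$, uniformly in $x \in B(R)$. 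This gives the first claimed bound with a constant depending only on $T$, $R$ and $C_{\MF}$ (and hence on $\|\theta^1\|_1$ only trivially). Note that this argument is essentially \eqref{support} in Appendix \ref{subsection:Flows}, so I would simply cite that estimate.

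\textbf{Step 2: The Lipschitz-in-control bound.} Set $X^i_t := \Phi^{\theta^i}_{(0,t)}(x)$ for $i=1,2$, with $X^1_0 = X^2_0 = x$. Subtracting the two integral equations and inserting a crossed term,
\begin{equation*}
|X^1_t - X^2_t| \leq \int_0^t \big| \MF(s,X^1_s,\theta^1_s) - \MF(s,X^2_s,\theta^1_s) \big|\ds + \int_0^t \big| \MF(s,X^2_s,\theta^1_s) - \MF(s,X^2_s,\theta^2_s) \big|\ds.
\end{equation*}
The first integrand is controlled by Assumption \ref{asum1}-$(iii)$, giving $L_{\MF}(1+|\theta^1_s|)|X^1_s - X^2_s|$. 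For the second integrand, since $X^2_s$ stays in a fixed ball $B(R_T)$ by Step 1, I use Assumption \ref{asum2}-$(ii)$ (the inequality \eqref{time}) to bound it by $C(d,m,R_T)|\theta^1_s - \theta^2_s|$. Applying Grönwall's inequality to $g(t) := |X^1_t - X^2_t|$ gives
\begin{equation*}
|X^1_t - X^2_t| \leq C(R_T) \Big( \int_0^T |\theta^1_s - \theta^2_s|\ds \Big) \exp\Big( \int_0^T L_{\MF}(1+|\theta^1_s|)\ds \Big) \leq C(T,R,\|\theta^1\|_1) \|\theta^1 - \theta^2\|_1,
\end{equation*}
and finally Cauchy--Schwarz converts $\|\theta^1-\theta^2\|_1 \leq T^{1/2}\|\theta^1-\theta^2\|_2$. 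Taking the supremum over $t \in [0,T]$ and over $x \in B(R)$ yields the second estimate.

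\textbf{Main obstacle.} The argument is entirely routine; there is no genuine difficulty. The only points requiring a little care are (i) making sure the ball $B(R_T)$ containing the trajectories $\Phi^{\theta^i}_{(0,t)}(B(R))$ is chosen before invoking \eqref{time}, so that the constant $C(d,m,R_T)$ is legitimately fixed, and (ii) bookkeeping the dependence of the exponential Grönwall factor on $\|\theta^1\|_1$ rather than on $\|\theta^1\|_2$ — which is exactly why the statement phrases the constant in terms of the $L^1$-norm of the control. Both are handled transparently by the estimates already established in Appendix \ref{subsection:Flows} (cf. \eqref{Gron}).
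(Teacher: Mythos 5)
Your argument is correct and is exactly the "standard application of Gr\"onwall's lemma" that the paper invokes without spelling out (the paper's proof of Proposition~\ref{prop:BoundFlow} is a one-liner referring to the quantitative regularity assumptions and Gr\"onwall). Your Step~1 reproduces \eqref{support}, and Step~2 is the same splitting into a Lipschitz-in-$x$ term controlled by Assumption~\ref{asum1}-$(iii)$ and a Lipschitz-in-$\theta$ term controlled by \eqref{time}, followed by Gr\"onwall and Cauchy--Schwarz, with the correct observation that the support radius $R_T$ is control-independent so \eqref{time} applies legitimately.
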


\begin{proof}
These estimates follows from our quantitative regularity assumptions together with a standard application of Gr\"onwall's lemma.  
\end{proof}

\begin{proposition}[Regularity of the flow with respect to the control variable] 
\label{prop:DiffFlow}
For any given $T>0$, suppose that $\Fcal$ satisfies Assumptions \ref{asum1} and \ref{asum2}. Then for every $\theta,\vartheta \in L^2([0,T],\R^m)$, the following Taylor expansion 
\begin{equation}
\label{eq:GDiffFlow}
\Phi^{\theta + \varepsilon \vartheta}_{(0,t)}(x) = \Phi^{\theta}_{(0,t)}(x) + \varepsilon \INTSeg{\Rcal^{\theta}_{(s,t)}(x) \nabla_{\theta} \Fcal \big(t,\Phi_{(0,s)}^{\theta}(x),\theta_s \big) \vartheta_s}{s}{0}{t} + o_{\theta}(\varepsilon), 
\end{equation}
holds in $\Ccal([0,T] \times B(R),\R^{2d})$, where for each $(\tau,x) \in [0,T] \times \in \R^d$ the resolvent map $t \in [0,T] \mapsto \Rcal_{(\tau,t)}^{\theta}(\cdot) \in \Ccal^1(\R^d;\R^{d \times d})$ is the unique solution of the linearized Cauchy problem
\begin{equation}
\label{eq:Resolvent}
\left\{
\begin{aligned}
\partial_t \Rcal_{(\tau,t)}^{\theta}(x) & = \nabla_x \Fcal \big( t , \Phi_{(0,t)}^{\theta}(x) , \theta_t \big) \Rcal_{(\tau,t)}^{\theta}(x), \\
\Rcal_{(\tau,\tau)}^{\theta}(x) &  = \operatorname{Id}.
\end{aligned}
\right.
\end{equation}
Moreover, for any $\theta^1,\theta^2 \in L^2([0,T],\R^m)$, there exists a constant $C'(T,R,\| \theta^1 \|_1) > 0$ such that
\begin{equation}
\label{eq:ResolventEst1}
\sup_{t \in [0,T]} \big\| \Rcal^{\theta^1}_{(0,t)} \big\|_{\Ccal(B(R),\R^{d \times d})} \leq C'(T,R,\| \theta^1 \|_1)
\end{equation}
and 
\begin{equation}
\label{eq:ResolventEst2}
\sup_{t \in [0,T]} \big\| \Rcal^{\theta^1}_{(0,t)} - \Rcal^{\theta^2}_{(0,t)} \big\|_{\Ccal(B(R),\R^{d \times d})} \leq C'(T,R,\| \theta^1 \|_1) \| \theta^1 - \theta^2 \|_2.
\end{equation}
In particular, the map $\theta \in L^2([0,T],\R^m) \mapsto \Phi^{\theta} \in C^0([0,T] \times B(R),\R^{2d})$ is Fr\'echet-differentiable. 
\end{proposition}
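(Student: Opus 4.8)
The plan is to establish the Taylor expansion \eqref{eq:GDiffFlow}, the a priori estimates \eqref{eq:ResolventEst1}--\eqref{eq:ResolventEst2}, and then deduce Fréchet-differentiability, all via Grönwall-type arguments applied to integral equations. First I would record that, by Assumption \ref{asum1} and Theorem \ref{thm:Wellposed} (or rather its ODE counterpart \eqref{support}), the flows $\Phi^\theta_{(0,t)}(x)$ remain in a fixed ball $B(R_T)$ for all $x\in B(R)$ and $t\in[0,T]$, so that all the quantities $\nabla_x\Fcal(t,\Phi^\theta_{(0,t)}(x),\theta_t)$, $\nabla_\theta\Fcal(t,\Phi^\theta_{(0,t)}(x),\theta_t)$, $\nabla_\theta^2\Fcal$ etc. are controlled by the constants of Assumptions \ref{asum1}-$(iv)$ and \ref{asum2}. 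With this in hand, the resolvent $t\mapsto\Rcal^\theta_{(\tau,t)}(x)$ of \eqref{eq:Resolvent} is well-defined by Carathéodory theory since $t\mapsto\nabla_x\Fcal(t,\Phi^\theta_{(0,t)}(x),\theta_t)$ is in $L^1([0,T];\R^{d\times d})$ with $L^1$-norm bounded by $L_{\Fcal,T,\|\theta\|_1}$, and an immediate Grönwall estimate on the integral form $\Rcal^\theta_{(0,t)}(x)=\operatorname{Id}+\int_0^t\nabla_x\Fcal(\cdots)\Rcal^\theta_{(0,s)}(x)\,ds$ gives the bound \eqref{eq:ResolventEst1} with $C'$ of the form $e^{L_{\Fcal,T,\|\theta^1\|_1}}$.

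Next I would prove the stability estimate \eqref{eq:ResolventEst2}: write the integral equation for the difference $\Rcal^{\theta^1}_{(0,t)}(x)-\Rcal^{\theta^2}_{(0,t)}(x)$, split the integrand using the identity
$$\nabla_x\Fcal(t,\Phi^{\theta^1}_{(0,t)}(x),\theta^1_t)\Rcal^{\theta^1}_{(0,s)}-\nabla_x\Fcal(t,\Phi^{\theta^2}_{(0,t)}(x),\theta^2_t)\Rcal^{\theta^2}_{(0,s)},$$
add and subtract crossed terms, and bound the three resulting pieces using (i) the Lipschitz dependence of $\Phi^{\theta}_{(0,t)}$ on $\theta$ from Proposition \ref{prop:BoundFlow}, (ii) Assumption \ref{asum2}-$(i)$ and \eqref{time} for the Lipschitzianity of $\nabla_x\Fcal$ in its space and time-through-$\theta$ arguments, and (iii) the already-established bound \eqref{eq:ResolventEst1}. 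This yields an inequality of the form $\|\Rcal^{\theta^1}_{(0,t)}(x)-\Rcal^{\theta^2}_{(0,t)}(x)\|\le C\|\theta^1-\theta^2\|_2 + C\int_0^t\|\Rcal^{\theta^1}_{(0,s)}(x)-\Rcal^{\theta^2}_{(0,s)}(x)\|\,ds$, and Grönwall closes it. The analogous one-sided estimates of Proposition \ref{prop:BoundFlow} are obtained the same way and I would simply invoke them.

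For the expansion \eqref{eq:GDiffFlow}, I would set $e_\varepsilon(t,x):=\Phi^{\theta+\varepsilon\vartheta}_{(0,t)}(x)-\Phi^{\theta}_{(0,t)}(x)-\varepsilon\int_0^t\Rcal^\theta_{(s,t)}(x)\nabla_\theta\Fcal(s,\Phi^\theta_{(0,s)}(x),\theta_s)\vartheta_s\,ds$, differentiate in $t$, and use the defining ODEs for both flows together with the ODE \eqref{eq:Resolvent} for the resolvent and the semigroup identity $\Rcal^\theta_{(s,t)}=\Rcal^\theta_{(0,t)}(\Rcal^\theta_{(0,s)})^{-1}$. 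A first-order Taylor expansion of $\Fcal$ in the pair $(x,\theta)$ — legitimate since $\Fcal$ is $C^2$ in $\theta$ by Assumption \ref{asum1}-$(iv)$ and $C^2$ in $x$ by Assumption \ref{asum2}-$(i)$ — produces a remainder that is $o(\varepsilon)$ uniformly in $x\in B(R)$, using that $\sup_t|\Phi^{\theta+\varepsilon\vartheta}_{(0,t)}(x)-\Phi^{\theta}_{(0,t)}(x)|=O(\varepsilon\|\vartheta\|_2)$ from Proposition \ref{prop:BoundFlow}; Grönwall then gives $\sup_{t,x}|e_\varepsilon(t,x)|=o(\varepsilon)$. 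Finally, Fréchet-differentiability of $\theta\mapsto\Phi^\theta$ follows because \eqref{eq:GDiffFlow} exhibits a bounded linear candidate differential $\vartheta\mapsto\int_0^\cdot\Rcal^\theta_{(s,\cdot)}(\cdot)\nabla_\theta\Fcal(\cdots)\vartheta_s\,ds$ (bounded on $L^2$ into $C([0,T]\times B(R))$ by \eqref{eq:ResolventEst1} and Cauchy--Schwarz), whose continuity in $\theta$ is exactly \eqref{eq:ResolventEst2} combined with the Lipschitz bounds on $\Phi^\theta$ and on $\nabla_\theta\Fcal(t,\cdot,\theta)$ from Assumption \ref{asum2}-$(iii)$; by the usual criterion a Gateaux derivative that is continuous in the base point is a Fréchet derivative. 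The main obstacle I anticipate is purely bookkeeping: making the $o_\theta(\varepsilon)$ genuinely uniform in $x\in B(R)$ requires that the second-order Taylor remainder of $\Fcal$ be controlled along the whole family of perturbed trajectories simultaneously, which is where the uniform support bound $B(R_T)$ and the $R$-dependent constants of Assumption \ref{asum2} are essential — none of it is deep, but it must be threaded carefully so that no constant secretly depends on the unknown perturbation.
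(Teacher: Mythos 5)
Your proposal is correct, and it takes a more explicit route than the paper does. The paper's argument is essentially a citation: it invokes the parametrised fixed-point theorem of Bressan--Piccoli \cite[Theorem 2.3.1]{BressanPiccoli} to obtain directly the expansion $\Phi^{\theta+\varepsilon\vartheta}_{(0,t)}(x)=\Phi^\theta_{(0,t)}(x)+\varepsilon\Psi^{\theta,\vartheta}_{(0,t)}(x)+o_\theta(\varepsilon)$, where $\Psi^{\theta,\vartheta}$ is the solution of the (inhomogeneous) linearized Cauchy problem \eqref{eq:ProofGdiff2}, and then appeals to the variation-of-constants (Duhamel) formula \cite[Theorem 2.2.3]{BressanPiccoli} to rewrite $\Psi^{\theta,\vartheta}$ in terms of the resolvent $\Rcal^\theta$; the bounds \eqref{eq:ResolventEst1}--\eqref{eq:ResolventEst2} are then attributed to the regularity hypotheses combined with \cite[Theorem 2.2.4]{BressanPiccoli}. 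You instead bypass the variational-equation intermediary altogether: you define the error $e_\varepsilon$ with the resolvent formula already plugged in, differentiate in $t$ (in the Carath\'eodory sense, since $t\mapsto\nabla_x\Fcal(t,\Phi^\theta_{(0,t)}(x),\theta_t)$ is only in $L^1$), exploit the fact that $\partial_t\big(\int_0^t\Rcal^\theta_{(s,t)}g(s)\,ds\big)=g(t)+\nabla_x\Fcal(t,\Phi^\theta_{(0,t)},\theta_t)\int_0^t\Rcal^\theta_{(s,t)}g(s)\,ds$, and then close by Taylor plus Gr\"onwall. This buys you a self-contained proof that does not rely on the reader having the Bressan--Piccoli chapter at hand, and it makes the uniformity in $x\in B(R)$ of the small-o term transparent, which is the point you rightly flag as the only delicate bit. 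Your final step (continuous Gateaux differential $\Rightarrow$ Fr\'echet) is a standard theorem and is an honest way to justify the concluding sentence of the proposition, which the paper leaves implicit; the continuity in $\theta$ of the candidate differential is indeed exactly what \eqref{eq:ResolventEst2} together with Proposition \ref{prop:BoundFlow} and the Lipschitz bounds of Assumption \ref{asum2}-$(iii)$ on $\nabla_\theta\Fcal$ deliver. One small remark: in your Gr\"onwall argument for \eqref{eq:ResolventEst2}, whichever way you split the crossed terms, the Gr\"onwall kernel or the multiplicative factor will depend on one of $\|\theta^1\|_1$, $\|\theta^2\|_1$ and the integrated remainder on the other, so the resulting constant genuinely depends on both norms; the statement's notation $C'(T,R,\|\theta^1\|_1)$ is slightly lax about this, but that imprecision is in the paper, not in your argument.
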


\begin{proof}
By reproducing the parametrised fixed-point argument detailed in \cite[Theorem 2.3.1]{BressanPiccoli}, one can prove that the following Taylor expansion 
\begin{equation}
\label{eq:ProofGdiff1}
\Phi^{\theta + \varepsilon \vartheta}_{(0,t)}(x) = \Phi^{\theta}_{(0,t)}(x) + \varepsilon  \Psi^{\theta,\vartheta}_{(0,t)}(x) + o_{\theta}(\varepsilon)
\end{equation}
holds for all $(t,x) \in [0,T] \times B(R)$ and each $\varepsilon > 0$, where the map $t \in [0,T] \mapsto \Psi^{\theta,\vartheta}_{(0,t)}(x) \in \R^d$ is the unique solution of the linearized Cauchy problem
\begin{equation}
\label{eq:ProofGdiff2}
\left\{
\begin{aligned}
\partial_t \Psi^{\theta,\vartheta}_{(0,t)}(x) & = \nabla_x \Fcal \big( t , \Phi_{(0,t)}^{\theta}(x) , \theta_t \big) \Psi^{\theta,\vartheta}_{(0,t)}(x) + \nabla_{\theta} \Fcal \big( t , \Phi_{(0,t)}^{\theta}(x) , \theta_t \big) \vartheta_t, \\
\Psi^{\theta,\vartheta}_{(0,0)}(x) & = 0.
\end{aligned}
\right.
\end{equation}
By a simple application of the constant variation formula (see e.g. \cite[Theorem 2.2.3]{BressanPiccoli}), it can be shown that it can in fact be expressed as 
\begin{equation*}
\Psi^{\theta,\vartheta}_{(0,t)}(x) = \INTSeg{\Rcal^{\theta}_{(s,t)}(x) \nabla_{\theta} \Fcal \big( s , \Phi^{\theta}_{(0,s)}(x), \theta_s \big) \vartheta_s}{s}{0}{t},
\end{equation*}
for all times $t \in [0,T]$, where the resolvent map $t \in [0,T] \mapsto \Rcal_{(\tau,t)}^{\theta}(x) \in \R^{d \times d}$ is defined as in \eqref{eq:Resolvent}. The regularity bounds displayed in \eqref{eq:ResolventEst1}-\eqref{eq:ResolventEst2} easily follow by combining the regularity hypotheses of Assumption \ref{asum1} and \ref{asum2} with the arguments detailed in \cite[Theorem 2.2.4]{BressanPiccoli}.
\end{proof}


\setcounter{section}{0} 
\renewcommand{\thesubsection}{C}

\subsection{Proof of Theorem \ref{thmla}}
\label{subsection:Lagrange}

\setcounter{thm}{0} \renewcommand{\thethm}{C.\arabic{thm}}
\setcounter{equation}{0} \renewcommand{\theequation}{C.\arabic{equation}}

In this third appendix section, we provide a proof of the abstract Lagrange multiplier rule stated in Theorem \ref{thmla}. 

\hspace{-0.75cm} $\bullet$ \textbf{Step 1.} We first want to show that 
\begin{equation}
G'(x^\ast)h=0 \qquad \mbox{ implies } \qquad DJ(x^\ast)h=0\, ,
\end{equation}
for all $h\in \overline {X}_E$. To this end, let $h\in \overline {X}_E$ be given such that $G'(x^*)h=0$. Here $DJ(x^\ast)$ is the multivalued $F$-differential of $J$ at $x^*$ as in Definition \ref{def:Fdiff}. Consider the operator
\begin{equation}
\Psi(\varepsilon,u):=\overline G(x^\ast+\varepsilon h+u)\,,
\end{equation}
where $(\varepsilon,u)$ is in some neighborhood of $(0,0)$ in $\RR\times \overline {X}_E$, and $\overline G$ is the unique extension of $G$ to $\overline E$. Indeed, for any $h,u\in \overline {X}_E$, there exists sequences $(h^n)_{n \in \NN},(u^n)_{n \in \NN} \subset X_E$ such that $h^n\to h$ and $u^n\to u$. According to the assumption it necessarily holds that $(x^*+\varepsilon h^n+u^n) \in x^*+X_E\subset  E$, so one can uniquely define
\begin{equation}
\Psi(\varepsilon,u)=\overline G(x^\ast+\varepsilon h+u):=\lim_{n\to\infty} G(x^\ast+\varepsilon h^n+u^n)\,.
\end{equation} 
In the sequel we will not differentiate $G$ from $\overline G$.
 
Note that if $x^*$ solves \eqref{eqopt}, one has
\begin{equation}
\Psi(0,0)=G(x^\ast)=0\,.
\end{equation}
By the definition of $F$-derivatives, we note that
\begin{equation}
\lim_{y\to 0}\frac{\norm{\Psi(0,y)-\Psi(0,0)-G'(x^\ast)y}_Y}{\norm{y}_X}=\lim_{y\to 0}\frac{\norm{G(x^*+y)-G(x^*)-G'(x^\ast)y}_Y}{\norm{y}_X}=0.
\end{equation}
This means that $G'(x^*)\in D\Psi_u(0,0)$. Thus there exists some $\Psi'_u(0,0)\in D\Psi_u(0,0)$ such that 
\begin{equation}
\Psi'_u(0,0)=G'(x^\ast), 
\end{equation}
Moreover $\Psi'_u(0,0)$ is surjective on $\overline {X}_E\rightarrow Y$, since $G'(x^\ast)$ is  surjective on $\overline {X}_E\rightarrow Y$. 

\smallskip

\hspace{-0.7cm} $\circ$ \textit{Step 1.1.} From above, we know that $\Psi'_u(0,0)$ is surjective on $\overline {X}_E\rightarrow Y$. Thus, there exists a number $\kappa>0$ such that, for each $y\in Y$, there is a point $\omega(y)\in \overline {X}_E\subset X$ satisfying 
\begin{equation}
\Psi'_u(0,0)\omega(y)=y \qquad \mbox{and} \qquad \|\omega(y)\|_X\leq \kappa\|y\|_Y\,,
\end{equation}
where  the second inequality follows from Banach's continuous inverse theorem. We define 
\begin{equation}
f(\varepsilon,u):=\Psi'_u(0,0)u-\Psi(\varepsilon,u)\,.
\end{equation}
Let $\varepsilon\leq \rho$ and $\|u\|_X,\|v\|_X\leq r$, and observe that for some $f_u'(\varepsilon,u)\in Df_u(\varepsilon,u)$, it holds
\begin{equation}
f'_u(\varepsilon,u)=\Psi'_u(0,0)-\Psi'_u(\varepsilon,u)\,.
\end{equation}
Since $f'_u(\varepsilon,u)$ is continuous at $(0,0)$ and $f'_u(0,0)=0$, Taylor's theorem implies that
\begin{equation}\label{94}
\|f(\varepsilon,u)-f(\varepsilon,v)\|\leq \sup\limits_{0\leq\tau\leq 1}\|f'_u(\varepsilon,u+\tau(v-u))\|\|u-v\|_X=o(1)\|\|u-v\|_X \, ,
\end{equation}
as $\rho,r\to0$. In addition since $f(0,0)=0$ and $f$ is continuous at $(0,0)$, we also get
\begin{align}\label{95}
\|f(\varepsilon,u)\|_Y\leq \|f(\varepsilon,u)-f(\varepsilon,0)\|_Y+\|f(\varepsilon,0)\|_Y\leq o(1)\|u\|_X+\|f(\varepsilon,0)\|_Y \, ,
\end{align}
as $\rho,r\to0$. For a given $\varepsilon\in\RR_+$ with $\varepsilon<\rho$, we consider following iterative method
\begin{equation}\label{96}
\Psi'_u(0,0)u_{m+1}=f(\varepsilon,u_m),\quad m=0,1,2,\cdots\,,
\end{equation}
where $u_0=0$ and $u_{m+1}=\omega(f(\varepsilon,u_m))$. Since $\|u_{m+1}\|_X\leq \kappa\|f(\varepsilon,u_m)\|_Y$, it follows from \eqref{94} and \eqref{95} that for sufficiently small $\rho$ and $r$, one has
\begin{equation}
\|u_m\|_X\leq o(1)r+o(1),~\rho\to 0\quad\mbox{ and }\quad \|u_{m+2}-u_{m+1}\|_X\leq \frac{1}{2} \|u_{m+1}-u_{m}\|_X\mbox{ for all }m=0,1,\cdots\,,
\end{equation}
which means that $\{u_m\}_{m\geq 0}$ is a Cauchy sequence in the Banach space $\overline {X}_E$, and hence there exists some $u\in \overline {X}_E $ such that
\begin{equation}
u_m\to u\mbox{ as }m\to\infty\,.
\end{equation}
Moreover we have that $\|u\|_X\leq r$ and $\Psi'_u(0,0)u=f(\varepsilon,u)$ because of \eqref{96}, and thus $\Psi(\varepsilon,u)=0$. Lastly, we let $m\to \infty$ in
\begin{equation}
\|u_{m+2}\|_X\leq \kappa\|f(\varepsilon,u_{m+1})\|_Y=\kappa\norm{\Psi'_u(0,0)u_{m+1}-\Psi(\varepsilon,u_{m+1})}_Y\,,
\end{equation}
then it follows that $\|u\|_X\leq \kappa\|\Psi'_u(0,0)u\|_Y$.

\hspace{-0.75cm} $\circ$ \textit{Step 1.2.}  It follows from Step 1.1 above that there exists numbers $\rho>0$ and $r>0$ such that for any $\varepsilon\in\RR_+$ and $\varepsilon\leq \rho$, there exists $u(\varepsilon)\in  \overline {X}_E$ with $\norm{u(\varepsilon)}_X\leq r$ such that
\begin{equation}
\Psi(\varepsilon,u(\varepsilon))=G(x^\ast+\varepsilon h+u(\varepsilon))=0
\end{equation}
and
\begin{equation}\label{100}
\norm{u(\varepsilon)}_X\leq \kappa\norm{\Psi'_u(0,0)u(\varepsilon)}_Y=\kappa\norm{G'(x^\ast)u(\varepsilon)}_Y
\end{equation}
along with $\norm{u(\varepsilon)}_X\to 0$ as $\varepsilon\to 0$.

By the definition of $F$- derivative, one has
\begin{equation}
G(x^\ast+k)-G(x^\ast)-G'(x^\ast)k=o(\norm{k}_X),\quad k\to 0\,.
\end{equation}
Let $k=\varepsilon h+u(\varepsilon)$, we have
\begin{equation}
G(x^\ast+\varepsilon h+u(\varepsilon))-G(x^\ast)-\varepsilon G'(x^\ast)h-G'(x^\ast)u(\varepsilon)=o(\norm{\varepsilon h+u(\varepsilon)}_X),\quad \varepsilon\to 0\,.
\end{equation}
Therefore
\begin{equation}
G'(x^\ast)u(\varepsilon)=o(1)\norm{\varepsilon h+u(\varepsilon)}_X,\quad \varepsilon\to 0\,.
\end{equation}
By \eqref{100}, we obtain $\norm{u(\varepsilon)}_X\leq o(1)\norm{\varepsilon h+u(\varepsilon)}_X$, which is
\begin{equation}
\norm{u(\varepsilon)}=o(\varepsilon),\quad \varepsilon\to 0\,.
\end{equation}
Since $x^\ast$ is the minimizer of $J$, one has
\begin{equation}
J(x^\ast+\varepsilon h+u(\varepsilon))\geq J(x^\ast)\,,
\end{equation}
which yields
\begin{equation}
DJ(x^\ast)(\varepsilon h+u(\varepsilon))+o(\norm{\varepsilon h+u(\varepsilon)}_X)\geq 0,\quad \varepsilon\to 0\,.
\end{equation}
Dividing by $\varepsilon$ and letting $\varepsilon \to \pm0$, one has $DJ(x^\ast)h\geq 0$ and $DJ(x^\ast)h\leq 0$. In other words 
\begin{equation}
DJ(x^\ast)h=0\,.
\end{equation}

\medskip

\hspace{-0.75cm} $\bullet$ \textit{Step 2.}
In Step 1 we have proven that if $G'(x^\ast)h=0$ for some $h\in  \overline {X}_E$, then $DJ(x^\ast)h=0$. This can be written in the more compact operator form
\begin{equation}
DJ(x^\ast)\subset [\mc{N}(G'(x^\ast))]^\perp= \Big\{x'\in  \overline {X}_E' ~|~ \la x', h\ra=0 \mbox{ for all } h\in \mc{N}(G'(x^\ast))\subset  \overline {X}_E \Big\}\,.
\end{equation}
Then, it follows from the closed range theorem in Banach spaces that
\begin{equation}
[ \mc{N}(G'(x^\ast))]^\perp = \mc{R}(G'(x^\ast)^\top)\,.
\end{equation}
which implies that 
\begin{equation*}
DJ(x^\ast)\subset  \mc{N}(G'(x^\ast))^\perp= \mc{R}(G'(x^\ast)^\top)\,.
\end{equation*}
Therefore, there exists a covector $p^*\in Y'$ such that $J'(x^\ast)=G'(x^\ast)^\top p^*$ for any $J'(x^\ast)\in DJ(x^\ast)$. In other words
\begin{equation}
\la J'(x^\ast), z\ra =\la  G'(x^\ast)^\top p^*,z   \ra=\la p^*, G'(x^\ast)z\ra\quad \mbox{ for all }z\in  \overline {X}_E\,,
\end{equation}
which completes the proof of Theorem \ref{thmla}.

\section*{Acknowledgments}

C.C., H.H., and M.F. acknowledge the support of the DFG Project "Identification of Energies from Observation of Evolutions" and the DFG SPP 1962 "Non-smooth and Complementarity-based Distributed Parameter Systems: Simulation and Hierarchical Optimization". C.C. and M.F. acknowledge also the partial support of the project  
``Online Firestorms And Resentment Propagation On Social Media: Dynamics, Predictability and Mitigation'' of the TUM Institute for Ethics in Artificial Intelligence.


{\small 
\bibliographystyle{plain}
\bibliography{references,control,biblioflock}

@article{ACFK16,
  title={Mean field control hierarchy},
author={Albi, Giacomo and Choi, Young-Pil and Fornasier, Massimo and Kalise, Dante},
journal={Applied Mathematics \& Optimization},
volume={76},
number={1},
pages={93--135},
year={2017},
publisher={Springer}
}

@article{lali07,
author="Lasry, Jean-Michel and Lions, Pierre-Louis",
title="{Mean field games.}",
journal="Jpn. J. Math. (3)",
volume="2",
number="1",
pages="229-260",
year="2007"
}

@article{R,
    author = "C. Ringhofer",
    title  = "Space-time discretization of series expansion
methods for the {B}oltzmann {T}ransport {E}quation",
    year   = "2000",
    journal= "SIAM J. Numer.
Anal.",
    volume = "",
    number = "38",
    pages  = "442--465"
}

@Book{befrph13,
  title={Mean field games and mean field type control theory},
author={Bensoussan, Alain and Frehse, Jens and Yam, Phillip and others},
volume={101},
year={2013},
publisher={Springer}
}

@article{BFRS15,
  title={Mean-field Pontryagin maximum principle},
author={Bongini, Mattia and Fornasier, Massimo and Rossi, Francesco and Solombrino, Francesco},
journal={Journal of Optimization Theory and Applications},
volume={175},
number={1},
pages={1--38},
year={2017},
publisher={Springer}
}

@article{MR3268059,
  author = {Fornasier, Massimo and Piccoli, Benedetto and Rossi, Francesco},
  title = {Mean-field sparse optimal control},
  journal = {Philos. Trans. R. Soc. Lond. Ser. A Math. Phys. Eng. Sci.},
  year = {2014},
  volume = {372},
  number = {2028},
  pages = {20130400, 21},
  doi = {10.1098/rsta.2013.0400}
}

@book {BressanPiccoli,
    AUTHOR = {Bressan, Alberto and Piccoli, Benedetto},
     TITLE = {Introduction to the Mathematical Theory of Control},
    SERIES = {AIMS Series on Applied Mathematics},
    VOLUME = {2},
 PUBLISHER = {American Institute of Mathematical Sciences (AIMS),
              Springfield, MO},
      YEAR = {2007},
}

@book {AGS,
    AUTHOR = {Ambrosio, Luigi and Gigli, Nicola and Savar{\'e}, Giuseppe},
     TITLE = {Gradient Flows in Metric Spaces and in the Space of
              Probability Measures},
    SERIES = {Lectures in Mathematics ETH Z\"urich},
   EDITION = {Second},
 PUBLISHER = {Birkh\"auser Verlag},
   ADDRESS = {Basel},
      YEAR = {2008},
    
}

@book {DM,
    AUTHOR = {Dal Maso, Gianni},
     TITLE = {An Introduction to {$\Gamma$}-Convergence},
    SERIES = {Progress in Nonlinear Differential Equations and their
              Applications, 8},
 PUBLISHER = {Birkh\"auser Boston Inc.},
   ADDRESS = {Boston, MA},
      YEAR = {1993},
}

@article{LipReg,
	Author = {Bonnet, B. and Rossi, F.},
	Journal = {SIAM Journal on Control and Optimization},
	Title = {{Intrinsic Lipschitz Regularity of Mean-Field Optimal Controls}},
	Volume = {59},
	Number = {3}, 
	Pages = {2011–2046},
	Year = {2021}}

@article{burger,
author = {Burger, Martin and Pinnau, René and Totzeck, Claudia and Tse, Oliver},
title = {Mean-Field Optimal Control and Optimality Conditions in the Space of Probability Measures},
journal = {SIAM Journal on Control and Optimization},
volume = {59},
number = {2},
pages = {977-1006},
year = {2021},
doi = {10.1137/19M1249461},
}

@Book{gilbarg2015elliptic,
  author    = {Gilbarg, David and Trudinger, Neil S},
  title     = {Elliptic partial differential equations of second order},
  year      = {2015},
  publisher = {springer},
}

@article{ying2006phase,
	title={The phase flow method},
	author={Ying, Lexing and Candes, Emmanuel J},
	journal={Journal of Computational Physics},
	volume={220},
	number={1},
	pages={184--215},
	year={2006},
	publisher={Elsevier}
}

@book{books/daglib/0033642,
  added-at = {2020-06-05T00:00:00.000+0200},
  author = {Shalev-Shwartz, Shai and Ben-David, Shai},
  biburl = {https://www.bibsonomy.org/bibtex/293329d1cd5964dd826bba3100cd17fe4/dblp},
  ee = {http://www.cambridge.org/de/academic/subjects/computer-science/pattern-recognition-and-machine-learning/understanding-machine-learning-theory-algorithms},
  interhash = {125d708c7b440a3cfeb6146e83ab5de3},
  intrahash = {93329d1cd5964dd826bba3100cd17fe4},
  isbn = {978-1-10-705713-5},
  keywords = {dblp},
  pages = {I-XVI, 1-397},
  publisher = {Cambridge University Press},
  timestamp = {2020-06-06T11:43:42.000+0200},
  title = {Understanding Machine Learning - From Theory to Algorithms.},
  year = 2014
}

@article{10.1214/14-AOP946,
author = {René Carmona and François Delarue},
title = {{Forward–backward stochastic differential equations and controlled McKean–Vlasov dynamics}},
volume = {43},
journal = {The Annals of Probability},
number = {5},
publisher = {Institute of Mathematical Statistics},
pages = {2647 -- 2700},
keywords = {McKean–Vlasov diffusion, mean-field forward–backward stochastic differential equation, Mean-field interaction, Stochastic control, stochastic Pontryagin principle},
year = {2015},
doi = {10.1214/14-AOP946},
URL = {https://doi.org/10.1214/14-AOP946}
}

@article{fornasier_lisini_orrieri_savaré_2019,
  title={Mean-field optimal control as {Gamma}-limit of finite agent controls},
author={Fornasier, Massimo and Lisini, Stefano and Orrieri, Carlo and Savar{\'e}, Giuseppe},
journal={European Journal of Applied Mathematics},
volume={30},
number={6},
pages={1153--1186},
year={2019},
publisher={Cambridge University Press}
}

@article{cavagnari2020lagrangian,
  title={Lagrangian, Eulerian and Kantorovich formulations of multi-agent optimal control problems: Equivalence and Gamma-convergence},
author={Cavagnari, Giulia and Lisini, Stefano and Orrieri, Carlo and Savar{\'e}, Giuseppe},
journal={arXiv preprint arXiv:2011.07117},
year={2020}
}

@article{fornasier2014mean,
	title={Mean-field optimal control},
	author={Fornasier, Massimo and Solombrino, Francesco},
	journal={ESAIM: Control, Optimisation and Calculus of Variations},
	volume={20},
	number={4},
	pages={1123--1152},
	year={2014},
	publisher={EDP Sciences}
}

@book{zeidler1995applied,
	title={Applied functional analysis},
	author={Zeidler, Eberhard},
	volume={},
	year={1995},
	publisher={Springer Science and Business Media}
}

@Article{piccoli2019wasserstein,
  author = {Piccoli, Benedetto and Rossi, Francesco and Tournus, Magali},
  title  = {A Wasserstein norm for signed measures, with application to non local transport equation with source term},
  year   = {2019},
}

@Article{canizo2011well,
  author    = {Canizo, Jos{\'e} A and Carrillo, Jos{\'e} A and Rosado, Jes{\'u}s},
  title     = {A well-posedness theory in measures for some kinetic models of collective motion},
  journal   = {Mathematical Models and Methods in Applied Sciences},
  year      = {2011},
  volume    = {21},
  number    = {03},
  pages     = {515--539},
  publisher = {World Scientific},
}

@Article{ambrosio2018spatially,
  title={Spatially inhomogeneous evolutionary games},
author={Ambrosio, Luigi and Fornasier, Massimo and Morandotti, Marco and Savar{\'e}, Giuseppe},
journal={Communications on Pure and Applied Mathematics},
volume={74},
number={7},
pages={1353--1402},
year={2021},
publisher={Wiley Online Library}
}

@article{Jabir2021, 
  title={Mean-field neural odes via relaxed optimal control},
author={Jabir, Jean-Fran{\c{c}}ois and {\v{S}}i{\v{s}}ka, David and Szpruch, {\L}ukasz},
journal={arXiv preprint arXiv:1912.05475},
year={2019}
}

@article{Tabuada2021, 
  title={Universal approximation power of deep residual neural networks via nonlinear control theory},
author={Tabuada, Paulo and Gharesifard, Bahman},
journal={arXiv preprint arXiv:2007.06007},
year={2020}
}

@article{Agrachev2021,
  title={Control on the manifolds of mappings as a setting for deep learning},
author={Agrachev, Andrei and Sarychev, Andrey},
journal={arXiv preprint arXiv:2008.12702},
year={2020}
}

@article{Agrachev2020,
  title={Control in the spaces of ensembles of points},
author={Agrachev, Andrei and Sarychev, Andrey},
journal={SIAM Journal on Control and Optimization},
volume={58},
number={3},
pages={1579--1596},
year={2020},
publisher={SIAM}
}

@book{Brezis,
	Author = {Br\'ezis,H.},
	Publisher = {Springer},
	Series = {Universitext},
	Title = {{F}unctional {A}nalysis, {S}obolev {S}paces and {P}artial {D}ifferential {E}quations},
	Year = {2010}}

@book{AmbrosioFuscoPallara,
  title={Functions of bounded variation and free discontinuity problems},
author={Ambrosio, Luigi and Fusco, Nicola and Pallara, Diego},
year={2000},
publisher={Courier Corporation}
}

@book{CannarsaS2004, 
  title={Semiconcave functions, {Hamilton-Jacobi} equations, and optimal control},
author={Cannarsa, Piermarco and Sinestrari, Carlo},
volume={58},
year={2004},
publisher={Springer Science \& Business Media}
}

@book{Pontryagin,
  title={Mathematical theory of optimal processes},
author={Pontryagin, Lev Semenovich},
year={1987},
publisher={CRC press}
}

@article{ContInc,
  title={Differential inclusions in wasserstein spaces: The cauchy-lipschitz framework},
author={Bonnet, Beno{\^\i}t and Frankowska, H{\'e}l{\`e}ne},
journal={Journal of Differential Equations},
volume={271},
pages={594--637},
year={2021},
publisher={Elsevier}
 }

@article{SetValuedPMP,
author={Bonnet, Beno{\^\i}t and Frankowska, H{\'e}l{\`e}ne},
 	Title = {{Necessary Optimality Conditions for Optimal Control Problems in Wasserstein Spaces}}, 
 	Journal = {To appear in Applied Mathematics and Optimization}, 
 	Year = {2021},
 	Volume = {},
 	Number = {},
 	Pages = {}}

@article{Frankowska1990,
  title={A priori estimates for operational differential inclusions},
author={Frankowska, Halina},
journal={Journal of differential equations},
volume={84},
number={1},
pages={100--128},
year={1990},
publisher={Elsevier}
 }

@article{SemiConcavityCDC,
author={Bonnet, Beno{\^\i}t and Frankowska, H{\'e}l{\`e}ne},
 	Title = {{On the Properties of the Value Function Associated to a 
Mean-Field Optimal Control Problem of Bolza Type}}, 
 	Journal = {Submitted}, 
 	Year = {2021},
 	Volume = {},
 	Number = {},
 	Pages = {}}

@article{SemiSensitivity,
author={Bonnet, Beno{\^\i}t and Frankowska, H{\'e}l{\`e}ne},
 	Title = {{Semiconcavity and Sensitivity Analysis in Mean-Field Optimal Control and Applications}}, 
 	Journal = {In revision}, 
 	Year = {2021},
 	Volume = {},
 	Number = {},
 	Pages = {}}

@article{PMPWass,
  title={The {P}ontryagin maximum principle in the {W}asserstein space},
author={Bonnet, Beno{\^\i}t and Rossi, Francesco},
journal={Calculus of Variations and Partial Differential Equations},
volume={58},
number={1},
pages={1--36},
year={2019},
publisher={Springer}
}

@article{PMPWassConst,
  title={A Pontryagin maximum principle in Wasserstein spaces for constrained optimal control problems},
author={Bonnet, Beno{\^\i}t},
journal={ESAIM: Control, Optimisation and Calculus of Variations},
volume={25},
pages={52},
year={2019},
publisher={EDP Sciences}
}

@misc{gühring2020expressivity,
      title={Expressivity of Deep Neural Networks}, 
      author={Ingo Gühring and Mones Raslan and Gitta Kutyniok},
      year={2020},
      eprint={2007.04759},
      archivePrefix={arXiv},
      primaryClass={cs.LG}
}

@article{shaham2018provable,
  title={Provable approximation properties for deep neural networks},
  author={Shaham, Uri and Cloninger, Alexander and Coifman, Ronald R},
  journal={Applied and Computational Harmonic Analysis},
  volume={44},
  number={3},
  pages={537--557},
  year={2018},
  publisher={Elsevier}
}

@article{mhaskar2020function,
  title={Function approximation by deep networks.},
  author={Mhaskar, Hrushikesh N and Poggio, Tomaso},
  journal={Communications on Pure \& Applied Analysis},
  volume={19},
  number={8},
  year={2020}
}

@book{werbos74,
  title={Beyond Regression: New Tools for Prediction and Analysis in the Behavioral Sciences},
  author={Werbos, Paul},
  url={https://books.google.de/books?id=z81XmgEACAAJ},
  year={1975},
  publisher={Harvard University}
}

@article{devore2020neural,
  title={Neural Network Approximation},
  author={DeVore, Ronald and Hanin, Boris and Petrova, Guergana},
  journal={arXiv preprint arXiv:2012.14501},
  year={2020}
}

@article{mhaskar2016deep,
  title={Deep vs. shallow networks: An approximation theory perspective},
  author={Mhaskar, Hrushikesh N and Poggio, Tomaso},
  journal={Analysis and Applications},
  volume={14},
  number={06},
  pages={829--848},
  year={2016},
  publisher={World Scientific}
}

@article{Berner2020,
   title={Analysis of the Generalization Error: Empirical Risk Minimization over Deep Artificial Neural Networks Overcomes the Curse of Dimensionality in the Numerical Approximation of Black--Scholes Partial Differential Equations},
   volume={2},
   ISSN={2577-0187},
   url={http://dx.doi.org/10.1137/19M125649X},
   DOI={10.1137/19m125649x},
   number={3},
   journal={SIAM Journal on Mathematics of Data Science},
   publisher={Society for Industrial & Applied Mathematics (SIAM)},
   author={Berner, Julius and Grohs, Philipp and Jentzen, Arnulf},
   year={2020},
   month={Jan},
   pages={631–657}
}

@article{elbraechter2020dnn,
  title={DNN expression rate analysis of high-dimensional PDEs: Application to option pricing},
  author={Elbr{\"a}chter, Dennis and Grohs, Philipp and Jentzen, Arnulf and Schwab, Christoph},
  journal={arXiv preprint arXiv:1809.07669},
  year={2020}
}

@article{sun2019optimization,
  title={Optimization for deep learning: theory and algorithms},
  author={Sun, Ruoyu},
  journal={arXiv preprint arXiv:1912.08957},
  year={2019}
}

@misc{he2016identity,
  title={Identity mappings in deep residual networks},
author={He, Kaiming and Zhang, Xiangyu and Ren, Shaoqing and Sun, Jian},
booktitle={European conference on computer vision},
pages={630--645},
year={2016},
organization={Springer}
}

@article{Haber_2017,
	doi = {10.1088/1361-6420/aa9a90},
	url = {https://doi.org/10.1088/1361-6420/aa9a90},
	year = 2017,
	month = {dec},
	publisher = {{IOP} Publishing},
	volume = {34},
	number = {1},
	pages = {014004},
	author = {Eldad Haber and Lars Ruthotto},
	title = {Stable architectures for deep neural networks},
	journal = {Inverse Problems},
	abstract = {Deep neural networks have become invaluable tools for supervised machine learning, e.g. classification of text or images. While often offering superior results over traditional techniques and successfully expressing complicated patterns in data, deep architectures are known to be challenging to design and train such that they generalize well to new data. Critical issues with deep architectures are numerical instabilities in derivative-based learning algorithms commonly called exploding or vanishing gradients. In this paper, we propose new forward propagation techniques inspired by systems of ordinary differential equations (ODE) that overcome this challenge and lead to well-posed learning problems for arbitrarily deep networks.

The backbone of our approach is our interpretation of deep learning as a parameter estimation problem of nonlinear dynamical systems. Given this formulation, we analyze stability and well-posedness of deep learning and use this new understanding to develop new network architectures. We relate the exploding and vanishing gradient phenomenon to the stability of the discrete ODE and present several strategies for stabilizing deep learning for very deep networks. While our new architectures restrict the solution space, several numerical experiments show their competitiveness with state-of-the-art networks.}
}

@inproceedings{10.5555/3327757.3327764,
author={Chen, Ricky TQ and Rubanova, Yulia and Bettencourt, Jesse and Duvenaud, David},
title = {Neural Ordinary Differential Equations},
year = {2018},
publisher = {Curran Associates Inc.},
address = {Red Hook, NY, USA},
booktitle = {Proceedings of the 32nd International Conference on Neural Information Processing Systems},
pages = {6572–6583},
numpages = {12},
location = {Montr\'{e}al, Canada},
series = {NIPS'18}
}

@article{carola,
  title={Deep learning as optimal control problems: Models and numerical methods},
author={Benning, Martin and Celledoni, Elena and Ehrhardt, Matthias J and Owren, Brynjulf and Sch{\"o}nlieb, Carola-Bibiane},
journal={Journal of Computational Dynamics},
pages={171},
volume={6},
year={2019}
}

@article{10.5555/3122009.3242022,
author = {Li, Qianxiao and Chen, Long and Tai, Cheng and Weinan, E.},
title = {Maximum Principle Based Algorithms for Deep Learning},
year = {2017},
issue_date = {January 2017},
publisher = {JMLR.org},
volume = {18},
number = {1},
issn = {1532-4435},
abstract = {The continuous dynamical system approach to deep learning is explored in order to devise alternative frameworks for training algorithms. Training is recast as a control problem and this allows us to formulate necessary optimality conditions in continuous time using the Pontryagin's maximum principle (PMP). A modification of the method of successive approximations is then used to solve the PMP, giving rise to an alternative training algorithm for deep learning. This approach has the advantage that rigorous error estimates and convergence results can be established. We also show that it may avoid some pitfalls of gradient-based methods, such as slow convergence on at landscapes near saddle points. Furthermore, we demonstrate that it obtains favorable initial convergence rate periteration, provided Hamiltonian maximization can be efficiently carried out - a step which is still in need of improvement. Overall, the approach opens up new avenues to attack problems associated with deep learning, such as trapping in slow manifolds and inapplicability of gradient-based methods for discrete trainable variables.},
journal = {J. Mach. Learn. Res.},
month = jan,
pages = {5998–6026},
numpages = {29},
keywords = {pontryagin's maximum principle, method of successive approximations, deep learning, optimal control}
}

@article{DBLP:journals/corr/abs-1908-10920,
title={Deep learning theory review: An optimal control and dynamical systems perspective},
author={Liu, Guan-Horng and Theodorou, Evangelos A},
journal={arXiv preprint arXiv:1908.10920},
year={2019}
}

@InProceedings{pmlr-v80-li18b,
  title = 	 {An Optimal Control Approach to Deep Learning and Applications to Discrete-Weight Neural Networks},
  author =       {Li, Qianxiao and Hao, Shuji},
  booktitle = 	 {Proceedings of the 35th International Conference on Machine Learning},
  pages = 	 {2985--2994},
  year = 	 {2018},
  editor = 	 {Jennifer Dy and Andreas Krause},
  volume = 	 {80},
  series = 	 {Proceedings of Machine Learning Research},
  address = 	 {Stockholmsmässan, Stockholm Sweden},
  month = 	 {10--15 Jul},
  publisher =    {PMLR},
  pdf = 	 {http://proceedings.mlr.press/v80/li18b/li18b.pdf},
  url = 	 {http://proceedings.mlr.press/v80/li18b.html},
  abstract = 	 {Deep learning is formulated as a discrete-time optimal control problem. This allows one to characterize necessary conditions for optimality and develop training algorithms that do not rely on gradients with respect to the trainable parameters. In particular, we introduce the discrete-time method of successive approximations (MSA), which is based on the Pontryagin’s maximum principle, for training neural networks. A rigorous error estimate for the discrete MSA is obtained, which sheds light on its dynamics and the means to stabilize the algorithm. The developed methods are applied to train, in a rather principled way, neural networks with weights that are constrained to take values in a discrete set. We obtain competitive performance and interestingly, very sparse weights in the case of ternary networks, which may be useful in model deployment in low-memory devices.}
}

@misc{avelin2020neural,
  title={Neural ODEs as the deep limit of ResNets with constant weights},
author={Avelin, Benny and Nystr{\"o}m, Kaj},
journal={Analysis and Applications},
volume={19},
number={03},
pages={397--437},
year={2021},
publisher={World Scientific}
}

@article{eweinan19,
	Author = {E, Weinan and Han, Jiequn and Li, Qianxiao},
	Da = {2018/12/13},
	Date-Added = {2021-02-19 15:11:41 +0000},
	Date-Modified = {2021-02-19 15:11:41 +0000},
	Doi = {10.1007/s40687-018-0172-y},
	Id = {E2018},
	Isbn = {2197-9847},
	Journal = {Research in the Mathematical Sciences},
	Number = {1},
	Pages = {10},
	Title = {A mean-field optimal control formulation of deep learning},
	Ty = {JOUR},
	Url = {https://doi.org/10.1007/s40687-018-0172-y},
	Volume = {6},
	Year = {2019},
	Bdsk-Url-1 = {https://doi.org/10.1007/s40687-018-0172-y},
	Bdsk-Url-2 = {http://dx.doi.org/10.1007/s40687-018-0172-y}}

@article{eweinan17,
	Annote = {We discuss the idea of using continuous dynamical systems to model general high-dimensional nonlinear functions used in machine learning. We also discuss the connection with deep learning.},
	Author = {E, Weinan},
	Da = {2017/03/01},
	Date-Added = {2021-02-19 12:33:50 +0000},
	Date-Modified = {2021-02-19 12:33:50 +0000},
	Doi = {10.1007/s40304-017-0103-z},
	Id = {E2017},
	Isbn = {2194-671X},
	Journal = {Communications in Mathematics and Statistics},
	Number = {1},
	Pages = {1--11},
	Title = {A Proposal on Machine Learning via Dynamical Systems},
	Ty = {JOUR},
	Url = {https://doi.org/10.1007/s40304-017-0103-z},
	Volume = {5},
	Year = {2017},
	Bdsk-Url-1 = {https://doi.org/10.1007/s40304-017-0103-z},
	Bdsk-Url-2 = {http://dx.doi.org/10.1007/s40304-017-0103-z}}

@book{10.5555/104279.104293,
author={Rumelhart, David E and Hinton, Geoffrey E and Williams, Ronald J},
title = {Learning Internal Representations by Error Propagation},
year = {1986},
isbn = {026268053X},
publisher = {MIT Press},
address = {Cambridge, MA, USA},
booktitle = {Parallel Distributed Processing: Explorations in the Microstructure of Cognition, Vol. 1: Foundations},
pages = {318–362},
numpages = {45}
}

@inproceedings{NIPS2012_4824,
 author = {Krizhevsky, Alex and Sutskever, Ilya and Hinton, Geoffrey E},
 booktitle = {Advances in Neural Information Processing Systems},
 pages = {1097--1105},
 publisher = {Curran Associates, Inc.},
 title = {ImageNet Classification with Deep Convolutional Neural Networks},
 url = {https://proceedings.neurips.cc/paper/2012/file/c399862d3b9d6b76c8436e924a68c45b-Paper.pdf},
 volume = {25},
 year = {2012}
}

@article{oord2016wavenet,
  title={Wavenet: A generative model for raw audio},
  author={Oord, Aaron van den and Dieleman, Sander and Zen, Heiga and Simonyan, Karen and Vinyals, Oriol and Graves, Alex and Kalchbrenner, Nal and Senior, Andrew and Kavukcuoglu, Koray},
  journal={arXiv preprint arXiv:1609.03499},
  year={2016}
}

@inproceedings{pmlr-v48-oord16,
 title = {Pixel Recurrent Neural Networks},
 author = {Aaron Van Oord and Nal Kalchbrenner and Koray Kavukcuoglu},
 pages = {1747--1756},
 year = {2016},
 volume = {48},
 series = {Proceedings of Machine Learning Research},
 publisher = {PMLR},
}

@article{silver2017mastering,
  added-at = {2017-12-15T02:14:58.000+0100},
  author = {Silver, David and Schrittwieser, Julian and Simonyan, Karen and Antonoglou, Ioannis and Huang, Aja and Guez, Arthur and Hubert, Thomas and Baker, Lucas and Lai, Matthew and Bolton, Adrian and Chen, Yutian and Lillicrap, Timothy and Hui, Fan and Sifre, Laurent and van den Driessche, George and Graepel, Thore and Hassabis, Demis},
  biburl = {https://www.bibsonomy.org/bibtex/2ecdfbfcceb55ee5f14c1c375ad71f2cb/achakraborty},
  description = {Mastering the game of Go without human knowledge | Nature},
  interhash = {c45d318e105d0f2d62ccc28c2699d9d4},
  intrahash = {ecdfbfcceb55ee5f14c1c375ad71f2cb},
  journal = {Nature},
  keywords = {2017 deep-learning deepmind google paper reinforcement-learning},
  month = oct,
  pages = {354--},
  publisher = {Macmillan Publishers Limited, part of Springer Nature. All rights reserved.},
  timestamp = {2017-12-15T02:14:58.000+0100},
  title = {Mastering the game of Go without human knowledge},
  url = {http://dx.doi.org/10.1038/nature24270},
  volume = 550,
  year = 2017
}

@article{elbrachter2020deep,
  title={Deep neural network approximation theory},
  author={Grohs, Philipp and Perekrestenko, Dmytro and Elbr{\"a}chter, Dennis and B{\"o}lcskei, Helmut},
  journal={arXiv preprint arXiv:1901.02220},
  volume={1},
  year={2020}
}

@inproceedings{zhang2016understanding,
title	= {Understanding deep learning requires rethinking generalization},
author	= {Zhang, Chiyuan and Bengio, Samy and Hardt, Moritz and Recht, Benjamin and Vinyals, Oriol},
year	= {2017},
booktitle	= {International Conference on Learning Representations}
}

@inproceedings{Goodfellow2015ExplainingAH,
title	= {Explaining and Harnessing Adversarial Examples},
author	= {Ian Goodfellow and Jonathon Shlens and Christian Szegedy},
year	= {2015},
URL	= {http://arxiv.org/abs/1412.6572},
booktitle	= {International Conference on Learning Representations}
}

@book{Goodfellow-et-al-2016,
  added-at = {2017-03-13T20:27:27.000+0100},
  author = {Goodfellow, Ian and Bengio, Yoshua and Courville, Aaron},
  biburl = {https://www.bibsonomy.org/bibtex/2175f81afff897a68829e4d30c080a8fb/hotho},
  interhash = {62814dec510d5c55b0b38ad85a6c748d},
  intrahash = {175f81afff897a68829e4d30c080a8fb},
  keywords = {book deep learning toread},
  note = {Book in preparation for MIT Press},
  publisher = {MIT Press},
  timestamp = {2017-04-14T13:44:20.000+0200},
  title = {Deep Learning},
  url = {http://www.deeplearningbook.org},
  year = 2016
}

@article{PETERSEN2018296,
title = "Optimal approximation of piecewise smooth functions using deep ReLU neural networks",
journal = "Neural Networks",
volume = "108",
pages = "296 - 330",
year = "2018",
issn = "0893-6080",
doi = "https://doi.org/10.1016/j.neunet.2018.08.019",
url = "http://www.sciencedirect.com/science/article/pii/S0893608018302454",
author = "Philipp Petersen and Felix Voigtlaender",
}

@misc{daubechies2019,
      title={Nonlinear Approximation and (Deep) ReLU Networks},
      author={Daubechies, Ingrid and ‪DeVore, Ronald and Foucart, Simon and Hanin, Boris and Petrova, Guergana},
      year={2019},
      eprint={1905.02199},
      archivePrefix={arXiv},
      primaryClass={cs.LG}
}

@article{nature15,
	Annote = {An artificial agent is developed that learns to play a diverse range of classic Atari 2600 computer games directly from sensory experience, achieving a performance comparable to that of an expert human player; this work paves the way to building general-purpose learning algorithms that bridge the divide between perception and action.},
	Author = {Mnih, Volodymyr and Kavukcuoglu, Koray and Silver, David and Rusu, Andrei A. and Veness, Joel and Bellemare, Marc G. and Graves, Alex and Riedmiller, Martin and Fidjeland, Andreas K. and Ostrovski, Georg and Petersen, Stig and Beattie, Charles and Sadik, Amir and Antonoglou, Ioannis and King, Helen and Kumaran, Dharshan and Wierstra, Daan and Legg, Shane and Hassabis, Demis},
	Da = {2015/02/01},
	Date-Added = {2020-10-12 18:24:12 +0000},
	Date-Modified = {2020-10-12 18:24:12 +0000},
	Doi = {10.1038/nature14236},
	Id = {Mnih2015},
	Isbn = {1476-4687},
	Journal = {Nature},
	Number = {7540},
	Pages = {529--533},
	Title = {Human-level control through deep reinforcement learning},
	Ty = {JOUR},
	Url = {https://doi.org/10.1038/nature14236},
	Volume = {518},
	Year = {2015},
	Bdsk-Url-1 = {https://doi.org/10.1038/nature14236},
	Bdsk-Url-2 = {http://dx.doi.org/10.1038/nature14236}}

@inproceedings{NIPS2017_7181,
  title={Attention is all you need},
  author={Vaswani, Ashish and Shazeer, Noam and Parmar, Niki and Uszkoreit, Jakob and Jones, Llion and Gomez, Aidan N and Kaiser, {\L}ukasz and Polosukhin, Illia},
	booktitle = {Advances in Neural Information Processing Systems 30},
  pages={5998--6008},
  year={2017},
	publisher = {Curran Associates, Inc.}}

@inproceedings{7780459,
  title={Deep residual learning for image recognition},
author={He, Kaiming and Zhang, Xiangyu and Ren, Shaoqing and Sun, Jian},
booktitle={Proceedings of the IEEE conference on computer vision and pattern recognition},
pages={770--778},
year={2016}}

@inproceedings{7fa6b6a5cde14bcfbd7ab3a8f19d0d56,
title = "Une procedure d'apprentissage pour reseau a seuil asymmetrique (A learning scheme for asymmetric threshold networks)",
author = "Yann Lecun",
year = "1985",
language = "English (US)",
pages = "599--604",
booktitle = "Proceedings of Cognitiva 85, Paris, France",
}

@article{hannun2014deep,
	title={Deep speech: Scaling up end-to-end speech recognition},
	author={Hannun, Awni and Case, Carl and Casper, Jared  and others},
	journal={arXiv preprint arXiv:1412.5567},
	year={2014}
}

@article{cloninger2020relu,
  title={ReLU nets adapt to intrinsic dimensionality beyond the target domain},
  author={Cloninger, Alexander and Klock, Timo},
  journal={arXiv preprint arXiv:2008.02545},
  year={2020}
}

@article{Kukacka-et-al-2017,
 author = {J. Kukačka and V. Golkov and D. Cremers},
 title = {Regularization for Deep Learning: A Taxonomy},
 year = {2017},
 journal = {arXiv preprint arXiv:1710.10686},
 eprint = {1710.10686},
 eprinttype = {arXiv},
 keywords = {deep learning, neural networks, regularization, data augmentation, network architecture, loss function, dropout, residual learning, optimization},
}

@article{10.3150/18-BEJ1065,
author = {Jonathan Weed and Francis Bach},
title = {{Sharp asymptotic and finite-sample rates of convergence of empirical measures in Wasserstein distance}},
volume = {25},
journal = {Bernoulli},
number = {4A},
publisher = {Bernoulli Society for Mathematical Statistics and Probability},
pages = {2620 -- 2648},
keywords = {Optimal transport, quantization, Wasserstein metrics},
year = {2019},
doi = {10.3150/18-BEJ1065},
URL = {https://doi.org/10.3150/18-BEJ1065}
}

@article{10.1214/12-AIHP489,
author = {Steffen Dereich and Michael Scheutzow and Reik Schottstedt},
title = {{Constructive quantization: Approximation by empirical measures}},
volume = {49},
journal = {Annales de l'Institut Henri Poincaré, Probabilités et Statistiques},
number = {4},
publisher = {Institut Henri Poincaré},
pages = {1183 -- 1203},
keywords = {Constructive quantization, Pierce’s lemma, Random quantization, Transportation problem, Wasserstein metric, Zador’s theorem},
year = {2013},
doi = {10.1214/12-AIHP489},
URL = {https://doi.org/10.1214/12-AIHP489}
}

@article{FG15,
	Author = {Fournier, Nicolas and Guillin, Arnaud},
	Da = {2015/08/01},
	Date-Added = {2022-03-23 11:31:26 +0000},
	Date-Modified = {2022-03-23 11:31:26 +0000},
	Doi = {10.1007/s00440-014-0583-7},
	Id = {Fournier2015},
	Isbn = {1432-2064},
	Journal = {Probability Theory and Related Fields},
	Number = {3},
	Pages = {707--738},
	Title = {On the rate of convergence in Wasserstein distance of the empirical measure},
	Ty = {JOUR},
	Url = {https://doi.org/10.1007/s00440-014-0583-7},
	Volume = {162},
	Year = {2015},
	Bdsk-Url-1 = {https://doi.org/10.1007/s00440-014-0583-7},
	Bdsk-Url-2 = {http://dx.doi.org/10.1007/s00440-014-0583-7}}
}

\end{document}